\documentclass[11pt]{article}
\usepackage{graphicx}
\usepackage{amsmath}
\usepackage{amssymb}
\usepackage{amsthm}
\usepackage{booktabs}
\usepackage{tabularx}
\usepackage{esint}
\usepackage{mathtools}
\usepackage{mathrsfs}
\usepackage[shortlabels]{enumitem}
\usepackage{xparse}
\usepackage{resmes}
\usepackage[colorlinks=true, linkcolor=blue, citecolor=blue, urlcolor=blue,pagebackref=false]{hyperref}

\usepackage[T1]{fontenc}
\usepackage{lmodern}

\numberwithin{equation}{section}
\newtheorem{definition}{Definition}[section]
\newtheorem{lemma}[definition]{Lemma}
\newtheorem{proposition}[definition]{Proposition}
\newtheorem{proposition*}[definition]{*Proposition}
\newtheorem{definition*}[definition]{*Definition}
\newtheorem{theorem}[definition]{Theorem}
\newtheorem{example}[definition]{Example}
\newtheorem{corollary}[definition]{Corollary}
\newtheorem{remark}[definition]{Remark}
\newtheorem{question}[definition]{Question}

\newcommand{\sub}{\subseteq}

\newcommand{\R}{\mathbb{R}}

\newcommand*{\n}[1]{\left\|#1\right\|}
\newcommand*{\inn}[2]{\left \langle #1, #2 \right \rangle}

\title{
Asymptotic analysis of Dirichlet problems in supercritically perforated domains via quantification by torsional rigidity}
\author{
    Naoto Sato \thanks{Mathematical Institute, Tohoku University. \footnotesize{\href{mailto:sato.naoto.t4@dc.tohoku.ac.jp}{sato.naoto.t4@dc.tohoku.ac.jp}}}
}
\date{\today}
\begin{document}

\maketitle
\begin{abstract}
We study the Dirichlet problem $-\mathrm{div}(A^{\varepsilon}\nabla u^{\varepsilon}) = f$ on varying open sets $D^{\varepsilon}$, with uniformly elliptic coefficients $A^{\varepsilon}$, in the regime where the solutions vanish in the limit for every source. For periodically perforated domains, this is the supercritical regime. We show that the torsional rigidity $T(D^{\varepsilon})$ is the natural normalization: up to a subsequence, $u^{\varepsilon}/T(D^{\varepsilon})$ converges weakly as measures to $kf$ for every continuous source $f$, where $k$ is a nonzero finite measure independent of $f$, which we call the effective measure. The key ingredient is an estimate comparing $u^{\varepsilon}$ with the product of $f$ and the torsion function, whose rate depends on the domains only through $T(D^{\varepsilon})$. We also show that this rate is optimal. No structural assumption on the perforation is needed, and the results hold for relaxed Dirichlet problems. Every nonzero finite measure arises as an effective measure, so the continuity of the source cannot be dropped in general. For rougher sources, we characterize the convergence by a condition on the domains alone, expressed through their torsion functions or, equivalently, their Poincar\'{e} constants. We also prove that the effective measure is local, satisfies optimal two-sided bounds and is monotone with respect to the coefficients, and we study the role of the antisymmetric parts of the coefficients.
\end{abstract}
{\small
\noindent
\textbf{Keywords:} Perforated domains, torsional rigidity, torsion functions, convergence rates, scaling limits, relaxed Dirichlet problems.

\smallskip

\noindent
\textbf{MSC 2020:} Primary 35B27; Secondary 35B40, 35J25, 28A33, 31B15, 49J45.
}
\medskip

\tableofcontents

\section{Introduction}
\subsection{Motivation}
A prototypical example of homogenization of the Poisson equation in perforated domains as presented in \cite{cioranescu1997strange,kacimi1987homogenisation} can be stated as follows. Let $D \sub \R^{d}$, $d \ge 3$,\footnote{The trichotomy also holds true when $d = 2$ with a different critical scale, see \cite[Chapitre 3]{kacimi1987homogenisation}, \cite{Jing2020unified}.} be a bounded domain and $\eta \ge 1$, and set, for $0 < \varepsilon < 1$,
\begin{equation}
    \label{eq: perforated domain Deh}
    D^{\varepsilon}_{\eta} := D \setminus \bigcup_{z \in \mathbb{Z}^{d}}B(\varepsilon z, 4^{-1}\varepsilon^{\eta}),
\end{equation}
so that $D^{\varepsilon}_{\eta}$ is obtained by removing from $D$ tiny closed balls centered at the points of $\varepsilon \mathbb{Z}^{d}$. Then the asymptotic behavior of (the zero extension of) the solution $u^{\varepsilon}$ to
    \begin{equation}
        \label{eq: intro periodic}
        -\Delta u^{\varepsilon} = f \text{ in $H^{-1}(D^{\varepsilon}_{\eta})$, $u^{\varepsilon} \in H^{1}_{0}(D^{\varepsilon}_{\eta})$}
    \end{equation} 
is described by two nonnegative constants $k_{\eta}, \mu_{\eta}$ depending only on the geometry of the perforation in the following sense:
\begin{itemize}
    \item Supercritical case ($1 \le \eta < \frac{d}{d - 2}$): For every $f \in H^{-1}(D)$, $u^{\varepsilon} \to 0$ in $H^{1}_{0}(D)$. When $f \in L^{2}(D)$, $\varepsilon^{-d + (d - 2)\eta}u^{\varepsilon} \rightharpoonup k_{\eta}f$ in $L^{2}(D)$ weak\footnote{ The convergence is strong if $1 < \eta < \frac{d}{d - 2}$, see \cite[Th\'{e}or\`{e}me 5.1 of Chapitre III]{kacimi1987homogenisation}, \cite[Theorem 2.4]{Jing2020unified}.}.
    \item Critical case ($\eta = \frac{d}{d - 2}$): For every $f \in H^{-1}(D)$, $u^{\varepsilon}$ converges in $H^{1}_{0}(D)$ weak to the solution to
    \begin{equation}
        \label{eq: schroedinger intro}
        -\Delta u + \mu_{\eta} u = f \text{ in $H^{-1}(D)$, $u \in H^{1}_{0}(D)$}
    \end{equation}
    where $0 < \mu_{\eta} < \infty$.
    \item Subcritical case ($\eta > \frac{d}{d - 2}$): The same convergence as the critical case takes place in $H^{1}_{0}(D)$ strong with $\mu_{\eta} = 0$.
\end{itemize}
There is a vast literature extending the above result (see, e.g., \cite{giunti2018homogenization,giunti2021convergence, giunti2021derivation,marchenko2006homogenization,sato2026homogenization,scardia2025homogenisation,francesco2026stochastic,beliaev1996darcy} and the references therein).

On the other hand, we know from the compactness theorem \cite[Theorem 4.14]{dalmaso1987wiener} for $\gamma$-convergence due to Dal Maso and Mosco that for an arbitrary sequence of open subsets $D^{\varepsilon} \sub D$ in $\R^{d}$, $d \ge 2$, the following holds: there exist a capacitary measure $\mu$ and a subsequence, still denoted by $(\varepsilon)$, such that for every $f \in H^{-1}(D)$ the solution $u^{\varepsilon}$ to
\begin{equation}
    \label{eq: poisson general intro}
    -\Delta u^{\varepsilon} = f \text{ in $H^{-1}(D^{\varepsilon})$, $u^{\varepsilon} \in H^{1}_{0}(D^{\varepsilon})$}
\end{equation}
converges in $H^{1}_{0}(D)$ weak to the solution to the so-called relaxed Dirichlet problem
\begin{equation}
    \label{eq: schroedinger eq}
    -\Delta u + \mu u = f \text{ in $(H^{1}_{\mu})^{*}$, $u \in H^{1}_{\mu}$}
\end{equation}
(for the definition of capacitary measures and the problem \eqref{eq: schroedinger eq}, see Subsection \ref{sect: capacitary meas}). Roughly speaking, if $\mu$ is infinitely large then the solution to \eqref{eq: schroedinger eq} is $0$, and if $\mu$ is Lebesgue measure times $\mu_{\eta}$ in $D$ and is infinitely large outside $D$, then $u$ is the solution to \eqref{eq: schroedinger intro}. In this way, they showed that the limit of $u^{\varepsilon}$ is always a solution to a problem of the type \eqref{eq: schroedinger eq}. Generalizations of this compactness theorem to wider classes of problems as well as applications to shape optimization have been studied intensively. Without attempting a comprehensive review, we cite \cite{baxter1987stopping, dalmaso1994new, dalmaso1997asymptotic,dalmaso2004asymptotic,casadodiaz2000asymptotic, casadodiaz2000homogenization,calvojurado2002thelimit, calvo2003homogenization,lentz2025capacitary, bucur2005variational,buttazzo1991shape}.

This raises the following natural question.
\begin{question}
    \label{q: scaling limit}
    Does a suitable scaling limit of the solution $u^{\varepsilon}$ to \eqref{eq: poisson general intro} in an appropriate topology always take the form $kf$ for some nonzero quantity $k$ independent of $f$ when the domains $D^{\varepsilon}$ are ``supercritically perforated'' in the sense that the unscaled solution $u^{\varepsilon}$ converges to zero  for any source $f \in H^{-1}(D)$?
\end{question}
In this article, we answer this question affirmatively, with $k$ a finite measure on $\overline{D}$, also for elliptic equations with variable coefficients, and investigate how $k$ depends on the domains $D^{\varepsilon}$ and on the coefficients.

\subsection{Overview of main results}
We now describe our main results. In the subsequent sections, we work within the framework of the relaxed Dirichlet problem, which includes as a special case the Dirichlet problem in a domain. However, for clarity, in this introduction we do not state our results in full generality and restrict ourselves to Dirichlet problems on varying open sets. Our notation is summarized in Subsection \ref{sect: notation}.

Let $D \sub \R^{d}$, $d \ge 2$ be nonempty bounded open, and let $(D^{\varepsilon}) \sub D$ be nonempty open subsets.  For $0 < \alpha \le \beta < \infty$, put
\begin{equation}
    M(\alpha, \beta) := \left\{ A \in L^{\infty}(\R^{d})^{d \times d}: A\xi \cdot \xi \ge \alpha |\xi|^{2} \text{ and } A^{-1}\xi\cdot \xi \ge \beta^{-1}|\xi|^{2} \text{ $\mathcal{L}^{d}$-a.e. for every $\xi \in \R^{d}$} \right\}.
\end{equation}
For a bounded open $\Omega \sub \R^{d}$, let $w_{\Omega} \in H^{1}_{0}(\Omega)$ denote the torsion function on $\Omega$, i.e., the solution to 
\begin{equation}
    -\Delta w_{\Omega} = 1 \text{ in $H^{-1}(\Omega)$},\, w_{\Omega} \in H^{1}_{0}(\Omega),
\end{equation}
and denote by $T(\Omega) := \n{w_{\Omega}}_{L^{1}(\Omega)}$ the torsional rigidity of $\Omega$.
Our first result answers Question \ref{q: scaling limit} in the affirmative in a more general setting.
\begin{theorem}[Theorem \ref{thm: main result}]
    \label{thm: convergence and error intro}
    Consider the problems
    \begin{equation}
        \label{eq: div form eq intro}
    -\mathrm{div}(A^{\varepsilon} \nabla u^{\varepsilon}) = f  \text{ in $H^{-1}(D^{\varepsilon})$},\, u^{\varepsilon} \in H^{1}_{0}(D^{\varepsilon}).
\end{equation}
Here, $A^{\varepsilon} \in M(\alpha, \beta)$. Assume that $T(D^{\varepsilon}) \to 0$, which is equivalent to the assumption that $u^{\varepsilon} \to 0$ in $H^{1}_{0}(D)$ for every $f \in H^{-1}(D)$. Then there exist a subsequence, still denoted by $(\varepsilon)$, and a nonnegative finite measure $k$ on the closure $\overline{D}$ of $D$ with $\beta^{-1} \le k(\overline{D}) \le \alpha^{-1}$ such that for every source $f \in C(\overline{D})$, the solution $u^{\varepsilon}$ to \eqref{eq: div form eq intro} satisfies
\begin{equation}
    \label{eq: qualitative convergence intro}
    \frac{u^{\varepsilon}}{T(D^{\varepsilon})} \rightharpoonup kf
\end{equation}
weakly in the space $\mathcal{M}(\overline{D})$ of Radon measures on $\overline{D}$. Further, if $p, q \in [1, 2]$ satisfy $\frac{1}{d} - \left( \frac{1}{p'} + \frac{1}{q'} \right)  > 0$, then we have the quantitative error estimate
        \begin{equation}
            \label{eq: quantitative error intro}
            \n{\frac{u^{\varepsilon}}{T(D^{\varepsilon})} - \frac{w_{A^{\varepsilon}, D^{\varepsilon}}}{T(D^{\varepsilon})}f}_{L^{q}(D)} \lesssim_{d, \alpha, \beta}  \n{\nabla f}_{L^{p'}(D^{\varepsilon})}T(D^{\varepsilon})^{\delta_{d, p, q}}
        \end{equation}
        for every $f \in C(\overline{D}) \cap W^{1, p'}(D)$. Here, $w_{A^{\varepsilon}, D^{\varepsilon}}$ denotes the solution to \eqref{eq: div form eq intro} with $f = 1$ and $\delta_{d, p, q} = \frac{d}{d + 2}\left(\frac{1}{d} - \left( \frac{1}{p'} + \frac{1}{q'} \right)\right)$.
\end{theorem}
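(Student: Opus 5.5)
The plan is to prove the quantitative estimate \eqref{eq: quantitative error intro} first, and then derive the qualitative convergence \eqref{eq: qualitative convergence intro} from it by compactness and density. Write $w := w_{A^{\varepsilon}, D^{\varepsilon}}$ and $v := u^{\varepsilon} - w f \in H^{1}_{0}(D^{\varepsilon})$. The product rule gives, in $H^{-1}(D^{\varepsilon})$,
\begin{equation*}
-\mathrm{div}(A^{\varepsilon}\nabla v) = \mathrm{div}(w A^{\varepsilon}\nabla f) + A^{\varepsilon}\nabla w\cdot \nabla f .
\end{equation*}
The key point is that the source $f$ cancels exactly, so the right-hand side involves only $\nabla f$ and $w$. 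To obtain the $L^{q}$ bound I would argue by duality. Fix $g \in L^{q'}(D)$ and let $z \in H^{1}_{0}(D^{\varepsilon})$ solve the adjoint problem $-\mathrm{div}((A^{\varepsilon})^{T}\nabla z) = g$. Testing gives
\begin{equation*}
\int v g = -\int w A^{\varepsilon}\nabla f\cdot\nabla z + \int z A^{\varepsilon}\nabla w\cdot\nabla f ,
\end{equation*}
so by H\"older
\begin{equation*}
\Big|\int v g\Big| \le \beta\,\n{\nabla f}_{L^{p'}}\big(\n{w\nabla z}_{L^{p}} + \n{z\nabla w}_{L^{p}}\big).
\end{equation*}

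It remains to show that $\n{w\nabla z}_{L^{p}} + \n{z\nabla w}_{L^{p}} \lesssim T(D^{\varepsilon})^{1+\delta_{d,p,q}}\n{g}_{L^{q'}}$. The ingredients, all controlled only by $T := T(D^{\varepsilon})$, are the following.
\begin{enumerate}[(i)]
\item Comparison and energy bounds for the Laplacian torsion function: $\alpha^{-1}$-, $\beta^{-1}$-multiples of $w_{D^{\varepsilon}}$ bound $w$ from above, and $\int w \simeq T$.
\item An $L^{\infty}$ bound $\n{w}_{L^{\infty}} \lesssim T^{2/(d+2)}$. I would obtain it by combining the $L^{1}$ bound with local boundedness of subsolutions: $w + |x|^{2}/(2d\alpha)$-type barriers together with mean value/De Giorgi on a ball of radius $r$, optimized in $r$, which is where the exponent $d/(d+2)$ comes from.
\item The resulting Poincar\'e inequality $\lambda_{1}(D^{\varepsilon})^{-1} \lesssim \n{w}_{L^{\infty}}$.
\item Weighted Caccioppoli estimates such as $\int |\nabla w|^{2} w^{s} \lesssim \int w^{s+1}$ and, for $z$, energy bounds $\n{\nabla z}_{L^{2}}$ improved by the small Poincar\'e constant.
\end{enumerate}
Interpolating $w$ between $L^{1}$ and $L^{\infty}$, and $z$ between its energy bound and a Sobolev/Stampacchia $L^{r}$ bound, and then distributing Hölder exponents, should give the total power $1+\delta_{d,p,q}$. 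The condition $\frac1d - (\frac1{p'}+\frac1{q'})>0$ is exactly what makes this bookkeeping close. This exponent accounting, in particular getting the gain on $z$ with only $g \in L^{q'}$ and no higher integrability, is where I expect the main difficulty; I would first try to treat $z$ by splitting $g = g^{+} - g^{-}$ and using $|z| \le$ a solution with source $|g|$, then bounding that pointwise through $w$-weighted estimates.

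For the qualitative part, the maximum principle gives $|u^{\varepsilon}| \le \n{f}_{\infty} w$, and
\begin{equation*}
\int w = \int A^{\varepsilon}\nabla w\cdot\nabla w .
\end{equation*}
Comparison with the variational characterization of $T$ then yields $\beta^{-1}T \le \int w \le \alpha^{-1}T$. Hence $w/T$ is bounded in $\mathcal{M}(\overline{D})$, and a subsequence converges weakly to some $k$ with $\beta^{-1}\le k(\overline{D})\le\alpha^{-1}$. For $f \in C^{1}(\overline{D})$, the estimate \eqref{eq: quantitative error intro} with admissible $p,q$ together with $T\to0$ gives $u^{\varepsilon}/T - (w/T)f \to 0$ in $L^{1}$, so $u^{\varepsilon}/T \rightharpoonup kf$. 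For general $f\in C(\overline{D})$, I would approximate uniformly by $f_{n} \in C^{1}$ and use the uniform mass bound
\begin{equation*}
\n{u^{\varepsilon}_{f} - u^{\varepsilon}_{f_{n}}}_{L^{1}}/T \le \n{f-f_{n}}_{\infty}\,\alpha^{-1},
\end{equation*}
which lets me pass to the limit. The equivalence $T(D^{\varepsilon})\to0 \iff u^{\varepsilon}\to0$ follows from the same energy identity applied to $f=1$ together with density of $L^{\infty}$ in $H^{-1}$ and the uniform resolvent bound.
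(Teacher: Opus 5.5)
Your reduction is sound and follows the paper: the corrector equation, the duality identity $\int v g = -\int w A^{\varepsilon}\nabla f\cdot\nabla z + \int z A^{\varepsilon}\nabla w\cdot\nabla f$, the bound $\|w\|_{\infty}\lesssim T^{2/(d+2)}$ from local boundedness of the subsolution $w$, and the qualitative part (mass bounds $\beta^{-1}T\le\int w\le\alpha^{-1}T$, compactness, uniform approximation by $C^{1}$ sources) are all fine. The gap is the heart of the theorem, $\|w\nabla z\|_{p}+\|z\nabla w\|_{p}\lesssim T^{1+\delta_{d,p,q}}\|g\|_{q'}$. You leave it open, and the tools you name cannot supply it.

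\begin{itemize}
\item Sobolev and Stampacchia bounds for $z$ are either scale invariant or involve $\mathcal{L}^{d}(D^{\varepsilon})$. That quantity need not vanish when $T(D^{\varepsilon})\to0$; for periodic perforations $\mathcal{L}^{d}(D^{\varepsilon}_{\eta})\to\mathcal{L}^{d}(D)$. So they give no power of $T$.
\item Pointwise control of $|z|$ by multiples of $w$ needs $g\in L^{\infty}$, not $g\in L^{q'}$.
\item The $L^{2}$ Poincar\'e (first eigenvalue) bound in (iii) is too weak. Admissibility forces $q<\frac{d}{d-1}$, so you must pass from $L^{2}$ to $L^{q}$ by H\"older on $D$. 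This gives only $\|\nabla z\|_{2}\lesssim_{D}T^{\frac{1}{d+2}}\|g\|_{q'}$, whereas you need $\|\nabla z\|_{2}\lesssim T^{\frac12-\frac{d}{(d+2)q'}}\|g\|_{q'}$. With the weaker bound, the estimate for $T^{-1}\|w\|_{2p/(2-p)}\|\nabla z\|_{2}$ is $T^{\frac{2-d}{2(d+2)}-\frac{d}{(d+2)p'}}$, which does not even tend to zero for $d\ge2$.
\item Item (i) is false as stated. For merely measurable $A^{\varepsilon}$ there is no pointwise comparison between $w_{A^{\varepsilon},D^{\varepsilon}}$ and multiples of $w_{D^{\varepsilon}}$, since the comparison principle applies only to a fixed operator. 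Only integral or norm comparisons survive.
\end{itemize}

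The missing idea is the torsional Hardy inequality. Testing the torsion equation with $\varphi^{2}/(w+\epsilon)$ gives
\[
\int\frac{\varphi^{2}}{w}\le\beta\int|\nabla\varphi|^{2},\qquad\int\frac{|\nabla w|^{2}}{w^{2}}\varphi^{2}\lesssim_{\alpha,\beta}\int|\nabla\varphi|^{2}\qquad(\varphi\in H^{1}_{0}(D^{\varepsilon})).
\]
Write $|\varphi|^{q}=(\varphi^{2}/w)^{q/2}w^{q/2}$ and $|\varphi\nabla w|=\frac{|\nabla w|}{w}|\varphi|\,w$, and apply H\"older. This yields the $L^{q}$ Poincar\'e inequality $\|\varphi\|_{q}^{2}\le\beta\|w\|_{\frac{q}{2-q}}\|\nabla\varphi\|_{2}^{2}$, and also $\|\varphi\nabla w\|_{p}\lesssim_{\alpha,\beta}\|w\|_{\frac{2p}{2-p}}\|\nabla\varphi\|_{2}$.

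In your duality setting this gives $\|\nabla z\|_{2}\lesssim\|w\|_{\frac{q}{2-q}}^{1/2}\|g\|_{q'}$, hence
\[
\|w\nabla z\|_{p}+\|z\nabla w\|_{p}\lesssim\|w\|_{\frac{2p}{2-p}}\|w\|_{\frac{q}{2-q}}^{1/2}\|g\|_{q'}.
\]
Inserting $\|w\|_{s}\le\|w\|_{\infty}^{1-1/s}\|w\|_{1}^{1/s}\lesssim T^{\frac{2}{d+2}+\frac{d}{(d+2)s}}$ gives exactly $T^{1+\delta_{d,p,q}}$. The paper does the same without duality: it tests the corrector equation with $v$ itself to get $\|v\|_{\dot{H}^{1}}\lesssim_{\alpha,\beta}\|w\|_{\frac{2p}{2-p}}\|\nabla f\|_{p'}$, then applies the $L^{q}$ Poincar\'e inequality above.

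Your own ingredients could also be completed without Hardy. Interpolate the adjoint resolvent by Riesz--Thorin between $L^{\infty}\to L^{s}$ and $L^{2}\to L^{2}$. The first has norm at most $\|w_{A^{\varepsilon *},D^{\varepsilon}}\|_{s}$ by comparison; the second has norm $\lesssim\|w_{D^{\varepsilon}}\|_{\infty}$ by your (iii). Combine this with $\|z\nabla w\|_{p}\le\|z\|_{\frac{2p}{2-p}}\|\nabla w\|_{2}$ and $\|\nabla w\|_{2}\lesssim T^{1/2}$. Either way, some such mechanism must be supplied; as written, the quantitative estimate is not proved.
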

In Subsection \ref{sect: optimality}, we show that the exponent $\delta_{d, p, q}$ in \eqref{eq: quantitative error intro} is optimal.

With the aid of the unscaled asymptotic analysis of \eqref{eq: div form eq intro} developed in \cite{dalmaso2004asymptotic}, we obtain the following dichotomy.

\begin{corollary}[Corollary \ref{thm: main thm for domains}]
    Consider the problem \eqref{eq: div form eq intro}. Up to a subsequence, one of the following holds.
    \begin{enumerate}
        \item (\cite[Theorem 5.2, Corollary 5.5]{dalmaso2004asymptotic}). There are $A \in M(\alpha, \beta)$ and a capacitary measure $\mu \in \mathcal{M}_{\mathrm{cap}}(D) \setminus \{I_{\varnothing}\}$ such that for every $f \in H^{-1}(D)$, the solution $u^{\varepsilon}$ converges in $H^{1}_{0}(D)$ weak to the solution $u$ to the relaxed Dirichlet problem
        \[-\mathrm{div}(A \nabla u) + \mu u = f  \text{ in $(H^{1}_{\mu})^{*}$, $u \in H^{1}_{\mu}$},\]
        the solution to which is nonzero whenever $f \in L^{2}(D)$ is nonnegative and bounded away from $0$ $\mathcal{L}^{d}$-a.e. on $D$ (for definitions of capacitary measures and relaxed Dirichlet problems, see Subsection \ref{sect: capacitary meas}).
        \item $u^{\varepsilon} \to 0$ in $H^{1}_{0}(D)$ for every $f \in H^{-1}(D)$. Further, there is a nonnegative finite Borel measure $k \in \mathcal{M}(\overline{D})$ with $\beta^{-1} \le k(\overline{D}) \le \alpha^{-1}$ such that for every $f \in C(\overline{D})$, the convergence \eqref{eq: qualitative convergence intro} takes place with the quantitative estimate \eqref{eq: quantitative error intro} for $p, q \in [1, 2]$ with $\frac{1}{d} - \left( \frac{1}{p'} + \frac{1}{q'} \right)  > 0$ and $f \in C(\overline{D}) \cap W^{1, p'}(D)$.
    \end{enumerate}
\end{corollary}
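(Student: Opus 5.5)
The plan is to split according to the behaviour of the torsional rigidities $T(D^{\varepsilon})$, which are uniformly bounded since $D^{\varepsilon} \sub D$ gives $T(D^{\varepsilon}) \le T(D)$. Passing to a subsequence, we may assume $T(D^{\varepsilon}) \to \tau \in [0, T(D)]$. If $\tau = 0$, Theorem \ref{thm: convergence and error intro} applies directly: $u^{\varepsilon} \to 0$ in $H^{1}_{0}(D)$ for every $f \in H^{-1}(D)$ by the equivalence stated there, and after a further subsequence we obtain the measure $k$ with $\beta^{-1} \le k(\overline{D}) \le \alpha^{-1}$, the convergence \eqref{eq: qualitative convergence intro} and the estimate \eqref{eq: quantitative error intro}. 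This is alternative 2.

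If $\tau > 0$, we invoke the compactness result \cite[Theorem 5.2, Corollary 5.5]{dalmaso2004asymptotic}. Up to a further subsequence, it yields $A \in M(\alpha, \beta)$ and a capacitary measure $\mu$ such that $u^{\varepsilon} \rightharpoonup u$ in $H^{1}_{0}(D)$ for every $f \in H^{-1}(D)$, where $u$ solves the relaxed problem. It then remains to show that $\mu \ne I_{\varnothing}$ and that $u \ne 0$ for $f \in L^{2}(D)$ with $f \ge c > 0$ a.e.

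For the first claim, note that $\mu = I_{\varnothing}$ means $H^{1}_{\mu} = \{0\}$, so all limits vanish. Consider the ``Laplacian'' torsion functions $w_{D^{\varepsilon}}$, which are bounded in $H^{1}_{0}(D)$. We want to exclude the case $w_{A^{\varepsilon}, D^{\varepsilon}} \rightharpoonup 0$ while $\int w_{D^{\varepsilon}} \to \tau > 0$. To do this, we compare the two torsion functions. Testing the equation for $w_{A^{\varepsilon},D^{\varepsilon}}$ against $w_{D^{\varepsilon}}$ and using ellipticity together with the Cauchy--Schwarz inequality in energy gives
\[
\int w_{D^{\varepsilon}} = \int |\nabla w_{D^{\varepsilon}}|^{2} \lesssim_{\alpha,\beta} \Big(\int w_{A^{\varepsilon},D^{\varepsilon}}\Big)^{1/2}\Big(\int w_{D^{\varepsilon}}\Big)^{1/2}.
\]
More concretely, we use $T_{A}(\Omega) = \sup_{v \in H^{1}_{0}(\Omega)} \big(2\int v - \int A\nabla v\cdot\nabla v\big)$ in the symmetric case, or an argument via $\beta$ in general. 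Either way we get $\int w_{A^{\varepsilon},D^{\varepsilon}} \gtrsim T(D^{\varepsilon})$. Since $w_{A^{\varepsilon},D^{\varepsilon}} \to w$ strongly in $L^{2}$ by Rellich, we conclude $\int w \gtrsim \tau > 0$. Hence the limit torsion $w \neq 0$, and so $\mu \ne I_{\varnothing}$.

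For the second claim, take $f \in L^{2}$ with $f \ge c > 0$. The comparison principle gives $u^{\varepsilon} \ge c\, w_{A^{\varepsilon},D^{\varepsilon}} \ge 0$, and this inequality is preserved under the weak (and strong $L^{2}$) limit. Thus $u \ge c\,w$ with $\int w > 0$, so $u \neq 0$.

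The main obstacle I expect is the comparison $\int w_{A^{\varepsilon},D^{\varepsilon}} \gtrsim_{\alpha,\beta} T(D^{\varepsilon})$ for nonsymmetric coefficients. My first attempt will be to test the equation for $w_{A^{\varepsilon},D^{\varepsilon}}$ with $w_{D^{\varepsilon}}$. Together with $\int A\nabla w\cdot\nabla w = \int w$ and the bound $|A\xi|\le\beta|\xi|$ implied by $M(\alpha,\beta)$, this gives
\[
\int w_{D^{\varepsilon}} = \int A^{\varepsilon}\nabla w_{A^{\varepsilon},D^{\varepsilon}}\cdot\nabla w_{D^{\varepsilon}} \le \beta\,\alpha^{-1/2}\Big(\int w_{A^{\varepsilon},D^{\varepsilon}}\Big)^{1/2} T(D^{\varepsilon})^{1/2}.
\]
This yields the bound $\int w_{A^{\varepsilon},D^{\varepsilon}} \ge \alpha\beta^{-2} T(D^{\varepsilon})$, as needed.
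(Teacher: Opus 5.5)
Your proof is correct. It uses the same two ingredients as the paper, namely the compactness theorem of \cite[Section 5]{dalmaso2004asymptotic} and Theorem \ref{thm: main result}, but it organizes the dichotomy differently.

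The paper only says to combine these two results, and it implicitly splits according to whether the Dal Maso--Murat limit $\mu$ equals $I_{\varnothing}$. If it does, the cited equivalence $\mu = I_{\varnothing} \Leftrightarrow T(\mu^{\varepsilon}) \to 0$ \cite[Theorem 5.1]{dalmaso2004asymptotic} places you in Theorem \ref{thm: main result}. If it does not, the nonzero claim comes from $\mu \neq I_{\varnothing}$, for instance via $T(\mu) \neq 0$ \cite[Proposition 3.4]{dalmaso1994new}, Lemma \ref{thm: torsion and poincare const}, and comparison in the limit problem.

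You instead split along $\tau = \lim T(D^{\varepsilon})$ and prove by hand the one implication you need: $\tau > 0$ forces $w_{A,\mu} = \lim w_{A^{\varepsilon}, D^{\varepsilon}} \neq 0$. Your bound $\int w_{A^{\varepsilon}, D^{\varepsilon}} \ge \alpha\beta^{-2} T(D^{\varepsilon})$ is the lower bound in \eqref{eq: L1 norm of wA} with a weaker constant. Estimating $\|A\nabla w\|_{2}^{2} \le \beta \int A\nabla w \cdot \nabla w$ as in \eqref{eq: energy bound by torsion}, rather than using $|A\xi| \le \beta|\xi|$ plus coercivity, gives $\beta^{-1}$ instead. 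Rellich's theorem then passes the bound to the limit.

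This makes alternative 1 self-contained and even quantitative, with $\int w_{A, \mu} \ge \beta^{-1}\tau$. The paper's route is shorter but rests on the cited results. The aside about the variational characterization of the torsional rigidity in the symmetric case is unnecessary, since your final computation already works for nonsymmetric $A^{\varepsilon}$.
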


For $A \in M(\alpha, \beta)$ and an open $\Omega \sub \R^{d}$, we denote by $P(A, \Omega)$ the
Dirichlet problem $-\mathrm{div}(A\nabla u) = f$ in $H^{-1}(\Omega)$, $u \in H^{1}_{0}(\Omega)$;
thus \eqref{eq: div form eq intro} is $P(A^{\varepsilon}, D^{\varepsilon})$. We call the measure
$k$ in Theorem \ref{thm: convergence and error intro}, i.e., the limit of
$T(D^{\varepsilon})^{-1}w_{A^{\varepsilon}, D^{\varepsilon}}$ in $\mathcal{M}(\overline{D})$ weak,
the \emph{effective measure} of the problems $P(A^{\varepsilon}, D^{\varepsilon})$
(Definition \ref{def: eff meas}). It is an analogue of the ratio of the permeability to the
viscosity in Darcy's law (see Remark \ref{rmk: Darcy}).

We also show in Proposition \ref{thm: convergence of energy} that the scaled energy density $T(D^{\varepsilon})^{-1}A^{\varepsilon}\nabla u^{\varepsilon} \cdot \nabla u^{\varepsilon}$ converges in $\mathcal{M}(\overline{D})$ weak to $kf^{2}$ when $f \in C(\overline{D})$ with a quantitative error estimate similar to \eqref{eq: quantitative error intro} for more regular sources.

We show in Proposition \ref{thm: attainability} that any nonzero nonnegative finite Borel measure $k$ on $\overline{D}$ is the effective measure of the problems $P(k(\overline{D})^{-1} \mathrm{Id}, D^{\varepsilon})$ for some open $D^{\varepsilon} \sub D$ with vanishing torsional rigidity. Consequently, the limit $kf$ appearing in \eqref{eq: qualitative convergence intro} is meaningful only for $f \in C(\overline{D})$ without further restrictions on $D^{\varepsilon} \sub D$. In Section \ref{sect: rougher source}, we characterize when this convergence extends to rougher sources. For $1 < r \le 2$, we have $k \in L^{\frac{r}{2 - r}}(D)$ and $T(D^{\varepsilon})^{-1}u^{\varepsilon} \rightharpoonup kf$ weakly in $L^{r}(D)$ for every $f \in L^{r'}(D)$ if and only if $\n{w_{D^{\varepsilon}}}_{\frac{r}{2 - r}} \lesssim T(D^{\varepsilon})$ uniformly in $\varepsilon$, or, equivalently, the $L^{r}$ Poincar\'{e} constants of $D^{\varepsilon}$ are bounded below by a constant multiple of $T(D^{\varepsilon})^{-1}$ (Proposition \ref{thm: rougher source term}). This condition depends only on the domains, not on the coefficients.

We also obtain various properties of effective measures. In the rest of this section, we assume that the problems $P(A^{\varepsilon}, D^{\varepsilon})$ admit an effective measure $k$, i.e., that $T(D^{\varepsilon})^{-1}w_{A^{\varepsilon}, D^{\varepsilon}} \rightharpoonup k$ in $\mathcal{M}(\overline{D})$ weak. By Theorem \ref{thm: convergence and error intro}, this can always be achieved by passing to a subsequence.

\begin{proposition}[{Locality, Proposition \ref{thm: locality}}]
    Let $U \sub \R^{d}$ be open. Then 
    \[T(D^{\varepsilon})^{-1}w_{A^{\varepsilon}, D^{\varepsilon} \cap U} \rightharpoonup k\resmes U\]
    weakly* in the space $\mathcal{M}(U)$ of finite Radon measures on $U$. Here, $k\resmes U (\cdot) := k(U \cap \cdot)$ is the restriction of $k$ to $U$ (see Subsection \ref{sect: notation}), and $w_{A^{\varepsilon}, D^{\varepsilon} \cap U}$ is zero-extended to a function on $\R^{d}$.
\end{proposition}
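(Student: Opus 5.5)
We compare $v^{\varepsilon}:=w_{A^{\varepsilon}, D^{\varepsilon}\cap U}$ with $w^{\varepsilon}:=w_{A^{\varepsilon}, D^{\varepsilon}}$ on $U$, showing that the difference is negligible after scaling on every compact subset of $U$. Since both functions are nonnegative and $\n{v^{\varepsilon}}_{L^1}\le \n{w^{\varepsilon}}_{L^1}\lesssim_{\alpha,\beta} T(D^{\varepsilon})$ by the comparison principle ($0\le v^{\varepsilon}\le w^{\varepsilon}$, because $w^{\varepsilon}-v^{\varepsilon}$ solves the homogeneous equation in $D^{\varepsilon}\cap U$ with nonnegative boundary values), the family $T(D^{\varepsilon})^{-1}v^{\varepsilon}$ is bounded in $\mathcal{M}(U)$. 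Weak* convergence in $\mathcal{M}(U)$ therefore only has to be tested against $\varphi\in C_c(U)$, which by density can be taken in $C_c^\infty(U)$.

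Fix $\varphi\in C_c^\infty(U)$ and a cutoff $\psi\in C_c^\infty(U)$ with $\psi=1$ on a neighbourhood of $\mathrm{supp}\,\varphi$. The key step is to estimate $w^{\varepsilon}\psi - v^{\varepsilon}$. The function $\psi w^{\varepsilon}$ lies in $H^1_0(D^{\varepsilon}\cap U)$ and satisfies
\[
-\mathrm{div}(A^{\varepsilon}\nabla(\psi w^{\varepsilon})) = \psi - \mathrm{div}(w^{\varepsilon}A^{\varepsilon}\nabla\psi) - A^{\varepsilon}\nabla w^{\varepsilon}\cdot\nabla\psi .
\]
Hence $z^{\varepsilon}:=\psi w^{\varepsilon}-v^{\varepsilon}$ solves $P(A^{\varepsilon}, D^{\varepsilon}\cap U)$ with the source $(\psi-1) + g^{\varepsilon}$, where $g^{\varepsilon}$ collects the commutator terms. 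Testing with $z^{\varepsilon}$ and using a Poincaré inequality on $D^{\varepsilon}$ whose constant is controlled by the torsion gives the required bounds. Concretely, $\n{z}_{L^2}^2\lesssim \n{w_{D^\varepsilon}}_{L^\infty}\n{\nabla z}_{L^2}^2$, and $\n{w_{D^\varepsilon}}_\infty\lesssim T(D^\varepsilon)^{2/(d+2)}$ by the standard $L^1$–$L^\infty$ torsion estimate, which is presumably the one behind $\delta_{d,p,q}$ in Theorem \ref{thm: main result}. We also need $\n{\nabla w^\varepsilon}_{L^2}^2\lesssim T(D^\varepsilon)$ and $\n{w^\varepsilon}_{L^2}^2\lesssim \n{w^\varepsilon}_\infty T(D^\varepsilon)$.

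An alternative to this direct computation is to apply the quantitative estimate of Theorem \ref{thm: main result} with the source $f=\psi$, regarded on $D^{\varepsilon}$. It gives $\n{T^{-1}u_\psi^{\varepsilon}-T^{-1}w^{\varepsilon}\psi}_{L^1}\to0$, where $u^\varepsilon_\psi$ solves $P(A^\varepsilon,D^\varepsilon)$ with source $\psi$. It then remains to compare $u^\varepsilon_\psi$ with $v^\varepsilon$ on $\mathrm{supp}\,\varphi$. Their difference $h$ solves $P(A^\varepsilon, D^\varepsilon\cap U)$ with the source $(\psi-1)\mathbf 1_{U}=0$ near $\mathrm{supp}\,\varphi$, plus the boundary mismatch along $\partial U$. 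I would instead keep the direct commutator computation above: every term of $g^{\varepsilon}$ is supported in $\mathrm{supp}\,\nabla\psi$, and $\psi-1$ vanishes on $\{\psi=1\}$.

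The main obstacle is that the source $(\psi-1)+g^\varepsilon$ is not small in $H^{-1}$ relative to $T$ in an obvious way: $g^\varepsilon$ has size $\sim\n{\nabla w^\varepsilon}_{L^2}\sim T^{1/2}$ in $H^{-1}$, which yields only $\n{\nabla z}_{L^2}\lesssim T^{1/2}$. That is too weak to conclude $\int z\varphi = o(T)$ directly. To get smallness I would use duality. Write $\int z\varphi = \langle \text{source}, \tilde w\rangle$, where $\tilde w$ solves the adjoint problem $P((A^\varepsilon)^{T}, D^\varepsilon\cap U)$ with source $\varphi$. By Theorem \ref{thm: main result} applied to the adjoint coefficients, $\tilde w\approx \varphi\, w_{(A^\varepsilon)^T,D^\varepsilon\cap U}$, which is small ($o(T)$ in $L^1$ after the correction) on $\mathrm{supp}(\psi-1)\cup\mathrm{supp}\nabla\psi$, since $\varphi=0$ there. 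In addition, $\n{\tilde w}_\infty\lesssim \n{w_{D^\varepsilon}}_\infty\to0$ and $\n{\nabla\tilde w}_{L^2}\lesssim T^{1/2}$. Pairing $g^\varepsilon$ with $\tilde w$ then gives terms like $\int w^\varepsilon A\nabla\psi\cdot\nabla\tilde w$. Since $\tilde w$ is itself the sum of a piece controlled by the error estimate and a piece of the form $\varphi\, \cdot$ torsion, which vanishes near $\mathrm{supp}\,\nabla\psi$, these terms should be bounded by $\n{w^\varepsilon}_{L^2}\, \n{\nabla(\tilde w-\varphi\tilde{\mathsf w})}_{L^2} = o(T)$. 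Carrying this out yields $T^{-1}\int(v^\varepsilon - w^\varepsilon)\varphi\to0$, and hence $T^{-1}v^\varepsilon\rightharpoonup k\resmes U$ against every $\varphi\in C_c(U)$.
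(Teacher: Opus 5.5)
Your argument, in its final duality form, is correct, but it is organized differently from the paper's. The paper uses neither a cutoff nor the resolvent with source $\varphi$. It tests the adjoint torsion equation on $\mu+I_U$ with $\varphi w_{A,\mu}$, and the torsion equation on $\mu$ with $\varphi w_{A^*,\mu+I_U}$. This yields the exact identity
\[
\int (w_{A,\mu}-w_{A^*,\mu+I_U})\varphi=-\int w_{A^*,\mu+I_U}\nabla\varphi\cdot A\nabla w_{A,\mu}+\int w_{A,\mu}\nabla w_{A^*,\mu+I_U}\cdot A\nabla\varphi ,
\]
whose right-hand side is $O(\|\nabla\varphi\|_{L^{q'}}T^{1+\delta_{d,1,q}})$ by H\"older and the torsion norm bounds. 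The paper then replaces $w_{A^*,\mu+I_U}$ by $w_{A,\mu+I_U}$ using Proposition~\ref{thm: adjoint}.

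You instead compare $\psi w^\varepsilon$ with $v^\varepsilon$ on all of $D^\varepsilon\cap U$, which creates the bulk source $\psi-1$ of size $O(1)$. That term is what forces you to use the dual resolvent $\tilde w=R_{A^{\varepsilon*},D^\varepsilon\cap U}\varphi$ together with the corrector estimate of Theorem~\ref{thm: quantitative error estimate}. With $\tilde{\mathsf w}:=w_{A^{\varepsilon*},D^\varepsilon\cap U}$ and $(\psi-1)\varphi=0$, you get $\int(\psi-1)\tilde w=\int(\psi-1)(\tilde w-\varphi\tilde{\mathsf w})=O(T^{1+\frac{1}{d+2}})$.

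The commutator terms need no decomposition at all. The products $\|w^\varepsilon\|_2\|\nabla\tilde w\|_2$ and $\|\tilde w\|_2\|\nabla w^\varepsilon\|_2$ are already $O(T^{\frac12+\frac1{d+2}}\,T^{\frac12})=o(T)$. By contrast, bounding $\int\tilde w A^\varepsilon\nabla w^\varepsilon\cdot\nabla\psi$ literally by $\|w^\varepsilon\|_2\|\nabla(\tilde w-\varphi\tilde{\mathsf w})\|_2$, as you suggest, would require the torsional Hardy inequality.

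Your route does not need the adjoint proposition, since the same $A^\varepsilon$ appears in $v^\varepsilon$ and $w^\varepsilon$. It costs the full corrector theorem and constants depending on $\|\nabla\psi\|_\infty$, i.e. on the distance from $\mathrm{supp}\,\varphi$ to $\partial U$. The paper's bound involves only $\|\nabla\varphi\|_{L^{q'}(\Omega_\mu\cap U)}$.

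Finally, as you observe yourself, the energy estimate in your second paragraph cannot give $o(T)$ on its own. That paragraph's claim that it ``gives the required bounds'' should therefore be dropped; the proof rests entirely on the duality step.
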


\begin{theorem}[{Role of the antisymmetric parts, Proposition \ref{thm: adjoint} and Theorem \ref{thm: continuous antisymmetric part in the uniform limit does not contribute to k}}]
    \label{thm: antisymmetric intro}
    The measure $k$ is also the effective measure of the adjoint problems $P(A^{\varepsilon *}, D^{\varepsilon})$. If, furthermore, the antisymmetric parts $2^{-1}(A^{\varepsilon} - A^{\varepsilon *})$ converge in $L^{\infty}(D)$ to a matrix-valued $C \in \mathcal{C}_{D, k}^{d \times d}$, then $k$ is also the effective measure of the problems $P(A^{\varepsilon s}, D^{\varepsilon})$. Here, $A^{\varepsilon s} := 2^{-1}(A^{\varepsilon} + A^{\varepsilon *})$ is the symmetric part, and $\mathcal{C}_{D, k}$ denotes the class of functions $f \in L^{\infty}(\mathcal{L}^{d} \resmes D + k)$ such that there is a $(\mathcal{L}^{d}\resmes D + k)$-null set $N \sub \overline{D}$ such that $f$ is continuous on $\overline{D} \setminus N$.
\end{theorem}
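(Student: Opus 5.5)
The plan is to handle the two assertions separately. Each follows from comparing torsion functions of different coefficient matrices through the weak formulation, combined with the quantitative estimate \eqref{eq: quantitative error intro} of Theorem \ref{thm: convergence and error intro}.

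\emph{Adjoint problems.} Write $w^{\varepsilon} := w_{A^{\varepsilon}, D^{\varepsilon}}$ and $w^{\varepsilon}_{*} := w_{A^{\varepsilon *}, D^{\varepsilon}}$. For $f \in C(\overline{D})$, let $u^{\varepsilon}$ solve $P(A^{\varepsilon}, D^{\varepsilon})$ with source $f$. Testing this equation with $w^{\varepsilon}_{*}$, and the adjoint torsion equation with $u^{\varepsilon}$, gives
\[
\int f\, w^{\varepsilon}_{*} = \int A^{\varepsilon}\nabla u^{\varepsilon}\cdot\nabla w^{\varepsilon}_{*} = \int A^{\varepsilon *}\nabla w^{\varepsilon}_{*}\cdot \nabla u^{\varepsilon} = \int u^{\varepsilon}.
\]
Dividing by $T(D^{\varepsilon})$ and using \eqref{eq: qualitative convergence intro} yields $T(D^{\varepsilon})^{-1}\int f w^{\varepsilon}_{*} \to \int f\,dk$. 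This is exactly weak convergence $T(D^{\varepsilon})^{-1}w^{\varepsilon}_{*} \rightharpoonup k$, and no subsequence is needed because the limit is identified. One point needs care: the normalization is by $T(D^{\varepsilon})$, the Laplacian torsional rigidity, and this is the same for both problems, so no compatibility issue arises.

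\emph{Symmetric part.} Write $A^{\varepsilon} = S^{\varepsilon} + C^{\varepsilon}$ with $S^{\varepsilon} = A^{\varepsilon s}$, and set $v^{\varepsilon} := w_{S^{\varepsilon}, D^{\varepsilon}}$. The first step is to test the $A^{\varepsilon}$-torsion equation with $v^{\varepsilon}$ and the $S^{\varepsilon}$-torsion equation with $w^{\varepsilon}$. Since $S^{\varepsilon}$ is symmetric,
\[
\int v^{\varepsilon} - \int w^{\varepsilon} = \int A^{\varepsilon}\nabla w^{\varepsilon}\cdot\nabla v^{\varepsilon} - \int S^{\varepsilon}\nabla v^{\varepsilon}\cdot\nabla w^{\varepsilon} = \int C^{\varepsilon}\nabla w^{\varepsilon}\cdot \nabla v^{\varepsilon}.
\]
To get weak convergence against every $f \in C(\overline{D})$, the same identity is applied to the solutions with source $f$. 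It reads $\int f v^{\varepsilon} - \int u^{\varepsilon} = \int C^{\varepsilon}\nabla u^{\varepsilon}\cdot\nabla v^{\varepsilon}$, and by the previous step together with \eqref{eq: qualitative convergence intro} it suffices to show that
\[
T(D^{\varepsilon})^{-1}\int C^{\varepsilon}\nabla u^{\varepsilon}\cdot\nabla v^{\varepsilon} \to 0.
\]
Energy bounds give $\int |\nabla u^{\varepsilon}|^{2}, \int|\nabla v^{\varepsilon}|^{2} \lesssim \|f\|_{\infty}^{2}\,T(D^{\varepsilon})$. Hence replacing $C^{\varepsilon}$ by $C$ costs $o(1)$ after division by $T(D^{\varepsilon})$, since $C^{\varepsilon}\to C$ in $L^\infty$.

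For smooth $f$ and smooth antisymmetric $C$, the next step uses \eqref{eq: quantitative error intro} (and its gradient/energy version, Proposition \ref{thm: convergence of energy}) to replace $\nabla u^{\varepsilon}$ by $f\nabla w^{\varepsilon}$ up to $o(T(D^{\varepsilon})^{1/2})$ in $L^{2}$. What remains is $\int fC\nabla w^{\varepsilon}\cdot\nabla v^{\varepsilon}$. A key observation is that antisymmetry kills the diagonal term: $C\nabla w\cdot\nabla w = 0$. So I would use polarization, writing $\nabla w^{\varepsilon}\cdot(\text{stuff})$ with $v^{\varepsilon} = w^{\varepsilon} + (v^{\varepsilon}-w^{\varepsilon})$, or integrate by parts: $\int fC\nabla w\cdot\nabla v = -\int v\,\mathrm{div}(fC^{T}\nabla w)$. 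Because $C$ is antisymmetric, $\mathrm{div}(fC^{T}\nabla w) = \nabla(f)\cdot C^{T}\nabla w + f\,(\mathrm{div}\,C^{T})\cdot\nabla w$ involves only first derivatives of $w$. Then Cauchy--Schwarz gives a bound $\lesssim \|v^{\varepsilon}\|_{L^{2}}\|\nabla w^{\varepsilon}\|_{L^{2}} \lesssim T(D^{\varepsilon})^{1/2}\cdot\|v^\varepsilon\|_{2}$. Using $\|v^{\varepsilon}\|_{L^{2}}^{2}\le\|v^{\varepsilon}\|_{\infty}\|v^{\varepsilon}\|_{1}$, together with the smallness of $\|w_{D^\varepsilon}\|_\infty \to 0$ (which follows from $T(D^\varepsilon)\to 0$ via the interpolation behind \eqref{eq: quantitative error intro}), shows this is $o(T(D^{\varepsilon}))$.

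\emph{Main obstacle.} The hardest step is passing from smooth $C$ to the stated class $\mathcal{C}_{D,k}$, whose elements are merely continuous off a $(\mathcal{L}^{d}\resmes D+k)$-null set. The plan is to approximate $C$ uniformly by smooth antisymmetric matrices on a neighborhood of $\overline{D}\setminus N$ where $C$ is continuous. The error $T(D^{\varepsilon})^{-1}\int |C - C_{\delta}||\nabla u^{\varepsilon}||\nabla v^{\varepsilon}|$ is then controlled by the scaled energy densities $T(D^{\varepsilon})^{-1}|\nabla w^{\varepsilon}|^{2}$, which converge weakly to multiples of $k$ (Proposition \ref{thm: convergence of energy}). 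Since $k(N)=0$ and these densities are tight, the contribution from a small neighborhood of $N$ is small, while away from it the approximation is uniform. Upper semicontinuity of the limit measure on closed sets makes this rigorous; this requires care but is where I expect the real work to lie.
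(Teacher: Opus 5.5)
Your treatment of the adjoint problems is correct and differs from the paper's. The duality identity $\int f\,w_{A^{\varepsilon*},D^{\varepsilon}}=\int R_{A^{\varepsilon},D^{\varepsilon}}f$, combined with $T(D^{\varepsilon})^{-1}R_{A^{\varepsilon},D^{\varepsilon}}f\rightharpoonup kf$, identifies the limit in one line. The paper instead tests the adjoint torsion equation with $w_{A,\mu}\varphi$, which gives the quantitative bound \eqref{eq: quantitative diff of adjoints} for each fixed $(A,\mu)$.

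Your step for Lipschitz antisymmetric $C$ is also sound, and it too differs from the paper. The paper uses that constant antisymmetric matrices do not change torsion functions, then locality (Corollary \ref{thm: locality uniform approx}) for $C$ piecewise constant on open sets, mutual absolute continuity of the two effective measures, and Lemma \ref{thm: approx by simple fcts}. In your step the corrector is not needed. Since $\sum_{i,j}c_{ij}\partial_i\partial_j=0$, one has $\int C\nabla u^{\varepsilon}\cdot\nabla v^{\varepsilon}=-\sum_{i,j}\int(\partial_i c_{ij})\,\partial_j u^{\varepsilon}\,v^{\varepsilon}=O(T^{1/2}\n{v^{\varepsilon}}_{2})=O(T^{1+\frac{1}{d+2}})$ for any bounded $f$. (The field to differentiate is $fC\nabla w$, not $fC^{T}\nabla w$.)

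\textbf{The gap is in the step you flag as the main obstacle.} The null set $N$ in the definition of $\mathcal{C}_{D,k}$ need not be closed, and then your mechanism fails.

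Take $C=C_{0}\sum_{n}2^{-n}\chi_{\{x_{1}>a_{n}\}}$ with $C_{0}$ constant antisymmetric and $(a_{n})$ dense. It is continuous off $N=\{x: x_{1}\in\{a_{n}\}_{n}\}$, which is $m$-null for $m:=\mathcal{L}^{d}\resmes D+k$ whenever $k$ does not charge these hyperplanes, e.g.\ $k\ll\mathcal{L}^{d}$.
\begin{itemize}
\item Every neighbourhood of $N$ is dense in $\overline{D}$. Its closure therefore has full $k$-mass, and upper semicontinuity on closed sets gives nothing.
\item An open neighbourhood $G$ of $N$ with $k(G)$ small can carry all of the scaled energy. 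Take $D^{\varepsilon}$ to be unions of tiny balls inside $G$, arranged as in Proposition \ref{thm: attainability} so that $k$ is normalized Lebesgue measure on $D$.
\item $C$ cannot be approximated uniformly by smooth matrices on $\overline{D}\setminus N$. It is not uniformly continuous there, while uniform limits of restrictions of smooth functions would be.
\end{itemize}
Your argument does work when $N$ can be taken closed. This holds for $C\in\mathcal{S}_{m}^{d\times d}$, where the bad set $\overline{D}\setminus\bigcup_{i}U_{i}$ is compact and $m$-null. So one repair is to reduce to that case first via Lemma \ref{thm: approx by simple fcts} and your $L^{\infty}$-perturbation step.

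\textbf{A direct repair is to split around a large compact set instead of a small neighbourhood of $N$.}
\begin{enumerate}
\item Choose a compact $K\sub\overline{D}\setminus N$ with $k(\overline{D}\setminus K)<\eta$.
\item By compactness and continuity of $C\restriction(\overline{D}\setminus N)$, there is $\rho>0$ with $|C(y)-C(x)|<\eta$ whenever $x\in K$, $y\in\overline{D}\setminus N$ and $|x-y|<\rho$.
\item Mollify an antisymmetrized Tietze extension of $C\restriction K$. This gives a smooth antisymmetric $C_{\eta}$ with $|C-C_{\eta}|\lesssim\eta$ $\mathcal{L}^{d}$-a.e.\ on $D\cap K_{\rho}$, where $K_{\rho}$ is the open $\rho$-neighbourhood of $K$; here $\mathcal{L}^{d}(N\cap D)=0$ is used.
\item The remainder lives on the closed set $F=\overline{D}\setminus K_{\rho}$, which has $k(F)<\eta$. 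By Proposition \ref{thm: convergence of energy} and the portmanteau theorem, $\limsup T^{-1}\int_{F}|\nabla u^{\varepsilon}|^{2}\le\alpha^{-1}\n{f}_{\infty}^{2}k(F)$.
\end{enumerate}
The energy density of $v^{\varepsilon}=w_{A^{\varepsilon s},D^{\varepsilon}}$ is governed by the effective measure of the symmetric problems, not a priori by $k$; that is exactly what is being proved. So apply Cauchy--Schwarz and use only the global bound $T^{-1}\n{\nabla v^{\varepsilon}}_{2}^{2}\lesssim 1$ on that factor, or invoke Theorem \ref{thm: effective measures comparison}.
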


Let $0 < \alpha' \le \beta' < \infty$ and $(B^{\varepsilon}) \sub M(\alpha', \beta')$ be such that the problems $P(B^{\varepsilon}, D^{\varepsilon})$ admit an effective measure $k'$. The next two results compare $k$ and $k'$.

\begin{theorem}[{Bounds on effective measures, Theorem \ref{thm: effective measures comparison}}]
    We have the following bound.
    \begin{equation}
        \label{eq: bounds on effective measures intro}
        \frac{\alpha}{\beta'}k \le k' \le \frac{\beta}{\alpha'}k.
    \end{equation}
    Furthermore, for every Borel measure $\nu$ on $\R^{d}$ with
    \begin{equation}
        \frac{\alpha}{\beta'}k \le \nu \le \frac{\beta}{\alpha'}k,
    \end{equation}
    there is a sequence $(f_{\varepsilon}) \sub C^{\infty}(\R^{d})$ with $\frac{\alpha}{\beta'} \le f_{\varepsilon} \le \frac{\beta}{\alpha'}$ such that $f_{\varepsilon}k \to \nu$ in total variation norm and $\nu$ is the effective measure of the problems $P(f_{\varepsilon}^{-1}A^{\varepsilon}, D^{\varepsilon})$, where $f_{\varepsilon}^{-1}A^{\varepsilon} \in M(\frac{\alpha}{\beta}\alpha', \frac{\beta}{\alpha}\beta')$. In particular, the bound \eqref{eq: bounds on effective measures intro} is optimal if $A^{\varepsilon}$ are the identity matrices and if we let $\alpha = \beta = 1$. If the Radon--Nikodym derivative of $\nu$ with respect to $k$ admits a version $\rho \in \mathcal{C}_{D, k}$, then $\nu$ is also the effective measure of the problems $P(\hat{\rho}^{-1}A^{\varepsilon}, D^{\varepsilon})$. Here, $\hat{\rho} = (\rho \vee \frac{\alpha}{\beta'}) \wedge \frac{\beta}{\alpha'}$.
\end{theorem}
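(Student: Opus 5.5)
The plan has three steps: first prove the two-sided bound by a pointwise comparison of torsion functions, then construct the sequence $f_\varepsilon$ by mollifying a Radon--Nikodym density and using a localized quantitative estimate, and finally treat a density $\rho\in\mathcal{C}_{D,k}$ directly with $f_\varepsilon\equiv\hat\rho$.

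\textbf{Step 1: the bound.} Fix $\varphi\in C(\overline{D})$ with $\varphi\ge 0$. Write $u_A^\varepsilon$ for the solution of $P(A^\varepsilon,D^\varepsilon)$ with source $\varphi$, and $u_B^\varepsilon$ for the one of $P(B^\varepsilon,D^\varepsilon)$. By Theorem \ref{thm: convergence and error intro},
\[
T(D^\varepsilon)^{-1}u_A^\varepsilon\rightharpoonup k\varphi,
\qquad
T(D^\varepsilon)^{-1}u_B^\varepsilon\rightharpoonup k'\varphi .
\]
It therefore suffices to show
\[
\int u_B^\varepsilon \le \frac{\beta}{\alpha'}\int u_A^\varepsilon + o\bigl(T(D^\varepsilon)\bigr),
\]
and symmetrically with the roles of $A$ and $B$ exchanged.

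Using $u_A^\varepsilon$ as a test function in the $A$-equation and $\int u_A^\varepsilon\varphi=\int A^\varepsilon\nabla u_A^\varepsilon\cdot\nabla u_A^\varepsilon$ gives a variational characterization:
\[
\int u_A^\varepsilon\varphi=\sup_{v\in H^1_0(D^\varepsilon)}\Bigl(2\int\varphi v-\int A^{\varepsilon s}\nabla v\cdot\nabla v\Bigr)\cdot(\text{correction}).
\]
For nonsymmetric $A$ the correction is a factor that the class $M(\alpha,\beta)$ controls: $A^\varepsilon\xi\cdot\xi\ge\alpha|\xi|^2$, and the dual energy is bounded by $\beta$. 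This is awkward, so I would instead pass through the scalar problems. Comparing $u_B^\varepsilon$ with $\alpha'^{-1}w_{D^\varepsilon}\varphi$-type quantities via Theorem \ref{thm: convergence and error intro} with $A=\mathrm{Id}$, one needs
\[
\beta^{-1}k_0\le k\le\alpha^{-1}k_0,
\]
where $k_0$ is the effective measure of the Laplacian problems.

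To prove this comparison, I test the $A$-equation with source $\varphi$ against $w_{D^\varepsilon}\varphi$. Theorem \ref{thm: convergence and error intro} lets me replace $u^\varepsilon$ by $w_{A^\varepsilon,D^\varepsilon}\varphi$ up to $o(T)$ errors (approximate $\varphi$ by smooth functions first). This yields
\[
\int w_{A}\varphi^2\approx\int A\nabla w_A\cdot\nabla w_0\,\varphi^2 .
\]
Then Cauchy--Schwarz with the $\alpha,\beta$ bounds, together with the identities $\int w_0\varphi^2\approx\int|\nabla w_0|^2\varphi^2$ and the analogue for $A$ (the energy convergence, Proposition \ref{thm: convergence of energy}), gives
\[
(k\varphi^2)^2\le\beta\,(k\varphi^2)(k_0\varphi^2)\cdot\beta^{-1}\cdots .
\]
I would organize this as: the energies satisfy
\[
\int A\nabla w_A\cdot\nabla w_A\varphi^2\approx k(\varphi^2),
\qquad
\int|\nabla w_0|^2\varphi^2\approx k_0(\varphi^2),
\]
and minimality of $w_0$ for the Dirichlet energy, with $A\ge\alpha$, gives $k\le\alpha^{-1}k_0$. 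The reverse bound $k\ge\beta^{-1}k_0$ comes from the dual (flux) formulation, using $A^{-1}\ge\beta^{-1}$. Combining the two comparisons for $A$ and $B$ gives \eqref{eq: bounds on effective measures intro}.

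\textbf{Step 2: attaining any $\nu$.} By Step 1, $\nu\ll k$, and its density $\rho=d\nu/dk$ takes values in $[\alpha/\beta',\beta/\alpha']$. Choose $g_j\in C^\infty$ with values in the same interval and $g_jk\to\nu$ in total variation; this is possible by Lusin's theorem and mollification. For a fixed continuous $g$, I show that the effective measure of $P(g^{-1}A^\varepsilon,D^\varepsilon)$ is $gk$. This is a localization argument: on small sets $g$ is almost constant, and multiplying $A$ by a constant $c$ multiplies the torsion function by $c^{-1}$. To make it rigorous, compare $w_{g^{-1}A}$ with $g\,w_A$. The function $g\,w_A$ solves the equation
\[
-\mathrm{div}(g^{-1}A\nabla(gw_A))=1+\text{(terms involving }\nabla g\text{)},
\]
and the error terms are bounded using the $\nabla f$ estimate in \eqref{eq: quantitative error intro}, which is $o(T)$. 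A diagonal choice $f_\varepsilon=g_{j(\varepsilon)}$ then gives the effective measure $\nu$, and the ellipticity class of $f_\varepsilon^{-1}A^\varepsilon$ is immediate.

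\textbf{Step 3: densities in $\mathcal{C}_{D,k}$.} When $\rho\in\mathcal{C}_{D,k}$, I take $f_\varepsilon\equiv\hat\rho$ directly. The continuity argument of Step 2 still works, because $\hat\rho$ is continuous off a set that is null for both $\mathcal{L}^d$ and $k$. On a neighborhood of that set of small $k$-measure, the uniform bounds from Step 1 control the contribution, and elsewhere continuity applies.

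I expect the main obstacle to be the lower bound in Step 1 for nonsymmetric coefficients. There the dual variational principle is unavailable, and I would first try reducing to symmetric parts via Theorem \ref{thm: antisymmetric intro}, or else test with $u_A^\varepsilon$ against the adjoint torsion function.
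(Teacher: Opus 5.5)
Your reduction through the Laplacian effective measure $k_0$ is legitimate: from $\beta^{-1}k_0\le k\le\alpha^{-1}k_0$ and $\beta'^{-1}k_0\le k'\le\alpha'^{-1}k_0$ the bound follows with the same constants. The gap is that Step 1 never establishes this comparison, and it leaves open exactly the part you flag, the lower bound for nonsymmetric coefficients.

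\textbf{Step 1.} Minimality of $w_0$ for the Dirichlet energy compares only total masses, not measures. The ``dual (flux) formulation'' is not a minimum principle when $A^{\varepsilon}\neq A^{\varepsilon *}$. Your fallback through Theorem \ref{thm: continuous antisymmetric part in the uniform limit does not contribute to k} is circular, because that proof uses the mutual absolute continuity of $k$ and $k'$ coming from the present theorem. It also requires uniformly convergent antisymmetric parts with limit in $\mathcal{C}_{D,k}$. In general $A^{\varepsilon}$ and its symmetric part have different effective measures (see the example after that theorem).

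The missing observation is that no variational principle is needed. Applying $A^{-1}\eta\cdot\eta\ge\beta^{-1}|\eta|^{2}$ to $\eta=A\xi$ gives $|A\xi|^{2}\le\beta A\xi\cdot\xi$ pointwise, without symmetry. Test the $A$-torsion equation with $w_0\varphi$, where $\varphi\ge0$ is smooth: $\int w_0\varphi=\int\varphi A\nabla w_A\cdot\nabla w_0+\int w_0 A\nabla w_A\cdot\nabla\varphi$. The last term is $O(T^{1+\delta})$ by H\"older and \eqref{eq: Ls_L1 estimate of torsion func}. Apply weighted Cauchy--Schwarz with the pointwise inequality, together with the localized energy identity \eqref{eq: quantitative core of energy convergence}, namely $\int\varphi A\nabla w_A\cdot\nabla w_A=\int\varphi w_A+O(T^{1+\delta})$ and likewise for $w_0$. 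In the limit this gives $k_0(\varphi)\le(\beta k(\varphi))^{1/2}k_0(\varphi)^{1/2}$.

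The upper bound is the mirror image: test the Laplace equation with $w_A\varphi$ and use $|\xi|^{2}\le\alpha^{-1}A\xi\cdot\xi$. The pairing is what decides whether $\beta$ or $\alpha$ enters. Your displayed identity $\int w_A\varphi^2\approx\int A\nabla w_A\cdot\nabla w_0\,\varphi^2$ gets it wrong, since testing the $A$-equation produces $\int w_0\varphi^2$ on the left. The paper runs this same argument directly between $A^{\varepsilon}$ and $B^{\varepsilon}$, testing the $B$-equation with $w_A\varphi$ and using Young's inequality with $\delta=1$, without passing through $k_0$.

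\textbf{Step 2.} This step is sound, and for smooth $g$ it is more direct than the paper, which treats smooth densities as a special case of the $\mathcal{C}_{D,k}$ result. The function $z=gw_A-w_{g^{-1}A}$ solves $-\mathrm{div}(g^{-1}A\nabla z)=-\mathrm{div}(g^{-1}w_A A\nabla g)$. Hence $\n{z}_1\le T^{1/2}\n{\nabla z}_2\lesssim\n{\nabla g}_{\infty}T^{1/2}\n{w_A}_2\lesssim\n{\nabla g}_{\infty}T^{1+\frac{1}{d+2}}$. This is a direct energy estimate; the source-term error estimate you cite does not apply as stated, because the coefficients on the two sides differ.

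\textbf{Step 3.} This step fails as written. The Step 2 estimate needs $\nabla\hat\rho$, but $\hat\rho$ may jump across $N$. What is missing is a locality principle: if two coefficient fields are uniformly close on $\Omega_{\mu^{\varepsilon}}\cap U$, their effective measures agree on $U$. This is Proposition \ref{thm: locality} and Corollary \ref{thm: locality uniform approx}, proved by comparing $w_{A,\mu}$ with $w_{A,\mu+I_U}$ against $\varphi\in C_c^\infty(U)$.

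The Step 1 bounds on a neighborhood $V$ of $N$ control only the limit mass in $V$. They say nothing about how the coefficients in $V$ influence the torsion functions outside $V$. Moreover, $N$ need not be closed (it can be dense), so a neighborhood whose closure has small $k$-mass need not exist.

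The paper instead approximates $\rho$ in $L^\infty(\mathcal{L}^d\resmes D+k)$ by functions constant on disjoint open sets $U_i$ that cover $\R^d$ up to an $(\mathcal{L}^d\resmes D+k)$-null set (Lemma \ref{thm: approx by simple fcts}). This is where the Lebesgue-nullness of $N$ is used: it makes the coefficients converge uniformly on $D$. The paper then identifies the effective measure on each $U_i$ by locality. Finally it glues the pieces using absolute continuity with respect to $k$, which comes from the bound.
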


\begin{proposition}[{Order property of effective measures, Proposition \ref{thm: effective measures comparison symmetric}}]
    Suppose that $B^{\varepsilon}$ are symmetric $\mathcal{L}^{d}$-a.e. on $D$ and that $B^{\varepsilon} \le A^{\varepsilon}$ in the sense that $B^{\varepsilon}\xi \cdot \xi \le A^{\varepsilon} \xi \cdot \xi$ holds for every $\xi \in \R^{d}$ $\mathcal{L}^{d}$-a.e. on $D$. Then $k \le k'$.
\end{proposition}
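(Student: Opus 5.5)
The plan is to compare the torsion functions $w_{A^{\varepsilon}, D^{\varepsilon}}$ and $w_{B^{\varepsilon}, D^{\varepsilon}}$ through a variational characterization. For symmetric $B^{\varepsilon}$, and more generally for any nonnegative $\varphi$, I would use the energy principle
\[
\int \varphi\, w_{B^{\varepsilon}, D^{\varepsilon}}\,dx \;=\; \sup_{v \in H^{1}_{0}(D^{\varepsilon})} \Bigl( 2\int \varphi v\,dx - \int B^{\varepsilon}\nabla v\cdot \nabla v\,dx \Bigr).
\]
This says that the solution with source $\varphi$ for the symmetric operator maximizes this functional. Since the definition of $k'$ only involves the source $1$, I would exploit Theorem \ref{thm: convergence and error intro}. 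For $\varphi \in C(\overline{D})$ with $\varphi \ge 0$, let $u_{B}^{\varepsilon}$ and $u_{A}^{\varepsilon}$ denote the solutions of $P(B^{\varepsilon}, D^{\varepsilon})$ and $P(A^{\varepsilon}, D^{\varepsilon})$ with source $\varphi$. Then $T(D^{\varepsilon})^{-1}\int \varphi\, u_{B}^{\varepsilon} \to \int \varphi^{2}\,dk'$ and similarly with $k$. Hence it suffices to show
\[
\int \varphi\, u^{\varepsilon}_{B} \;\ge\; \int \varphi\, u^{\varepsilon}_{A} - o(T(D^{\varepsilon})).
\]
From this we get $\int \varphi^{2}\,dk' \ge \int \varphi^{2}\,dk$ for all nonnegative continuous $\varphi$. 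Since $\varphi \ge 0$ continuous implies $\sqrt{\varphi}$ is admissible, this yields $\int \psi\, dk' \ge \int\psi\, dk$ for all $\psi\ge0$ in $C(\overline{D})$, that is, $k\le k'$.

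For the key inequality, I would insert $v = u^{\varepsilon}_{A}$ into the variational principle for $B^{\varepsilon}$. This gives
\[
\int\varphi u_{B}^{\varepsilon} \ge 2\int \varphi u_{A}^{\varepsilon} - \int B^{\varepsilon}\nabla u^{\varepsilon}_{A}\cdot\nabla u^{\varepsilon}_{A} \ge 2\int \varphi u_{A}^{\varepsilon} - \int A^{\varepsilon}\nabla u^{\varepsilon}_{A}\cdot\nabla u^{\varepsilon}_{A}.
\]
Testing the equation for $u_{A}^{\varepsilon}$ with itself gives $\int A^{\varepsilon}\nabla u_{A}^{\varepsilon}\cdot\nabla u_{A}^{\varepsilon} = \int \varphi u_{A}^{\varepsilon}$. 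Note that only the quadratic form of $A^{\varepsilon}$ enters, so $A^{\varepsilon}$ need not be symmetric. Therefore $\int\varphi u_{B}^{\varepsilon}\ge\int\varphi u_{A}^{\varepsilon}$ exactly, with no error term. Dividing by $T(D^{\varepsilon})$ and passing to the limit along the subsequence gives $\int\varphi^{2}dk'\ge\int\varphi^{2}dk$.

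The main point to verify is the variational principle for $B^{\varepsilon}$, which uses its symmetry. The functional $v\mapsto 2\int\varphi v-\int B^{\varepsilon}\nabla v\cdot\nabla v$ is strictly concave. Its Euler--Lagrange equation is $-\mathrm{div}(B^{\varepsilon}\nabla v)=\varphi$ precisely because $B^{\varepsilon}$ is symmetric, and at the maximizer its value equals $\int\varphi u_{B}^{\varepsilon}$. Only symmetry on $D$ matters, since functions in $H^{1}_{0}(D^{\varepsilon})$ have gradients supported in $D$. The convergence $T(D^{\varepsilon})^{-1}u^{\varepsilon}\rightharpoonup k\varphi$ tested against $\varphi\in C(\overline{D})$ is exactly \eqref{eq: qualitative convergence intro}, applied along the common subsequence on which both effective measures exist. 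In the relaxed setting, the same argument applies verbatim with the $\mu$-term added to both energies.
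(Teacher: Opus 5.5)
Your argument is correct and is essentially the paper's proof. The paper also evaluates the energy minimized by the symmetric $B^{\varepsilon}$ at the competitor $u^{\varepsilon}=R_{A^{\varepsilon},\mu^{\varepsilon}}f$, uses $B^{\varepsilon}\le A^{\varepsilon}$, divides by $T(\mu^{\varepsilon})$, and passes to the limit. The paper takes this limit in the energy terms through the scaled energy-density convergence (Proposition \ref{thm: convergence of energy}), whereas you note that the energy equals $\int \varphi u^{\varepsilon}_{A}$ exactly, so $\int\varphi u^{\varepsilon}_{B}\ge\int\varphi u^{\varepsilon}_{A}$ holds for each $\varepsilon$ and only \eqref{eq: convergence to kf} is needed.
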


\medskip

\noindent
\textbf{Idea of the proofs.}

The core of our analysis is quantitative, and its guiding principle is to bound every error term by a power of the torsional rigidity alone. For instance, the qualitative part \eqref{eq: qualitative convergence intro} of Theorem \ref{thm: convergence and error intro} is easily deduced from the error estimate \eqref{eq: quantitative error intro} by a density argument and weak compactness of bounded sets in $\mathcal{M}(\overline{D})$. The proof of \eqref{eq: quantitative error intro} consists of the following two steps.
 
\emph{Step 1: an energy estimate for the corrector.} The difference $z^{\varepsilon} := u^{\varepsilon} - w_{A^{\varepsilon}, D^{\varepsilon}}f$ solves
\[
-\mathrm{div}(A^{\varepsilon}\nabla z^{\varepsilon}) = \mathrm{div}(w_{A^{\varepsilon}, D^{\varepsilon}}A^{\varepsilon}\nabla f) + A^{\varepsilon}\nabla w_{A^{\varepsilon}, D^{\varepsilon}} \cdot \nabla f \text{ in $D^{\varepsilon}$}, \quad z^{\varepsilon} = 0\text{ on $\partial D^{\varepsilon}$}.
\]
We test this equation with $z^{\varepsilon}$ and control the second term by the so-called torsional Hardy inequality
\[
\int_{\Omega} \frac{|\nabla w_{A, \Omega}|^{2}}{w_{A, \Omega}^{2}}\varphi^{2} \lesssim_{\alpha, \beta} \int_{\Omega} |\nabla \varphi|^{2}, \quad \varphi \in H^{1}_{0}(\Omega)
\]
(Lemma \ref{thm: torsional_Hardy}). We then pass from the energy to $L^{q}$-norms by means of $\lambda_{q}(D^{\varepsilon})$, where, for an open $\Omega \sub \R^{d}$, $\lambda_{q}(\Omega)$ denotes the largest constant $C$ such that $C\n{\varphi}_{q}^{2} \le \n{\nabla \varphi}_{2}^{2}$ for every $\varphi \in C_{c}^{\infty}(\Omega)$. This yields
\[
\n{u^{\varepsilon} - w_{A^{\varepsilon}, D^{\varepsilon}}f}_{L^{q}(D^{\varepsilon})} \lesssim_{\alpha, \beta} \lambda_{q}(D^{\varepsilon})^{-\frac{1}{2}}\n{w_{A^{\varepsilon}, D^{\varepsilon}}}_{L^{\frac{2p}{2 - p}}(D^{\varepsilon})}\n{\nabla f}_{L^{p'}(D^{\varepsilon})}
\]
(Lemma \ref{thm: energy estimate for u-wf} and Theorem \ref{thm: quantitative error estimate}).
 
\emph{Step 2: reduction to the torsional rigidity.} The two factors $\lambda_{q}(D^{\varepsilon})^{-\frac{1}{2}}$ and $\n{w_{A^{\varepsilon}, D^{\varepsilon}}}_{\frac{2p}{2 - p}}$ still depend on the geometry of $D^{\varepsilon}$. We express them through $T(D^{\varepsilon})$ alone by means of two estimates for torsion functions, which we adapt from Brasco--Ruffini \cite{brasco2017compact} and van den Berg--Bucur \cite{vandenBerg2014on} to general $A \in M(\alpha, \beta)$ and to capacitary measures:
\[
\lambda_{q}(\Omega)^{-1} \asymp_{d, \alpha, \beta, q} \n{w_{A, \Omega}}_{\frac{q}{2 - q}} \quad \text{and} \quad \n{w_{A, \Omega}}_{s} \lesssim_{d, \alpha} T(\Omega)^{\frac{2s + d}{(d + 2)s}} \quad (1 \le s \le \infty)
\]
(Lemmas \ref{thm: torsion and poincare const} and \ref{thm: Linfty_Lt_estimate_of_torsion_func}). Inserting both estimates into Step 1 bounds the right-hand side by $\n{\nabla f}_{L^{p'}(D^{\varepsilon})}T(D^{\varepsilon})^{1 + \delta_{d, p, q}}$, which is \eqref{eq: quantitative error intro}.
  
\emph{Other results.} The same principle applies to the rest of the paper. Most properties of the effective measure (Propositions \ref{thm: convergence of energy}, \ref{thm: adjoint}, and \ref{thm: locality} and Theorem \ref{thm: effective measures comparison}) are proved by testing one equation with the solution of another equation multiplied by a test function $\varphi$. The resulting error terms are $O(T(D^{\varepsilon})^{1 + \delta})$ by Step 2 for some $\varepsilon$-independent $\delta > 0$, and thus vanish after division by $T(D^{\varepsilon})$.

\medskip

\noindent
\textbf{Novelties of the present paper.}

\emph{A universal normalization.} For periodically perforated domains such as \eqref{eq: perforated domain Deh}, the scaling limit in the supercritical regime is classical. There the normalization $\varepsilon^{d - (d - 2)\eta}$ is explicit (\cite{lions1980asymptotic,kacimi1987homogenisation,Jing2020unified}). To the best of our knowledge, no comparable result was available for general sequences of domains whose unscaled solutions vanish in the limit.

We show that, for an arbitrary sequence of open sets $(D^{\varepsilon}) \sub D$ with vanishing torsional rigidity and arbitrary coefficients $(A^{\varepsilon}) \sub M(\alpha, \beta)$, the torsional rigidity is the right normalization: up to a subsequence, $T(D^{\varepsilon})^{-1}u^{\varepsilon}$ converges to $kf$ for every continuous source (Theorem \ref{thm: main result}). No periodicity, stationarity or other structural assumption on the perforation is needed, and the result holds for relaxed Dirichlet problems.

\emph{Optimal error estimate for general domains.} In general, the more abstract the setting, the less quantitative information one can expect. Quantitative results have been obtained for specific perforations: periodic ones \cite{lions1980asymptotic, kacimi1989estimation, Jing2020unified}, local perturbations of periodic ones \cite{xavier2022homogenization}, and random ones generated by i.i.d.\ random variables \cite{hoefer2024convergence}, whereas the theory for general varying open sets focuses on qualitative convergence \cite{dalmaso1987wiener,dalmaso1994new,dalmaso2004asymptotic}.

In this paper, we exploit the very assumption that the domains are ``supercritically perforated,'' i.e., that the torsional rigidity $T(D^{\varepsilon})$ vanishes in the limit, and quantify the error $T(D^{\varepsilon})^{-1}(u^{\varepsilon} - w_{A^{\varepsilon}, D^{\varepsilon}}f)$ by a power of $T(D^{\varepsilon})$ (the error estimate \eqref{eq: quantitative error intro}), under no assumption on the geometry of $D^{\varepsilon}$ other than the existence of a bounded box $D \supseteq D^{\varepsilon}$. Moreover, the exponent $\delta_{d,p,q}$ cannot be improved in this generality (Example \ref{ex: optimality}).

\subsection{Organization of the paper}
In Section \ref{sect: preliminaries}, we recall capacitary measures and relaxed Dirichlet problems and collect estimates for torsion functions.
 
Section \ref{sect: main_results} contains the main results. Subsection \ref{sect: convergence and error} states Theorem \ref{thm: main result} and the $\varepsilon$-independent estimate of Theorem \ref{thm: quantitative error estimate}, whose proof is given in Subsection \ref{sect: pf of quantitative error estimate}. Subsection \ref{sect: attainability} shows that every nonzero finite measure is attained as an effective measure, and Subsection \ref{sect: examples} collects examples. Subsections \ref{sect: properties of effective measures} and \ref{sect: comparison} establish the properties of the effective measure, and Subsection \ref{sect: optimality} shows the optimality of the error estimates. Section \ref{sect: rougher source} deals with rougher sources. The appendix contains a multiplicative inequality, an $L^{\infty}$ estimate for solutions, and an approximation lemma.
\section{Preliminaries}
\label{sect: preliminaries}
\subsection{Notations}
\label{sect: notation}
\begin{itemize}
    \item Throughout the paper, $d \ge 2$ is an integer.
    \item Throughout the paper, $\varepsilon$ varies in a sequence in $]0, 1[$ strictly decreasing to $0$.
    \item We denote by $\chi_{A}$ the characteristic function of a set $A$.
    \item We denote by $E^{*}$ the continuous dual of a Banach space $E$ equipped with the usual norm.
    \item Given a locally compact Hausdorff space $X$, we denote by $\mathcal{M}(X)$ the Banach space of finite Radon measures on $X$ equipped with the total variation norm.
    \item Given a measure $\mu$ on a measurable space $(X, \Sigma)$ and a measurable $A \in \Sigma$, we denote by $\mu \resmes A$ the measure on $(X, \Sigma)$ defined by $(\mu \resmes A)(B) := \mu(A \cap B)$ for every $B \in \Sigma$.
    \item We denote by $\mathcal{L}^{d}$ the $d$-dimensional Lebesgue measure on $\R^{d}$.
    \item For notational convenience, we sometimes omit $dx$ and write simply $\int_{A} \varphi$ to denote the integral of a Lebesgue measurable function $\varphi$ defined on a Lebesgue measurable $A \sub \R^{d}$ with respect to the Lebesgue measure $\mathcal{L}^{d}$. When the domain of integration $A = \R^{d}$, we frequently omit $A$ and write $\int \varphi = \int_{\R^{d}} \varphi \, dx$.
    \item We denote by $\n{\cdot}_{L^{p}(A)}$ the usual $L^{p}$ norm on a Lebesgue measurable $A \sub \R^{d}$ with respect to the Lebesgue measure $\mathcal{L}^{d}$. When $A = \R^{d}$, we omit $A$ and simply write $\n{\cdot}_{p} = \n{\cdot}_{L^{p}(\R^{d})}$.
    \item Given an open set $U \sub \R^{d}$ we identify $H^{1}_{0}(U)$ with a subspace of $H^{1}(\R^{d})$ via extension by zero.
    \item Given an open set $U \sub \R^{d}$, we denote by $C_{c}^{\infty}(U)$ (resp. $C_{b}(U)$) the set of $\R$-valued infinitely differentiable functions on $U$ with compact support (resp. the set of $\R$-valued continuous bounded functions on $U$).
    \item Given a point $x \in \R^{d}$ and $r > 0$, we denote by $U(x, r)$ and $B(x, r)$ the open and closed balls of radius $r$ centered at $x$, respectively.
    \item For $a, b \in [-\infty, \infty]$, $a \vee b := \mathrm{max}\{a, b\}$, $a \wedge b := \mathrm{min}\{a, b\}$, $a_{+} := a \vee 0$, and $a_{-} := -(a \wedge 0)$.
    \item We denote by the symbol $C(a_{1}, \cdots, a_{n})$ a finite positive constant that depends only on quantities $a_{1}, \cdots, a_{n}$.
    \item We also write $A \lesssim_{a_{1}, \cdots, a_{n}} B$ if $A, B \in [-\infty, \infty]$ and $A \le C(a_{1}, \cdots, a_{n})B$ holds. The expression $A \asymp_{a_{1}, \cdots, a_{n}} B$ is used if both $A \lesssim_{a_{1}, \cdots, a_{n}} B$ and $B \lesssim_{a_{1}, \cdots, a_{n}} A$ hold.
\end{itemize}
\subsection{Capacitary measures and relaxed Dirichlet problems}
\label{sect: capacitary meas}
In this subsection, we recall the notions of capacitary measures and relaxed Dirichlet problems. We refer the reader to \cite[Chapter 3]{Velichkov2015existence} and \cite[Subsection 3.3]{henrot2018shape} for detailed accounts of these subjects.

For any subset $A \sub \R^{d}$, the capacity of $A$, denoted by $\mathrm{cap}(A)$, is defined by
\begin{equation}
    \mathrm{cap}(A) := \inf\left\{\int |\nabla \varphi |^{2} + |\varphi|^{2} \, dx: \varphi \in C^{\infty}_{c}(\R^{d}), \varphi \ge 1 \text{ on a neighborhood of $A$}\right\}.
\end{equation}
A property $P(x)$ for $x \in \R^{d}$ is said to hold quasi-everywhere (shortly q.e.) if $P(x)$ holds true for every $x \in \R^{d}$ except for points in a set of capacity zero. A subset $U \sub \R^{d}$ is called quasi-open if for every $\varepsilon > 0$ there is an open $V \sub \R^{d}$ such that the symmetric difference $V \triangle U := (V \setminus U) \cup (U \setminus V)$ is of capacity less than $\varepsilon$.

Recall from \cite[Subsection 3.3.4]{henrot2018shape} that for every $u \in H^{1}(\R^{d})$, there is a function $\tilde{u}$ that coincides with $u$ $\mathcal{L}^{d}$-a.e., called a \emph{quasi-continuous representative} of $u$, such that for every $\varepsilon > 0$ there is an open set $U \sub \R^{d}$ of capacity less than $\varepsilon$ such that the restriction $\tilde{u} \restriction \R^{d} \setminus U$ to $\R^{d} \setminus U$ is continuous. It is well known (\cite[Lemma 3.3.30]{henrot2018shape}) that any two quasi-continuous representatives of $u \in H^{1}(\R^{d})$ coincide q.e. We identify $u \in H^{1}(\R^{d})$ with its quasi-continuous representative unless otherwise stated.

\begin{definition}
    \label{def: capacitary measures and H1mu}
    \begin{itemize}
    \item A nonnegative Borel measure $\mu$ on $\R^{d}$ is said to be a capacitary measure if every set of capacity zero is $\mu$-null, and for every Borel $E \sub \R^{d}$, we have
    \begin{equation}
        \label{eq: outer regularity of capacitary measure}
        \mu(E) = \inf\{\mu(A): E \sub A, \text{ $A$ is quasi-open}\}.
    \end{equation}
    \item We denote by $\mathcal{M}_{\mathrm{cap}}$ the set of capacitary measures on $\R^{d}$.
    \item For a quasi-open $U \sub \R^{d}$ we define $I_{U} \in \mathcal{M}_{\mathrm{cap}}$ by 
    \begin{equation}
        \label{def: I_U}
        I_{U}(B) := \begin{cases}
            0 & \text{ if $\mathrm{cap}(B \setminus U) = 0$}\\
            \infty & \text{ else},
        \end{cases}
    \end{equation}
    and we put
    \begin{equation}
        \label{def: McapU}
        \mathcal{M}_{\mathrm{cap}}(U) := \{\mu \in \mathcal{M}_{\mathrm{cap}}: \mu \ge I_{U}\}
    \end{equation}
    and
    \begin{equation}
        \mathcal{M}_{\mathrm{cap}}^{b} := \bigcup_{n = 1}^{\infty}\mathcal{M}_{\mathrm{cap}}(U(0, n)).
    \end{equation}
    \item By an abuse of notation, we write $U$ for $I_{U}$ when $U$ is quasi-open, and we say that $\mu \in \mathcal{M}_{\mathrm{cap}}$ is open (resp. quasi-open) if $\mu = I_{U}$ for an open (resp. quasi-open) $U \sub \R^{d}$.
    \item For $\mu \in \mathcal{M}_{\mathrm{cap}}^{b}$, $H^{1}_{\mu}$ denotes the Hilbert space $H^{1}(\R^{d}) \cap L^{2}(\mu)$ endowed with the norm
    \begin{equation}
        \label{def: dotH1mu}
        \n{u}_{\dot{H}^{1}_{\mu}}^{2} := \int_{\R^{d}} |\nabla u|^{2} \, dx + \int |u|^{2} \, d\mu.
    \end{equation}
\end{itemize}    
\end{definition}

\begin{remark}
    \label{rmk: open set as a capacitary meas}
    If $U \sub \R^{d}$ is open, then $H^{1}(\R^{d}) \cap L^{2}(I_{U}) = H^{1}_{0}(U)$, the closure of $C_{c}^{\infty}(U)$ in $H^{1}(U)$ (see, e.g., \cite[Theorem 3.3.42]{henrot2018shape}). Hence, for $\mu \in \mathcal{M}_{\mathrm{cap}}^{b}$, $\n{\cdot}_{\dot{H}^{1}_{\mu}}$ is a norm on $H^{1}_{\mu}$ equivalent to $\n{\cdot}_{\dot{H}^{1}_{\mu}} + \n{ \cdot}_{L^{2}(\R^{d})}$. That the space $H^{1}_{\mu}$ is complete with respect to these norms is an easy consequence of the completeness of $H^{1}(\R^{d})$ and $L^{2}(\mu)$, and the fact that every sequence converging to $u \in H^{1}(\R^{d})$ in $H^{1}(\R^{d})$ strong has a subsequence converging to $u$ q.e. (and hence $\mu$-a.e.) (for the proof of this fact, see, e.g., \cite[Proposition 3.3.33]{henrot2018shape}). A complete proof of the completeness of $H^{1}_{\mu}$ can be found in \cite[Proposition 3.18]{Velichkov2015existence}.
\end{remark}

\begin{remark}
    Let $\mu \in \mathcal{M}_{\mathrm{cap}}^{b}$.
    \begin{itemize}
        \item We have the continuous embedding $H^{1}_{\mu} \hookrightarrow L^{p}(\R^{d})$ and hence the dual map $L^{p'}(\R^{d}) \to (H^{1}_{\mu})^{*}$, which is never injective  since $H^{1}_{\mu} \sub H^{1}_{0}(U(0, n))$ for some $n$, for $1 \le p \le \frac{2d}{d - 2}$ if $d \ge 3$ and for $1 \le p < \infty$ if $d = 2$.
        \item More generally, if $U \sub \R^{d}$ is open with $\mu \ge I_{U}$, then we have the continuous embedding $H^{1}_{\mu} \hookrightarrow H^{1}_{0}(U)$, and thus the (not necessarily injective) dual map $H^{-1}(U) \to (H^{1}_{\mu})^{*}$.
    \end{itemize}
\end{remark}

Next, we define the so-called \emph{relaxed Dirichlet problem} associated with $\mu \in \mathcal{M}_{\mathrm{cap}}^{b}$. For $0 < \alpha \le \beta < \infty$, we put
\begin{equation}
    \label{def: Mab}
    M(\alpha, \beta) := \left\{ A \in L^{\infty}(\R^{d})^{d \times d}: A\xi \cdot \xi \ge \alpha |\xi|^{2} \text{ and } A^{-1}\xi\cdot \xi \ge \beta^{-1}|\xi|^{2} \text{ $\mathcal{L}^{d}$-a.e. for every $\xi \in \R^{d}$} \right\}.
\end{equation}

\begin{definition}[Relaxed Dirichlet problem]
    \label{def: relaxed_Dirichlet}
    Let $\mu \in \mathcal{M}_{\mathrm{cap}}^{b}$ and $A \in M(\alpha, \beta)$. Given $f \in (H^{1}_{\mu})^{*}$, we say that $u \in H^{1}_{\mu}$ is a solution to 
    \begin{equation}
        \label{eq: relaxed_Dirichlet}
        -\mathrm{div}(A \nabla u) + \mu u = f \text{ in $(H^{1}_{\mu})^{*}$, $u \in H^{1}_{\mu}$}
    \end{equation}
    if
    \begin{equation}
        \label{eq: weak formulation of relaxed Dirichlet}
        \int A\nabla u \cdot \nabla \varphi + \int u \varphi \, d\mu = \inn{f}{\varphi}_{(H^{1}_{\mu})^{*}, H^{1}_{\mu}}
    \end{equation}
    for every $\varphi \in H^{1}_{\mu}$. By the Lax--Milgram theorem, we see that \eqref{eq: weak formulation of relaxed Dirichlet} has a unique solution $u \in H^{1}_{\mu}$, which we denote by $R_{A, \mu}f$. We call \eqref{eq: relaxed_Dirichlet} the \emph{relaxed Dirichlet problem} associated with $A$ and $\mu$, and denote it by $P(A, \mu)$.
\end{definition}

\begin{remark}
    \label{rmk: relaxed dirichlet for I_U}
     The solution to the problem \eqref{eq: relaxed_Dirichlet} with $\mu = I_{U}$, where $U \sub \R^{d}$ is bounded open, and with $f \in H^{-1}(U)$ is the usual weak solution of $-\mathrm{div}(A\nabla u) = f$ in $U$ with zero Dirichlet boundary condition.
\end{remark}
\begin{remark}[Scaling properties]
    \label{rmk: scaling property}
    Clearly we have the following scaling properties. Let $t > 0$, $A \in M(\alpha, \beta)$, $\mu \in \mathcal{M}_{\mathrm{cap}}^{b}$, and $f \in (H^{1}_{\mu})^{*}$. Then
    \begin{equation}
        R_{t^{-1}A, t^{-1}\mu}f = tR_{A, \mu}f.
    \end{equation}
    Further, if $\mu = I_{U}$ is open and $f \in H^{-1}(tU)$, then
    \begin{equation}
        \label{eq: scaling property of resolvent}
        R_{A(t \cdot), U}(f(t \cdot)) = t^{-2}(R_{A, tU}f)(t \cdot).
    \end{equation}
    These relations are used extensively throughout the paper.
\end{remark}

We have the following comparison principle.
\begin{lemma}[{{\cite[Theorem 2.10]{dalmaso1986wiener}}, see also \cite[Proposition 2.5]{dalmaso1994new}}]
    \label{thm: comparison}
    Let $0 < \alpha \le \beta < \infty$, $A \in M(\alpha, \beta)$, and $D \sub \R^{d}$ be bounded open. Suppose that $f_{1}, f_{2} \in H^{-1}(D)$ and $\mu_{1}, \mu_{2} \in \mathcal{M}_{\mathrm{cap}}(D)$ satisfy $0 \le f_1 \le f_2$ and $\mu_{2} \le \mu_{1}$. Then $0 \le R_{A, \mu_{1}}f_{1} \le R_{A, \mu_{2}}f_{2}$ q.e.
\end{lemma}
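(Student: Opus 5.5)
We prove this comparison principle in two steps, monotonicity in the source and then monotonicity in the measure, and combine them through the chain
\[
0 \le R_{A,\mu_1}f_1 \le R_{A,\mu_2}f_1 \le R_{A,\mu_2}f_2 .
\]
Both steps rely on the same tool: testing the weak formulation \eqref{eq: weak formulation of relaxed Dirichlet} with positive or negative parts of suitable functions, and using the coercivity $A\xi\cdot\xi\ge\alpha|\xi|^2$. One preliminary fact is needed. Since $\mu_i \ge I_D$ and $D$ is bounded, $H^1_{\mu_i}\hookrightarrow H^1_0(D)$. Moreover, $H^1_{\mu_i}$ is a lattice: $v\mapsto v_\pm$ maps $H^1(\R^d)$ into itself, preserves q.e.\ values, and satisfies $|v_\pm|\le|v|$ pointwise, so $v_\pm \in L^2(\mu_i)$ whenever $v$ is.

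\emph{Step 1 (sign and monotonicity in $f$).} Let $\mu\in\mathcal M_{\mathrm{cap}}(D)$ and $0\le g\in H^{-1}(D)$. Put $u=R_{A,\mu}g$ and test with $\varphi=u_-\in H^1_\mu$. Because $\nabla u_-=-\nabla u\,\chi_{\{u<0\}}$ a.e.\ and $u\,u_-=-u_-^2$, we get
\[
-\int A\nabla u_-\cdot\nabla u_- - \int u_-^2\,d\mu = \inn{g}{u_-}\ge 0 .
\]
Hence $\alpha\n{\nabla u_-}_2^2\le 0$, so $u_-=0$ and $u\ge0$ q.e. The problem is linear, so $R_{A,\mu}f_2-R_{A,\mu}f_1=R_{A,\mu}(f_2-f_1)\ge0$. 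Applying this with $\mu=\mu_1$ and $g=f_1$ gives the first inequality of the chain, and applying it with $\mu=\mu_2$ gives the third.

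\emph{Step 2 (monotonicity in $\mu$).} Let $u_i=R_{A,\mu_i}f$ with $f=f_1\ge0$, so $u_1,u_2\ge0$ by Step 1. We need $u_1\le u_2$. Set $\varphi=(u_1-u_2)_+$. The main point is that $\varphi\in H^1_{\mu_1}\cap H^1_{\mu_2}$. First, $0\le\varphi\le u_1$ q.e., hence $\mu_1$-a.e.\ (since $\mu_1$ does not charge sets of capacity zero), so $\varphi\in L^2(\mu_1)$. Second, $\mu_2\le\mu_1$ gives $L^2(\mu_1)\subseteq L^2(\mu_2)$. Test the equation for $u_1$ with $\varphi$ in $H^1_{\mu_1}$ and the equation for $u_2$ with $\varphi$ in $H^1_{\mu_2}$, then subtract. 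Since $f$ is the image of one element of $H^{-1}(D)$, the right-hand sides coincide, and we obtain
\[
\int A\nabla(u_1-u_2)\cdot\nabla\varphi+\int u_1\varphi\,d\mu_1-\int u_2\varphi\,d\mu_2=0 .
\]
On the set $\{\varphi>0\}$ we have $u_1> u_2\ge 0$, so
\[
\int u_1\varphi\,d\mu_1\ \ge\ \int u_1\varphi\,d\mu_2\ \ge\ \int u_2\varphi\,d\mu_2 .
\]
Both integrals are finite: $u_1\varphi\le u_1^2\in L^1(\mu_1)$, and the middle integral is bounded by the left one. Therefore $\alpha\n{\nabla\varphi}_2^2\le0$, which gives $\varphi=0$ because $\varphi\in H^1_0(D)$. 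Thus $u_1\le u_2$ q.e.

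The main obstacle is the measure-theoretic bookkeeping in Step 2. We must justify that $\varphi$ is admissible in both spaces and that the comparison of the $\mu_1$- and $\mu_2$-integrals is legitimate when the measures may take infinite values. In the argument above, this is handled by two facts: $\varphi$ is dominated by $u_1$ q.e., and all inequalities hold pointwise q.e., hence $\mu_i$-a.e. One also has to check that the duality pairings agree across the two spaces. This holds because both are restrictions of a single $H^{-1}(D)$ functional via $H^1_{\mu_i}\hookrightarrow H^1_0(D)$.
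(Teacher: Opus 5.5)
Your proof is correct. Testing with $u_-$ gives positivity, and linearity then gives monotonicity in the source. Testing both equations with $\varphi=(u_1-u_2)_+$ gives monotonicity in the measure. You rightly note that $u_2\ge 0$ q.e.\ is exactly what gives $0\le\varphi\le u_1$ q.e.\ and hence $\varphi\in H^1_{\mu_1}\cap H^1_{\mu_2}$. The paper gives no proof of its own here; it quotes \cite[Theorem 2.10]{dalmaso1986wiener} and \cite[Proposition 2.5]{dalmaso1994new}, so your argument is a self-contained proof of the cited comparison principle rather than a reproduction of anything in the paper.
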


\subsection{Torsion functions}
\begin{definition}[Torsion function]
    \label{def: torsion fct}
    Given $\mu \in \mathcal{M}_{\mathrm{cap}}^{b}$ and $A \in M(\alpha, \beta)$, we denote by $w_{A, \mu} \in H^{1}_{\mu}$ the unique solution of \eqref{eq: relaxed_Dirichlet} with $f = 1$ and call it the \emph{torsion function} with respect to $A$ and $\mu$. Note that $w_{A, \mu} \ge 0$ q.e. by Lemma \ref{thm: comparison}. When $A$ is the identity matrix $\mathcal{L}^{d}$-a.e., we simply write $w_{\mu} = w_{A, \mu}$. The quantity
    \begin{equation}
        \label{eq: torsional rigidity}
        T(\mu) := \int w_{\mu} = \int |\nabla w_{\mu}|^{2} + \int w_{\mu}^{2} \, d\mu
    \end{equation}
    is called the \emph{torsional rigidity} of $\mu$.
     The q.e. defined quasi-open set
\begin{equation}
        \Omega_{\mu} := \{w_{\mu} > 0\}
\end{equation}
is called \emph{the set of finiteness} of $\mu$ (see \cite[Definition 3.73]{Velichkov2015existence}). 
\end{definition}

\begin{remark}
    \begin{itemize}
        \item Our definition of the torsional rigidity $T(\mu)$ is twice the quantity in \cite[Definition 3.34]{Velichkov2015existence}.
        \item For $\mu \in \mathcal{M}_{\mathrm{cap}}^{b}$ and $u \in H^{1}_{\mu}$, it holds that $u = 0$ q.e. on $\R^{d} \setminus \Omega_{\mu}$ (see \cite[Proposition 3.72]{Velichkov2015existence}).
    \end{itemize}
\end{remark}

The next estimates are direct consequences of Definition \ref{def: torsion fct}.
\begin{lemma}
    \label{thm: energy bound by torsion and L1 norm of wA}
    Let $A \in M(\alpha, \beta)$ and $\mu \in \mathcal{M}_{\mathrm{cap}}^{b}$. Then 
    \begin{enumerate}
        \item \begin{equation}
        \label{eq: energy bound by torsion}
        \n{A\nabla w_{A, \mu}}_{2}^{2} + \beta \n{w_{A, \mu}}_{L^{2}(\mu)}^{2} \le \beta \n{w_{A, \mu}}_{1}.
    \end{equation}
    \item If either $\alpha \le 1 \le \beta$ or $\mu$ is quasi-open, then 
    \begin{equation}
        \label{eq: L1 norm of wA}
        \beta^{-1}T(\mu) \le \n{w_{A, \mu}}_{1} \le \alpha^{-1} T(\mu).
    \end{equation}
    \end{enumerate}
\end{lemma}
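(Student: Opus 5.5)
For item 1, I will test the weak formulation \eqref{eq: weak formulation of relaxed Dirichlet} of $P(A,\mu)$ with $f=1$ against $\varphi = w_{A,\mu}\in H^{1}_{\mu}$. This gives
\[
\int A\nabla w_{A,\mu}\cdot\nabla w_{A,\mu} + \int w_{A,\mu}^{2}\,d\mu = \int w_{A,\mu} = \n{w_{A,\mu}}_{1},
\]
where the last equality uses $w_{A,\mu}\ge 0$ (Definition \ref{def: torsion fct}). Next I will use the pointwise inequality $|A\xi|^{2}\le \beta\, A\xi\cdot\xi$ for $A\in M(\alpha,\beta)$. It follows from the defining condition $A^{-1}\eta\cdot\eta\ge\beta^{-1}|\eta|^{2}$ by putting $\eta=A\xi$. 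Integrating with $\xi=\nabla w_{A,\mu}$ gives $\n{A\nabla w_{A,\mu}}_{2}^{2}\le \beta\int A\nabla w_{A,\mu}\cdot\nabla w_{A,\mu}$. Multiplying the identity above by $\beta$ then yields \eqref{eq: energy bound by torsion}.

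For item 2, I will test $P(A,\mu)$ against $w_{\mu}$ and test $P(\mathrm{Id},\mu)$ against $w_{A,\mu}$. Both are admissible, since both functions lie in $H^{1}_{\mu}$. This gives
\[
\int w_{\mu} = \int A\nabla w_{A,\mu}\cdot\nabla w_{\mu} + \int w_{A,\mu}w_{\mu}\,d\mu,\qquad \int w_{A,\mu} = \int \nabla w_{\mu}\cdot\nabla w_{A,\mu} + \int w_{A,\mu}w_{\mu}\,d\mu.
\]
The cleaner route is variational, so I will use the dual characterizations of the two quantities. First, $T(\mu)=\sup_{\varphi\in H^{1}_{\mu}}\bigl(2\int\varphi - \int|\nabla\varphi|^{2}-\int\varphi^{2}d\mu\bigr)$. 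Second, by the symmetric-part energy, $\n{w_{A,\mu}}_{1}$ relates to $\sup_{\varphi}\bigl(2\int\varphi-\int A\nabla\varphi\cdot\nabla\varphi-\int\varphi^2d\mu\bigr)$. The latter holds when $A$ is symmetric; in general I will instead use the explicit identities above together with Cauchy–Schwarz.

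\emph{Upper bound.} Testing $P(A,\mu)$ with $w_{A,\mu}$ gives $\alpha\int|\nabla w_{A,\mu}|^{2} + \int w_{A,\mu}^{2}d\mu \le \n{w_{A,\mu}}_{1}$. If $\alpha\le 1$, this yields $\alpha\bigl(\int|\nabla w_{A,\mu}|^{2}+\int w_{A,\mu}^{2}d\mu\bigr)\le \n{w_{A,\mu}}_{1}$. The variational characterization of $T(\mu)$ with $\varphi = t\,w_{A,\mu}$ gives $T(\mu)\ge 2t\n{w_{A,\mu}}_1 - t^{2}\alpha^{-1}\n{w_{A,\mu}}_1$. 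Taking $t=\alpha$ gives $T(\mu)\ge\alpha\n{w_{A,\mu}}_{1}$, which is the upper bound.

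\emph{Lower bound.} For the lower bound I test $P(\mathrm{Id},\mu)$ with $w_{A,\mu}$ and the $A$-problem with $w_\mu$. The key is the bound
\[
\Bigl(\int w_\mu\Bigr)^2 \le \Bigl(\int A\nabla w_\mu\cdot\nabla w_\mu+\int w_\mu^2d\mu\Bigr)\,\n{w_{A,\mu}}_1 ,
\]
which I obtain from the variational characterization of $\n{w_{A,\mu}}_1$ when $A$ is symmetric. For nonsymmetric $A$, I would obtain it via Cauchy–Schwarz using $|A\xi|^{2}\le\beta A\xi\cdot\xi$, possibly losing structure; this is the step I expect to be delicate. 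The right-hand factor is at most $\beta\int|\nabla w_\mu|^2+\int w_\mu^2d\mu$. When $\beta\ge1$, this is at most $\beta T(\mu)$, giving $T(\mu)\le\beta\n{w_{A,\mu}}_1$.

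\emph{Quasi-open case.} When $\mu=I_U$ is quasi-open, the $d\mu$-terms vanish, so the constraint $\alpha\le1\le\beta$ is not needed. Indeed, the scalings $t=\alpha$ and $\beta^{-1}$ then work with no $\mu$-term to compare.

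For nonsymmetric $A$, I will try the direct estimate: test $P(A,\mu)$ with $w_\mu$ to get $\int w_\mu = \int A\nabla w_{A,\mu}\cdot\nabla w_\mu + \int w_{A,\mu}w_\mu d\mu$. Then bound by Cauchy–Schwarz, $\le \n{A\nabla w_{A,\mu}}_2\n{\nabla w_\mu}_2+\n{w_{A,\mu}}_{L^2(\mu)}\n{w_\mu}_{L^2(\mu)}$, and apply item 1 to get $\le \sqrt{\beta\n{w_{A,\mu}}_1}\sqrt{T(\mu)}$, in the case $\beta\ge1$ or $\mu$ quasi-open. Squaring gives $T(\mu)\le\beta\n{w_{A,\mu}}_1$ without symmetry.
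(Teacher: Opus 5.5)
Your proof is correct. Item 1 and your final Cauchy--Schwarz argument for the lower bound $\beta^{-1}T(\mu)\le\n{w_{A,\mu}}_{1}$ coincide with the paper's proof:
\begin{itemize}
\item test $P(A,\mu)$ with $w_{\mu}$;
\item pair $(A\nabla w_{A,\mu},w_{A,\mu})$ against $(\nabla w_{\mu},w_{\mu})$ by Cauchy--Schwarz;
\item conclude with item 1, using either $\beta\ge1$ or $\int w_{A,\mu}^{2}\,d\mu=0$.
\end{itemize}

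For the upper bound you take a different route from the paper. The paper tests $P(\mathrm{Id},\mu)$ with $w_{A,\mu}$ and applies Cauchy--Schwarz. You instead insert $\alpha w_{A,\mu}$ into the Dirichlet principle $T(\mu)=\sup_{\varphi\in H^{1}_{\mu}}\bigl(2\int\varphi-\int|\nabla\varphi|^{2}-\int\varphi^{2}\,d\mu\bigr)$. The two arguments have the same strength: your inequality is just $\n{w_{\mu}-\alpha w_{A,\mu}}_{\dot{H}^{1}_{\mu}}^{2}\ge0$ expanded, using $\inn{w_{\mu}}{w_{A,\mu}}_{\dot{H}^{1}_{\mu}}=\int w_{A,\mu}$. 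Your route is legitimate because the variational principle is invoked only for the symmetric operator $-\Delta+\mu$.

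You should, however, delete the detour through the ``key bound''
\[
\Bigl(\int w_{\mu}\Bigr)^{2}\le\Bigl(\int A\nabla w_{\mu}\cdot\nabla w_{\mu}+\int w_{\mu}^{2}\,d\mu\Bigr)\n{w_{A,\mu}}_{1}.
\]
It cannot be obtained for nonsymmetric $A$ ``via Cauchy--Schwarz'', because it is false in general. Write $A^{s}$ for the symmetric part of $A$. Testing $P(A,\mu)$ with $w_{A,\mu}$ gives $\int w_{A,\mu}=\int A^{s}\nabla w_{A,\mu}\cdot\nabla w_{A,\mu}+\int w_{A,\mu}^{2}\,d\mu$. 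The Dirichlet principle for $A^{s}$ then yields $\n{w_{A,\mu}}_{1}\le\n{w_{A^{s},\mu}}_{1}$, strictly whenever $w_{A,\mu}\neq w_{A^{s},\mu}$.

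Now take $A^{s}=\mathrm{Id}$ with a nonconstant antisymmetric part, such as the piecewise example on the unit ball in the paper. Your key bound would then assert $T(\mu)\le\n{w_{A,\mu}}_{1}$, whereas in fact $\n{w_{A,\mu}}_{1}<T(\mu)$. Your last paragraph does not use this bound, so the proof stands once that paragraph replaces the variational discussion of the lower bound.
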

\begin{proof}
    Since $A \in M(\alpha, \beta)$, $|A\eta|^{2} \le \beta A^{-1}A\eta \cdot A\eta = \beta A\eta \cdot \eta$ $\mathcal{L}^{d}$-a.e. for every $\eta \in \R^{d}$ . Hence, 
    \[\int|A\nabla w_{A, \mu}|^{2} + \beta \n{w_{A, \mu}}_{L^{2}(\mu)}^{2} \le \beta \left( \int A \nabla w_{A, \mu} \cdot \nabla w_{A, \mu} + \int w_{A, \mu}^{2} \, d\mu \right) = \beta \n{w_{A, \mu}}_{1}.\]
    Observe that $\n{u}_{L^{2}(\mu)}^{2} \le \eta \n{u}_{L^{2}(\mu)}^{2}$ for every $u \in L^{2}(\mu)$ whenever $\eta \ge 1$ or $\mu$ is quasi-open. Indeed, every $\mu$-integrable function vanishes $\mu$-a.e. if $\mu$ is quasi-open. Hence, if either $\alpha \le 1 \le \beta$ or $\mu$ is quasi-open, we have
    \begin{equation*}
        \begin{split}
            T(\mu) &= \int A\nabla w_{A, \mu} \cdot \nabla w_{\mu} + \int w_{A, \mu}w_{\mu} \, d\mu \le \left( \n{A \nabla w_{A, \mu}}_{2}^{2} + \n{w_{A, \mu}}_{L^{2}(\mu)}^{2} \right)^{\frac{1}{2}}T(\mu)^{\frac{1}{2}}\\
            &\le \beta^{\frac{1}{2}}\n{w_{A, \mu}}_{1}^{\frac{1}{2}}T(\mu)^{\frac{1}{2}}
        \end{split}
    \end{equation*}
    and similarly
    \[\begin{split}
        \n{w_{A, \mu}}_{1} &= \int \nabla w_{\mu} \cdot \nabla w_{A, \mu} + \int w_{\mu}w_{A, \mu} \, d\mu \le \n{w_{\mu}}_{\dot{H}^{1}_{\mu}}\left( \alpha^{-1}\int A\nabla w_{A, \mu} \cdot \nabla w_{A, \mu} + \int w_{A, \mu}^{2} \, d\mu \right)^{\frac{1}{2}}\\
        &\le \alpha^{-\frac{1}{2}}T(\mu)^{\frac{1}{2}}\n{w_{A, \mu}}_{1}^{\frac{1}{2}}.
    \end{split}\]
    Combining the last two estimates yields \eqref{eq: L1 norm of wA}.
\end{proof}

The following lemmas were proved in \cite{brasco2017compact} by Brasco and Ruffini and \cite{vandenBerg2014on} by van den Berg and Bucur for torsion functions with respect to $p$-Laplacian $(1 < p < \infty)$ and an open $\mu \in \mathcal{M}_{\mathrm{cap}}$. The proofs carry over with minor changes to general $A$ and $\mu$. We include them for completeness.

\begin{lemma}[cf. {\cite[Proposition 3.1]{brasco2017compact}}]
    \label{thm: Linfty_Lt_estimate_of_torsion_func}
    Let $A \in M(1, \beta)$, $\mu \in \mathcal{M}_{\mathrm{cap}}^{b}$, and $1 \le t < \infty$. Then
    \begin{equation}
        \label{eq: Ls_Lt estimate of torsion func}
        \n{w_{A, \mu}}_{s} \lesssim_{d} \n{w_{A, \mu}}_{t}^{\gamma_{t} + \frac{t}{s}(1 - \gamma_{t})}
    \end{equation}
    for every $1 \le t \le s \le \infty$. Here, 
    \begin{equation}
        \label{def: gamma}
        \gamma_{t} := \frac{2t}{d + 2t}.
    \end{equation}
    In particular, 
    \begin{equation}
        \label{eq: Ls_L1 estimate of torsion func}
        \n{w_{A, \mu}}_{s} \lesssim_{d} \n{w_{A, \mu}}_{1}^{\gamma_{1} + \frac{1}{s}(1 - \gamma_{1})} \le T(\mu)^{\gamma_{1} + \frac{1}{s}(1 - \gamma_{1})}
    \end{equation}
    for every $1 \le s \le \infty$.
\end{lemma}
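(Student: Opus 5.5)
The plan is the Brasco--Ruffini argument: first an $L^{\infty}$--$L^{t}$ bound, then interpolation. Write $w = w_{A,\mu}$, $M := \n{w}_{\infty}$ and $\gamma = \gamma_t$. The main step is
\[
M \lesssim_{d} \n{w}_{t}^{\gamma}.
\]
Granting it, for $t \le s < \infty$ we interpolate:
\[
\n{w}_{s}^{s} \le M^{s-t}\n{w}_{t}^{t} \lesssim_d \n{w}_{t}^{\gamma(s-t) + t},
\]
so $\n{w}_{s} \lesssim_d \n{w}_{t}^{\gamma + \frac{t}{s}(1-\gamma)}$, which is \eqref{eq: Ls_Lt estimate of torsion func}. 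The case $s = \infty$ is the main step itself. The particular case \eqref{eq: Ls_L1 estimate of torsion func} follows by taking $t = 1$ and applying \eqref{eq: L1 norm of wA} with $\alpha = 1$. Note that \eqref{eq: L1 norm of wA} is available because $A \in M(1,\beta)$ gives $\alpha = 1 \le \beta$, and it yields $\n{w}_{1} \le T(\mu)$. The exponent is positive, so this bound can be inserted.

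\textbf{Step 1: level-set inequality.} For $k \ge 0$ we test the equation \eqref{eq: weak formulation of relaxed Dirichlet} with $(w-k)_{+}$. This test function lies in $H^{1}_{\mu}$, since $0 \le (w-k)_+ \le w$ q.e. and hence $\mu$-a.e. The $\mu$-term equals $\int w (w-k)_+\,d\mu \ge 0$, and we may drop it. Ellipticity with $\alpha = 1$ then gives, with $\Omega_k := \{w > k\}$,
\[
\int |\nabla (w-k)_+|^2 \le \int (w-k)_+ .
\]
This is the only place the coefficients and $\mu$ enter, and the resulting constants depend only on $d$.

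\textbf{Step 2: a standard iteration.} By Sobolev's inequality (Gagliardo--Nirenberg for $d = 2$), combined with Hölder, we have
\[
\int (w-k)_+ \le \n{(w-k)_+}_{2^*}\,|\Omega_k|^{1-1/2^*} \lesssim_d \n{\nabla (w-k)_+}_2\,|\Omega_k|^{\frac{d+2}{2d}}.
\]
Combining this with Step 1 yields
\[
\int (w-k)_+ \lesssim_d |\Omega_k|^{1 + 2/d}.
\]
Setting $g(k) := \int (w-k)_+$, we have $g' = -|\Omega_k|$, so the inequality becomes the differential inequality $g \lesssim (-g')^{1+2/d}$. Integrating it (the Talenti/Stampacchia argument) gives that $g$ vanishes beyond $k_0 + C g(k_0)^{\frac{2}{d+2}}$.

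\textbf{Step 3: choosing $k_0$ (expected main obstacle).} The difficulty is to turn the vanishing level of Step 2 into a bound in terms of $\n{w}_t$ with exactly the exponent $\gamma_t = \frac{2t}{d+2t}$. First, we control $g(k_0)$ by Chebyshev and Hölder. Using
\[
|\Omega_{k}| \le k^{-t}\n{w}_t^t, \qquad g(k_0) \le \int_{\Omega_{k_0}} w \le \n{w}_t\,|\Omega_{k_0}|^{1-1/t},
\]
we obtain
\[
M \le k_0 + C\bigl(\n{w}_t^{t} k_0^{1-t}\bigr)^{\frac{2}{d+2}}.
\]
Next, we choose $k_0 = \n{w}_t^{\gamma}$, which balances the two terms. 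Indeed, the exponent of $\n{w}_t$ in the second term is
\[
\frac{2}{d+2}\bigl(t + (1-t)\gamma\bigr) = \frac{2}{d+2}\cdot\frac{t(d+2)}{d+2t} = \gamma .
\]
Hence $M \lesssim_d \n{w}_t^{\gamma}$, which is the main step.

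Alternatively, one could use the scaling property \eqref{eq: scaling property of resolvent} to normalize $\n{w}_t = 1$ and then prove $M \lesssim 1$. However, scaling does not preserve a general capacitary measure as simply as it does an open set, so I would keep the direct computation above. The part most needing care is the differential inequality for $g$ together with this exponent bookkeeping; everything else is standard.
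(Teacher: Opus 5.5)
Your proof is correct. Its outer structure is exactly the paper's: first $\n{w_{A,\mu}}_{\infty}\lesssim_d \n{w_{A,\mu}}_{t}^{\gamma_t}$, then $\n{w}_s^s\le\n{w}_\infty^{s-t}\n{w}_t^t$, then \eqref{eq: L1 norm of wA} with $\alpha=1$. The difference is in how the $L^\infty$ bound is proved.

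The paper obtains it from the general Lemma~\ref{thm: Linfty Lr estimate for solutions} with $f=1$ and $r=1$, so that $\gamma_{t,1}=\gamma_t$. That proof proceeds as follows:
\begin{itemize}
\item it tests with $(u-k)_+^{t}$, which first requires knowing $u_+\in L^\infty$ (obtained via the subsolution property);
\item it controls $\nabla\bigl((u-k)_+^{(t+1)/2}\bigr)$ and feeds this into the multiplicative inequality of Lemma~\ref{thm: multi general}, which is a Nash-type iteration when $d=2$;
\item it runs an ODE comparison for $M(k)=\n{(u-k)_+}_{tr}^{tr}$, so the $L^t$ norm enters directly through $M(0)$.
\end{itemize}

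You instead test with the linear truncation $(w-k)_+$, which is admissible without any a priori boundedness. From it you derive the classical Stampacchia--Talenti inequality $\int(w-k)_+\lesssim_d|\{w>k\}|^{1+2/d}$. For $d=2$ the bound you need, $\n{v}_1\lesssim_d\n{\nabla v}_2\,|\{v\ne0\}|^{\frac12+\frac1d}$, follows in every dimension from $W^{1,1}\hookrightarrow L^{\frac{d}{d-1}}$ and H\"{o}lder. You convert to $\n{w}_t$ only at the end, via Chebyshev and the balanced level $k_0=\n{w}_t^{\gamma_t}$. Your exponent computation is right, and the constant visibly depends on $d$ alone.

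Your route is the more elementary one for the torsion function. The paper's route buys generality. Lemma~\ref{thm: Linfty Lr estimate for solutions} handles arbitrary sources $f\in L^{r'}$, with the factor $\n{f}_{r'}^{1-\gamma_{t,r}}$. It is reused in that form in Proposition~\ref{thm: H-1 strong convergence of the scaled energy density} to bound $\n{R_{A,\mu}}_{p'\to\infty}$. Your argument, by contrast, uses $f=1$ directly on the right-hand side of the energy inequality.
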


\begin{proof}
    \eqref{eq: Ls_Lt estimate of torsion func} follows from Lemma \ref{thm: Linfty Lr estimate for solutions} and an interpolation. Bounding the right-hand side of \eqref{eq: Ls_Lt estimate of torsion func} for $t = 1$ by using \eqref{eq: L1 norm of wA}, we get \eqref{eq: Ls_L1 estimate of torsion func}.
\end{proof}

\begin{lemma}[Torsional Hardy inequalities, cf. {\cite[Proposition 4.1]{brasco2017compact}}]
    \label{thm: torsional_Hardy}
    Let $0 < \alpha \le \beta < \infty$, $A \in M(\alpha, \beta)$, and $\mu \in \mathcal{M}_{\mathrm{cap}}^{b}$. Then for every $u \in H^{1}_{\mu}$, $u = 0$ $\mathcal{L}^{d}$-a.e. on $\{w_{A, \mu} = 0\}$, 
    \begin{equation}
        \label{eq: torsional_Hardy_1}
        \int \frac{u^{2}}{w_{A, \mu}} \le \beta \int |\nabla u|^{2} + \int u^{2} \, d\mu,
    \end{equation}
    and 
    \begin{equation}
        \label{eq: torsional_Hardy_2}
        \alpha\int \left| \frac{\nabla w_{A, \mu}}{w_{A, \mu}} \right|^{2}u^{2} \le \int \frac{A \nabla w_{A, \mu} \cdot \nabla w_{A, \mu}}{w_{A, \mu}^{2}}u^{2} \le 4 \beta \int |\nabla u|^{2} + 2\int u^{2} \, d\mu.
    \end{equation}
\end{lemma}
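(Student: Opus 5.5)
We prove both inequalities of Lemma \ref{thm: torsional_Hardy} by testing the equation for $w := w_{A,\mu}$ with functions of the form $u^{2}/(w+\delta)$ and letting $\delta \downarrow 0$, following the idea of \cite[Proposition 4.1]{brasco2017compact}. By density, and since the right-hand sides are continuous in $H^{1}_{\mu}$, it suffices to treat $u \in H^{1}_{\mu} \cap L^{\infty}$. For $\delta > 0$ set $w_{\delta} := w + \delta$. Then $\varphi_{\delta} := u^{2}/w_{\delta}$ belongs to $H^{1}_{\mu}$: it lies in $L^{2}(\mu)$ because $|\varphi_{\delta}| \le \delta^{-1}\n{u}_{\infty}|u|$, and its gradient
\[\nabla\varphi_{\delta} = \frac{2u\nabla u}{w_{\delta}} - \frac{u^{2}\nabla w}{w_{\delta}^{2}}\]
is in $L^{2}$, since $u$ is bounded, $w_{\delta} \ge \delta$ and $w$ is bounded by Lemma \ref{thm: Linfty_Lt_estimate_of_torsion_func}.

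\emph{Second inequality.} Testing \eqref{eq: weak formulation of relaxed Dirichlet} with $f=1$ against $\varphi_{\delta}$ gives
\[\int \frac{u^{2}}{w_{\delta}} + \int \frac{A\nabla w\cdot\nabla w}{w_{\delta}^{2}}u^{2} = \int \frac{2u}{w_{\delta}}A\nabla w\cdot\nabla u + \int \frac{w}{w_{\delta}}u^{2}\,d\mu.\]
The last term is at most $\int u^{2}\,d\mu$. For the cross term we use the $A$-weighted Cauchy--Schwarz inequality, valid also for nonsymmetric $A$:
\[|A\eta\cdot\xi| \le |A\eta||\xi| \le (\beta A\eta\cdot\eta)^{1/2}|\xi|,\]
where the bound $|A\eta| \le (\beta A\eta\cdot\eta)^{1/2}$ is the one from the proof of Lemma \ref{thm: energy bound by torsion and L1 norm of wA}. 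Applying Young's inequality with weight $\tfrac12$ then bounds the cross term by
\[\frac12\int \frac{A\nabla w\cdot\nabla w}{w_{\delta}^{2}}u^{2} + 2\beta\int|\nabla u|^{2}.\]
Absorbing this, dropping the nonnegative term $\int u^{2}/w_{\delta}$, and multiplying by $2$ yields
\[\int \frac{A\nabla w\cdot\nabla w}{w_{\delta}^{2}}u^{2} \le 4\beta\int|\nabla u|^{2} + 2\int u^{2}\,d\mu.\]

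\emph{First inequality.} From the same identity we instead keep $\int u^{2}/w_{\delta}$ and apply Young's inequality with weight $1$, so that the cross term is absorbed exactly by $\int \frac{A\nabla w\cdot\nabla w}{w_{\delta}^{2}}u^{2}$. This gives
\[\int \frac{u^{2}}{w_{\delta}} \le \beta\int|\nabla u|^{2} + \int u^{2}\,d\mu.\]

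\emph{Passage to the limit.} We let $\delta \downarrow 0$ by monotone convergence on $\{w>0\}$. The assumption that $u = 0$ $\mathcal{L}^{d}$-a.e. on $\{w=0\}$ makes the integrands vanish there, with the convention $0/0 = 0$. The lower bound $\alpha|\nabla w|^{2} \le A\nabla w\cdot\nabla w$ is pointwise ellipticity.

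\emph{Main obstacle.} The delicate step is the density reduction from general $u$ to bounded $u$. Truncations $u_{n} := (u\wedge n)\vee(-n)$ converge to $u$ in $H^{1}_{\mu}$ and still vanish on $\{w=0\}$. The left-hand sides then converge by monotone convergence, since $u_{n}^{2} \uparrow u^{2}$. This monotonicity is what makes the argument work: we need not know a priori that the left-hand sides are finite.
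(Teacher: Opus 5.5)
Your computation is the same as the paper's. You use the same test function $u^{2}/(w_{A,\mu}+\delta)$, the same identity, Young's inequality with the two weights matching the paper's choices $\beta^{-1}$ and $(2\beta)^{-1}$, and the same truncation reduction. The gap is that you have misread the first assertion of the lemma. The clause ``$u = 0$ $\mathcal{L}^{d}$-a.e. on $\{w_{A,\mu}=0\}$'' is the first of three \emph{conclusions} asserted for every $u \in H^{1}_{\mu}$, not a hypothesis. Yet in your passage to the limit, and again in the truncation step, you invoke it as ``the assumption.'' As written, your argument proves the two inequalities only for those $u$ that already vanish on $\{w_{A,\mu}=0\}$. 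It never shows that all of $H^{1}_{\mu}$ does. This matters: the paper uses exactly this conclusion in Lemma \ref{thm: torsion and poincare const}, both to deduce $H^{1}_{\mu}=(0)$ from $w_{A,\mu}=0$ and to justify writing $\int |u|^{q} = \int |u|^{q} w_{A,\mu}^{-q/2}\, w_{A,\mu}^{q/2}$.

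The repair uses only what you have already proved. For bounded $u \in H^{1}_{\mu}$, restrict the left-hand side of your $\delta$-level first inequality to $\{w_{A,\mu}=0\}$, where the integrand equals $u^{2}/\delta$. This gives
\[
\frac{1}{\delta}\int_{\{w_{A,\mu}=0\}} u^{2} \le \beta \int |\nabla u|^{2} + \int u^{2}\,d\mu \quad \text{for every } \delta > 0 .
\]
Letting $\delta \downarrow 0$ forces $u = 0$ a.e. on $\{w_{A,\mu}=0\}$. This passes to general $u$ through the truncations, since $\{u_{n} \neq 0\} = \{u \neq 0\}$. This is precisely how the paper obtains the statement. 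With it in place, the integrands do vanish on $\{w_{A,\mu}=0\}$ and your monotone-convergence argument finishes the proof. A minor point: you do not need the boundedness of $w_{A,\mu}$ to get $\varphi_{\delta} \in H^{1}_{\mu}$, since $w_{A,\mu} \ge 0$ and $\nabla w_{A,\mu} \in L^{2}$ suffice.
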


\begin{proof}
    Notice that given $u \in H^{1}_{\mu}$, $(u \wedge k)\vee(-k) \to u$ in $H^{1}_{\mu}$ as $k \to \infty$. Thus, we may assume that $u \in H^{1}_{\mu}$ is bounded. Fix $\varepsilon >0$ temporarily. Using $\varphi = \frac{u^{2}}{w_{A, \mu} + \varepsilon} \in H^{1}_{\mu}$ as a test function for \eqref{eq: weak formulation of relaxed Dirichlet} with $f = 1$, we have
    \[\int \frac{2uA\nabla w_{A, \mu} \cdot \nabla u}{w_{A, \mu} + \varepsilon} - \int \frac{A\nabla w_{A, \mu}\cdot \nabla w_{A, \mu}}{(w_{A, \mu} + \varepsilon)^{2}}u^{2} + \int \frac{w_{A, \mu}}{w_{A, \mu} + \varepsilon}u^{2} \, d\mu = \int \frac{u^{2}}{w_{A, \mu} + \varepsilon}.\]
    Hence, by Young's inequality and the assumption $A \in M(\alpha, \beta)$, 
    \begin{equation}
        \begin{split}
            \int \frac{A \nabla w_{A, \mu} \cdot \nabla w_{A, \mu}}{(w_{A, \mu} + \varepsilon)^{2}}u^{2} + \int \frac{u^{2}}{w_{A, \mu} + \varepsilon} &\le \delta \int \frac{|A \nabla w_{A, \mu}|^{2}}{(w_{A, \mu} + \varepsilon)^{2}}u^{2} + \frac{1}{\delta}\int |\nabla u|^{2} + \int\frac{w_{A, \mu}}{w_{A, \mu} + \varepsilon}u^{2} \, d\mu\\
            &\le \delta\beta \int \frac{A \nabla w_{A, \mu} \cdot \nabla w_{A, \mu}}{(w_{A, \mu} + \varepsilon)^{2}}u^{2} + \frac{1}{\delta}\int  |\nabla u|^{2} + \int u^{2} \, d\mu
        \end{split}
    \end{equation}
    for any $\delta > 0$. By letting $\delta = \beta^{-1}$ and $\varepsilon \searrow 0$, we see that $u = 0$ $\mathcal{L}^{d}$-a.e. on $\{w_{A, \mu} = 0\}$ and that \eqref{eq: torsional_Hardy_1} holds.
    On the other hand, letting $\delta = 2^{-1}\beta^{-1}$ in the right-hand side of 
    \[\int  \frac{A\nabla w_{A, \mu} \cdot \nabla w_{A, \mu}}{(w_{A, \mu} + \varepsilon)^{2}}u^{2} \le \frac{1}{\delta(1 - \delta\beta)}\int  |\nabla u|^{2} + \frac{1}{1 - \delta\beta}\int u^{2} \, d\mu\]
    and letting $\varepsilon \searrow 0$, we obtain the second inequality of \eqref{eq: torsional_Hardy_2}.
\end{proof}

Given $\mu \in \mathcal{M}_{\mathrm{cap}}^{b}$, define 
    \begin{equation}
        \lambda_{q}^{(\eta)}(\mu) := \inf_{u \in H^{1}_{\mu}, u \neq 0}\frac{\eta\n{\nabla u}_{2}^{2} + \n{u}_{L^{2}(\mu)}^{2}}{\n{u}_{q}^{2}}
    \end{equation}
    for $1 \le q \le 2$ and $\eta > 0$. We omit $\eta$ and write $\lambda_{q}(\mu) = \lambda_{q}^{(\eta)}(\mu)$ if $\eta = 1$.

\begin{lemma}[cf. {\cite[Theorem 1.2, 1.3]{brasco2017compact}}, {\cite[Theorem 9]{vandenBerg2014on}}]
    \label{thm: torsion and poincare const}
    Let $\mu \in \mathcal{M}_{\mathrm{cap}}^{b}$ and $A \in M(\alpha, \beta)$, $0 < \alpha \le \beta < \infty$. 
    Then $w_{A, \mu} = 0$ if and only if $H^{1}_{\mu}$ is the null space. Further, if $w_{A, \mu} \neq 0$, then we have
    \begin{equation}
        \label{eq: torsion poincare q < 2}
        1 \le \lambda_{q}^{(\beta)}(\mu)\n{w_{A, \mu}}_{\frac{q}{2 - q}},\, \lambda_{q}^{(\alpha)}(\mu)\n{w_{A, \mu}}_{\frac{q}{2 - q}} \le \frac{1}{q}\frac{1}{2 - q}
    \end{equation}
    if $1 \le q < 2$, and
    \begin{equation}
        \label{eq: torsion poincare q = 2}
        1\le \lambda_{2}^{(\beta)}(\mu)\n{w_{A, \mu}}_{\infty},\, \lambda_{2}^{(\alpha)}(\mu)\n{w_{A, \mu}}_{\infty} \le C\left(d, \frac{\beta}{\alpha}\right)
    \end{equation}
    if $q = 2$.
    
    In particular, 
    \begin{equation}
        \n{w_{A, \mu}}_{r} \asymp_{d, \alpha, \beta, r} \lambda_{\frac{2r}{1 + r}}(\mu)^{-1} \asymp_{d, r} \n{w_{\mu}}_{r}
    \end{equation}
    for every $1 \le r \le \infty$.
\end{lemma}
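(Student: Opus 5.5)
The plan follows Brasco--Ruffini and van den Berg--Bucur. First, $w_{A,\mu}=0$ iff $H^1_\mu=\{0\}$. If $H^1_\mu=\{0\}$, then trivially $w_{A,\mu}=0$. Conversely, if $w_{A,\mu}=0$, the torsional Hardy inequality \eqref{eq: torsional_Hardy_1} of Lemma \ref{thm: torsional_Hardy} shows that every $u\in H^1_\mu$ vanishes a.e. on $\{w_{A,\mu}=0\}=\R^d$. Hence $u=0$.

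\emph{Lower bounds} $1\le \lambda_q^{(\beta)}\n{w}_{q/(2-q)}$. Write $w=w_{A,\mu}$ and take $u\in H^1_\mu$ bounded (truncation as before), with $u=0$ off $\{w>0\}$. By H\"older with exponents $2/q$ and $2/(2-q)$,
\[
\n{u}_q^2=\Bigl(\int \frac{|u|^q}{w^{q/2}}\,w^{q/2}\Bigr)^{2/q}\le \int\frac{u^2}{w}\;\n{w}_{\frac{q}{2-q}} .
\]
By \eqref{eq: torsional_Hardy_1}, the right-hand side is at most $(\beta\n{\nabla u}_2^2+\n{u}_{L^2(\mu)}^2)\n{w}_{q/(2-q)}$. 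This gives the lower bound for all $1\le q\le 2$, with $q=2$ read as the $L^\infty$ norm.

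\emph{Upper bounds} for $q<2$: test the Rayleigh quotient of $\lambda_q^{(\alpha)}$ with $u=w^{\theta}$, where $\theta=1/(2-q)\ge1$, so that $u\in H^1_\mu$ since $w$ is bounded by Lemma \ref{thm: Linfty_Lt_estimate_of_torsion_func}. Then $\n{u}_q^2=(\int w^{q/(2-q)})^{2/q}$. Testing the equation with $w^{2\theta-1}$ gives
\[
(2\theta-1)\int w^{2\theta-2}A\nabla w\cdot\nabla w+\int w^{2\theta}\,d\mu=\int w^{2\theta-1}.
\]
Hence
\[
\alpha\n{\nabla u}_2^2\le\theta^2\int w^{2\theta-2}A\nabla w\cdot\nabla w\le \frac{\theta^2}{2\theta-1}\int w^{2\theta-1},
\]
and $\int u^2\,d\mu\le\int w^{2\theta-1}$. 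Since $2\theta-1=q/(2-q)$, the numerator is at most $\frac{\theta^2}{2\theta-1}\int w^{q/(2-q)}$, because $\theta^2/(2\theta-1)\ge1$. Dividing, the quotient is at most $\frac{\theta^2}{2\theta-1}\n{w}_{q/(2-q)}^{-1}$, using $1-2/q=-(2-q)/q$. Now $\theta^2/(2\theta-1)=\frac{1}{(2-q)q}$, which matches the constant $\frac1q\frac1{2-q}$. Since $\lambda^{(\alpha)}\le\lambda^{(\beta)}$, both products lie in the stated range.

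\emph{The case $q=2$} is the main obstacle, since $u=w^\theta$ degenerates as $\theta\to\infty$. Here I will use the $L^\infty$--$L^2$ estimate of Lemma \ref{thm: Linfty Lr estimate for solutions} for the eigenfunction-type problem, following van den Berg--Bucur. Let $\lambda=\lambda_2^{(\alpha)}$, rescale via Remark \ref{rmk: scaling property} to $A\in M(1,\beta/\alpha)$, and take a near-minimizer $u\ge0$ of the first eigenvalue. By comparison (Lemma \ref{thm: comparison}), the resolvent $R_{A,\mu}$ on $L^\infty$ has norm $\n{w}_\infty$. Iterating the resolvent $R^n$ applied to $L^2$ data, ultracontractivity bounds it in $L^\infty$ by $C(d,\beta/\alpha)\lambda^{d/4}$ times an $L^2$-decay factor. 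Comparing growth along eigenvector directions gives $\lambda^{-1}\lesssim\n{w}_\infty$, i.e. $\lambda\n{w}_\infty\ge c$, while the reverse bound is the lower bound already proved. If the eigenfunction route is delicate for nonsymmetric $A$, I would instead compare $w_{A,\mu}$ with $w_{\mu}$, for which $\alpha$-$\beta$ two-sided bounds follow from Lemma \ref{thm: comparison} applied to symmetric parts, and apply the Laplacian result. Finally, the ``in particular'' follows by setting $r=q/(2-q)$, i.e. $q=2r/(1+r)$, and using $\lambda^{(\alpha)}\asymp_{\alpha,\beta}\lambda^{(\beta)}\asymp\lambda$.
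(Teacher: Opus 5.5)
\textbf{Gap: the upper bound for $q=2$.} Three parts of your proposal are correct and match the paper: the equivalence $w_{A,\mu}=0 \Leftrightarrow H^{1}_{\mu}=(0)$, the lower bounds via H\"older and \eqref{eq: torsional_Hardy_1}, and the upper bound for $1\le q<2$ with the test function $w^{1/(2-q)}$ (the paper's $w^{(\varkappa+1)/2}$ with $\varkappa=q/(2-q)$). One small point there: the numerator bound must come from $\theta^{2}\int w^{2\theta-2}A\nabla w\cdot\nabla w+\int w^{2\theta}\,d\mu\le\frac{\theta^{2}}{2\theta-1}\int w^{2\theta-1}$. Adding your two separate bounds would cost an extra $+1$ in the constant.

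The missing piece is $\lambda_{2}^{(\alpha)}(\mu)\n{w_{A,\mu}}_{\infty}\le C(d,\beta/\alpha)$.
\begin{itemize}
\item Your semigroup sketch, ``comparing growth along eigenvector directions,'' yields $\lambda^{-1}\lesssim\n{w}_{\infty}$, i.e.\ $\lambda\n{w}_{\infty}\ge c$. That is the easy inequality you already have from the Hardy inequality: $|R_{A,\mu}u|\le\n{u}_{\infty}w_{A,\mu}$ can only bound $\n{w}_{\infty}$ from below.
\item You then call the inequality that is actually needed, $\lambda\n{w}_{\infty}\le C$, ``the lower bound already proved.'' It is not.
\item There is also a mismatch: $\lambda_{2}^{(\alpha)}(\mu)$ is the Rayleigh quotient of $-\alpha\Delta+\mu$ and does not involve $A$, so its minimizers are not eigenvectors of $R_{A,\mu}$.
\item The fallback fails as well. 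Lemma \ref{thm: comparison} compares solutions for the \emph{same} matrix $A$ and says nothing relating $w_{A,\mu}$ to $w_{\mu}$. The comparability $\n{w_{A,\mu}}_{\infty}\asymp\n{w_{\mu}}_{\infty}$ is a consequence of this lemma (its $r=\infty$ case), not an available input.
\item Even for $A=\mathrm{Id}$ and a general capacitary $\mu$, the estimate $\lambda_{2}(\mu)\n{w_{\mu}}_{\infty}\le C(d)$ is exactly what has to be proved. Without it, the $r=\infty$ case of the final equivalence also fails.
\end{itemize}

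\textbf{What is needed.} You need a mechanism forcing $w$ to carry $L^{2}$-mass near its maximum at the scale $r\sim\n{w}_{\infty}^{1/2}$. The paper, after scaling to $\alpha=1$, proceeds as follows.
\begin{enumerate}
\item It shows that $w$ is a global subsolution, $-\mathrm{div}(A\nabla w)\le 1$ in $H^{-1}(\R^{d})$. The proof tests with $\varphi\wedge(hw)\in H^{1}_{\mu}$ for $0\le\varphi\in C_{c}^{\infty}(\R^{d})$ and lets $h\to\infty$.
\item Local boundedness of subsolutions \cite[Theorem 8.17]{gilbarg1983elliptic} gives $\n{w}_{L^{\infty}(U(y,r/2))}\le C_{1}(r^{-d/2}\n{w}_{L^{2}(U(y,r))}+r^{2})$. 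Choosing $y$ with $\n{w}_{L^{\infty}(U(y,r))}>\eta$ for all $r$, and $r^{2}=(2C_{1})^{-1}\eta$, yields $r^{-d/2}\n{w}_{L^{2}(U(y,r))}\gtrsim\eta$.
\item One tests $\lambda_{2}(\mu)$ with $\theta w$, where $\theta$ is a cutoff between $U(y,r)$ and $U(y,2r)$. The numerator is bounded by Lemma \ref{thm: cacciopoli torsion}, which gives $\lambda_{2}(\mu)\lesssim(\n{w}_{\infty}+\eta^{-1}\n{w}_{\infty}^{2})/\eta^{2}$.
\item Letting $\eta\nearrow\n{w}_{\infty}$ gives the bound.
\end{enumerate}
A heat-kernel argument in the spirit of van den Berg could replace this. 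It would have to bound $\n{w}_{\infty}$ from above, using $\n{e^{-tL}}_{2\to2}\le e^{-\lambda_{2}^{(\alpha)}t}$, ultracontractivity, and a localization that avoids dependence on $\mathcal{L}^{d}(D)$. Your sketch does none of this.
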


\begin{proof}
    By Lemma \ref{thm: torsional_Hardy}, $w_{A, \mu} = 0$ implies $H^{1}_{\mu} = (0)$. The other implication is obvious. 
    
    For $1 \le q \le 2$ and $u \in H^{1}_{\mu}$, 
    \begin{equation}
        \begin{split}
            \int  |u|^{q} = \int  \frac{|u|^{q}}{w_{A, \mu}^{\frac{q}{2}}}w_{A, \mu}^{\frac{q}{2}} &\le \left( \int  \frac{|u|^{2}}{w_{A, \mu}} \right)^{\frac{q}{2}}\n{w_{A, \mu}}_{\frac{q}{2}(\frac{2}{q})'}^{\frac{q}{2}}\\
            &\le \left( \beta\int  |\nabla u|^{2} + \int u^{2} \, d\mu \right)^{\frac{q}{2}}\n{w_{A, \mu}}_{\frac{q}{2 - q}}^{\frac{q}{2}},
        \end{split}
    \end{equation}
    where we have used the H\"{o}lder inequality for the first inequality and \eqref{eq: torsional_Hardy_1} for the second.
    Hence, the lower bounds in \eqref{eq: torsion poincare q < 2} and \eqref{eq: torsion poincare q = 2} follow.

    Next, we show the upper bound in \eqref{eq: torsion poincare q < 2}. Let $1 \le q < 2$. Fix $\varkappa \ge 1$ temporarily. Since $w_{A, \mu} \in L^{\infty}(\R^{d}) \cap H^{1}_{\mu}$, $w_{A, \mu}^{\varkappa} \in H^{1}_{\mu}$. Hence, 
    \begin{equation}
        \begin{split}
            \int  w_{A, \mu}^{\varkappa} &= \int  A \nabla (w_{A, \mu}) \cdot \nabla (w_{A, \mu}^{\varkappa}) + \int w_{A, \mu}^{\varkappa + 1} \, d\mu\\
            &= \varkappa \left( \frac{2}{\varkappa + 1} \right)^{2}\int  A \nabla (w_{A, \mu}^{\frac{\varkappa + 1}{2}}) \cdot \nabla (w_{A, \mu}^{\frac{\varkappa + 1}{2}}) + \int |w_{A, \mu}^{\frac{\varkappa + 1}{2}}|^{2} \, d\mu\\
            &\ge \varkappa \left( \frac{2}{\varkappa + 1} \right)^{2}\left( \alpha \int |\nabla w_{A, \mu}^{\frac{\varkappa + 1}{2}}|^{2} + \int |w_{A, \mu}^{\frac{\varkappa + 1}{2}}|^{2} \, d\mu \right)\\
            &\ge \varkappa \left( \frac{2}{\varkappa + 1} \right)^{2}\lambda_{q}^{(\alpha)}(\mu)\n{w_{A, \mu}^{\frac{\varkappa + 1}{2}}}_{q}^{2}\\
            &= \varkappa \left( \frac{2}{\varkappa + 1} \right)^{2}\lambda_{q}^{(\alpha)}(\mu)\n{w_{A, \mu}}_{q\frac{\varkappa + 1}{2}}^{\varkappa + 1}.
        \end{split}
    \end{equation}
    Thus, by letting $q\frac{\varkappa + 1}{2} = \varkappa$, namely $\varkappa = \frac{q}{2 - q}$, we obtain the upper bound in \eqref{eq: torsion poincare q < 2}.

    Finally, we show the upper bound in \eqref{eq: torsion poincare q = 2}. To this end, we may assume that $\alpha = 1$. We know from \cite[Proposition 2.6]{dalmaso1986wiener} (see also \cite[Proposition 2.6]{dalmaso1994new}) that $-\mathrm{div}(A \nabla w_{A, \mu}) \le 1$ in $H^{-1}(\R^{d})$. Indeed, given a nonnegative $\varphi \in C^{\infty}_{c}(\R^{d})$, $\varphi_{h} := \varphi\wedge (h w_{A, \mu}) \in H^{1}_{\mu}$ for every $h > 0$, and thus
    \[\begin{split}
        \int_{\R^{d}} \varphi &\ge \int_{\R^{d}} \varphi_{h} = \int_{\R^{d}} A \nabla w_{A, \mu} \cdot \nabla \varphi_{h} + \int w_{A, \mu} \varphi_{h} \, d\mu\ \ge \int_{\varphi < hw_{A, \mu}}A\nabla w_{A, \mu} \cdot \nabla \varphi\\
        &\xrightarrow{h \to \infty} \int_{w_{A, \mu} > 0}A \nabla w_{A, \mu} \cdot \nabla \varphi = \int_{\R^{d}}A \nabla w_{A, \mu} \cdot \nabla \varphi.
    \end{split}\]

    Therefore, by \cite[Theorem 8.17]{gilbarg1983elliptic}, there is a finite constant $C_{1} = C(d, \beta) > 0$ such that 
    \begin{equation*}
        \n{w_{A, \mu}}_{L^{\infty}(U(y, \frac{r}{2}))} \le C_{1}\left(r^{-\frac{d}{2}}\n{w_{A, \mu}}_{L^{2}(U(y, r))} + r^{2}\right)
    \end{equation*}
    for every $y \in \R^{d}$ and $r > 0$. Now, let $0 < \eta < \n{w_{A, \mu}}_{L^{\infty}(\R^{d})}$ and fix $y \in \R^{d}$ such that $\n{w_{A, \mu}}_{L^{\infty}(U(y, r))} > \eta$ for every $r > 0$. By choosing $r > 0$ so that $r^{2} = (2C_{1})^{-1}\eta$, we obtain
    \begin{equation}
        \label{eq: denominator estimate of rayleigh q}
        r^{-\frac{d}{2}}\n{w_{A, \mu}}_{L^{2}(U(y, r))} \ge C(d, \beta) \eta.
    \end{equation}

    On the other hand, by Lemma \ref{thm: cacciopoli torsion} below as applied to $\theta \in C^{\infty}_{c}(U(y, 2r))$ such that $\chi_{U(y, r)} \le \theta \le 1$ and $|\nabla \theta| \le \frac{2}{r}$, we see that
    \begin{equation}
        \label{eq: numerator estimate of rayleigh q}
        \n{\theta w_{A, \mu}}_{\dot{H}^{1}_{\mu}}^{2} \le C(\beta)\left( \int_{U(y, 2r)}w_{A, \mu} + \frac{1}{r^{2}}\int_{U(y, 2r)}w_{A, \mu}^{2} \right).
    \end{equation}
    Combining \eqref{eq: denominator estimate of rayleigh q} and \eqref{eq: numerator estimate of rayleigh q}, we bound 
    \begin{equation*}
        \begin{split}
            \lambda_{2}(\mu) &\le  \frac{\n{\theta w_{A, \mu}}_{\dot{H}^{1}_{\mu}}^{2}}{\n{\theta w_{A, \mu}}_{2}^{2}} \le C(d, \beta)\frac{\fint_{U(y, 2r)}w_{A, \mu} + \frac{1}{r^{2}}\fint_{U(y, 2r)}w_{A, \mu}^{2}}{r^{-d}\n{w_{A, \mu}}_{L^{2}(U(y, r))}^{2}}
            \le C(d, \beta)\frac{\n{w_{A, \mu}}_{\infty} + \eta^{-1}\n{w_{A, \mu}}_{\infty}^{2}}{\eta^{2}}.
        \end{split}
    \end{equation*}
    Letting $\eta \nearrow \n{w_{A, \mu}}_{\infty}$ in the last display, we get the upper bound for \eqref{eq: torsion poincare q = 2}.
\end{proof}

\begin{lemma}[cf. {\cite[Lemma 10]{vandenBerg2014on}}]
    \label{thm: cacciopoli torsion}
    Let $A \in M(1, \beta)$, $\mu \in \mathcal{M}_{\mathrm{cap}}^{b}$, and $\theta \in C^{\infty}_{c}(\R^{d})$. Then
    \begin{equation}
        \n{\theta w_{A, \mu}}_{\dot{H}^{1}_{\mu}}^{2} \le C(\beta)\left( \int w_{A, \mu}\theta^{2} + \int w_{A, \mu}^{2}|\nabla \theta|^{2} \right).
    \end{equation}
\end{lemma}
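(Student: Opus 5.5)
The plan is to test the equation for $w := w_{A, \mu}$ with $\varphi = \theta^{2} w$ and then absorb the cross term; this is the usual Caccioppoli argument. First we check that this test function is admissible. By Lemma \ref{thm: Linfty_Lt_estimate_of_torsion_func}, $w \in L^{\infty}(\R^{d})$, and $\theta \in C^{\infty}_{c}(\R^{d})$, so $\theta^{2} w \in H^{1}(\R^{d})$. Moreover $|\theta^{2} w| \le \n{\theta}_{\infty}^{2} |w|$ $\mu$-a.e., hence $\theta^{2} w \in L^{2}(\mu)$, and therefore $\theta^{2} w \in H^{1}_{\mu}$. By the same argument, $\theta w \in H^{1}_{\mu}$, so the left-hand side of the claimed inequality is meaningful.

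Plugging $\varphi = \theta^{2} w$ into \eqref{eq: weak formulation of relaxed Dirichlet} with $f = 1$ gives
\[
\int \theta^{2} A\nabla w \cdot \nabla w + 2\int \theta w\, A\nabla w \cdot \nabla \theta + \int \theta^{2} w^{2} \, d\mu = \int w \theta^{2}.
\]
We estimate the terms as follows.
\begin{itemize}
    \item Since $A \in M(1, \beta)$, the first term is at least $\int \theta^{2} |\nabla w|^{2}$.
    \item For the cross term we use the bound $|A\eta|^{2} \le \beta A\eta \cdot \eta$ from the proof of Lemma \ref{thm: energy bound by torsion and L1 norm of wA}, followed by Young's inequality:
    \[
    2\left|\int \theta w\, A\nabla w \cdot \nabla \theta\right| \le \frac{1}{2\beta}\int \theta^{2}|A\nabla w|^{2} + 2\beta\int w^{2}|\nabla\theta|^{2} \le \frac12\int \theta^{2} A\nabla w\cdot\nabla w + 2\beta\int w^{2}|\nabla \theta|^{2}.
    \]
\end{itemize}
Absorbing the first piece into the left-hand side yields
\[
\frac12\int \theta^{2}|\nabla w|^{2} + \int \theta^{2} w^{2}\, d\mu \le \int w\theta^{2} + 2\beta \int w^{2}|\nabla\theta|^{2}.
\]

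It remains to pass to the full norm of $\theta w$. Since $\nabla(\theta w) = \theta\nabla w + w\nabla\theta$, we have
\[
\int |\nabla(\theta w)|^{2} \le 2\int \theta^{2}|\nabla w|^{2} + 2\int w^{2}|\nabla \theta|^{2}.
\]
Adding the $\mu$-term, we obtain
\[
\n{\theta w}_{\dot{H}^{1}_{\mu}}^{2} \le 4\int w\theta^{2} + (8\beta + 2)\int w^{2}|\nabla\theta|^{2},
\]
which is the claim with $C(\beta) = 8\beta + 4$.

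The only delicate point is the admissibility of $\theta^{2} w$ in $H^{1}_{\mu}$, which is where the boundedness of $w$ (Lemma \ref{thm: Linfty_Lt_estimate_of_torsion_func}) and the nonnegativity of $\mu$ enter. If one wanted to avoid appealing to the $L^{\infty}$ bound, one could instead argue with truncations $w \wedge k$ and let $k \to \infty$, as in the proof of Lemma \ref{thm: torsional_Hardy}. The rest of the argument is routine.
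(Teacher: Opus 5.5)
Your proof is correct and follows essentially the same route as the paper's: test the torsion equation with $\theta^{2}w_{A,\mu}$, use coercivity together with $|A\eta|^{2}\le\beta A\eta\cdot\eta$ and Young's inequality to absorb the cross term, and then pass to $\n{\theta w_{A,\mu}}_{\dot{H}^{1}_{\mu}}^{2}$ via the Leibniz rule, with your explicit constant $8\beta+4$ being a harmless refinement. One minor remark: admissibility of $\theta^{2}w_{A,\mu}$ does not actually require the $L^{\infty}$ bound, because the product of an $H^{1}$ function with $\theta^{2}\in W^{1,\infty}$ stays in $H^{1}$, and since $\theta^{2}$ is continuous, $\theta^{2}\tilde{w}_{A,\mu}$ is a quasi-continuous representative, which gives the $\mu$-a.e.\ bound directly.
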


\begin{proof}
    By the coercivity of $A$ and the Leibniz rule, 
    \[\int \theta^{2}|\nabla w_{A, \mu}|^{2} \le \int \theta^{2}A\nabla w_{A, \mu} \cdot \nabla w_{A, \mu} = \int A\nabla w_{A, \mu} \cdot \nabla (w_{A, \mu} \theta^{2}) - 2\int w_{A, \mu}\theta A \nabla w_{A, \mu} \cdot \nabla \theta.\]
    By applying Young's inequality to the second term of the right-hand side, we get
    \[\int \theta^{2}|\nabla w_{A, \mu}|^{2} \le C(\beta)\left( \int A\nabla w_{A, \mu} \cdot \nabla (w_{A, \mu} \theta^{2}) + \int w_{A, \mu}^{2}|\nabla \theta|^{2} \right).\]
    Hence, 
    \begin{equation*}
        \begin{split}
            \n{\theta w_{A, \mu}}_{\dot{H}^{1}_{\mu}}^{2} &\le C(\beta)\left( \int A \nabla w_{A, \mu} \cdot \nabla (w_{A, \mu}\theta^{2}) + \int w_{A, \mu}^{2}\theta^{2} \, d\mu + \int w_{A, \mu}^{2}|\nabla \theta|^{2} \right)\\
            &= C(\beta)\left( \int w_{A, \mu}\theta^{2} + \int w_{A, \mu}^{2}|\nabla \theta|^{2} \right).
        \end{split}
    \end{equation*}
\end{proof}
\section{Main results}
\label{sect: main_results}
\subsection{Convergence results and quantitative error estimates}
\label{sect: convergence and error}
Let $0 < \alpha \le \beta < \infty$, $(A^{\varepsilon}) \sub M(\alpha, \beta)$, $D \sub \R^{d}$ nonempty bounded open, and $(\mu^{\varepsilon}) \sub \mathcal{M}_{\mathrm{cap}}(D)$. Then Dal Maso and Murat showed in \cite[Section 5]{dalmaso2004asymptotic} that there is a subsequence $(\varepsilon)$, $A \in M(\alpha, \beta)$, and $\mu \in \mathcal{M}_{\mathrm{cap}}(D)$ such that for every sequence $(f^{\varepsilon}) \sub H^{-1}(D)$ strongly converging to $f \in H^{-1}(D)$ in $H^{-1}(D)$, the solution $u^{\varepsilon} \in H^{1}_{\mu^{\varepsilon}}$ to the relaxed Dirichlet problem \eqref{eq: relaxed_Dirichlet} for $A^{\varepsilon}$, $f^{\varepsilon}$, and $\mu^{\varepsilon}$ converges weakly in $H^{1}_{0}(D)$ to the solution to \eqref{eq: relaxed_Dirichlet} for $A$, $f$, and $\mu$. We study the finer asymptotics when this limit vanishes. Recall from \cite[Proposition 3.4]{dalmaso1994new} that $T(\nu) \neq 0$ whenever $\nu \in \mathcal{M}_{\mathrm{cap}}^{b} \setminus \{I_{\varnothing}\}$.

\begin{theorem}
    \label{thm: main result}
    Suppose that $\mu = I_{\varnothing}$, i.e., that $T(\mu^{\varepsilon}) \to 0$, which is also equivalent to the assumption that $R_{A^{\varepsilon}, \mu^{\varepsilon}}f \xrightharpoonup{H^{1}_{0}(D)} 0$ for every $f \in H^{-1}(D)$ (see \cite[Theorem 5.1]{dalmaso2004asymptotic}). Suppose also that $\mu^{\varepsilon} \neq I_{\varnothing}$ for every $\varepsilon$. Then there is a subsequence, still denoted by $(\varepsilon)$, and a nonnegative Borel measure $k$ supported in $\overline{D}$ with $\beta^{-1} \wedge 1 \le k(\overline{D}) \le \alpha^{-1} \vee 1$ such that for every $f \in C_{b}(\R^{d})$ 
    \begin{equation}
        \label{eq: convergence to kf}
        \frac{R_{A^{\varepsilon}, \mu^{\varepsilon}}f}{T(\mu^{\varepsilon})} \xrightharpoonup{\mathcal{M}(\R^{d})} kf
    \end{equation}
    weakly in $\mathcal{M}(\R^{d})$ with a qualitative corrector result
    \begin{equation}
        \label{eq: qualitative corrector}
        \n{\frac{R_{A^{\varepsilon}, \mu^{\varepsilon}}f}{T(\mu^{\varepsilon})} - \frac{w_{A^{\varepsilon}, \mu^{\varepsilon}}}{T(\mu^{\varepsilon})}f}_{L^{1}(\R^{d})} \to 0
    \end{equation}
    as $\varepsilon \searrow 0$. Further, if $p, q \in [1, 2]$ satisfy
    \begin{equation}
        \label{eq: condition on p q}
        \frac{1}{d} - \left(\frac{1}{p'} + \frac{1}{q'}\right) > 0,
    \end{equation}
    then we have a quantitative error estimate
    \begin{equation}
        \label{eq: quantitative error mue}
        \n{\frac{R_{A^{\varepsilon}, \mu^{\varepsilon}}f}{T(\mu^{\varepsilon})} - \frac{w_{A^{\varepsilon}, \mu^{\varepsilon}}}{T(\mu^{\varepsilon})}f}_{L^{q}(\R^{d})} \lesssim_{d, \alpha, \beta}\n{\nabla f}_{L^{p'}(\Omega_{\mu^{\varepsilon}})}T(\mu^{\varepsilon})^{\delta_{d, p, q}}
    \end{equation}
    for every $f \in C_{b}(\R^{d}) \cap W^{1, p'}(D)$. Here,
    \begin{equation}
        \label{def: delta dpq}
        \delta_{d, p, q} := \frac{d}{d + 2}\left( \frac{1}{d} - \left( \frac{1}{p'} + \frac{1}{q'} \right) \right).
    \end{equation}
    If $\mu^{\varepsilon}$ are quasi-open, then $\beta^{-1} \le k(\overline{D}) \le \alpha^{-1}$.
\end{theorem}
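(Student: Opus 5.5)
I will first establish the quantitative estimate \eqref{eq: quantitative error mue}, then derive the qualitative statements \eqref{eq: qualitative corrector} and \eqref{eq: convergence to kf} from it by density and compactness.

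\emph{Step 1: energy estimate for the corrector.} Fix $\varepsilon$ and write $w = w_{A^{\varepsilon}, \mu^{\varepsilon}}$, $u = R_{A^{\varepsilon}, \mu^{\varepsilon}}f$, $\mu = \mu^{\varepsilon}$. For $f \in C_{b} \cap W^{1,p'}$ I will first take $f$ smooth and then approximate; in that case $wf \in H^{1}_{\mu}$ because $w \in L^{\infty}$. Set $z := u - wf$. I test the equation for $u$ with $z$, and the equation for $w$ with $zf$. Subtracting gives
\[\int A\nabla z\cdot\nabla z + \int z^{2}\,d\mu = -\int w\,A\nabla f\cdot\nabla z + \int z\,A\nabla w\cdot\nabla f .\]
The measure terms cancel exactly, which is why $\mu$ causes no difficulty.

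For the first term I use H\"older with exponents $p'$ and $\frac{2p}{2-p}$ on $\nabla f$ and $w$, giving $\n{\nabla f}_{p'}\n{w}_{\frac{2p}{2-p}}\n{\nabla z}_{2}$. For the second term I write $z A\nabla w = (z\,|\nabla w|/w)\,w\,(\ldots)$. The torsional Hardy inequality \eqref{eq: torsional_Hardy_2} controls $\n{z\nabla w/w}_{2}$ by $\n{z}_{\dot H^{1}_{\mu}}$, so the same H\"older bound applies. Hence
\[\n{z}_{\dot H^{1}_{\mu}} \lesssim_{\alpha,\beta} \n{\nabla f}_{L^{p'}(\Omega_{\mu})}\n{w}_{\frac{2p}{2-p}} .\]
The gradient of $f$ only appears on $\Omega_\mu$, since $w$ and $z$ vanish outside it.

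\emph{Step 2: passing to $L^{q}$ and reducing to $T(\mu)$.} By the definition of $\lambda_{q}$, $\n{z}_{q} \le \lambda_{q}(\mu)^{-1/2}\n{z}_{\dot H^{1}_{\mu}}$. Lemma \ref{thm: torsion and poincare const} gives $\lambda_q(\mu)^{-1}\asymp \n{w_\mu}_{\frac{q}{2-q}}$, and Lemma \ref{thm: Linfty_Lt_estimate_of_torsion_func} (after rescaling $A$ to lie in $M(1,\beta/\alpha)$ via Remark \ref{rmk: scaling property}) gives $\n{w}_{s}\lesssim T(\mu)^{\gamma_1+\frac1s(1-\gamma_1)}$ with $\gamma_{1} = \frac{2}{d+2}$. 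Taking $s=\frac{q}{2-q}$ and $s=\frac{2p}{2-p}$, the total exponent of $T(\mu)$ is
\[\tfrac12\Bigl(\gamma_1+\tfrac{2-q}{q}(1-\gamma_1)\Bigr) + \gamma_1+\tfrac{2-p}{2p}(1-\gamma_1).\]
A short computation should give $1+\delta_{d,p,q}$ once one uses $\frac{2-q}{2q} = \frac1q-\frac12$ and $1-\gamma_1 = \frac{d}{d+2}$. Dividing by $T(\mu)$ yields \eqref{eq: quantitative error mue}. The constraint \eqref{eq: condition on p q} is only needed to make $\delta>0$. Checking this exponent bookkeeping, and justifying the Hardy step for $z$, which only vanishes where $w$ does, is the main technical point. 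I also need $\mu$ to be a general capacitary measure, but Lemma \ref{thm: torsional_Hardy} already covers that case.

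\emph{Step 3: qualitative conclusions.} By Lemma \ref{thm: energy bound by torsion and L1 norm of wA}, $\n{w_{A^\varepsilon,\mu^\varepsilon}}_1/T(\mu^\varepsilon)$ lies between $\beta^{-1}\wedge1$ and $\alpha^{-1}\vee 1$, and between $\beta^{-1}$ and $\alpha^{-1}$ in the quasi-open case. These are nonnegative measures supported in $\overline D$ with bounded mass. By weak-* compactness in $\mathcal M(\overline D)$, a subsequence converges weakly to some $k$. Because $\overline D$ is compact, no mass escapes and the total mass passes to the limit, giving the stated bounds on $k(\overline D)$.

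For $f\in C_b(\R^d)$, I approximate $f$ uniformly on $\overline D$ by smooth $f_n$. By comparison (Lemma \ref{thm: comparison} applied to $\pm f$), $|R_{A^\varepsilon,\mu^\varepsilon}(f-f_n)|\le \n{f-f_n}_{L^\infty(D)}\,w_{A^{\varepsilon},\mu^{\varepsilon}}$. Dividing by $T(\mu^\varepsilon)$ and taking $L^1$ norms gives an error of $(\alpha^{-1}\vee1)\n{f-f_n}_\infty$, uniformly in $\varepsilon$; the same bound holds for $w(f-f_n)/T$. Applying Step 2 with $q=1$ and $p$ close to $2$ handles each $f_n$, and this yields \eqref{eq: qualitative corrector}. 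Finally, $w f/T(\mu^\varepsilon)\rightharpoonup kf$ because $f$ is continuous. Together with \eqref{eq: qualitative corrector}, this gives \eqref{eq: convergence to kf}.
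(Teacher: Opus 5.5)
Your proposal is correct and follows the paper's proof essentially step for step. Step 1 is Lemma~\ref{thm: energy estimate for u-wf}: same identity, same H\"older split, same use of \eqref{eq: torsional_Hardy_2}. Step 2 is the second line of Theorem~\ref{thm: quantitative error estimate}, via Lemmas~\ref{thm: torsion and poincare const} and \ref{thm: Linfty_Lt_estimate_of_torsion_func}. Your exponent count does give $1+\delta_{d,p,q}$, and you only need the constant-one bound $\lambda_q(\mu)^{-1}\le\n{w_\mu}_{\frac{q}{2-q}}$. Step 3 is the paper's density-plus-compactness argument. The one difference is that you use the comparison bound $|R_{A^\varepsilon,\mu^\varepsilon}(f-f_n)|\le\n{f-f_n}_{L^\infty(D)}\,w_{A^\varepsilon,\mu^\varepsilon}$ in place of the Poincar\'e-based estimate \eqref{eq: a priori estimate}. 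This is an equally valid, slightly more direct way to get an $\varepsilon$-uniform $O(\n{f-f_n}_{\infty})$ error.

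There is one slip to fix. In Step 3 you must take $p$ close to $1$, not close to $2$. With $q=1$ the exponent is $\delta_{d,p,1}=\frac{d}{d+2}\bigl(\frac1d-\frac1{p'}\bigr)$. This is positive only when $p'>d$, and it is negative for $p$ near $2$ when $d\ge 3$, so the estimate would not vanish. The paper takes $p=1$, which gives $\delta_{d,1,1}=\frac{1}{d+2}$; this is legitimate because $\nabla f_n$ is bounded on the bounded set $D$.

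Two smaller points. First, the preliminary smoothing of $f$ in Step 1 is unnecessary. For bounded $f\in W^{1,p'}(D)$ one checks directly that $w\bar f\in H^1_\mu$ and that the quasi-continuous representatives multiply correctly, as the paper does. Second, Lemma~\ref{thm: torsional_Hardy} applies to every element of $H^1_\mu$, and it itself gives vanishing on $\{w=0\}$. So the Hardy step for $z$ needs no extra justification.
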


Theorem \ref{thm: main result} is an immediate consequence of the following $\varepsilon$-independent estimate.

\begin{theorem}
    \label{thm: quantitative error estimate}
    Let $p, q \in [1, 2]$, $A \in M(1, \beta)$, $D \sub \R^{d}$ bounded open, $\mu \in \mathcal{M}_{\mathrm{cap}}(D) \setminus \{I_{\varnothing}\}$, and
    \begin{equation}
        \label{eq: admissible integrability of source}
        f \in \begin{cases}
            L^{\frac{2d}{d + 2}}(D) & \text{ if $d \ge 3$}\\
            L^{1+}(D) := \bigcup_{\delta > 0}L^{1 + \delta}(D) & \text{ if $d = 2$}.
        \end{cases}
    \end{equation}
    Assume that $\nabla f \in L^{p'}(D)$. Then
    \begin{equation}
        \label{eq: quantitative error estimate}
        \begin{split}
            \n{R_{A, \mu}f - w_{A, \mu}f}_{L^{q}(D)} &\le 3 \beta \n{\nabla f}_{L^{p'}(\Omega_{\mu})}\lambda_{q}(\mu)^{-\frac{1}{2}}\n{w_{A, \mu}}_{\frac{2p}{2 - p}}\\
            &\lesssim_{d}\beta\n{\nabla f}_{L^{p'}(\Omega_{\mu})}T(\mu)^{1 + \delta_{d, p, q}}.
        \end{split}
    \end{equation}
\end{theorem}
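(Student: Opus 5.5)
Set $u := R_{A,\mu}f$, $w := w_{A,\mu}$ and $z := u - wf$. The plan is to test the equation for $z$ with $z$ itself, control the resulting terms by the torsional Hardy inequality, and then convert energy bounds into $L^{q}$ and torsional-rigidity bounds using Lemmas \ref{thm: torsion and poincare const} and \ref{thm: Linfty_Lt_estimate_of_torsion_func}.

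\textbf{Step 0: $z \in H^{1}_{\mu}$.} Since $w \in L^{\infty}$ by Lemma \ref{thm: Linfty_Lt_estimate_of_torsion_func}, $wf$ is admissible. If needed, first approximate $f$ by bounded smooth functions, or truncate, and pass to the limit at the end. This gives $wf \in H^{1}_{\mu}$, hence $z \in H^{1}_{\mu}$.

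\textbf{Step 1: the identity for $z$.} For $\varphi \in H^{1}_{\mu}$, take $f\varphi$ as a test function in the torsion equation:
\[
\int A\nabla w\cdot\nabla\varphi\, f + \int A\nabla w\cdot\nabla f\,\varphi + \int w f\varphi\,d\mu = \int f\varphi.
\]
Subtracting this from the equation for $u$ gives
\[
\int A\nabla z\cdot\nabla\varphi + \int z\varphi\,d\mu = \int wA\nabla f\cdot\nabla\varphi - \int A\nabla w\cdot\nabla f\,\varphi .
\]
Now take $\varphi = z$. Use $A \in M(1,\beta)$, so $|A\xi| \le \beta|\xi|$. Note also that all integrands are supported in $\Omega_{\mu}$, because $z = 0$ outside it; this is why $\n{\nabla f}_{L^{p'}(\Omega_\mu)}$ appears.

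\textbf{Step 2: bounding the two terms.} Set $E := \n{\nabla z}_2^2 + \n{z}_{L^2(\mu)}^2$, so that $\n{z}_{\dot H^1_\mu}^{2} \le E$.

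For the first term, H\"older with exponents $p'$, $\frac{2p}{2-p}$, $2$ gives
\[
\int w|\nabla f||\nabla z| \le \n{\nabla f}_{p'}\n{w}_{\frac{2p}{2-p}}\n{\nabla z}_2 ,
\]
since $\frac1{p'} + \frac{2-p}{2p} + \frac12 = 1$.

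For the second term, write $|\nabla w||z| = w\,\frac{|\nabla w|}{w}|z|$. Then
\[
\int |\nabla w||\nabla f||z| \le \n{\nabla f}_{p'}\n{w}_{\frac{2p}{2-p}}\Bigl\|\tfrac{\nabla w}{w}z\Bigr\|_2 ,
\]
and \eqref{eq: torsional_Hardy_2} (with $\alpha = 1$) bounds $\bigl\|\tfrac{\nabla w}{w}z\bigr\|_2 \le (4\beta)^{1/2}E^{1/2}$.

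Together these give $E \le (\beta + 2\beta^{3/2})\n{\nabla f}_{p'}\n{w}_{\frac{2p}{2-p}}E^{1/2}$. The constant $3\beta$ in the statement suggests a slightly sharper bookkeeping, with $\beta \ge 1$ used to merge the factors, so I would adjust the Young/Hardy constants accordingly. Finally, the definition of $\lambda_q$ gives $\n{z}_q \le \lambda_q(\mu)^{-1/2}E^{1/2}$, which is the first inequality.

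\textbf{Step 3: the second inequality.} Lemma \ref{thm: torsion and poincare const} gives $\lambda_q(\mu)^{-1} \lesssim_{d} \n{w_\mu}_{\frac{q}{2-q}}$; since the bound only involves $\mu$, we may pass to $w_\mu$. Lemma \ref{thm: Linfty_Lt_estimate_of_torsion_func} then gives $\n{w}_s \lesssim T(\mu)^{\gamma_1 + \frac{1}{s}(1-\gamma_1)}$ with $\gamma_1 = \frac{2}{d+2}$; apply this to $A$ and to $\mathrm{Id}$ alike. Since $\frac{2-q}{q} = \frac{2}{q} - 1 = 1 - \frac{2}{q'}$, the total exponent is
\[
\tfrac12\Bigl(\gamma_1 + (1-\gamma_1)\bigl(1-\tfrac2{q'}\bigr)\Bigr) + \gamma_1 + (1-\gamma_1)\Bigl(\tfrac12 - \tfrac1{p'}\Bigr),
\]
with $1 - \gamma_1 = \frac{d}{d+2}$. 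Expanding,
\[
\frac{3}{d+2} + \frac{d}{d+2} - \frac{d}{d+2}\Bigl(\frac1{p'} + \frac1{q'}\Bigr) = 1 + \frac{d}{d+2}\Bigl(\frac1d - \frac1{p'} - \frac1{q'}\Bigr) = 1 + \delta_{d,p,q}.
\]

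\textbf{Main obstacle.} The delicate point is the handling of the term $\int A\nabla w\cdot\nabla f\, z$. It must be routed through the torsional Hardy inequality and not through a plain $L^{2}$ bound on $\nabla w$, since only the Hardy route produces the factor $\n{w}_{\frac{2p}{2-p}}$ and hence the correct power of $T(\mu)$. A secondary issue is justifying the regularity and approximation needed so that $wf\varphi$ and $z$ are legitimate test functions.
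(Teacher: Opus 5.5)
Your route is the paper's: the identity for $z = u - wf$ tested with $z$, H\"older plus the torsional Hardy inequality for the term containing $\nabla w$, then $\lambda_q(\mu)$ and the torsion estimates. Your exponent computation in Step 3 is correct. The right-hand side of your Step 1 identity has the wrong overall sign (it should read $-\int wA\nabla f\cdot\nabla\varphi+\int\varphi A\nabla w\cdot\nabla f$), but this is harmless.

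The step that does not give the stated result is the constant. You apply $|A\xi|\le\beta|\xi|$ to $A\nabla w$ and then use the unweighted left-hand form $\alpha\int|\nabla w/w|^2z^2$ of \eqref{eq: torsional_Hardy_2}. This loses a factor $\beta^{1/2}$, so you get $\beta+2\beta^{3/2}$. Your suggested repair (merging via $\beta\ge 1$) only yields $3\beta^{3/2}$, and the second inequality would then carry $\beta^{3/2}$ instead of $\beta$. No adjustment of Young-type constants fixes this.

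The missing idea, which is Lemma \ref{thm: gradient of torsion negative sobolev estimate} in the paper, is to keep $A$ attached to $\nabla w$. Bound $|A\nabla w\cdot\nabla f|\le|A\nabla w|\,|\nabla f|$, and use $|A\eta|^2\le\beta\,A\eta\cdot\eta$, which follows from $A^{-1}\xi\cdot\xi\ge\beta^{-1}|\xi|^2$ with $\xi=A\eta$. Then apply the middle, $A$-weighted expression in \eqref{eq: torsional_Hardy_2}:
\[
\bigl\|\,|A\nabla w|\,z\bigr\|_{p}\le\Bigl\|\frac{|A\nabla w|}{w}\,z\Bigr\|_{2}\|w\|_{\frac{2p}{2-p}}\le\beta^{\frac12}\|w\|_{\frac{2p}{2-p}}\Bigl(4\beta\|\nabla z\|_2^2+2\|z\|_{L^2(\mu)}^2\Bigr)^{\frac12}\le 2\beta\|w\|_{\frac{2p}{2-p}}\|z\|_{\dot H^1_\mu}.
\]
Multiplying by $\|\nabla f\|_{L^{p'}(\Omega_\mu)}$ and adding the bound $\beta\|w\|_{\frac{2p}{2-p}}\|\nabla f\|_{L^{p'}(\Omega_\mu)}\|\nabla z\|_2$ for the other term gives exactly $3\beta$.

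Your Step 0 also needs to be an actual argument rather than ``if needed''. For unbounded $f$, the term $f\nabla w$ need not lie in $L^2$, so $wf\in H^1_\mu$ can fail. The paper truncates, setting $f_k:=(f\wedge k)\vee(-k)$:
\begin{itemize}
\item The integrability hypothesis on $f$ is exactly what gives $f_k\to f$ in $H^{-1}(D)$, hence $R_{A,\mu}f_k\to R_{A,\mu}f$.
\item The pointwise bound $|\nabla f_k|\le|\nabla f|$ keeps the right-hand side fixed.
\item Fatou's lemma then passes to the limit on the left.
\end{itemize}
For bounded $f$, Lemma \ref{thm: comparison} gives $\|u\|_\infty\le\|f\|_{L^\infty(D)}\|w\|_\infty$. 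So $z$ is bounded, which is what makes $fz$ an admissible test function in the torsion equation.
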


We postpone the proof of Theorem \ref{thm: quantitative error estimate} to Subsection \ref{sect: pf of quantitative error estimate} and first complete the proof of Theorem \ref{thm: main result}.

\begin{proof}[Proof of Theorem \ref{thm: main result}]
    The error estimate \eqref{eq: quantitative error mue} follows from Theorem \ref{thm: quantitative error estimate} (see Remark \ref{rmk: scaling property}). Note that for any $h \in L^{\infty}(\R^{d})$, $A \in M(\alpha, \beta)$, and $\mu \in \mathcal{M}_{\mathrm{cap}}(D)$, we have
    \[\n{R_{A, \mu}h}_{1} \underset{\eqref{eq: torsion poincare q < 2}}{\le}T(\mu)^{\frac{1}{2}}\n{R_{A, \mu}h}_{\dot{H}^{1}_{\mu}} \underset{\eqref{eq: weak formulation of relaxed Dirichlet}}{\lesssim_{\alpha}} T(\mu)^{\frac{1}{2}}\n{h R_{A, \mu}h}_{1}^{\frac{1}{2}} \le T(\mu)^{\frac{1}{2}}\n{h}_{L^{\infty}(\Omega_{\mu})}^{\frac{1}{2}}\n{R_{A, \mu}h}_{1}^{\frac{1}{2}},\]
    and thus
    \begin{equation}
        \label{eq: a priori estimate}
        \n{R_{A, \mu}h}_{1} \lesssim_{\alpha} T(\mu)\n{h}_{L^{\infty}(\Omega_{\mu})}.
    \end{equation}
    Hence, given $f \in L^{\infty}(\R^{d})$ and $g \in W^{1, \infty}(\R^{d})$, we bound
    \[\n{\frac{R_{A^{\varepsilon}, \mu^{\varepsilon}}f}{T(\mu^{\varepsilon})} - \frac{w_{A^{\varepsilon}, \mu^{\varepsilon}}}{T(\mu^{\varepsilon})}f}_{L^{1}(\R^{d})} \lesssim_{d, \alpha, \beta} \left( \n{f - g}_{L^{\infty}(\Omega_{\mu^{\varepsilon}})} + \n{\nabla g}_{L^{\infty}(\Omega_{\mu^{\varepsilon}})}T(\mu^{\varepsilon})^{\delta_{d, 1, 1}} + \n{f - g}_{L^{\infty}(\Omega_{\mu^{\varepsilon}})} \right)\]
    by \eqref{eq: a priori estimate}, Theorem \ref{thm: quantitative error estimate}, and the triangle inequality. Since every function $f \in C_{b}(\R^{d})$ can be approximated uniformly on $\overline{D}$ by a sequence of $W^{1, \infty}(\R^{d})$ functions, the last inequality implies the convergence \eqref{eq: qualitative corrector} when $f \in C_{b}(\R^{d})$.

    Since $D$ is bounded, there is a subsequence, still denoted by $(\varepsilon)$, and a nonnegative finite Borel measure $k$ supported in $\overline{D}$ such that $\frac{w_{A^{\varepsilon}, \mu^{\varepsilon}}}{T(\mu^{\varepsilon})} \xrightharpoonup{\mathcal{M}(\R^{d})} k$ weakly. The weak convergence \eqref{eq: convergence to kf} now follows from \eqref{eq: qualitative corrector}. Since $\frac{\n{w_{A^{\varepsilon}, \mu^{\varepsilon}}}_{1}}{T(\mu^{\varepsilon})} \to k(\overline{D})$, \eqref{eq: L1 norm of wA} implies the bounds for $k(\overline{D})$.
\end{proof}

\begin{remark}[Variants of Theorem \ref{thm: main result} and \ref{thm: quantitative error estimate}]
    \label{rmk: var of main thm}
    \begin{enumerate}
        \item Note that \eqref{eq: Ls_L1 estimate of torsion func} can be slightly improved: we have
    \begin{equation}
        \label{eq: torsion lorentz norm estimate}
        \n{w_{A, \mu}}_{{s, r}} \lesssim_{d} s^{\frac{1}{r}}\n{w_{A, \mu}}_{1}^{\gamma_{1} + \frac{1}{s}(1 - \gamma_{1})} \le s^{\frac{1}{r}}T(\mu)^{\gamma_{1} + \frac{1}{s}(1 - \gamma_{1})}
    \end{equation}
    for every $A \in M(1, \beta)$, $\mu \in \mathcal{M}_{\mathrm{cap}}^{b}$, and $1 \le r \le s < \infty$. Here, $\n{\cdot}_{s, r}$ is the $(s, r)$-Lorentz quasi-norm defined for any Lebesgue measurable $v: \R^{d} \to \R$ by
    \begin{equation}
        \n{v}_{s, r} := s^{\frac{1}{r}}\n{\lambda \mathcal{L}^{d}\left(\{|v| > \lambda\} \right)^{\frac{1}{s}}}_{L^{r}(\lambda^{-1} d\lambda\resmes ]0, \infty[)}.
    \end{equation}By using the H\"{o}lder inequality in Lorentz spaces (\cite[Theorem 4.5]{hunt1966on}) and \eqref{eq: torsion lorentz norm estimate} with $s = \frac{2p}{2 - p}$ and $r = 2$ instead of the usual H\"{o}lder and \eqref{eq: Ls_L1 estimate of torsion func} in the proofs of Lemmas \ref{thm: gradient of torsion negative sobolev estimate} and \ref{thm: energy estimate for u-wf} and Theorem \ref{thm: quantitative error estimate}, we obtain a partially improved estimate over \eqref{eq: quantitative error estimate}:
    \begin{equation}
        \label{eq: quantitative error estimate lorentz}
        \n{\frac{R_{A, \mu}f}{T(\mu)} - \frac{w_{A, \mu}}{T(\mu)}f}_{L^{q}(D)}
        \lesssim_{d, p} \beta\n{|\nabla f|\chi_{\Omega_{\mu}}}_{p', \infty}T(\mu)^{\delta_{d, p, q}}
    \end{equation}
    whenever $A \in M(1, \beta)$, $1 \le q \le 2$, $1 \le p < 2$, and $f \in H^{-1}(D)$ satisfy \eqref{eq: admissible integrability of source}, the distributional derivative $\nabla f \in L^{2}(D)$, and $\n{|\nabla f| \chi_{\Omega_{\mu}}}_{p', \infty} < \infty$.

    Using \eqref{eq: quantitative error estimate lorentz}, we may formulate a version of \eqref{eq: quantitative error mue}:
    \begin{equation}
        \label{eq: quantitative error mue lorentz}
        \n{\frac{R_{A^{\varepsilon}, \mu^{\varepsilon}}f}{T(\mu^{\varepsilon})} - \frac{w_{A^{\varepsilon}, \mu^{\varepsilon}}}{T(\mu^{\varepsilon})}f}_{L^{q}(D)} \lesssim_{d, \alpha, \beta, p}\n{|\nabla f|\chi_{\Omega_{\mu^{\varepsilon}}}}_{p', \infty}T(\mu^{\varepsilon})^{\delta_{d, p, q}},
    \end{equation}
    where $A^{\varepsilon}, \mu^{\varepsilon}, p, q, \alpha, \beta$ are as in Theorem \ref{thm: main result}, and $f \in C_{b}(\R^{d})$ satisfies $\nabla f \in L^{1}_{\mathrm{loc}}(D)$ and $\n{|\nabla f|\chi_{D}}_{p', \infty} < \infty$. We shall point out in Remark \ref{rmk: exact sharpness lorentz} that the rate of convergence of the left-hand side of \eqref{eq: quantitative error mue lorentz} is exactly of order $O(T(\mu^{\varepsilon})^{\delta_{d, p, q}})$ for suitable choices of $(\mu^{\varepsilon})$ and $f \in C(\R^{d})$ with $\n{|\nabla f| \chi_{D}}_{p', \infty} < \infty$.

    To obtain \eqref{eq: torsion lorentz norm estimate}, observe first that
    \begin{equation*}
        \begin{split}
            \n{w}_{s, r}^{r} &= s\int_{0}^{\n{w}_{\infty}}\left( \lambda\mathcal{L}^{d}\left(\{w > \lambda \}\right)^{\frac{1}{s}} \right)^{r} \, \frac{d\lambda}{\lambda}
            \le s\n{w}_{\infty}^{1 - \frac{r}{s}}\left( \int_{0}^{\infty} \lambda^{(r - 1)\frac{s}{r}} \mathcal{L}^{d}\left(\{w > \lambda\} \right) \, d\lambda \right)^{\frac{r}{s}}\\
            &= s\left( (r - 1)\frac{s}{r} + 1 \right)^{-\frac{r}{s}}\n{w}_{\infty}^{1 - \frac{r}{s}}\left( \n{w}_{(r - 1)\frac{s}{r} + 1}^{(r - 1)\frac{s}{r} + 1} \right)^{\frac{r}{s}}
        \end{split}
    \end{equation*}
    for every Lebesgue measurable $w: \R^{d} \to \R_{\ge 0}$. Hence, 
    \[\n{w_{A, \mu}}_{s, r} \le s^{\frac{1}{r}}\n{w_{A, \mu}}_{\infty}^{\theta}\n{w_{A, \mu}}_{s(1 - \theta)}^{1 - \theta},\]
    where $\theta = \frac{1}{r} - \frac{1}{s}$. Combining the last estimate and \eqref{eq: Ls_L1 estimate of torsion func} yields \eqref{eq: torsion lorentz norm estimate}.

    \item As we will see in Subsection \ref{sect: pf of quantitative error estimate}, the crucial ingredients of the proofs of Theorem \ref{thm: quantitative error estimate} and the preceding lemmas are the use of the weak formulation of the boundary value problems we consider (namely, \eqref{eq: weak formulation of relaxed Dirichlet}) and Lemma \ref{thm: multi general}. A few modifications which we describe below yield the following analogue of Theorem \ref{thm: main result} for Robin problems: let $\alpha, \beta, A^{\varepsilon}$ be as in Theorem \ref{thm: main result}, $D$ be bounded Lipschitz, and $(U^{\varepsilon})_{\varepsilon} \sub D$ be a sequence of bounded Lipschitz domains such that the solutions $w^{\varepsilon}$ to 
    \begin{equation}
        \label{eq: Robin torsion fct}
        -\Delta w^{\varepsilon} = 1  \text{ in $U^{\varepsilon}$, } \nabla w^{\varepsilon} \cdot n + w^{\varepsilon} = 0  \text{ on $\partial U^{\varepsilon}$}
    \end{equation}
    are such that $\n{w^{\varepsilon}}_{L^{1}(U^{\varepsilon})} \to 0$ as $\varepsilon \searrow 0$. Here, $n$ is the outward unit normal. Then there is a subsequence $(\varepsilon)$ and a nonnegative Borel measure $k$ on $\overline{D}$ with $\beta^{-1} \wedge 1 \le k(\overline{D}) \le \alpha^{-1} \vee 1$ such that for every $f \in C(\overline{D})$, the solution $u^{\varepsilon}$ to $-\mathrm{div}(A^{\varepsilon}\nabla u^{\varepsilon}) = f$ in $U^{\varepsilon}$, $A^{\varepsilon}\nabla u^{\varepsilon} \cdot n + u^{\varepsilon}= 0$ on $\partial U^{\varepsilon}$ satisfies
    \begin{equation}
        \n{w^{\varepsilon}}_{L^{1}(U^{\varepsilon})}^{-1}\bar{u^{\varepsilon}} \xrightharpoonup{\mathcal{M}(\overline{D})} kf
    \end{equation}
    weakly. Here, $\bar{u^{\varepsilon}} \in L^{1}(\R^{d})$ denotes the zero extension of $u^{\varepsilon}$.
    
     The necessary modifications to get the result above are the following.
     \begin{itemize}
        \item Instead of Lemma \ref{thm: multi general}, use the inequality
    \begin{equation}
        \label{eq: multi robin}
        \n{u}_{L^{r}} \le C(d, \mathcal{L}^{d}(U))\left( \n{\nabla u}_{L^{2}(U)}^{2} + \n{u}_{L^{2}(\partial U, \mathcal{H}^{d - 1})}^{2} \right)^{\frac{1}{2}\theta} \n{u}_{L^{s}(U)}^{1 - \theta},\, \frac{1}{r} = \frac{d - 1}{2d}\theta + \frac{1 - \theta}{s}
    \end{equation}
    which is valid for every bounded Lipschitz domain $U$, $u \in H^{1}(U)$, and $1 \le s \le r \le \frac{2d}{d - 1}.$ The estimate \eqref{eq: multi robin} follows from an inequality due to Maz'ya (\cite[Corollary 6.11.1/2]{Mazya2011sobolevspaces}, see also \cite[Section 3]{Daners2000Robin}) and an interpolation between Lebesgue spaces. This enables us to bound the $L^{\infty}(U^{\varepsilon})$ norm of $w^{\varepsilon}$ by a power of $\n{w^{\varepsilon}}_{L^{1}(U^{\varepsilon})}$ times a constant $C(d, \mathcal{L}^{d}(U^{\varepsilon}))$.
    \item Replace every occurrence of the capacitary measure $\mu$ (resp. the space $H^{1}_{\mu}$) in the proofs of Theorem \ref{thm: quantitative error estimate}, Lemma \ref{thm: torsional_Hardy}, \eqref{eq: torsion poincare q < 2}, and lemmas in Subsection \ref{sect: pf of quantitative error estimate} with $\mathcal{H}^{d - 1} \resmes \partial U^{\varepsilon}$ (resp. $H^{1}(U^{\varepsilon})$). Here, $\mathcal{H}^{d - 1}$ denotes the $(d - 1)$-dimensional Hausdorff measure on $\R^{d}$.
     \end{itemize}
     
    However, we will not give a detailed account of this direction as the scaling limit problem for Robin problems seems to be somewhat artificial since no general theory of Robin problems on varying domains comparable to that of \cite{dalmaso1987wiener, dalmaso2004asymptotic} for Dirichlet problems is available.
    \end{enumerate}
\end{remark}

Combining the results in \cite[Section 5]{dalmaso2004asymptotic} with Theorem \ref{thm: main result}, we obtain the following
\begin{corollary}
    \label{thm: main thm for domains}
    Let $0 < \alpha \le \beta < \infty$ and $D \sub \R^{d}$ be bounded open. For $A \in M(\alpha, \beta)$, nonempty open $U \sub D$, and $f \in H^{-1}(D)$, let $R_{A, U}f$ denote the unique (weak) solution $u \in H^{1}_{0}(U)$ to the problem
    \begin{equation}
        -\mathrm{div}(A\nabla u) = f \text{ in $H^{-1}(U)$, $u \in H^{1}_{0}(U)$}.
    \end{equation}
    Let $(A^{\varepsilon}) \sub M(\alpha, \beta)$, and $(D^{\varepsilon}) \sub D$ be arbitrary nonempty open subsets. Here, $\varepsilon$ varies in a sequence in $]0, 1[$ strictly decreasing to $0$ (see Subsection \ref{sect: notation}). Then up to a subsequence, one of the following holds.
    \begin{enumerate}
        \item (\cite[Theorem 5.2, Corollary 5.5]{dalmaso2004asymptotic}). There are $A \in M(\alpha, \beta)$ and a capacitary measure $\mu \in \mathcal{M}_{\mathrm{cap}}(D) \setminus \{I_{\varnothing}\}$ such that for every $f \in H^{-1}(D)$, $R_{A^{\varepsilon}, D^{\varepsilon}}f \xrightharpoonup{H^{1}_{0}(D)} u$ weakly, where $u$ is the solution to the relaxed Dirichlet problem
        \[-\mathrm{div}(A \nabla u) + \mu u = f  \text{ in $(H^{1}_{\mu})^{*}$, $u \in H^{1}_{\mu}$},\]
        the solution to which for every nonnegative $f \in L^{2}(D)$ bounded away from $0$ $\mathcal{L}^{d}$-a.e. on $D$ is nonzero.
        \item $R_{A^{\varepsilon}, D^{\varepsilon}}f \to 0$ in $H^{1}_{0}(D)$ for every $f \in H^{-1}(D)$. Further, there is a nonnegative finite Borel measure $k \in \mathcal{M}(\overline{D})$ with $\beta^{-1} \le k(\overline{D}) \le \alpha^{-1}$ such that for every $f \in C(\overline{D})$, 
        \[\frac{R_{A^{\varepsilon}, D^{\varepsilon}}f}{T(D^{\varepsilon})} \xrightharpoonup{\mathcal{M}(\overline{D})} kf\]
        weakly in $\mathcal{M}(\overline{D})$, where $T(D^{\varepsilon}) = \n{R_{\mathrm{Id}, D^{\varepsilon}}(1)}_{L^{1}(D)}$ is the torsional rigidity. Further, if $p, q \in [1, 2]$ satisfy $\frac{1}{d} - \left( \frac{1}{p'} + \frac{1}{q'} \right)  > 0$, then 
        \begin{equation}
            \label{eq: quantitative error estimate domains}
            \n{\frac{R_{A^{\varepsilon}, D^{\varepsilon}}f}{T(D^{\varepsilon})} - \frac{w_{A^{\varepsilon}, D^{\varepsilon}}}{T(D^{\varepsilon})}f}_{L^{q}(D)} \lesssim_{d, \alpha, \beta} \n{\nabla f}_{L^{p'}(D^{\varepsilon})}T(D^{\varepsilon})^{\delta_{d, p, q}}
        \end{equation}
        for every $f \in C(\overline{D}) \cap W^{1, p'}(D)$, where $\delta_{d, p, q} = \frac{d}{d + 2}\left(\frac{1}{d} - \left( \frac{1}{p'} + \frac{1}{q'} \right)\right)$ and $w_{A^{\varepsilon}, D^{\varepsilon}} = R_{A^{\varepsilon}, D^{\varepsilon}}(1)$.
    \end{enumerate}
\end{corollary}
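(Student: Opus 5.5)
The plan is to deduce the corollary from the Dal Maso--Murat compactness theorem combined with Theorem \ref{thm: main result}, applied to the open capacitary measures $\mu^{\varepsilon} := I_{D^{\varepsilon}} \in \mathcal{M}_{\mathrm{cap}}(D)$. By Remark \ref{rmk: open set as a capacitary meas} and Remark \ref{rmk: relaxed dirichlet for I_U}, $R_{A^{\varepsilon}, I_{D^{\varepsilon}}}f = R_{A^{\varepsilon}, D^{\varepsilon}}f$ for $f \in H^{-1}(D)$. Also $T(I_{D^{\varepsilon}}) = T(D^{\varepsilon})$, and $\Omega_{\mu^{\varepsilon}} \sub D^{\varepsilon}$ up to a polar set. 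Since $D^{\varepsilon} \neq \varnothing$, we have $\mu^{\varepsilon} \neq I_{\varnothing}$.

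First, by \cite[Section 5]{dalmaso2004asymptotic}, after passing to a subsequence there exist $A \in M(\alpha,\beta)$ and $\mu \in \mathcal{M}_{\mathrm{cap}}(D)$ such that $R_{A^{\varepsilon}, D^{\varepsilon}}f \rightharpoonup R_{A,\mu}f$ weakly in $H^{1}_{0}(D)$ for every $f \in H^{-1}(D)$. We then split into two cases.

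\emph{Case 1: $\mu \neq I_{\varnothing}$.} This is alternative 1, and the convergence statement is \cite[Theorem 5.2, Corollary 5.5]{dalmaso2004asymptotic}. It remains to check nonvanishing. By \cite[Proposition 3.4]{dalmaso1994new}, $T(\mu) \neq 0$, so $w_{\mu} \neq 0$. By Lemma \ref{thm: torsion and poincare const}, $H^{1}_{\mu} \neq (0)$ and $w_{A,\mu} \neq 0$. Now let $f \in L^{2}(D)$ with $f \ge c > 0$ a.e. on $D$. Since $H^{1}_{\mu} \sub H^{1}_{0}(D)$, test functions vanish off $D$, so $f$ and $c\chi_{D}$ act on $H^{1}_{\mu}$ as elements of $H^{-1}(D)$ with $f \ge c$. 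Lemma \ref{thm: comparison} then gives $R_{A,\mu}f \ge c\,w_{A,\mu} \ge 0$ q.e., and hence $R_{A,\mu}f \neq 0$.

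\emph{Case 2: $\mu = I_{\varnothing}$.} By \cite[Theorem 5.1]{dalmaso2004asymptotic}, as recalled in Theorem \ref{thm: main result}, this is equivalent to $T(D^{\varepsilon}) \to 0$. The unscaled solutions converge weakly to $0$. For strong convergence, write $u^{\varepsilon} = R_{A^{\varepsilon},D^{\varepsilon}}f$ and test with $u^{\varepsilon}$:
\[
\alpha\n{\nabla u^{\varepsilon}}_{2}^{2} \le \langle f, u^{\varepsilon}\rangle_{H^{-1}(D), H^{1}_{0}(D)} \to 0 .
\]
Hence $u^{\varepsilon} \to 0$ strongly in $H^{1}_{0}(D)$.

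Theorem \ref{thm: main result} then supplies a further subsequence and a measure $k$. Because $\mu^{\varepsilon}$ is quasi-open, it gives $\beta^{-1} \le k(\overline{D}) \le \alpha^{-1}$, the convergence \eqref{eq: convergence to kf}, and the estimate \eqref{eq: quantitative error mue} in which the right-hand side reads $\n{\nabla f}_{L^{p'}(\Omega_{\mu^{\varepsilon}})} \le \n{\nabla f}_{L^{p'}(D^{\varepsilon})}$. Two translations remain.
\begin{enumerate}
\item Any $f \in C(\overline{D})$ extends by Tietze to some $\tilde{f} \in C_{b}(\R^{d})$. The solution depends only on $f|_{D}$, and all measures involved are supported in $\overline{D}$, so weak convergence in $\mathcal{M}(\R^{d})$ of measures supported in the compact set $\overline{D}$ is the same as weak convergence in $\mathcal{M}(\overline{D})$.
\item The $L^{q}(\R^{d})$ norm in \eqref{eq: quantitative error mue} equals the $L^{q}(D)$ norm, since both $R_{A^{\varepsilon},D^{\varepsilon}}f$ and $w_{A^{\varepsilon},D^{\varepsilon}}$ vanish outside $D^{\varepsilon}$.
\end{enumerate}

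The main obstacle is mild bookkeeping rather than analysis. One point is the nonvanishing claim in Case 1, which needs the nontriviality of $w_{A,\mu}$ for $\mu \neq I_{\varnothing}$ via \cite[Proposition 3.4]{dalmaso1994new} together with the comparison principle. The other is ensuring that the subsequence extractions in the two steps compose consistently, which they do since Theorem \ref{thm: main result} is applied along the subsequence already chosen.
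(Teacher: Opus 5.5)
Your proposal is correct and follows the paper's route. The paper obtains the corollary by applying the Dal Maso--Murat compactness theorem to $\mu^{\varepsilon} = I_{D^{\varepsilon}}$ and, when $\mu = I_{\varnothing}$, invoking Theorem \ref{thm: main result} for quasi-open $\mu^{\varepsilon}$; your translations from $\mathcal{M}(\R^{d})$ to $\mathcal{M}(\overline{D})$ and from $L^{q}(\R^{d})$ to $L^{q}(D)$, together with your extra checks, correctly fill in details the paper leaves to the citation. Those extra checks are nonvanishing in alternative 1 (via $T(\mu) \neq 0$, Lemma \ref{thm: torsion and poincare const} and the comparison principle) and strong $H^{1}_{0}(D)$ convergence (via the energy identity).
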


Motivated by Theorem \ref{thm: main result}, we make the following
\begin{definition}
    \label{def: eff meas}
    Let $0 < \alpha \le \beta < \infty$, $(A^{\varepsilon}) \sub M(\alpha, \beta)$, and $(\mu^{\varepsilon}) \sub \mathcal{M}_{\mathrm{cap}}^{b}$. We say that the problems $P(A^{\varepsilon}, \mu^{\varepsilon})$ admit an \emph{effective measure} $k \in \mathcal{M}(\R^{d})$ if there is a bounded open $D \sub \R^{d}$ such that $\mu^{\varepsilon} \in \mathcal{M}_{\mathrm{cap}}(D) \setminus \{I_{\varnothing}\}$ except for finitely many $\varepsilon$, $T(\mu^{\varepsilon}) \to 0$, and
    \[
        T(\mu^{\varepsilon})^{-1}w_{A^{\varepsilon}, \mu^{\varepsilon}} \rightharpoonup k
        \quad \text{in $\mathcal{M}(\R^{d})$ weak}
    \]
    as $\varepsilon \searrow 0$.
\end{definition}

By Theorem \ref{thm: main result}, every sequence $(A^{\varepsilon}, \mu^{\varepsilon})$ as in that theorem has a subsequence along
which the problems $P(A^{\varepsilon}, \mu^{\varepsilon})$ admit an effective measure. Moreover, if the problems $P(A^{\varepsilon}, \mu^{\varepsilon})$ admit an effective measure $k$, then, applying Theorem \ref{thm: main result} to subsequences, we obtain $T(\mu^{\varepsilon})^{-1}R_{A^{\varepsilon}, \mu^{\varepsilon}}f \rightharpoonup kf$ in $\mathcal{M}(\R^{d})$ weak for every $f \in C_{b}(\R^{d})$, along the whole sequence. For rougher sources, see Section \ref{sect: rougher source}.
 
\begin{remark}
    \label{rmk: Darcy}
    The analogous problem for the Stokes equations in supercritically perforated domains of the type \eqref{eq: perforated domain Deh} leads to Darcy's law (see, e.g., \cite{sanchezpalencia1980nonhomogeneous, tartar1979incompressible, Allaire1989homogenization, Allaire1990homogenization, allaire1997homogenization}). Since the coefficients $(A^{\varepsilon})$ play the role of the viscosity, the effective measure $k$ is an analogue of the ratio of the permeability to the viscosity in Darcy's law. The normalizations also agree: if $D^{\varepsilon}_{\eta}$ is defined as in \eqref{eq: perforated domain Deh}, then $T(D^{\varepsilon}_{\eta}) \asymp_{d, D} \varepsilon^{d - (d - 2)\eta}$ for the supercritical regime $1 \le \eta < \frac{d}{d - 2}$ (see Example \ref{ex: periodically perforated domains}). This is comparable to the factor by which the velocity is rescaled (see \cite[Subsection 3.1.1 and Theorem 3.1]{allaire1997homogenization}).
\end{remark}

The corrector for \eqref{eq: qualitative corrector} and the effective measure also describe the asymptotic behavior of the energy density.
\begin{proposition}[Convergence of the scaled energy density]
    \label{thm: convergence of energy}
    Let $0 < \alpha \le \beta < \infty$, $A \in M(\alpha, \beta)$, $\mu \in \mathcal{M}_{\mathrm{cap}}^{b}$, and $f \in L^{\infty}(\R^{d})$. Then
    \begin{equation}
        \label{eq: quantitative core of energy convergence}
        \sup_{\varphi \in C_{c}^{\infty}(\R^{d}), \n{\nabla \varphi}_{L^{q'}(\Omega_{\mu})} \le 1}T(\mu)^{-1}\left| \int \varphi A \nabla u \cdot \nabla u + \int \varphi u^{2} \, d\mu - \int\varphi uf \right| \lesssim_{d, \alpha, \beta} \n{f}_{\infty}^{2}T(\mu)^{\delta_{d, 1, q}}
    \end{equation}
    for every $1 \le q \le 2$, where $u := R_{A, \mu}f$. If, furthermore, $1 < q < \frac{d}{d - 1}$ and $f \in W^{1, q'}(\R^{d})$, then
    \begin{equation}
        \n{\frac{A \nabla u \cdot \nabla u \mathcal{L}^{d} + u^{2}\mu}{T(\mu)} - \frac{w_{A, \mu}}{T(\mu)}f^{2}}_{W^{-1, q}(\R^{d})} \lesssim_{d, \alpha, \beta, q} \n{f}_{L^{\infty}(\Omega_{\mu})}\left( \n{f}_{L^{\infty}(\Omega_{\mu})} + \n{\nabla f}_{L^{q'}(\Omega_{\mu})} \right) T(\mu)^{\delta_{d, 1, q}}.
    \end{equation}
    As a consequence, if $(A^{\varepsilon}) \sub M(\alpha, \beta)$ and $(\mu^{\varepsilon}) \sub \mathcal{M}_{\mathrm{cap}}^{b}$ are such that the problems $P(A^{\varepsilon}, \mu^{\varepsilon})$ admit an effective measure $k$, then for every $f \in C_{b}(\R^{d})$ and $u^{\varepsilon} := R_{A^{\varepsilon}, \mu^{\varepsilon}}f$,
    \begin{equation}
        \label{eq: weak convergence of energy}
        T(\mu^{\varepsilon})^{-1}\left( A^{\varepsilon}\nabla u^{\varepsilon} \cdot \nabla u^{\varepsilon} \mathcal{L}^{d} + |u^{\varepsilon}|^{2}\mu^{\varepsilon} \right) \to kf^{2}
    \end{equation}
    in $\mathcal{M}(\R^{d})$ weak and in $W^{-1, q}(\R^{d})$ strong for $1 < q < \frac{d}{d - 1}$, and if $f \in W^{1, q'}(\R^{d})$, then 
    \begin{equation}
        \begin{split}
            \Bigg\|\frac{A^{\varepsilon}\nabla u^{\varepsilon} \cdot \nabla u^{\varepsilon} \mathcal{L}^{d} + |u^{\varepsilon}|^{2}\mu^{\varepsilon}}{T(\mu^{\varepsilon})} &- \frac{w_{A^{\varepsilon}, \mu^{\varepsilon}}}{T(\mu^{\varepsilon})}f^{2}\Bigg\|_{W^{-1, q}(\R^{d})}\\ &\lesssim_{d, \alpha, \beta, q} \n{f}_{L^{\infty}(\Omega_{\mu^{\varepsilon}})}\left( \n{f}_{L^{\infty}(\Omega_{\mu^{\varepsilon}})} + \n{ \nabla f}_{L^{q'}(\Omega_{\mu^{\varepsilon}})} \right)T(\mu^{\varepsilon})^{\delta_{d, 1, q}}
        \end{split}
    \end{equation}
    for every $1 < q < \frac{d}{d - 1}$.
\end{proposition}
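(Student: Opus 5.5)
The plan is to test the equation for $u := R_{A,\mu}f$ with $\varphi u$. For $\varphi \in C_c^\infty(\R^d)$ we have $\varphi u \in H^1_\mu$, since $u$ is bounded by the $L^\infty$ estimate of the appendix and $\varphi$ is smooth. The weak formulation \eqref{eq: weak formulation of relaxed Dirichlet} then gives
\[
\int \varphi A\nabla u\cdot\nabla u + \int \varphi u^2\,d\mu - \int \varphi u f = -\int u\, A\nabla u\cdot\nabla\varphi .
\]
So the quantity in \eqref{eq: quantitative core of energy convergence} is exactly the commutator term $\int u\,A\nabla u\cdot\nabla\varphi$. Since $\nabla u = 0$ a.e. off $\Omega_\mu$, Hölder gives
\[
\left|\int u\, A\nabla u\cdot\nabla\varphi\right| \le \beta\,\n{u\nabla u}_{L^{q}}\,\n{\nabla\varphi}_{L^{q'}(\Omega_\mu)} .
\]
The task is therefore to show $\n{u\nabla u}_{q} \lesssim \n{f}_\infty^2 T(\mu)^{1+\delta_{d,1,q}}$.

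To bound $u\nabla u$, I use comparison (Lemma \ref{thm: comparison}) applied to $f_\pm$, which yields $|u| \le \n{f}_\infty w_{A^s}$-type bounds. Concretely, $|u| \lesssim \n{f}_\infty w_{A,\mu}$ pointwise (for nonsymmetric $A$ I split $f=f_+-f_-$ and use the comparison principle with the same $A$). Then
\[
\n{u\nabla u}_q \le \n{u}_{\frac{2q}{2-q}}\n{\nabla u}_2 \lesssim \n{f}_\infty \n{w_{A,\mu}}_{\frac{2q}{2-q}}\, \n{\nabla u}_2 .
\]
The energy bound $\n{\nabla u}_2^2 \lesssim_\alpha \int |fu| \lesssim \n{f}_\infty^2 T(\mu)$ follows from \eqref{eq: a priori estimate}. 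For the other factor, Lemma \ref{thm: Linfty_Lt_estimate_of_torsion_func} with $s=\frac{2q}{2-q}$ gives $\n{w}_{s}\lesssim T^{\gamma_1+\frac1s(1-\gamma_1)}$, where $\gamma_1 = \frac{2}{d+2}$. Then
\[
\gamma_1 + \tfrac1s(1-\gamma_1) + \tfrac12 = 1 + \tfrac{d}{d+2}\Bigl(\tfrac1d - \tfrac1{q'}\Bigr),
\]
because $\frac1s=\frac1q-\frac12$. This equals $1+\delta_{d,1,q}$, as $\frac1{p'}=0$ for $p=1$. Rescaling $A$ to $M(1,\beta/\alpha)$ via Remark \ref{rmk: scaling property} handles general $\alpha$. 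This proves \eqref{eq: quantitative core of energy convergence}.

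For the $W^{-1,q}$ estimate I pair the measure difference with $\varphi\in C_c^\infty$, $\n{\nabla\varphi}_{q'}\le1$, and write
\[
\int \varphi u f - \int \varphi w_{A,\mu} f^2 = \int \varphi f\,(u - w_{A,\mu} f).
\]
This is bounded by $\n{\varphi f}_{q'}\n{u-w_{A,\mu}f}_q$. By Theorem \ref{thm: quantitative error estimate} with $p=q$, the error is at most $\n{\nabla f}_{q'}T^{1+\delta_{d,q,q}}$. I would also like to use $p=1$, but Theorem \ref{thm: quantitative error estimate} with $p=1$ requires $\nabla f \in L^\infty$. So instead I pair it as $\n{\varphi f}_{\infty}$-free: use Sobolev, $\n{\varphi}_{L^{r}}\lesssim \n{\nabla \varphi}_{q'}$ on bounded $D$ (since $q'>d$, $\varphi$ is bounded by Morrey on the bounded set $D$). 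Then $|\int\varphi f(u-wf)| \le \n{\varphi}_{L^\infty(D)}\n{f}_\infty\n{u-wf}_1$. The term $\n{u-wf}_1$ is bounded via Theorem \ref{thm: quantitative error estimate} with $(p,q)$ replaced by $(q,1)$, giving $\n{\nabla f}_{q'}T^{1+\delta_{d,q,1}}$ with $\delta_{d,q,1}=\delta_{d,1,q}$. Combining with the first step gives the stated $W^{-1,q}$ bound; $q<\frac{d}{d-1}$ ensures $q'>d$ and the positivity of $\delta$.

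For the consequences, the $\mathcal{M}$-weak limit follows from \eqref{eq: quantitative core of energy convergence} with $q=1$, since $\n{\nabla\varphi}_\infty$ is bounded for $\varphi\in C_c^\infty$. Density of $C_c^\infty$ in $C_0$ and boundedness of the total mass, $T^{-1}(\ldots)(\R^d)=T^{-1}\int uf\lesssim \n{f}_\infty^2$, then reduce the problem to $T^{-1}\int\varphi uf\to\int\varphi f^2\,dk$, which is Theorem \ref{thm: main result} applied to the source $\varphi f$. Strong $W^{-1,q}$ convergence follows because the measures are bounded and supported in $\overline D$, and $\mathcal{M}(\overline D)\hookrightarrow W^{-1,q}$ compactly for $q<\frac d{d-1}$, so weak convergence upgrades to strong. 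The main obstacle I expect is the pointwise domination $|u|\lesssim\n f_\infty w_{A,\mu}$ for relaxed problems, which is needed in the Hölder step; it is settled by Lemma \ref{thm: comparison}.
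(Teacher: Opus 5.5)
Your proposal is essentially the paper's proof: the commutator identity $\int\varphi A\nabla u\cdot\nabla u+\int\varphi u^{2}\,d\mu-\int\varphi uf=-\int u\,A\nabla u\cdot\nabla\varphi$, H\"older with $\n{u}_{\frac{2q}{2-q}}\n{\nabla u}_{2}$, the comparison $|u|\le\n{f}_{\infty}w_{A,\mu}$ and \eqref{eq: Ls_L1 estimate of torsion func}, then Theorem \ref{thm: quantitative error estimate} with exponents $(q,1)$ paired against $\n{\varphi f}_{\infty}$, and compactness of $\mathcal{M}(\overline{D})\hookrightarrow W^{-1,q}$ for the strong convergence.

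There are two small fixes. First, for $q'>d$ the bound $\n{\varphi}_{L^{\infty}(D)}\lesssim\n{\nabla\varphi}_{q'}$ fails for $\varphi\in C_{c}^{\infty}(\R^{d})$: take $\varphi=\psi(\cdot/R)$ with $R\to\infty$. So you must test with $\n{\varphi}_{W^{1,q'}(\R^{d})}\le 1$, the norm dual to $W^{-1,q}(\R^{d})$, and use Morrey's inequality $\n{\varphi}_{\infty}\lesssim_{d,q}\n{\varphi}_{W^{1,q'}(\R^{d})}$, as the paper does. Second, $T(\mu^{\varepsilon})^{-1}\int\varphi u^{\varepsilon}f\to\int\varphi f^{2}\,dk$ is Theorem \ref{thm: main result} for the source $f$ tested against $\varphi f\in C_{b}(\R^{d})$. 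It is not that theorem applied to the source $\varphi f$, which for nonsymmetric $A^{\varepsilon}$ would compute a different pairing.
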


\begin{proof}
    Given $\varphi \in C_{c}^{\infty}(\R^{d})$, we calculate
    \begin{equation}
        \label{eq: convergence of energy Leibniz}
        \begin{split}
            \int \varphi A \nabla u \cdot \nabla u + \int \varphi u^{2}\, d\mu - \int \varphi u f&= \int A \nabla u \cdot \nabla (\varphi u) + \int \varphi u^{2} \, d\mu -\int \varphi u f - \int u A \nabla u \cdot \nabla \varphi\\
            &= - \int u A \nabla u \cdot \nabla \varphi\\
            &\le \n{u}_{\frac{2q}{2 - q}}\n{A \nabla u}_{2}\n{\nabla \varphi}_{L^{q'}(\Omega_{\mu})}.
        \end{split}
    \end{equation}
    Note that $\n{A \nabla u}_{2}^{2} \le \beta \int A \nabla u \cdot \nabla u + \beta\int u^{2} \, d\mu = \beta\int fu \le \beta \n{f}_{\infty}\n{u}_{1}$. Observe also that Lemma \ref{thm: comparison} implies $\n{u}_{r} \le \n{f}_{\infty}\n{w_{A, \mu}}_{r}$ for every $1 \le r \le \infty$. Hence, the right-hand side of the last display can be further bounded by
    \begin{equation*}
        C(\beta)\n{f}_{\infty}^{2}\n{w_{A, \mu}}_{\frac{2q}{2 - q}} \n{w_{A, \mu}}_{1}^{\frac{1}{2}} \n{ \nabla \varphi}_{L^{q'}(\Omega_{\mu})} \underset{\text{\eqref{eq: Ls_L1 estimate of torsion func}}}{\le} C(d, \alpha, \beta)\n{f}_{\infty}^{2}T(\mu)^{1 + \delta_{d, 1, q}}\n{\nabla \varphi}_{L^{q'}(\Omega_{\mu})},
    \end{equation*}
    and \eqref{eq: quantitative core of energy convergence} is established.
    
    Now suppose that $f \in W^{1, q'}(\R^{d})$ with $1 < q < \frac{d}{d - 1}$. Then by Theorem \ref{thm: quantitative error estimate}, 
    \[\left| \int \varphi u f - \int \varphi w_{A, \mu}f^{2} \right| \lesssim_{d, \alpha, \beta} \n{\varphi f}_{L^{\infty}(\Omega_{\mu})} \n{\nabla f}_{L^{q'}(\Omega_{\mu})}T(\mu)^{1 + \delta_{d, 1, q}}.\]
    From the last display and \eqref{eq: quantitative core of energy convergence}, we obtain 
    \begin{equation}
        \begin{split}
            &\left| \int \varphi A \nabla u \cdot \nabla u + \int \varphi u^{2} \, d\mu - \int \varphi w_{A, \mu}f^{2} \right|\\ &\lesssim_{d, \alpha, \beta} \n{f}_{L^{\infty}(\Omega_{\mu})}\left( \n{\varphi}_{L^{\infty}(\Omega_{\mu})} \n{\nabla f}_{L^{q'}(\Omega_{\mu})} + \n{\nabla \varphi}_{L^{q'}(\Omega_{\mu})} \n{f}_{L^{\infty}(\Omega_{\mu})}\right)T(\mu)^{1 + \delta_{d, 1, q}}\\
            &\lesssim_{d, q} \n{\varphi}_{W^{1, q'}(\R^{d})}\n{f}_{L^{\infty}(\Omega_{\mu})}\left( \n{f}_{L^{\infty}(\Omega_{\mu})} + \n{\nabla f}_{L^{q'}(\Omega_{\mu})} \right) T(\mu)^{1 + \delta_{d, 1, q}}.
        \end{split}
    \end{equation}

    Finally suppose that $f \in C_{b}(\R^{d})$ and the problems $P(A^{\varepsilon}, \mu^{\varepsilon})$ admit an effective measure $k$. Then the left-hand side of \eqref{eq: weak convergence of energy} is concentrated on a compact subset of $\R^{d}$ with total mass bounded by $\n{f}_{\infty}\sup_{\varepsilon}T(\mu^{\varepsilon})^{-1}\n{u^{\varepsilon}}_{1} < \infty$. Hence, up to a subsequence, it is weakly convergent in $\mathcal{M}(\R^{d})$ and, by the Rellich--Kondrachov theorem, is strongly convergent in $W^{-1, q}(\R^{d})$ for $1 < q < \frac{d}{d - 1}$. By \eqref{eq: convergence to kf} and \eqref{eq: quantitative core of energy convergence} with $1 \le q < \frac{d}{d - 1}$, we identify the limit as $kf^{2}$.
\end{proof}

\subsection{Attainability of any finite measure as an effective measure}
\label{sect: attainability}

The next proposition shows that any nonzero nonnegative finite Borel measure on $\R^{d}$ with compact support is the effective measure of suitable problems.

\begin{proposition}[Attainability]
    \label{thm: attainability}
    Let $D \sub \R^{d}$ be a nonempty bounded open set and $k$ a nonnegative Borel measure supported in $\overline{D}$ with $0 < k(\overline{D}) < \infty$. Then there is a sequence $(\Omega^{\varepsilon})_{\varepsilon}$ of open subsets of $D$ such that $k$ is the effective measure of the problems $P(k(\overline{D})^{-1}\mathrm{Id}, \Omega^{\varepsilon})$.
\end{proposition}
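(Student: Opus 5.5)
The plan is to reduce the statement to realizing an arbitrary probability measure on $\overline{D}$ as a weak limit of normalized Laplacian torsion functions, and to build the domains from finitely many tiny disjoint balls.

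\emph{Reduction.} Put $c := k(\overline{D}) \in ]0, \infty[$. By Remark \ref{rmk: scaling property}, $w_{c^{-1}\mathrm{Id}, \Omega} = c\, w_{\Omega}$. Hence it suffices to construct open $\Omega^{\varepsilon} \sub D$ with $T(\Omega^{\varepsilon}) \to 0$ and
\[
T(\Omega^{\varepsilon})^{-1} w_{\Omega^{\varepsilon}} \rightharpoonup c^{-1}k =: \pi
\]
weakly in $\mathcal{M}(\R^{d})$. Here $\pi$ is a probability measure on $\overline{D}$.

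\emph{Atomic approximation.} Since $D$ is dense in $\overline{D}$ and $\overline{D}$ is compact metrizable, I choose probability measures $\pi_{n} = \sum_{i=1}^{M_{n}} \frac{N_{i}^{(n)}}{N^{(n)}} \delta_{x_{i}^{(n)}}$ with the following properties: the points $x_{i}^{(n)} \in D$ are distinct, the weights are rational with $\sum_{i} N_{i}^{(n)} = N^{(n)}$, and $\pi_{n} \rightharpoonup \pi$. To get them, partition $\overline{D}$ into finitely many Borel pieces of diameter $< 1/n$, pick a point of $D$ near each piece, and round the masses to rationals with error $< 1/n$.

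\emph{Realizing one atomic measure.} Fix $n$ and choose $\rho_{n} > 0$ so that the balls $U(x_{i}^{(n)}, \rho_{n})$ are pairwise disjoint, contained in $D$, and $\rho_{n} < 1/n$. Inside each $U(x_{i}^{(n)}, \rho_{n})$, place $N_{i}^{(n)}$ pairwise disjoint open balls of a common radius $r \ll \rho_{n}$, and let $\Omega_{n, r}$ be the union of all these $N^{(n)}$ balls. Because the components are disjoint, $w_{\Omega_{n, r}}$ is the sum of the torsion functions of the balls. Each ball $B$ has $T(B) = c_{d} r^{d+2}$, with $w_{B}$ supported in $B$. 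Therefore
\[
T(\Omega_{n, r}) = N^{(n)} c_{d} r^{d+2}
\quad\text{and}\quad
\frac{w_{\Omega_{n, r}}}{T(\Omega_{n, r})} = \sum_{i} \frac{N_{i}^{(n)}}{N^{(n)}}\, \nu_{i},
\]
where each $\nu_{i}$ is a probability measure supported in $U(x_{i}^{(n)}, \rho_{n})$. In a metric inducing the weak topology on probability measures on the compact set $\overline{D}$ (e.g.\ bounded-Lipschitz), this measure is therefore within $\rho_{n} < 1/n$ of $\pi_{n}$. Moreover $T(\Omega_{n, r}) \to 0$ as $r \to 0$ with $n$ fixed.

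\emph{Diagonalization.} For each $n$ choose $r_{n}$ with $N^{(n)} c_{d} r_{n}^{d+2} < 1/n$, and set $\Omega^{\varepsilon_{n}} := \Omega_{n, r_{n}}$ along the given sequence $\varepsilon_{n} \searrow 0$. Then $T(\Omega^{\varepsilon_{n}}) \to 0$ and $\mathrm{dist}(T^{-1} w_{\Omega^{\varepsilon_{n}}}, \pi) \le 1/n + \mathrm{dist}(\pi_{n}, \pi) \to 0$, which gives the weak convergence in $\mathcal{M}(\R^{d})$. Multiplying by $c$ recovers $k$ as the effective measure of $P(c^{-1}\mathrm{Id}, \Omega^{\varepsilon})$ in the sense of Definition \ref{def: eff meas}.

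\emph{Expected obstacle.} The only delicate point is the bookkeeping in the atomic approximation: masses sitting on $\partial D$ must be represented by atoms inside $D$, and the weights must be rational so that equal-size balls can be used. Both are handled by choosing nearby interior points and rounding, since weak convergence tolerates these errors. Everything else is an explicit computation.
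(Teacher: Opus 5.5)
Your proof is correct and follows essentially the same route as the paper. Both arguments approximate $k/k(\overline{D})$ by finite convex combinations of Dirac masses at distinct points of $D$, realize each atom by small disjoint balls whose normalized torsion functions concentrate at that point, use additivity of torsion functions over disjoint components, and diagonalize. The only difference is cosmetic: the paper puts a single ball $U(x_i,r_i)$ at each atom and tunes the radii so that $T(U(x_i,r_i))=\delta a_i$, which is possible because $T(U(x,r))=r^{d+2}T(U(0,1))$. That handles arbitrary real weights directly and makes your rational rounding and clusters of equal-radius balls unnecessary.
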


\begin{proof}
    We may assume that $k(\overline{D}) = 1$ (see Remark \ref{rmk: scaling property}). Let $\rho$ be a metric for the weak topology on the space $\mathrm{pr}(\overline{D})$ of Borel probability measures on $\overline{D}$ such that
    \begin{equation}
        \rho\left(\sum_{i = 1}^{n}a_{i}\mu_{i}, \sum_{i = 1}^{n}a_{i}\nu_{i}\right) \le \sum_{i = 1}^{n}a_{i}\rho(\mu_{i}, \nu_{i})
    \end{equation}
    for every $a_{i} \ge 0$ and $\mu_{i}, \nu_{i} \in \mathrm{pr}(\overline{D})$ such that $\sum_{i}a_{i} = 1$. For instance, set 
    \[\rho(\mu, \nu) = \sum_{i = 1}^{\infty}2^{-i}\left|\int_{\overline{D}}f_{i} \, (d\nu - d\mu)\right|,\]
    where $(f_{i})_{i}$ is a countable dense family of the unit ball of $C(\overline{D})$.

    Fix $\varepsilon >0$. Then there is a positive integer $N$, $a_{i} \in ]0, 1]$, and distinct $x_{i} \in D$ $(i = 1, \cdots N)$ such that 
    \begin{equation}
        \rho\left( k, \sum_{i = 1}^{N}a_{i}\delta_{x_{i}} \right) < \varepsilon
    \end{equation}
    and $\sum_{i = 1}^{N}a_{i} = 1$. By Example \ref{ex: torsion fct on ball} below, we may find $r_{1}, \cdots r_{N} \in ]0, \infty[$ and $\delta \in ]0,  \varepsilon[$ such that $(U(x_{i}, r_{i}))_{i}$ is a disjoint family of subsets of $D$, 
    \[\rho\left(\delta_{x_{i}}, \frac{w_{U(x_{i}, r_{i})}}{T(U(x_{i}, r_{i}))}\right) \le \varepsilon,\]
    and
    \[T(U(x_{i}, r_{i})) = \delta a_{i}.\]
    Define
    \begin{equation}
        \Omega^{\varepsilon} := \bigcup_{i = 1}^{N}U(x_{i}, r_{i}).
    \end{equation}
    Then $T(\Omega^{\varepsilon}) = \sum_{i = 1}^{N}T(U(x_{i}, r_{i})) = \delta < \varepsilon$, and
    \begin{equation*}
        \begin{split}
            \rho\left( \sum_{i = 1}^{N}a_{i}\delta_{x_{i}},  \frac{w_{\Omega^{\varepsilon}}}{T(\Omega^{\varepsilon})}\right) = \rho\left( \sum_{i = 1}^{N}a_{i}\delta_{x_{i}}, \sum_{i = 1}^{N}a_{i}\frac{w_{U(x_{i}, r_{i})}}{T(U(x_{i}, r_{i}))} \right)\le \sum_{i = 1}^{N}a_{i}\rho\left(\delta_{x_{i}}, \frac{w_{U(x_{i}, r_{i})}}{T(U(x_{i}, r_{i}))}\right) \le \varepsilon.
        \end{split}
    \end{equation*}
\end{proof}

\begin{example}
    \label{ex: torsion fct on ball}
    Let $x \in \R^{d}$. Then $T(U(x, r)) = r^{d + 2}T(U(0, 1))$ for every $r > 0$, and we have 
    \begin{equation}
        \label{eq: effective measure of shrinking balls}
        \frac{w_{U(x, r)}}{T(U(x, r))} \xrightharpoonup{\mathcal{M}(\R^{d})} \delta_{x}
    \end{equation}
    weakly as $r \searrow 0$, where $\delta_{x}$ denotes the Dirac measure at $x$.
\end{example}

\begin{proof}
    Since $r^{2}w_{U(0, 1)}(r^{-1} \cdot) = w_{U(0, r)}$,
    \[T(U(x, r)) = T(U(0, r)) = r^{2}\int w_{U(0, 1)}(r^{-1}y) \, dy = r^{d + 2}\int w_{U(0, 1)} = r^{d + 2}T(U(0, 1)).\]
    Since the left-hand side of \eqref{eq: effective measure of shrinking balls} is supported in $B(x, 1)$ for all $0 < r < 1$, it converges in $\mathcal{M}(\R^{d})$ weak up to a subsequence, say $k$. But the support of $k$ is contained in $\bigcap_{r > 0}B(x, r) = \{x\}$, and $k(\R^{d}) = \int \frac{w_{U(x, r)}}{T(U(x, r))} = 1$. These identify $k = \delta_{x}$.
\end{proof}

\subsection{Examples}
\label{sect: examples}
\begin{example}[Periodically perforated domains]
    \label{ex: periodically perforated domains}
    Let $d \ge 3$ and $D \sub \R^{d}$ be nonempty bounded open. Define $D^{\varepsilon}_{\eta} := D \setminus \bigcup_{z \in \mathbb{Z}^{d}}B(\varepsilon z, 4^{-1}\varepsilon^{\eta})$ for $0 < \varepsilon < 1$ and $\eta \ge 1$. Then $T(D^{\varepsilon}_{\eta}) \asymp_{d, D} \varepsilon^{(d - (d - 2)\eta)_{+}}$ for $\varepsilon$ small enough. Further, the following trichotomy holds.
    \begin{itemize}
        \item If $\eta > \frac{d}{d - 2}$, then $R_{\mathrm{Id}, D^{\varepsilon}_{\eta}}f \xrightarrow{H^{1}_{0}(D)} R_{\mathrm{Id}, D}f$ for every $f \in H^{-1}(D)$.
        \item If $\eta = \frac{d}{d - 2}$, then $R_{\mathrm{Id}, D^{\varepsilon}_{\eta}}f \xrightharpoonup{H^{1}_{0}(D)} R_{\mathrm{Id}, D + c\, \mathcal{L}^{d}}f$ weakly for every $f \in H^{-1}(D)$. Here, $c$ is a positive constant independent of $f$.
        \item If $1 \le \eta < \frac{d}{d - 2}$, then
        \[\frac{R_{\mathrm{Id}, D^{\varepsilon}_{\eta}}f}{T(D^{\varepsilon}_{\eta})} \xrightharpoonup{L^{2}(D)} \frac{1}{\mathcal{L}^{d}(D)}f\]
        for every $f \in L^{2}(D)$. If, furthermore, $\eta > 1$, then the last convergence takes place in $L^{2}(D)$ strong.
    \end{itemize}
\end{example}
\begin{proof}
    For the cases $\eta \ge \frac{d}{d - 2}$, see \cite{cioranescu1997strange}. The case $1 < \eta < \frac{d}{d - 2}$ follows from \cite[Th\'{e}or\`{e}me 5.1 of Chapitre III]{kacimi1987homogenisation}  (see also \cite[Theorem 2.2]{Jing2020unified}). The case $\eta = 1$ follows from \cite[Th\'{e}or\`{e}me 8.1 of Chapitre III]{kacimi1987homogenisation} (see also \cite[Theorem 3.1]{lions1980asymptotic}, \cite[Section 11 of Chapter 1]{lions1981some}, \cite[Theorem 2.2]{Jing2020unified}).
\end{proof}

\begin{example}[Dimension reduction]
    \label{ex: dimension reduction}
    Let $D \sub \R^{d}$, $d \ge 2$ be the open unit ball and $D^{\varepsilon}$ be the annulus $\{1 - \varepsilon < |x| < 1\}$. Then 
    \begin{equation}
        \frac{w_{D^{\varepsilon}}}{T(D^{\varepsilon})} \xrightharpoonup{\mathcal{M}(\R^{d})} \frac{1}{\mathcal{H}^{d - 1}(\partial D)}\mathcal{H}^{d - 1}\resmes \partial D
    \end{equation}
    weakly. Here, $\mathcal{H}^{d - 1}$ denotes the $(d - 1)$-dimensional Hausdorff measure. Indeed, the limit $k$ of the left-hand side of the last display must be a rotation-invariant probability measure supported on $\partial D$.
\end{example}

\begin{example}[Disjoint balls]
    \label{ex: disjoint balls}
    Let $d \ge 2$ and $D = ]0, 1[^{d}$. For each positive integer $h$, put $\varepsilon_{h} := 2^{-(h + 1)}$ and let $I^{h}_{1}, \cdots, I^{h}_{2^{hd}}$ be translates of $2\varepsilon_{h}[0, 1[^{d}$ such that $[0, 1[^{d}$ is a disjoint union of $I^{h}_{1}, \cdots, I^{h}_{2^{hd}}$. For each $h$ and a real $\varkappa \ge 1$, let $D_{h, \varkappa}$ be the union of open balls of radii $\varepsilon_{h}^{\varkappa}$ centered at the centers $x_{1}^{h}, \cdots, x_{2^{hd}}^{h}$ of $I^{h}_{1}, \cdots, I^{h}_{2^{hd}}$, respectively.
    \begin{enumerate}
        \item 
        \begin{equation}
            \label{eq: Ls norm of torsion of disjoint balls}
            \n{w_{D_{h, \varkappa}}}_{s} = \left( \frac{1}{2} \right)^{\frac{d}{s}}\varepsilon_{h}^{(\varkappa - 1)\frac{d}{s} + 2\varkappa}\n{w_{U(0, 1)}}_{s}
        \end{equation}
        for every $1 \le s \le \infty$.
        \item As $h \to \infty$, 
        \begin{equation}
            \frac{w_{D_{h, \varkappa}}}{T(D_{h, \varkappa})} \xrightharpoonup{\mathcal{M}(\R^{d})} \mathcal{L}^{d} \resmes D.
        \end{equation}
        Let $0 < \alpha \le b \le \beta < \infty$ and $B \in M(\alpha, \beta)$ such that $B = B(r\cdot)$ $\mathcal{L}^{d}$-a.e. for all $r > 0$. Define $A_{h} = A_{B, b, \varkappa, h} \in M(\alpha, \beta)$ by $A_{h}(x) = B(x - x_{i}^{h})$ on the balls $U(x_{i}^{h}, \varepsilon_{h}^{\varkappa})$, $1 \le i \le 2^{hd}$, and $A_{h} = b \mathrm{Id}$ elsewhere. Then 
        \begin{equation}
            \frac{w_{A_{h}, D_{h, \varkappa}}}{T(D_{h, \varkappa})} \xrightharpoonup{\mathcal{M}(\R^{d})}k_{B}\mathcal{L}^{d}\resmes D.
        \end{equation}
        Here, $k_{B} := T(U(0, 1))^{-1}\n{w_{B, U(0, 1)}}_{1}$.
        \item If $\varkappa > 1$, then $A_{h} \to b \mathrm{Id}$ in $L^{p}(D)$ for every $1 \le p < \infty$, while the effective measure $k_{B}\mathcal{L}^{d}\resmes D$ of the problems $P(A_{B, b, \varkappa, h}, D_{h, \varkappa})$ does not depend on $b$.
    \end{enumerate}
\end{example}

\begin{proof}
    Item 1 follows from \eqref{eq: scaling property of resolvent}. To prove 2, extract a subsequence of $(h)$, which we still denote by $(h)$, and a Radon measure $k$ supported in $\overline{D}$ such that
    \[\frac{w_{A_{h}, D_{h, \varkappa}}}{T(D_{h, \varkappa})} \xrightharpoonup{\mathcal{M}(\R^{d})} k\]
    weakly. Then for every pair of indices $h \ge h_{0}$ and $1 \le i \le 2^{h_{0}d}$, we have
    \begin{equation}
        \int_{\overline{I^{h_{0}}_{i}}} \frac{w_{A_{h}, D_{h, \varkappa}}}{T(D_{h, \varkappa})} = k_{B}\mathcal{L}^{d}(I^{h_{0}}_{i}) = \int_{\mathrm{int}I^{h_{0}}_{i}} \frac{w_{A_{h}, D_{h, \varkappa}}}{T(D_{h, \varkappa})},
    \end{equation}
    where $\mathrm{int}I^{h_{0}}_{i}$ denotes the interior of $I^{h_{0}}_{i}$. Hence $k(\overline{I^{h_{0}}_{i}}) \ge k_{B}\mathcal{L}^{d}(I^{h_{0}}_{{i}}) \ge k(\mathrm{int}(I^{h_{0}}_{i}))$ for every index $h_{0}$ and $1 \le i \le 2^{h_{0}d}$. This in particular implies that $k\resmes D$ is absolutely continuous with respect to $\mathcal{L}^{d}\resmes D$, and thus $k \resmes D = k_{B}\mathcal{L}^{d}\resmes D$. Since $k(\overline{D}) = k_{B}$, this proves 2.

    Now assume that $\varkappa > 1$. Then $\mathcal{L}^{d}(D_{h, \varkappa}) \asymp_{d}\varepsilon_{h}^{(\varkappa - 1)d} \to 0$ as $h \to \infty$, and thus $A_{h} \to b \mathrm{Id}$ in measure, which in turn implies the $L^{p}(D)$ convergence for finite $p$, since $|A_{h}| \lesssim_{d} \beta$ $\mathcal{L}^{d}$-a.e. for every $h$.
\end{proof}

\subsection{Properties of effective measures: locality and the contribution of the antisymmetric parts of the coefficients}
\label{sect: properties of effective measures}
Let $D \sub \R^{d}$ be nonempty bounded open, $0 < \alpha \le \beta < \infty$, and $(A^{\varepsilon}) \sub M(\alpha, \beta)$. In this and the next subsection, we fix a sequence $(\mu^{\varepsilon}) \sub \mathcal{M}_{\mathrm{cap}}(D)$ such that the problems $P(A^{\varepsilon}, \mu^{\varepsilon})$ admit an effective measure $k$.

The first result computes the effective measure of the adjoint problems $P(A^{\varepsilon *}, \mu^{\varepsilon})$.
\begin{proposition}
    \label{thm: adjoint}
    Let $A \in M(\alpha, \beta)$, $\mu \in \mathcal{M}_{\mathrm{cap}}^{b}$, and $1 \le q \le 2$. Then
    \begin{equation}
        \label{eq: quantitative diff of adjoints}
        \left| \int(w_{A, \mu} - w_{A^{*}, \mu})\varphi \right| \lesssim_{d, \alpha, \beta}\n{\nabla \varphi}_{L^{q'}(\Omega_{\mu})} T(\mu)^{1 + \delta_{d, 1, q}}
    \end{equation}
    for every $\varphi \in C_{c}^{\infty}(\R^{d})$. Here, $\delta_{d, 1, q}$ is defined by \eqref{def: delta dpq}. In particular, the measure $k$ is also the effective measure of the adjoint problems $P(A^{\varepsilon *}, \mu^{\varepsilon})$.
\end{proposition}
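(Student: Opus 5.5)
The plan is a duality (symmetric testing) argument that uses only the weak formulation \eqref{eq: weak formulation of relaxed Dirichlet}. Write $w := w_{A,\mu}$ and $w^{*} := w_{A^{*},\mu}$, and fix $\varphi \in C_{c}^{\infty}(\R^{d})$. Both $w$ and $w^{*}$ lie in $H^{1}_{\mu} \cap L^{\infty}(\R^{d})$ by Lemma \ref{thm: Linfty_Lt_estimate_of_torsion_func}, after rescaling to $A \in M(1,\beta/\alpha)$ via Remark \ref{rmk: scaling property}. Hence $\varphi w^{*}$ and $\varphi w$ are admissible test functions in $H^{1}_{\mu}$. We test the equation for $w$ with $\varphi w^{*}$, and the equation for $w^{*}$ with $\varphi w$:
\begin{align*}
\int \varphi A\nabla w\cdot\nabla w^{*} + \int w^{*} A\nabla w\cdot\nabla\varphi + \int \varphi w w^{*}\,d\mu &= \int \varphi w^{*},\\
\int \varphi A^{*}\nabla w^{*}\cdot\nabla w + \int w A^{*}\nabla w^{*}\cdot\nabla\varphi + \int \varphi w w^{*}\,d\mu &= \int \varphi w.
\end{align*}
Since $A^{*}\nabla w^{*}\cdot\nabla w = A\nabla w\cdot\nabla w^{*}$, the principal terms and the $\mu$-terms cancel when we subtract. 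This yields the identity
\[
\int (w - w^{*})\varphi = \int w A^{*}\nabla w^{*}\cdot\nabla\varphi - \int w^{*}A\nabla w\cdot\nabla\varphi .
\]
The integrands vanish off $\Omega_{\mu}$, so only $\n{\nabla\varphi}_{L^{q'}(\Omega_{\mu})}$ enters.

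Next, each term is bounded by H\"older's inequality with exponents $\frac{2q}{2-q}$, $2$ and $q'$. For instance,
\[
\left|\int w^{*}A\nabla w\cdot\nabla\varphi\right| \le \n{w^{*}}_{\frac{2q}{2-q}}\n{A\nabla w}_{2}\n{\nabla\varphi}_{L^{q'}(\Omega_{\mu})}.
\]
By \eqref{eq: energy bound by torsion}, $\n{A\nabla w}_{2}\le (\beta\n{w}_{1})^{1/2}$. Combining the a priori bound \eqref{eq: a priori estimate} (with $h=1$) and \eqref{eq: Ls_L1 estimate of torsion func}, after scaling out $\alpha$, gives $\n{w}_{1}\lesssim_{\alpha} T(\mu)$ and $\n{w^{*}}_{s}\lesssim_{d,\alpha,\beta} T(\mu)^{\gamma_{1}+\frac{1}{s}(1-\gamma_{1})}$. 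The same bounds hold with $A$ and $A^{*}$ interchanged, because $A^{*}\in M(\alpha,\beta)$ as well.

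It remains to check the exponent. With $s=\frac{2q}{2-q}$ and $\gamma_{1}=\frac{2}{d+2}$, we get
\[
\frac12+\frac{2}{d+2}+\Bigl(\frac1q-\frac12\Bigr)\frac{d}{d+2} = \frac{3}{d+2}+\frac{d}{(d+2)q},
\]
and a direct computation shows this equals $1+\delta_{d,1,q}$. This proves \eqref{eq: quantitative diff of adjoints}.

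For the consequence, we take $q=1$, where $\delta_{d,1,1}=\frac{1}{d+2}>0$. Dividing by $T(\mu^{\varepsilon})\to0$ gives $T(\mu^{\varepsilon})^{-1}\int (w_{A^{\varepsilon},\mu^{\varepsilon}}-w_{A^{\varepsilon*},\mu^{\varepsilon}})\varphi\to0$ for every $\varphi\in C_{c}^{\infty}(\R^{d})$. The family $T(\mu^{\varepsilon})^{-1}w_{A^{\varepsilon*},\mu^{\varepsilon}}$ is supported in $\overline{D}$ and has total mass bounded by \eqref{eq: a priori estimate}. Hence density of $C_{c}^{\infty}$ in $C_{b}$ on $\overline{D}$ upgrades this to weak convergence in $\mathcal{M}(\R^{d})$ to the same limit $k$.

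The only delicate points are the admissibility of the test functions $\varphi w^{*}$ and $\varphi w$, which needs the $L^{\infty}$ bound, and tracking the $\alpha$-dependence through the scaling, since Lemma \ref{thm: Linfty_Lt_estimate_of_torsion_func} is stated for $M(1,\beta)$. Neither should cause real difficulty.
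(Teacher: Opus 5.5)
Your proposal is correct and follows essentially the paper's argument: the paper likewise tests the equation for $w_{A^{*},\mu}$ with $w_{A,\mu}\varphi$ and the equation for $w_{A,\mu}$ with $\varphi w_{A^{*},\mu}$ to get the same identity. It then bounds the two remaining terms by H\"older, \eqref{eq: energy bound by torsion} and \eqref{eq: Ls_L1 estimate of torsion func}, exactly as you do. One small correction: admissibility of $\varphi w$ and $\varphi w^{*}$ in $H^{1}_{\mu}$ does not need the $L^{\infty}$ bound, since multiplication by $\varphi \in C_{c}^{\infty}(\R^{d})$ already preserves $H^{1}(\R^{d}) \cap L^{2}(\mu)$.
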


\begin{proof}
    For every $\varphi \in C_{c}^{\infty}(\R^{d})$, 
    \begin{equation}
        \label{eq: adjoint Leibniz}
        \begin{split}
            &\int w_{A, \mu}\varphi\\
            &= \int A^{*}\nabla w_{A^{*}, \mu} \cdot \nabla (w_{A, \mu}\varphi) + \int w_{A^{*}, \mu}w_{A, \mu}\varphi \, d\mu\\
            &= \int \nabla(\varphi w_{A^{*}, \mu}) \cdot A\nabla w_{A, \mu} + \int w_{A^{*}, \mu} w_{A, \mu}\varphi \, d\mu - \int w_{A^{*}, \mu}\nabla \varphi \cdot A\nabla w_{A, \mu} + \int w_{A, \mu}\nabla w_{A^{*}, \mu} \cdot A\nabla \varphi\\
            &= \int w_{A^{*}, \mu}\varphi - \int w_{A^{*}, \mu}\nabla \varphi \cdot A\nabla w_{A, \mu} + \int w_{A, \mu}\nabla w_{A^{*}, \mu} \cdot A\nabla \varphi.
        \end{split}
    \end{equation}
    
    By \eqref{eq: adjoint Leibniz}, \eqref{eq: energy bound by torsion}, and the H\"{o}lder inequality, we have
    \begin{equation}
        \int (w_{A, \mu} - w_{A^{*}, \mu})\varphi \lesssim_{d, \alpha, \beta} \left( \n{w_{A^{*}, \mu}}_{\frac{2q}{2 - q}}\n{w_{A, \mu}}_{1}^{\frac{1}{2}} + \n{w_{A, \mu}}_{\frac{2q}{2 - q}} \n{w_{A^{*}, \mu}}_{1}^{\frac{1}{2}} \right)\n{\nabla \varphi}_{L^{q'}(\Omega_{\mu})}.
    \end{equation}
    The estimate \eqref{eq: quantitative diff of adjoints} now follows from the last display combined with \eqref{eq: Ls_L1 estimate of torsion func}.
\end{proof}

Since the effective measure $k$ is defined through boundary value problems $P(A^{\varepsilon}, \mu^{\varepsilon})$, the local behavior of $k$ a priori depends on $A^{\varepsilon}$ and $\mu^{\varepsilon}$ everywhere. However, the next proposition and its corollary show that it is, to some extent, determined locally.

\begin{proposition}[Locality of the effective measure]
    \label{thm: locality}
    Let $A \in M(\alpha, \beta)$, $\mu \in \mathcal{M}_{\mathrm{cap}}^{b}$, $1 \le q \le 2$, and $U \sub \R^{d}$ open. Then 
    \begin{equation}
        \label{eq: quantitative diff loc}
        \left| \int(w_{A, \mu} - w_{A, \mu + I_{U}})\varphi \right| \lesssim_{d, \alpha, \beta}\n{\nabla \varphi}_{L^{q'}(\Omega_{\mu} \cap U)}T(\mu)^{1 + \delta_{d, 1, q}}
    \end{equation}
    for every $\varphi \in C_{c}^{\infty}(U)$. In particular, 
    \begin{equation}
        \label{eq: local formula for k}
        \frac{w_{A^{\varepsilon}, \mu^{\varepsilon} + I_{U}}}{T(\mu^{\varepsilon})} \xrightharpoonup[*]{\mathcal{M}(U)} k \resmes U
    \end{equation}
    weakly* in $\mathcal{M}(U)$. If $\mu^{\varepsilon}$ is of the form $\mu^{\varepsilon} = I_{D^{\varepsilon}}$ with $D^{\varepsilon} \sub D$ open, then \eqref{eq: local formula for k} reads
    \begin{equation}
        \frac{w_{A^{\varepsilon}, D^{\varepsilon} \cap U}}{T(D^{\varepsilon})} \xrightharpoonup[*]{\mathcal{M}(U)} k \resmes U.
    \end{equation}
\end{proposition}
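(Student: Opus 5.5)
The approach is the testing argument already used in Proposition \ref{thm: adjoint}. Write $w := w_{A, \mu}$ and $v := w_{A, \mu + I_{U}}$. Since $\mu + I_{U} \ge \mu$, we have $H^{1}_{\mu + I_{U}} \sub H^{1}_{\mu}$, and $v = 0$ q.e. outside $U \cap \Omega_{\mu}$. For $\varphi \in C_{c}^{\infty}(U)$, both $v\varphi$ and $w\varphi$ lie in $H^{1}_{\mu + I_{U}}$. For $w\varphi$ this holds because $\varphi$ has compact support in $U$, so $w\varphi \in H^{1}_{0}(U) \cap H^{1}_{\mu}$, and the two conditions together give membership in $H^{1}_{\mu + I_U}$. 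Because $I_{U}$ is infinite off $U$ and $\varphi$ vanishes there, the term $\int w v \varphi \, dI_{U}$ contributes nothing. I would then run the same Leibniz computation as in \eqref{eq: adjoint Leibniz}, using the adjoint torsion function only as an intermediary to handle non-symmetric $A$.

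The key steps are as follows. First, test the equation for $w_{A^{*}, \mu}$ with $v\varphi$, and the equation for $v$ (with coefficient $A$) with $w_{A^{*}, \mu}\varphi$. The products $\int A\nabla v\cdot\nabla w_{A^*,\mu}\,\varphi$ cancel, as do the $\mu$-terms, since $\int w_{A^{*},\mu} v\varphi\, d(\mu+I_U)=\int w_{A^{*},\mu} v\varphi\, d\mu$. This leaves
\[
\int v\varphi - \int w_{A^{*},\mu}\varphi = \int v\, \nabla w_{A^{*},\mu}\cdot A\nabla\varphi - \int w_{A^{*},\mu}\, \nabla\varphi \cdot A\nabla v .
\]
Second, combine this with Proposition \ref{thm: adjoint}, which controls $\int (w - w_{A^{*},\mu})\varphi$ by the same right-hand side. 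A cleaner alternative is to test the equation for $w$ with $\varphi v$ and the equation for $v$ with $\varphi w$, each against the adjoint solution; the bookkeeping is the same either way.

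Third, bound each error term by H\"older's inequality: $\n{v}_{\frac{2q}{2-q}}\n{A\nabla w_{A^*,\mu}}_{2}\n{\nabla\varphi}_{L^{q'}(\Omega_\mu\cap U)}$ and the symmetric counterpart. The gradient integrals are supported where $v$ or $w$ is nonzero and where $\nabla\varphi \neq 0$, which lies inside $U\cap\Omega_\mu$. Then use \eqref{eq: energy bound by torsion} to get $\n{A\nabla w}_{2} \lesssim \n{w}_{1}^{1/2}$, and \eqref{eq: Ls_L1 estimate of torsion func} together with \eqref{eq: L1 norm of wA}. Lemma \ref{thm: comparison} gives $v \le w$, so the norms of $v$ are dominated by those of $w$, after rescaling $A$ to $M(1,\beta/\alpha)$ via Remark \ref{rmk: scaling property}. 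This yields $T(\mu)^{\gamma_1 + \frac{2-q}{2q}(1-\gamma_1)+\frac12}=T(\mu)^{1+\delta_{d,1,q}}$, the same exponent arithmetic as in Proposition \ref{thm: convergence of energy}.

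The consequence \eqref{eq: local formula for k} follows by dividing by $T(\mu^\varepsilon)$ and noting $\delta_{d,1,q}>0$ for $q$ close to $1$; indeed $\delta_{d,1,1}=\frac1{d+2}$. This gives the convergence against every $\varphi\in C_c^\infty(U)$. The total masses are uniformly bounded because $v\le w$, so density extends the convergence to $C_0(U)$, giving weak* convergence to $k\resmes U$. This uses $T(\mu^\varepsilon)^{-1}w_{A^\varepsilon,\mu^\varepsilon}\rightharpoonup k$ tested against $\varphi \in C_c(U)$. The open-set case is immediate since $I_{D^\varepsilon}+I_U=I_{D^\varepsilon\cap U}$. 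I expect the main obstacle to be justifying the test functions and the vanishing of the $I_U$-term rigorously, namely that $w\varphi\in H^1_{\mu+I_U}$ for $\varphi\in C_c^\infty(U)$; the estimates themselves are routine.
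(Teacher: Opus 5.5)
Your argument is correct and is essentially the paper's, differing only in the choice of intermediary. The paper tests the equation for $w_{A^{*}, \mu + I_{U}}$ with $\varphi w_{A, \mu}$ (and the equation for $w_{A,\mu}$ with $\varphi w_{A^{*}, \mu + I_{U}}$), then closes with Proposition \ref{thm: adjoint} applied to $\mu + I_{U}$ (using $T(\mu + I_{U}) \le T(\mu)$); you use $w_{A^{*}, \mu}$ and Proposition \ref{thm: adjoint} for $\mu$ itself, which gives the same H\"older bounds and exponent $1 + \delta_{d,1,q}$, and your passage to \eqref{eq: local formula for k} via uniformly bounded masses and test functions in $C_c^\infty(U)$ is equally valid.
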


\begin{proof}
    The proof is similar to that of Proposition \ref{thm: adjoint}. For every $\varphi \in C^{\infty}_{c}(U)$, since $\varphi w_{A, \mu} \in H^{1}_{\mu + I_{U}}$, 
    \begin{equation}
        \label{eq: locality test function leibniz}
        \begin{split}
            \int w_{A, \mu}\varphi &= \int A^{*}\nabla w_{A^{*}, \mu + I_{U}} \cdot \nabla (w_{A, \mu}\varphi) + \int w_{A^{*}, \mu + I_{U}}w_{A, \mu}\varphi \, d(\mu + I_{U})\\
            &=\int \nabla w_{A^{*}, \mu + I_{U}} \cdot A\nabla (w_{A, \mu}\varphi) + \int w_{A^{*}, \mu + I_{U}}w_{A, \mu}\varphi \, d\mu\\
            &= \int w_{A^{*}, \mu + I_{U}}\varphi - \int w_{A^{*}, \mu + I_{U}}\nabla \varphi \cdot A\nabla w_{A, \mu} + \int w_{A, \mu}\nabla w_{A^{*}, \mu +I_{U}} \cdot A\nabla \varphi.
        \end{split}
    \end{equation}
    Note that $0 \le w_{A^{*}, \mu + I_{U}} \le w_{A^{*}, \mu}$ $\mathcal{L}^{d}$-a.e. by Lemma \ref{thm: comparison}, and thus $\n{w_{A^{*}, \mu + I_{U}}}_{1} \lesssim_{\alpha} T(\mu)$ by \eqref{eq: L1 norm of wA}.  Hence, by applying the H\"{o}lder inequality to the right-hand side of \eqref{eq: locality test function leibniz} and then using \eqref{eq: energy bound by torsion}, \eqref{eq: L1 norm of wA}, and \eqref{eq: Ls_L1 estimate of torsion func}, we obtain 
    \[\left|\int(w_{A, \mu} - w_{A^{*}, \mu + I_{U}})\varphi \right| \lesssim_{d, \alpha, \beta} \n{\nabla \varphi}_{L^{q'}(\Omega_{\mu} \cap U)}T(\mu)^{1 + \delta_{d, 1, q}}\]
    for $1 \le q \le 2$. The last display combined with \eqref{eq: quantitative diff of adjoints} for $\mu + I_{U}$ shows \eqref{eq: quantitative diff loc}. To see \eqref{eq: local formula for k}, it suffices to observe that $\n{\frac{w_{A^{\varepsilon}, \mu^{\varepsilon} + I_{U}}}{T(\mu^{\varepsilon})}}_{\mathcal{M}(U)} = T(\mu^{\varepsilon})^{-1}\n{w_{A^{\varepsilon}, \mu^{\varepsilon} + I_{U}}}_{1} \le \alpha^{-1} \vee 1$, and hence that the left-hand side of \eqref{eq: local formula for k} is weak* convergent in $\mathcal{M}(U)$ up to a subsequence.
\end{proof}

We repeatedly use the next corollary to compare effective measures.
\begin{corollary}
    \label{thm: locality uniform approx}
    Let $(B^{\varepsilon}) \sub M(\alpha, \beta)$ be such that the problems $P(B^{\varepsilon}, \mu^{\varepsilon})$ admit an effective measure $k'$. If $U \sub \R^{d}$ is open and $\n{A^{\varepsilon} - B^{\varepsilon}}_{L^{\infty}(\Omega_{\mu^{\varepsilon}} \cap U)} \to 0$, then $k'\resmes U = k\resmes U$.
\end{corollary}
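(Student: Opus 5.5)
By Proposition \ref{thm: locality}, applied once with $A^{\varepsilon}$ and once with $B^{\varepsilon}$, we have $T(\mu^{\varepsilon})^{-1}w_{A^{\varepsilon},\mu^{\varepsilon}+I_U}\rightharpoonup k\resmes U$ and $T(\mu^{\varepsilon})^{-1}w_{B^{\varepsilon},\mu^{\varepsilon}+I_U}\rightharpoonup k'\resmes U$ weakly* in $\mathcal{M}(U)$. It therefore suffices to show
\[
T(\mu^{\varepsilon})^{-1}\left|\int (w_{A^{\varepsilon},\nu^{\varepsilon}}-w_{B^{\varepsilon},\nu^{\varepsilon}})\varphi\right|\to 0,\qquad \nu^{\varepsilon}:=\mu^{\varepsilon}+I_U,
\]
for every $\varphi\in C_c^\infty(U)$. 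Density of $C_c^\infty(U)$ in $C_0(U)$ and the uniform mass bound $T(\mu^{\varepsilon})^{-1}\n{w_{\cdot,\nu^{\varepsilon}}}_1\le \alpha^{-1}\vee 1$ then identify the two weak* limits.

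To compare the two torsion functions, we mimic the identity \eqref{eq: adjoint Leibniz}, but with $\varphi\equiv 1$ and without any Leibniz term. We use the adjoint torsion function $w_{B^{\varepsilon *},\nu^{\varepsilon}}$ as a test function in the equation for $w_{A^{\varepsilon},\nu^{\varepsilon}}$, and $w_{A^{\varepsilon},\nu^{\varepsilon}}$ as a test function in the adjoint equation for $B^{\varepsilon}$. Writing $w_A:=w_{A^{\varepsilon},\nu^{\varepsilon}}$ and $w_{B^*}:=w_{B^{\varepsilon *},\nu^{\varepsilon}}$, this gives
\[
\int w_{B^*}-\int w_A=\int (A^{\varepsilon}-B^{\varepsilon})\nabla w_A\cdot\nabla w_{B^*}.
\]
This identity only yields a statement about total masses, so for a test function $\varphi$ we should work directly with $\varphi$. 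We set $z:=w_A-w_{B,\nu^{\varepsilon}}$; it solves $-\mathrm{div}(B^{\varepsilon}\nabla z)+\nu^{\varepsilon}z=\mathrm{div}((A^{\varepsilon}-B^{\varepsilon})\nabla w_A)$. Testing against $z$ and using coercivity gives
\[
\n{z}_{\dot H^1_{\nu^{\varepsilon}}}\lesssim_{\alpha}\n{A^{\varepsilon}-B^{\varepsilon}}_{L^\infty(\Omega_{\nu^{\varepsilon}})}\n{\nabla w_A}_2 .
\]
The gradients of $w_A$ and $z$ vanish a.e. outside $\Omega_{\nu^{\varepsilon}}\subseteq\Omega_{\mu^{\varepsilon}}\cap U$ (up to null sets). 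Hence the $L^\infty$ norm is taken over $\Omega_{\mu^{\varepsilon}}\cap U$, which tends to $0$ by hypothesis. Moreover, \eqref{eq: energy bound by torsion} together with the comparison $w_A\le w_{A^{\varepsilon},\mu^{\varepsilon}}$ (Lemma \ref{thm: comparison}) gives $\n{\nabla w_A}_2\lesssim T(\mu^{\varepsilon})^{1/2}$.

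Next we pass from the energy to $L^1$. By \eqref{eq: torsion poincare q < 2} with $q=1$ applied to $\nu^{\varepsilon}$, and by monotonicity, $\lambda_1(\nu^{\varepsilon})^{-1}\lesssim \n{w_{\nu^{\varepsilon}}}_1\le T(\mu^{\varepsilon})$. Hence
\[
\n{z}_1\le \lambda_1(\nu^{\varepsilon})^{-1/2}\n{z}_{\dot H^1_{\nu^{\varepsilon}}}\lesssim \n{A^{\varepsilon}-B^{\varepsilon}}_{L^\infty(\Omega_{\mu^{\varepsilon}}\cap U)}\,T(\mu^{\varepsilon}),
\]
and so $T(\mu^{\varepsilon})^{-1}|\int z\varphi|\le \n{\varphi}_\infty\, o(1)$, as required.

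The step I expect to need the most care is the localization of the sup norm: checking that $\nabla w_A=0$ a.e. off $\Omega_{\mu^{\varepsilon}}\cap U$. Since $w_A\in H^1_{\nu^{\varepsilon}}$ vanishes q.e. off $\Omega_{\nu^{\varepsilon}}$, and gradients of Sobolev functions vanish a.e. on level sets, this should follow. The remaining points, the comparison of the constants in the $\lambda_1$ bound and the use of \eqref{eq: L1 norm of wA} for non-quasi-open $\nu^{\varepsilon}$, are routine, with constants depending only on $\alpha,\beta$.
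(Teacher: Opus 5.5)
Your proposal is correct and follows essentially the paper's proof. The paper likewise derives $\n{w_{A,\mu}-w_{B,\mu}}_{1}\lesssim_{d,\alpha,\beta} T(\mu)\n{A-B}_{L^{\infty}(\Omega_{\mu})}$ from an energy estimate for the difference plus the torsion--Poincar\'e bound \eqref{eq: torsion poincare q < 2}. It then applies this to $\mu+I_U$, using $\Omega_{\mu+I_U}\subseteq\Omega_{\mu}\cap U$ up to capacity zero, and combines it with Proposition \ref{thm: locality}. The differences are cosmetic: the paper merges the locality error and the coefficient error into the single estimate \eqref{eq: locality A - B quantitative}, whereas you compare the two weak* limits given by \eqref{eq: local formula for k}, and your opening detour through the adjoint identity is not needed.
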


\begin{proof}
    Observe first that the bound
    \begin{equation}
        \label{eq: quantify the difference of diffusion coefficients}
        \n{w_{A, \mu} - w_{B, \mu}}_{1} \lesssim_{d, \alpha, \beta} T(\mu)\n{A - B}_{L^{\infty}(\Omega_{\mu})}
    \end{equation}
    is valid for every $A, B \in M(\alpha, \beta)$ and $\mu \in \mathcal{M}_{\mathrm{cap}}^{b}$. Indeed, if we put $z := w_{A, \mu} - w_{B, \mu}$, we have
    \[\begin{split}
        &\n{ z}_{\dot{H}^{1}_{\mu}}^{2}
        \lesssim_{\alpha} \int A \nabla z \cdot \nabla z + \int |z|^{2} \, d\mu\\
        &= \int A \nabla w_{A, \mu} \cdot \nabla z + \int w_{A, \mu} z \, d\mu - \left( \int B\nabla w_{B, \mu} \cdot \nabla z + \int w_{B, \mu}z \, d\mu \right)  - \int (A - B)\nabla w_{B, \mu} \cdot \nabla z\\
        &= -\int (A - B)\nabla w_{B, \mu} \cdot \nabla z.
    \end{split}\]
    The last display combined with \eqref{eq: torsion poincare q < 2} yields \eqref{eq: quantify the difference of diffusion coefficients}. From \eqref{eq: quantify the difference of diffusion coefficients} and \eqref{eq: quantitative diff loc}, we obtain 
    \begin{equation}
        \label{eq: locality A - B quantitative}
        \left| \int (w_{A, \mu} - w_{B, \mu})\varphi \right| \lesssim_{d, \alpha, \beta} T(\mu)\left( \n{\nabla \varphi}_{L^{q'}(\Omega_{\mu} \cap U)}T(\mu)^{\delta_{d, 1, q}} + \n{\varphi}_{L^{\infty}(\Omega_{\mu + I_{U}})} \n{A - B}_{L^{\infty}(\Omega_{\mu + I_{U}})} \right)
    \end{equation}
    for every $A, B \in M(\alpha, \beta)$, $\mu \in \mathcal{M}_{\mathrm{cap}}^{b}$, open $U \sub \R^{d}$, $\varphi \in C_{c}^{\infty}(U)$, and $1 \le q \le 2$. Since $\Omega_{\mu + I_{U}} \sub \Omega_{\mu} \cap U$ up to a set of capacity zero, we deduce from \eqref{eq: locality A - B quantitative} that $k' \resmes U = k \resmes U$ whenever $\n{A^{\varepsilon} - B^{\varepsilon}}_{L^{\infty}(\Omega_{\mu^{\varepsilon}} \cap U)} \to 0$.
\end{proof}

\begin{remark}
    Example \ref{ex: disjoint balls} shows that the $L^{\infty}$-convergence assumption in Corollary \ref{thm: locality uniform approx} cannot be replaced with the condition $\n{A^{\varepsilon} - B^{\varepsilon}}_{L^{p}(D \cap U)} \to 0$ for a finite $p$.
\end{remark}

We now present an application of Corollary \ref{thm: locality uniform approx}. Given a nonnegative Borel measure $m$ on $\R^{d}$, we put 
\begin{equation}
    \label{def: Cm}
    \begin{split}
        \mathcal{C}_{m} := \{u \in L^{\infty}(m)&: \text{There is an $m$-null set $N$ such that the restriction $u \restriction \R^{d} \setminus N$ is continuous}\}.
    \end{split}
\end{equation}
Note that if $\mathcal{C}_{D, k}$ is defined as in Theorem \ref{thm: antisymmetric intro}, then the zero extension $\mathcal{C}_{D, k} \to \mathcal{C}_{\mathcal{L}^{d}\resmes D + k}$ is a bijection.
\begin{theorem}
    \label{thm: continuous antisymmetric part in the uniform limit does not contribute to k}
    Suppose that the antisymmetric part of $A^{\varepsilon}$ converges in $L^{\infty}(D)$ to a matrix-valued function $C \in \mathcal{C}_{\mathcal{L}^{d} \resmes D + k}^{d \times d}$. Then $k$ is also the effective measure of the problems $P(\frac{A^{\varepsilon} + A^{\varepsilon *}}{2}, \mu^{\varepsilon})$.
\end{theorem}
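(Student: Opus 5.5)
The plan is to compare $w_{A^{\varepsilon},\mu^{\varepsilon}}$ with $w_{A^{\varepsilon s},\mu^{\varepsilon}}$ by testing one equation with the other torsion function times $\varphi$, as in the proofs of Propositions \ref{thm: adjoint} and \ref{thm: locality}. Write $S^{\varepsilon}:=\tfrac12(A^{\varepsilon}-A^{\varepsilon *})$, $a:=w_{A^{\varepsilon},\mu^{\varepsilon}}$ and $b:=w_{A^{\varepsilon s},\mu^{\varepsilon}}$. Since $A^{\varepsilon s}\in M(\alpha,\beta')$ for a $\beta'$ depending only on $\alpha,\beta$, all earlier torsion estimates apply to $b$ as well.

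\textbf{Step 1: reduce to a single error term.} For $\varphi\in C_c^\infty(\R^d)$, use $\varphi a$ as test function in the equation for $b$, and $\varphi b$ in the equation for $a$. Repeating the Leibniz manipulation \eqref{eq: adjoint Leibniz} gives
\[
\int(a-b)\varphi = \pm\int \varphi\, S^{\varepsilon}\nabla a\cdot\nabla b + \mathcal{E},
\]
where $\mathcal{E}$ collects terms of the form $\int a\,\nabla\varphi\cdot M\nabla b$ with $M$ bounded. By \eqref{eq: energy bound by torsion} and \eqref{eq: Ls_L1 estimate of torsion func}, $|\mathcal{E}|\lesssim \n{\nabla\varphi}_{q'}T(\mu^{\varepsilon})^{1+\delta_{d,1,q}}$, exactly as before. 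Next replace $S^{\varepsilon}$ by $C$. By the energy bounds $\n{\nabla a}_2\n{\nabla b}_2\lesssim T(\mu^{\varepsilon})$, this costs at most $\n{\varphi}_\infty\n{S^{\varepsilon}-C}_{L^\infty(D)}\,T(\mu^{\varepsilon})=o(T(\mu^{\varepsilon}))$. So it remains to show
\[
T(\mu^{\varepsilon})^{-1}\int\varphi\, C\nabla a\cdot\nabla b\to 0 .
\]

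\textbf{Step 2: smooth antisymmetric $C$.} If $C$ is smooth and antisymmetric, integrate by parts in $\int \varphi C_{ij}\partial_j a\,\partial_i b$, moving $\partial_j$ off $a$. This is legitimate after approximating $a$ and $b$ by smooth functions in $H^1$; both are compactly supported in $\overline D$. The second-derivative term $\varphi C_{ij}\partial_{ij}b$ vanishes by antisymmetry, leaving
\[
-\int a\,\partial_j(\varphi C_{ij})\,\partial_i b .
\]
This is bounded by $\n{\nabla(\varphi C)}_\infty\n{a}_2\n{\nabla b}_2\lesssim T^{\gamma_1+\frac12(1-\gamma_1)}\,T^{1/2}$, using \eqref{eq: Ls_L1 estimate of torsion func}. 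The exponent exceeds $1$, so the term is $o(T(\mu^{\varepsilon}))$.

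\textbf{Step 3: general $C\in\mathcal{C}^{d\times d}_{\mathcal{L}^d\resmes D+k}$ (main obstacle).} Here I would use an energy-density control. By Proposition \ref{thm: convergence of energy} with $f=1$, $T^{-1}A^{\varepsilon}\nabla a\cdot\nabla a\,\mathcal{L}^d$ has weak limit at most $k$, and the same holds for $b$ with the same $k$ if the conclusion is true. To avoid circularity, I would only use that $T^{-1}|\nabla b|^2$ is bounded and pass to a subsequence with limit $k''\lesssim k_b$, where $k_b$ is the effective measure of the symmetric problems along that subsequence; $k_b\lesssim k$ by the bound \eqref{eq: bounds on effective measures intro}, or more elementarily via Lemma \ref{thm: comparison}-type comparisons. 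By Cauchy--Schwarz, for continuous $\psi\ge0$,
\[
\limsup T^{-1}\int\psi|\nabla a||\nabla b|\lesssim \int\psi\,d(\mathcal{L}^d\resmes D+k).
\]
Using continuity of $C$ off a null set $N$ and outer regularity, choose an open $O\supseteq N$ of small $(\mathcal{L}^d\resmes D+k)$-measure. By Tietze extension and mollification, choose smooth antisymmetric $C_n$ with $|C-C_n|\le\eta$ on $\overline D\setminus O$ and $|C_n|\lesssim\n{C}_\infty$. Then split $\int\varphi(C-C_n)\nabla a\cdot\nabla b$ into the part outside $O$, which is at most $\eta\cdot O(T)$, and the part inside $O$. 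For the latter, dominate $\chi_O$ by a continuous cutoff supported in a slightly larger open set of small measure and apply the limsup bound. With Step 2 for $C_n$, this gives $\limsup T^{-1}|\int(a-b)\varphi|\lesssim\eta+(\mathcal{L}^d\resmes D+k)(O')$, which is arbitrarily small. Hence $T^{-1}b\rightharpoonup k$.

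The delicate point is making the portmanteau and regularity bookkeeping in Step 3 rigorous, in particular obtaining the limsup bound by $k$ rather than by an unknown measure.
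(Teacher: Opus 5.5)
Steps 1 and 2 are correct, and together they are a genuinely different route from the paper's. Step 3, however, contains a step that fails as written (second paragraph).

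\textbf{Comparison with the paper.} The paper never integrates by parts. It proceeds as follows:
\begin{enumerate}
\item It uses Corollary \ref{thm: locality uniform approx} to reduce to $A^{\varepsilon a}=C$ exactly, and notes that a constant antisymmetric part leaves the torsion function unchanged.
\item It treats piecewise constant $C\in\mathcal{S}_{m}^{d\times d}$, with $m=\mathcal{L}^{d}\resmes D+k$, by locality together with the mutual absolute continuity from Theorem \ref{thm: effective measures comparison}.
\item It reaches general $C\in\mathcal{C}_{m}^{d\times d}$ by \emph{uniform} approximation (Lemma \ref{thm: approx by simple fcts}), so no portmanteau argument is ever needed.
\end{enumerate}
Your route buys a quantitative comparison for $C^{1}$ limits, with errors of order $T(\mu^{\varepsilon})^{\frac{1}{d+2}}+\n{S^{\varepsilon}-C}_{L^{\infty}(D)}$. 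The price is that elements of $\mathcal{C}_{m}$ are in general not uniform limits of continuous functions, so a measure-theoretic approximation step cannot be avoided. The point you flagged is actually fine. Since $A^{\varepsilon s}\in M(\alpha,\beta)$, Theorem \ref{thm: effective measures comparison} gives $k_{b}\le\frac{\beta}{\alpha}k$. Lemma \ref{thm: comparison} would not give this, since it compares different $\mu$ for the same $A$. Cauchy--Schwarz then yields $\limsup_{\varepsilon}T(\mu^{\varepsilon})^{-1}\int\psi|\nabla a||\nabla b|\lesssim_{\alpha,\beta}\int\psi\,dk$ for continuous $\psi\ge0$.

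\textbf{The gap: the treatment of $O$ in Step 3.} Weak convergence controls the $\limsup$ only against continuous $\psi$, or on closed sets. Any continuous $\psi\ge\chi_{O}$ satisfies $\psi\ge1$ on $\overline{O}$. If $N$ is dense in $D$, then every open $O\supseteq N$ is dense, so $\overline{O}\supseteq\overline{D}$ and no such cutoff has small $k$-integral.

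This situation does occur. Let $k\ll\mathcal{L}^{d}$, let $(q_{j})$ be dense in $D$, and let $C=gC_{0}$, where $C_{0}$ is a constant antisymmetric matrix and $g(x)=\sum_{j}2^{-j}\sin(1/|x-q_{j}|)$. Then $C\in\mathcal{C}_{m}^{d\times d}$. However, the oscillation of size $\approx2^{1-j}$ near each $q_{j}$ is carried by sets of positive measure, so every admissible $N$ must contain all the $q_{j}$.

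\textbf{How to repair it.} Enclose the closed set where the oscillation is large, rather than $N$ itself.
\begin{enumerate}
\item For $x\in\overline{D}$, let $\omega(x)$ be the limit as $r\searrow0$ of the essential oscillation of $C$ on $U(x,r)\cap D$. Then $\omega$ is upper semicontinuous. It vanishes on $\overline{D}\setminus N$, because $C$ is continuous off $N$ and $\mathcal{L}^{d}(N\cap D)=0$.
\item Hence $F_{\eta}:=\{\omega\ge\eta\}$ is closed and contained in $N$, so $k(F_{\eta})=0$. It follows that $k(\{\operatorname{dist}(\cdot,F_{\eta})\le2r\})<\eta$ for small $r$.
\item A continuous cutoff between the $r$- and $2r$-neighbourhoods of $F_{\eta}$ now handles the bad part.
\item On the compact set $\overline{D}\setminus\{\operatorname{dist}(\cdot,F_{\eta})<r\}$, every point has oscillation $<\eta$. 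A finite smooth partition of unity with constant antisymmetric values $c_{i}$ taken from $C$ therefore gives a smooth antisymmetric $C_{\eta}$ with $|C_{\eta}|\le\n{C}_{L^{\infty}(D)}$ and $|C-C_{\eta}|\le\eta$ $\mathcal{L}^{d}$-a.e.\ there. Tietze is not needed.
\end{enumerate}
With this $C_{\eta}$ in place of your $C_{n}$, your Steps 1 and 2 and the Cauchy--Schwarz bound close the argument.
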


\begin{proof}
    Put $m := \mathcal{L}^{d} \resmes D + k$. For a matrix $\mathcal{A}$, we denote by $\mathcal{A}^{s}$ and $\mathcal{A}^{a}$ the symmetric and antisymmetric parts of $\mathcal{A}$, respectively. By Corollary \ref{thm: locality uniform approx}, we may assume that $A^{\varepsilon a} = C$ $\mathcal{L}^{d}$-a.e. on $D$ for every $\varepsilon$. Passing to a subsequence, we may assume that the problems $P(A^{\varepsilon s}, \mu^{\varepsilon})$ admit an effective measure $k'$. It suffices to show that $k' = k$. If $C$ is constant $\mathcal{L}^{d}$-a.e. on $D$, then we see from Definition \ref{def: relaxed_Dirichlet} of relaxed Dirichlet problem that $w_{A^{\varepsilon}, \mu^{\varepsilon}} = w_{A^{\varepsilon s}, \mu^{\varepsilon}}$, and hence $k' = k$. Next, suppose that $C \in \mathcal{S}^{d \times d}$, where $\mathcal{S} := \mathcal{S}_{m}$ is the class defined in Lemma \ref{thm: approx by simple fcts}. Then we may write 
    \begin{equation}
        C = \sum_{i = 1}^{n}C_{i}\chi_{U_{i}},
    \end{equation}
    where $n \ge 1$, the $C_{i}$ are constant antisymmetric matrices, $U_{i}$ are open sets in $\R^{d}$, and $\bigcup_{i \neq j}(U_{i} \cap U_{j})$ and $\R^{d} \setminus \bigcup_{i}U_{i}$ are of $m$ measure zero. Then by Corollary \ref{thm: locality uniform approx}, $k' \resmes U_{i} = k\resmes U_{i}$ for all $i$. We shall see from Theorem \ref{thm: effective measures comparison} below, the proof of which is independent of the present theorem, that $k$ and $k'$ are mutually absolutely continuous. Hence $k' = k$. Finally, consider the general case where $C \in \mathcal{C}_{m}$. Then by a diagonal argument we may construct a sequence $(C^{\varepsilon}) \sub \mathcal{S}^{d \times d}$ such that $\n{C^{\varepsilon} - C}_{L^{\infty}(D)} \le \n{C^{\varepsilon} - C}_{L^{\infty}(m)} \to 0$ and $k'$ is the effective measure of the problems $P(A^{\varepsilon s} + C^{\varepsilon}, \mu^{\varepsilon})$. Hence, again by Corollary \ref{thm: locality uniform approx}, we have that $k' = k$.
\end{proof}

\begin{example}
    In the same setting as Example \ref{ex: disjoint balls}, let $B$ be of the form
    \begin{equation}
        B(x) = B(x_{1}, \cdots, x_{d}) = \begin{cases}
            \mathrm{Id} & \text{ if $x_{d} > 0$}\\
            \mathrm{Id} + C & \text{ if $x_{d} \le 0$},
        \end{cases}
    \end{equation}
    where $C = (c_{ij})$ is a $d \times d$ antisymmetric constant matrix such that $(c_{1 d}, \cdots, c_{d d}) \neq 0$. Then a direct computation shows that $w_{U(0, 1)} = \frac{1 - |x|^{2}}{2d} \neq w_{B, U(0, 1)}$. Hence, 
    \[\begin{split}
        -T(U(0, 1)) &= \int |\nabla w_{U(0, 1)}|^{2} - 2 \int w_{U(0, 1)} < \int |\nabla w_{B, U(0, 1)}|^{2} -2\int w_{B, U(0, 1)}\\
        &= \int B\nabla w_{B, U(0, 1)} \cdot \nabla w_{B, U(0, 1)} -2\int w_{B, U(0, 1)} = - \n{w_{B, U(0, 1)}}_{1},
    \end{split}\]
    and $k_{B} = T(U(0, 1))^{-1}\n{w_{B, U(0, 1)}}_{1} < 1$. Hence, the problems $P(A_{h}, D_{h, \varkappa})$ and $P(\frac{A_{h} + A_{h}^{*}}{2}, D_{h, \varkappa})$ have different effective measures.
\end{example}
\subsection{Comparisons of effective measures}
\label{sect: comparison}
In this subsection, we compare the effective measures of the problems $P(A^{\varepsilon}, \mu^{\varepsilon})$ and $P(B^{\varepsilon}, \mu^{\varepsilon})$, which have the same $(\mu^{\varepsilon})$ but different coefficients.
\begin{theorem}[Bounds on effective measures]
    \label{thm: effective measures comparison}
    Let $0 < \alpha' \le \beta' < \infty$ and $(B^{\varepsilon}) \sub M(\alpha', \beta')$ be such that the problems $P(B^{\varepsilon}, \mu^{\varepsilon})$ admit an effective measure $k'$. Then 
    \begin{equation}
        \label{eq: effective measures comparison general}
    \left(  \frac{\beta'}{\alpha} \vee 1 \right)^{-1}k \le    k' \le \left( \frac{\beta}{\alpha'} \vee 1 \right)k.
    \end{equation}
    Now, suppose that $\mu^{\varepsilon}$ are quasi-open. Then
    \begin{equation}
        \label{eq: effective measures comparison domains}
        \frac{\alpha}{\beta'}k \le k' \le \frac{\beta}{\alpha'}k.
    \end{equation}
    Furthermore, for every Borel measure $\nu$ on $\R^{d}$ with
    \begin{equation}
        \label{eq: nu bound}
        \frac{\alpha}{\beta'}k \le \nu \le \frac{\beta}{\alpha'}k,
    \end{equation}
    there is a sequence $(f_{\varepsilon}) \sub C^{\infty}(\R^{d})$ with $\frac{\alpha}{\beta'} \le f_{\varepsilon} \le \frac{\beta}{\alpha'}$ such that $f_{\varepsilon}k \to \nu$ in total variation norm and $\nu$ is the effective measure of the problems $P(\tilde{B}^{\varepsilon}, \mu^{\varepsilon})$, where $\tilde{B}^{\varepsilon} := f_{\varepsilon}^{-1}A^{\varepsilon} \in M(\frac{\alpha}{\beta}\alpha', \frac{\beta}{\alpha}\beta')$. If the Radon--Nikodym derivative of $\nu$ with respect to $k$ admits a version $\rho \in \mathcal{C}_{\mathcal{L}^{d} \resmes D + k}$ (see \eqref{def: Cm}), then $\nu$ also arises as the effective measure of the problems $P(\hat{\rho}^{-1}A^{\varepsilon}, \mu^{\varepsilon})$. Here, $\hat{\rho} := (\rho \vee \frac{\alpha}{\beta'}) \wedge \frac{\beta}{\alpha'}$.
\end{theorem}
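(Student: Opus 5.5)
We prove the measure inequalities by testing against nonnegative $\varphi \in C_{c}^{\infty}(\R^{d})$. The bounds then pass to the weak limits, and the two-sided comparison upgrades to Borel sets by regularity.

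\emph{Step 1 (a local comparison).} Fix $\varphi \ge 0$ and set $w = w_{A,\mu}$, $v = w_{B,\mu}$. We test the equation of $v$ with $\varphi w$ and the equation of $w$ with $\varphi v$, in the spirit of the Leibniz identities \eqref{eq: adjoint Leibniz} and \eqref{eq: convergence of energy Leibniz}. This gives
\[
\int \varphi w = \int \varphi B\nabla v\cdot\nabla w + \int \varphi v w\,d\mu + O\bigl(\|\nabla\varphi\|_{q'}T(\mu)^{1+\delta_{d,1,q}}\bigr),
\]
and the analogous identity for $\int\varphi v$ with $A$ in place of $B$. The error terms $\int v B\nabla w\cdot\nabla\varphi$ and similar ones are bounded exactly as in Proposition \ref{thm: adjoint}, via \eqref{eq: energy bound by torsion} and \eqref{eq: Ls_L1 estimate of torsion func}.

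Next we use the Cauchy--Schwarz inequality with weight $\varphi$. For the symmetric parts we have $|B\nabla v\cdot\nabla w| \le (B\nabla v\cdot\nabla v)^{1/2}(B^{s}\nabla w\cdot\nabla w)^{1/2}$ only for symmetric $B$. For a general $B$ we instead bound $|B\xi\cdot\eta| \le |B\xi||\eta| \le \beta'^{1/2}(B\xi\cdot\xi)^{1/2}|\eta|$ and $|\eta|^{2} \le \alpha^{-1}A\eta\cdot\eta$. Combined with Proposition \ref{thm: convergence of energy} applied to $f=1$, which gives $\int\varphi(A\nabla w\cdot\nabla w)+\int\varphi w^{2}\,d\mu = \int\varphi w + O(T^{1+\delta})$, this yields
\[
\int\varphi w \le \Bigl(\tfrac{\beta'}{\alpha}\Bigr)^{1/2}\Bigl(\int\varphi v\Bigr)^{1/2}\Bigl(\int\varphi w\Bigr)^{1/2} + \Bigl(\int\varphi v\,d\mu\text{-part}\Bigr) + O(T^{1+\delta}).
\]
In the quasi-open case the $\mu$-integrals vanish. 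Dividing by $T(\mu^{\varepsilon})$ and letting $\varepsilon\to0$, we obtain $\int\varphi\,dk \le (\beta'/\alpha)\int\varphi\,dk'$. Swapping the roles of $A$ and $B$ gives the other inequality, so \eqref{eq: effective measures comparison domains} follows. In the general case the $\mu$-term is controlled by Cauchy--Schwarz with constant $1$, which produces the factor $\frac{\beta'}{\alpha}\vee1$ and hence \eqref{eq: effective measures comparison general}. The main obstacle is organizing this Cauchy--Schwarz step so that exactly the constants $\alpha^{-1}\beta'$ appear without losing a square root. If the direct route fails, I would use the variational characterization instead. For symmetric coefficients, $\int\varphi w$ is the maximum of $2\int\varphi u-\int A\nabla u\cdot\nabla u$ over $u$, localized by $\varphi$ up to $O(T^{1+\delta})$. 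Nonsymmetric coefficients would be handled through $|A\xi|^{2}\le\beta A\xi\cdot\xi$.

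\emph{Step 2 (attainability of $\nu$).} By Step 1, $\nu\ll k$ and $\rho := d\nu/dk$ takes values in $[\alpha/\beta',\beta/\alpha']$. By Lemma \ref{thm: approx by simple fcts} (the approximation lemma in the appendix), or by Lusin's theorem together with mollification, we choose smooth $g_{n}$ with values in the same interval and $g_{n}\to\rho$ in $L^{1}(k)$, so that $g_{n}k\to\nu$ in total variation. For a fixed continuous $g$, the problems $P(g^{-1}A^{\varepsilon},\mu^{\varepsilon})$ have effective measure $gk$. To see this, note first that on an open set $U$ where $g$ is nearly equal to a constant $c$, Corollary \ref{thm: locality uniform approx} together with the scaling $R_{c^{-1}A,\mu}=cR_{A,\mu}$ (valid for quasi-open $\mu$, by Remark \ref{rmk: scaling property}) shows that the limit restricted to $U$ is approximately $ck\resmes U$. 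A partition argument, as in the proof of Theorem \ref{thm: continuous antisymmetric part in the uniform limit does not contribute to k}, then gives the limit $gk$; the mutual absolute continuity from Step 1 controls the boundaries of the pieces. A diagonal choice $f_{\varepsilon}=g_{n(\varepsilon)}$ then yields effective measure $\nu$. When $\rho$ has a version in $\mathcal{C}_{m}$, the same partition and locality argument applies directly to $\hat\rho$, by the approximation used in Theorem \ref{thm: continuous antisymmetric part in the uniform limit does not contribute to k}, and gives the effective measure $\hat\rho k=\nu$. The membership $f_{\varepsilon}^{-1}A^{\varepsilon}\in M(\frac{\alpha}{\beta}\alpha',\frac{\beta}{\alpha}\beta')$ is immediate from the bounds on $f_{\varepsilon}$.
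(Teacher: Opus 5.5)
Your proposal is correct and follows essentially the paper's proof. Step 1 tests the $B$-torsion equation with $\varphi w_{A,\mu}$ and controls the cross term using $|B\xi|^{2}\le\beta' B\xi\cdot\xi$, $|\eta|^{2}\le\alpha^{-1}A\eta\cdot\eta$ and the localized energy identity \eqref{eq: quantitative core of energy convergence}; the paper uses Young's inequality with absorption after normalizing $\alpha=1$, which is equivalent to your Cauchy--Schwarz step, and no square root is lost. Step 2 is the same locality, piecewise-constant and diagonal scheme. The one point to tighten is your claim that the limit on $U$ is ``approximately $ck\resmes U$'': Corollary \ref{thm: locality uniform approx} identifies limits only when the coefficient difference tends to zero uniformly, so (as in the paper) you should first treat exactly piecewise-constant $\rho\in\mathcal{S}_{m}$, and then pass to uniform limits by a diagonal argument together with that corollary applied with $U=\R^{d}$.
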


\begin{proof}
    We first show that for every $A \in M(\alpha, \beta)$, $B \in M(\alpha', \beta')$, and $\mu \in \mathcal{M}_{\mathrm{cap}}^{b}$, we have
    \begin{equation}
        \label{eq: comparison prop quantitative core}
        \sup_{\varphi \in C_{c}^{\infty}(\R^{d}), \n{\nabla \varphi}_{L^{q'}(\Omega_{\mu})} \le 1, \varphi \ge 0}\int \left( w_{A, \mu} - \frac{\beta'}{\alpha}w_{B, \mu} \right)\varphi \lesssim_{d, \alpha, \alpha', \beta'}T(\mu)^{1 + \delta_{d, 1, q}},
    \end{equation}
where $1 \le q \le 2$ and $\delta_{d, 1, q}$ is defined by \eqref{def: delta dpq}, if either $\alpha \le \beta'$ or $\mu$ is quasi-open. To this end, we may assume that $\alpha = 1$ and that either  $1 \le \beta'$ or $\mu$ is quasi-open. For every nonnegative $\varphi \in C_{c}^{\infty}(\R^{d})$ with $\n{\nabla \varphi}_{L^{q'}(\Omega_{\mu})} \le 1$, 
\begin{equation}
    \label{eq: bound by I + II + III}
    \begin{split}
        \int w_{A, \mu}\varphi &= \int B\nabla w_{B, \mu} \cdot \nabla (w_{A, \mu}\varphi) + \int w_{B, \mu}w_{A, \mu}\varphi \, d\mu\\
        &= \int w_{A, \mu}B\nabla w_{B, \mu}\cdot \nabla \varphi + \int \varphi B\nabla w_{B, \mu}\cdot \nabla w_{A, \mu} + \int w_{B, \mu}w_{A, \mu} \varphi \, d\mu\\
        &\underset{\text{Young's inequality}}{\le} \int_{D}w_{A, \mu}B\nabla w_{B, \mu}\cdot \nabla \varphi\\
        &+ \frac{1}{2\delta}\left( \int \varphi|B\nabla w_{B, \mu}|^{2} + \int w_{B, \mu}^{2}\varphi \, d\mu\right) + \frac{\delta}{2}\left( \int \varphi|\nabla w_{A, \mu}|^{2} + \int w_{A, \mu}^{2} \varphi \, d\mu \right)\\
        &=: I + II + III.
    \end{split}
\end{equation}
Using the H\"{o}lder inequality and then \eqref{eq: Ls_L1 estimate of torsion func}, \eqref{eq: energy bound by torsion}, and \eqref{eq: L1 norm of wA}, we bound, $I \lesssim_{d, \alpha', \beta'} T(\mu)^{1 + \delta_{d, 1, q}}$. We also bound, by using Proposition \ref{thm: convergence of energy},
\begin{equation}
    \label{eq: comparison II}
    \begin{split}
        II &\le \frac{\beta'}{2\delta}\left( \int \varphi B\nabla w_{B, \mu} \cdot \nabla w_{B, \mu} + \int w_{B, \mu}^{2}\varphi \, d\mu \right) \underset{\text{\eqref{eq: quantitative core of energy convergence}}}{\lesssim_{d, \alpha'}} \frac{\beta'}{2\delta}\left( T(\mu)^{1 + \delta_{d, 1, q}} + \int \varphi w_{B, \mu} \right),
    \end{split}
\end{equation}
where the first inequality follows from the assumption that either $\beta' \ge 1$ or $\mu$ is quasi-open.
Similarly, $III \lesssim_{d} \frac{\delta}{2}\left( T(\mu)^{1 + \delta_{d, 1, q}} + \int \varphi w_{A, \mu} \right)$.
Combining \eqref{eq: bound by I + II + III} and the estimates for $I$, $II$, and $III$ with $\delta = 1$, we obtain \eqref{eq: comparison prop quantitative core}.

From \eqref{eq: comparison prop quantitative core}, we deduce that 
\[\int \varphi \, dk - \frac{\beta'}{\alpha} \int \varphi \, dk' \le 0\]
for every nonnegative $\varphi \in C_{c}^{\infty}(\R^{d})$, and thus $k \le \alpha^{-1}\beta'k'$, if either $\alpha \le \beta'$ or $\mu^{\varepsilon}$ are quasi-open. This proves the first inequalities of \eqref{eq: effective measures comparison general} and \eqref{eq: effective measures comparison domains}. The second inequalities follow similarly.

Now, suppose that $\mu^{\varepsilon}$ are quasi-open and let a Borel measure $\nu$ on $\R^{d}$ satisfy the bound \eqref{eq: nu bound}. We first assume that the Radon--Nikodym derivative of $\nu$ with respect to $k$ admits a version $\rho \in \mathcal{C}_{m}$, and show that $\nu$ is the effective measure of the problems $P(\hat{\rho}^{-1}A^{\varepsilon}, \mu^{\varepsilon})$. Here, $m := \mathcal{L}^{d} \resmes D + k$. Since $\hat{\rho} = \rho$ $k$-a.e. and $\hat{\rho} \in \mathcal{C}_{m}$, we may assume that $\frac{\alpha}{\beta'} \le \rho \le \frac{\beta}{\alpha'}$ pointwise. Passing to a subsequence, we may assume that the problems $P(\rho^{-1}A^{\varepsilon}, \mu^{\varepsilon})$ admit an effective measure $k''$; it suffices to show $k'' = \nu$. If $\rho$ is constant, then $w_{\rho^{-1}A^{\varepsilon}, \mu^{\varepsilon}} = \rho w_{A^{\varepsilon}, \mu^{\varepsilon}}$, and hence $k'' = \rho k = \nu$. Next, suppose that $\rho$ is of the form $\rho = \sum_{i = 1}^{n}r_{i}\chi_{U_{i}},$ where $n \ge 1$, $0 < r_{i} < \infty$, and $U_{i}$ are open sets such that $\bigcup_{i \neq j}(U_{i} \cap U_{j})$ and $\R^{d} \setminus \bigcup_{i}U_{i}$ are $m$-null. Then by Corollary \ref{thm: locality uniform approx}, $k''\resmes U_{i} =r_{i}k\resmes U_{i} = \nu\resmes U_{i}$ for all $i$. Since $k''$ and $\nu$ satisfy the bounds \eqref{eq: effective measures comparison domains} and \eqref{eq: nu bound}, respectively, they are absolutely continuous with respect to $m$. Hence, $k'' = \nu$. For the general case where $\rho \in \mathcal{C}_{m}$, Lemma \ref{thm: approx by simple fcts} allows us to construct by a diagonal argument a sequence $(\rho_{\varepsilon}) \sub \mathcal{S}_{m}$ such that $\frac{\alpha}{\beta'} \le \rho_{\varepsilon} \le \frac{\beta}{\alpha'}$ $m$-a.e., $\n{\rho_{\varepsilon} - \rho}_{L^{\infty}(m)} \to 0$, and $T(\mu^{\varepsilon})^{-1}w_{\rho_{\varepsilon}^{-1}A^{\varepsilon}, \mu^{\varepsilon}} - \rho_{\varepsilon}k \rightharpoonup 0$ in $\mathcal{M}(\R^{d})$ weak. Hence, by Corollary \ref{thm: locality uniform approx}, $k'' = \nu$.

Finally, consider the case where $\nu$ satisfies the bound \eqref{eq: nu bound} only. Let $f \in L^{1}(k)$ be such that $\nu = fk$. Then there is a sequence $(f_{h})_{h} \sub C^{\infty}(\R^{d})$ such that $\frac{\alpha}{\beta'} \le f_{h} \le \frac{\beta}{\alpha'}$ and $f_{h} \to f$ in $L^{1}(k)$. Since $(f_{h}) \sub \mathcal{C}_{m}$, we may construct by a diagonal argument a subsequence $(h_{\varepsilon})$ such that $f_{\varepsilon} := f_{h_{\varepsilon}}$ satisfies $\frac{\alpha}{\beta'} \le f_{\varepsilon} \le \frac{\beta}{\alpha'}$ pointwise, $\n{f_{\varepsilon} - f}_{L^{1}(k)} \to 0$, and $T(\mu^{\varepsilon})^{-1}w_{f_{\varepsilon}^{-1}A^{\varepsilon}, \mu^{\varepsilon}} - f_{\varepsilon}k \rightharpoonup 0$ in $\mathcal{M}(\R^{d})$ weak. Hence, $\nu = fk$ is the effective measure of the problems $P(f_{\varepsilon}^{-1}A^{\varepsilon}, \mu^{\varepsilon})$.
\end{proof}

For $A, B \in M(\alpha, \beta)$, we write $A \le B$ if $A\xi \cdot \xi \le B\xi \cdot \xi$ $\mathcal{L}^{d}$-a.e. for every $\xi \in \R^{d}$. Effective measures are monotone (order-reversing) with respect to this order, in analogy with $H$-convergence (see \cite[Lemma 10.7]{tartar2009the}).
\begin{proposition}[Order property of effective measures]
    \label{thm: effective measures comparison symmetric}
    Let $(B^{\varepsilon}) \sub M(\alpha, \beta)$ be such that the problems $P(B^{\varepsilon}, \mu^{\varepsilon})$ admit an effective measure $k'$. If each $B^{\varepsilon}$ is symmetric $\mathcal{L}^{d}$-a.e. and $B^{\varepsilon} \le A^{\varepsilon}$ for every $\varepsilon$, then $k \le k'$.
\end{proposition}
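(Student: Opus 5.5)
The plan is a localized energy comparison: expand a nonnegative quadratic form in $w_{A^{\varepsilon},\mu^{\varepsilon}}-w_{B^{\varepsilon},\mu^{\varepsilon}}$, then identify each term up to errors of order $T(\mu)^{1+\delta_{d,1,q}}$, as in Propositions \ref{thm: adjoint} and \ref{thm: convergence of energy}. Fix $A, B\in M(\alpha,\beta)$ with $B$ symmetric and $B\le A$, a measure $\mu\in\mathcal{M}_{\mathrm{cap}}^{b}$, a nonnegative $\varphi\in C_c^\infty(\R^d)$, and some $1\le q\le 2$. Write $w_A=w_{A,\mu}$, $w_B=w_{B,\mu}$ and $z=w_A-w_B$. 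Since $\varphi\ge0$ and $B\xi\cdot\xi\ge0$,
\[
0\le \int\varphi\, B\nabla z\cdot\nabla z+\int\varphi z^2\,d\mu .
\]
Because $B$ is symmetric, $B\nabla w_A\cdot\nabla w_B=B\nabla w_B\cdot\nabla w_A$. The right-hand side therefore equals
\[
\Big(\int\varphi B\nabla w_A\cdot\nabla w_A+\int\varphi w_A^2\,d\mu\Big)-2\Big(\int\varphi B\nabla w_B\cdot\nabla w_A+\int\varphi w_Aw_B\,d\mu\Big)+\Big(\int\varphi B\nabla w_B\cdot\nabla w_B+\int\varphi w_B^2\,d\mu\Big).
\]
This is the only place symmetry is needed.

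Next I estimate the three brackets.
\begin{itemize}
\item First bracket: $B\le A$ and $\varphi\ge0$ give $\int\varphi B\nabla w_A\cdot\nabla w_A\le\int\varphi A\nabla w_A\cdot\nabla w_A$. Then \eqref{eq: quantitative core of energy convergence} with $f=1$ bounds the first bracket by $\int\varphi w_A+C\,\n{\nabla\varphi}_{L^{q'}(\Omega_\mu)}T(\mu)^{1+\delta_{d,1,q}}$.
\item Third bracket: the same estimate for $B$ shows it equals $\int\varphi w_B+O(T(\mu)^{1+\delta_{d,1,q}})$.
\item Middle bracket: I test the equation of $w_B$ with $\varphi w_A\in H^1_\mu$. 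This is admissible because $w_A$ is bounded by Lemma \ref{thm: Linfty_Lt_estimate_of_torsion_func}. The Leibniz computation, exactly as in \eqref{eq: adjoint Leibniz}, shows the middle bracket equals $\int\varphi w_A-\int w_A B\nabla w_B\cdot\nabla\varphi$. By Hölder, \eqref{eq: energy bound by torsion} and \eqref{eq: Ls_L1 estimate of torsion func}, the last integral is bounded by $\n{w_A}_{\frac{2q}{2-q}}\,\n{B\nabla w_B}_2\,\n{\nabla\varphi}_{L^{q'}(\Omega_\mu)}\lesssim_{d,\alpha,\beta}T(\mu)^{1+\delta_{d,1,q}}\n{\nabla\varphi}_{L^{q'}(\Omega_\mu)}$.
\end{itemize}

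Inserting these three estimates, the $\int\varphi w_A$ terms combine as $1-2=-1$, and we get
\[
\int\varphi\,(w_{A,\mu}-w_{B,\mu})\lesssim_{d,\alpha,\beta}\n{\nabla\varphi}_{L^{q'}(\Omega_\mu)}T(\mu)^{1+\delta_{d,1,q}}
\]
for every nonnegative $\varphi\in C_c^\infty(\R^d)$. Now apply this with $A=A^\varepsilon$, $B=B^\varepsilon$, $\mu=\mu^\varepsilon$, divide by $T(\mu^\varepsilon)$, and let $\varepsilon\searrow0$. Since $\delta_{d,1,q}>0$ for $q$ close to $1$, the right-hand side vanishes, and the effective-measure convergence yields $\int\varphi\,dk\le\int\varphi\,dk'$ for all nonnegative test functions. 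Hence $k\le k'$.

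The main obstacle is making sure no error term involves the antisymmetric part of $A$, which could be large. In this scheme $A$ enters only through the pointwise inequality $B\le A$ and through the energy identity \eqref{eq: quantitative core of energy convergence} for $w_A$, whose proof never needs symmetry. So I expect no difficulty there. What remains to check is that every constant depends only on $d,\alpha,\beta$, and that the test function $\varphi w_A$ indeed lies in $H^1_\mu$.
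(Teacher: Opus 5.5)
Your argument is correct. It is not the route of the paper's proof of this proposition, but it is essentially the alternative argument sketched in the remark right after it. There the cross term $\int\varphi B\nabla w_{B,\mu}\cdot\nabla w_{A,\mu}+\int\varphi w_{A,\mu}w_{B,\mu}\,d\mu$ is split using the pointwise inequality $|B\xi\cdot\eta|\le(B\xi\cdot\xi)^{1/2}(B\eta\cdot\eta)^{1/2}$ for symmetric $B$. This is equivalent to your expansion of $\int\varphi B\nabla z\cdot\nabla z+\int\varphi z^2\,d\mu\ge0$. The remaining terms are then handled exactly as you do:
\begin{itemize}
\item the Leibniz identity obtained by testing the equation of $w_{B,\mu}$ with $\varphi w_{A,\mu}$;
\item Hölder together with the $L^s$ bounds on torsion functions for the term containing $\nabla\varphi$;
\item $B\le A$ to pass to the $A$-energy;
\item \eqref{eq: quantitative core of energy convergence} with $f=1$ for both energy brackets.
\end{itemize}
This yields \eqref{eq: quantitative core symmetric ver}, and your exponent bookkeeping $1+\delta_{d,1,q}$ checks out.

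The paper's main proof localizes through the source rather than through a cutoff. Fix $f\in C_b(\R^d)$ and set $J_{B}(w):=\int B\nabla w\cdot\nabla w+\int w^2\,d\mu^\varepsilon-2\int fw$. Using $B^\varepsilon\le A^\varepsilon$ and then the Dirichlet minimum principle for the symmetric $B^\varepsilon$ (this is where symmetry enters), it obtains
\[
J_{A^\varepsilon}(u^\varepsilon)\ge J_{B^\varepsilon}(u^\varepsilon)\ge J_{B^\varepsilon}(v^\varepsilon),
\]
where $u^\varepsilon:=R_{A^\varepsilon,\mu^\varepsilon}f$ and $v^\varepsilon:=R_{B^\varepsilon,\mu^\varepsilon}f$. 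By the weak formulations, the two ends equal $-\int fu^\varepsilon$ and $-\int fv^\varepsilon$. Dividing by $T(\mu^\varepsilon)$ and using \eqref{eq: convergence to kf} gives $\int f^2\,dk\le\int f^2\,dk'$ for every $f\in C_b(\R^d)$. Since every nonnegative $g\in C_b(\R^d)$ equals $(\sqrt{g})^2$, this gives $k\le k'$.

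The comparison is as follows:
\begin{itemize}
\item The paper's route is shorter and needs nothing beyond Theorem \ref{thm: main result}.
\item Your route needs the localized energy estimate and the torsion-function bounds. In exchange it gives an $\varepsilon$-independent, quantitative one-sided estimate $\int\varphi(w_{A,\mu}-w_{B,\mu})\lesssim_{d,\alpha,\beta}\n{\nabla\varphi}_{L^{q'}(\Omega_\mu)}T(\mu)^{1+\delta_{d,1,q}}$, valid for each fixed triple $(A,B,\mu)$, i.e.\ a rate rather than only the limiting inequality.
\end{itemize}
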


\begin{proof}
    Let $f \in C_{b}(\R^{d})$ and put $u^{\varepsilon} := R_{A^{\varepsilon}, \mu^{\varepsilon}}f$ and $v^{\varepsilon} := R_{B^{\varepsilon}, \mu^{\varepsilon}}f$. Then 
    \begin{equation}
        \begin{split}
            \int A^{\varepsilon} \nabla u^{\varepsilon} \cdot \nabla u^{\varepsilon} + \int |u^{\varepsilon}|^{2} \, d\mu^{\varepsilon} -2\int f u^{\varepsilon} &\ge \int B^{\varepsilon} \nabla u^{\varepsilon} \cdot \nabla u^{\varepsilon} + \int |u^{\varepsilon}|^{2} \, d\mu^{\varepsilon} - 2\int f u^{\varepsilon}\\
            &\ge \int B^{\varepsilon} \nabla v^{\varepsilon} \cdot \nabla v^{\varepsilon} + \int |v^{\varepsilon}|^{2} \, d\mu^{\varepsilon} - 2 \int f v^{\varepsilon}.
        \end{split}
    \end{equation}
    Dividing both sides of the last display by $T(\mu^{\varepsilon})$ and passing to the limit by virtue of \eqref{eq: convergence to kf} and \eqref{eq: weak convergence of energy}, we obtain $-\int f^{2} \, dk \ge - \int f^{2} \, dk'$. Since $f \in C_{b}(\R^{d})$ was arbitrary, this implies $k \le k'$.
\end{proof}

\begin{remark}
    Similarly to Theorem \ref{thm: effective measures comparison}, one can show the following estimate. For $A, B \in M(\alpha, \beta)$ with $B \le A$ and $B$ symmetric $\mathcal{L}^{d}$-a.e. and $\mu \in \mathcal{M}_{\mathrm{cap}}^{b}$, we have 
    \begin{equation}
        \label{eq: quantitative core symmetric ver}
        \sup_{\varphi \in C_{c}^{\infty}(\R^{d}), \n{\nabla \varphi}_{L^{q'}(\Omega_{\mu})} \le 1, \varphi \ge 0}\int(w_{A, \mu} - w_{B, \mu})\varphi \lesssim_{d, \alpha, \beta}T(\mu)^{1 + \delta_{d, 1, q}}
    \end{equation}
    for every $1 \le q \le 2$. This is another proof of Proposition \ref{thm: effective measures comparison symmetric}. To see \eqref{eq: quantitative core symmetric ver}, note that $|B\xi \cdot \eta| = |B^{\frac{1}{2}}\xi \cdot B^{\frac{1}{2}}\eta| \le (B\xi \cdot \xi)^{\frac{1}{2}}(B\eta \cdot \eta)^{\frac{1}{2}}$ $\mathcal{L}^{d}$-a.e. for every $\xi, \eta \in \R^{d}$, since $B$ is symmetric $\mathcal{L}^{d}$-a.e. Taking into account this relation and observing that the first and the second equalities of \eqref{eq: bound by I + II + III} are still valid for general $A, B \in M(\alpha, \beta)$ and $\mu \in \mathcal{M}_{\mathrm{cap}}^{b}$, we obtain, for every nonnegative $\varphi \in C_{c}^{\infty}(\R^{d})$, 
    \begin{equation*}
        \begin{split}
            \int w_{A, \mu}\varphi &\le \int w_{A, \mu}B\nabla w_{B, \mu}\cdot \nabla \varphi\\
            &+ \frac{1}{2}\left( \int \varphi B\nabla w_{B, \mu} \cdot \nabla w_{B, \mu} + \int w_{B, \mu}^{2}\varphi \, d\mu\right) + \frac{1}{2}\left( \int \varphi B \nabla w_{A, \mu} \cdot \nabla w_{A, \mu} + \int w_{A, \mu}^{2}\varphi \, d\mu \right)\\
            &=: I + II' + III'.
        \end{split}
    \end{equation*}
    Use also the assumption $B \le A$ to bound $III' \le 2^{-1}(\int \varphi A \nabla w_{A, \mu} \cdot \nabla w_{A, \mu} + \int \varphi w_{A, \mu}^{2} \, d\mu)$.
  From the last two bounds and \eqref{eq: quantitative core of energy convergence}, we obtain \eqref{eq: quantitative core symmetric ver}.  
\end{remark}

\subsection{Proof of Theorem \ref{thm: quantitative error estimate}}
\label{sect: pf of quantitative error estimate}
\begin{lemma}
    \label{thm: gradient of torsion negative sobolev estimate}
    If $A \in M(1, \beta)$ and $\mu \in \mathcal{M}_{\mathrm{cap}}^{b}$, then for $1 \le p \le 2$ and $\varphi \in H^{1}_{\mu}$, 
    \begin{equation}
        \label{eq: Lp H1 estimate of multiplication by energy density of torsion func}
        \n{|A\nabla w_{A, \mu}|\varphi}_{p} \le \beta^{\frac{1}{2}}\n{w_{A, \mu}}_{\frac{2p}{2 - p}}\left(4\beta \int |\nabla \varphi|^{2} + 2\int \varphi^{2} \, d\mu\right)^{\frac{1}{2}}.
    \end{equation}
\end{lemma}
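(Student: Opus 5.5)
The plan is to write $|A\nabla w_{A,\mu}|\varphi = \frac{|A\nabla w_{A,\mu}|}{w_{A,\mu}^{1/2}}\cdot \frac{|\varphi|}{w_{A,\mu}^{1/2}}\cdot w_{A,\mu}$ on $\{w_{A,\mu}>0\}$. Lemma \ref{thm: torsional_Hardy} gives $\varphi = 0$ $\mathcal{L}^{d}$-a.e. on $\{w_{A,\mu}=0\}$, so the integrand vanishes there and we may work on $\{w_{A,\mu}>0\}$ only.

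First I use the pointwise bound $|A\eta|^{2} \le \beta A\eta\cdot\eta$, valid a.e. since $A \in M(1,\beta)$, exactly as in the proof of Lemma \ref{thm: energy bound by torsion and L1 norm of wA}. With $\eta=\nabla w_{A,\mu}$ this gives
\[
|A\nabla w_{A,\mu}|\,|\varphi| \le \beta^{\frac12}\left(\frac{A\nabla w_{A,\mu}\cdot\nabla w_{A,\mu}}{w_{A,\mu}^{2}}\varphi^{2}\right)^{\frac12} w_{A,\mu}.
\]
Next I apply H\"older's inequality in $L^{p}$ with exponents $\frac{2}{p}$ and $\frac{2}{2-p}$ to the two factors $g := \bigl(\frac{A\nabla w\cdot\nabla w}{w^{2}}\varphi^{2}\bigr)^{1/2}$ and $w_{A,\mu}$. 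This yields $\n{g\,w_{A,\mu}}_{p} \le \n{g}_{2}\n{w_{A,\mu}}_{\frac{2p}{2-p}}$, since $\frac1p = \frac12 + \frac{2-p}{2p}$. For $p=2$ the second exponent is $\infty$, which is consistent with the formula.

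Finally, the second inequality in \eqref{eq: torsional_Hardy_2} bounds $\n{g}_{2}^{2} \le 4\beta\int|\nabla\varphi|^{2} + 2\int\varphi^{2}\,d\mu$. Taking square roots and combining the three steps gives \eqref{eq: Lp H1 estimate of multiplication by energy density of torsion func} with the stated constant $\beta^{1/2}$.

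The only delicate point is justifying the division by $w_{A,\mu}$ and the null set $\{w_{A,\mu}=0\}$. On that set $\nabla w_{A,\mu}=0$ a.e. by Stampacchia's theorem, and $\varphi=0$ a.e. by Lemma \ref{thm: torsional_Hardy}, so both sides behave consistently there. Beyond this, the argument is a routine combination of the torsional Hardy inequality and H\"older's inequality.
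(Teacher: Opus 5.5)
Your proposal is correct and follows the paper's proof essentially verbatim. Both write $|A\nabla w_{A,\mu}|\varphi = \frac{|A\nabla w_{A,\mu}|}{w_{A,\mu}}\varphi \cdot w_{A,\mu}$, apply H\"older with exponents $\frac{2}{p}$ and $\frac{2}{2-p}$, use the pointwise bound $|A\eta|^{2}\le\beta A\eta\cdot\eta$, and conclude with the second inequality of \eqref{eq: torsional_Hardy_2}; your remark about the set $\{w_{A,\mu}=0\}$ just makes explicit a point the paper leaves implicit.
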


\begin{proof}
    We bound
    \begin{equation*}
        \begin{split}
            \n{|A\nabla w_{A, \mu}| \varphi}_{p} & = \n{\frac{|A\nabla w_{A, \mu}|}{w_{A, \mu}}\varphi w_{A, \mu}}_{p}\\
            &\le \n{\frac{|A\nabla w_{A, \mu}|}{w_{A, \mu}}\varphi}_{2}\n{w_{A, \mu}}_{\frac{2p}{2 - p}}\\
            &\le \left( \beta \int\frac{A \nabla w_{A, \mu} \cdot \nabla w_{A, \mu}}{w_{A, \mu}^{2}}\varphi^{2} \right)^{\frac{1}{2}}\n{w_{A, \mu}}_{\frac{2p}{2 - p}}\\
            &\underset{\eqref{eq: torsional_Hardy_2}}{\le} \beta^{\frac{1}{2}}\n{w_{A, \mu}}_{\frac{2p}{2 - p}}\left( 4\beta\int |\nabla \varphi|^{2} + 2\int \varphi^{2} \, d\mu \right)^{\frac{1}{2}}.
        \end{split}
    \end{equation*}
\end{proof}

\begin{lemma}
    \label{thm: energy estimate for u-wf}
    Let $A \in M(1, \beta)$ and $\mu \in \mathcal{M}_{\mathrm{cap}}(D)$, $D \sub \R^{d}$ bounded open. Let $f \in L^{\infty}(D) \cap W^{1, p'}(D)$, $1 \le p \le 2$. For a real-valued function $g$ defined on a subset of $\R^{d}$, let $\bar{g}:\R^{d} \to \R$ denote the zero extension. Then the product $w_{A, \mu} \bar{f} \in H^{1}_{\mu}$ and
    \begin{equation}
        \label{eq: energy estimate for u-wf}
        \n{u - w_{A, \mu}\bar{f}}_{\dot{H}^{1}_{\mu}} \le 3\beta\n{w_{A, \mu}}_{\frac{2p}{2 - p}}\n{\nabla f}_{L^{p'}(\Omega_{\mu})}.
    \end{equation}
    Here, we put $u = R_{A, \mu}f$.
\end{lemma}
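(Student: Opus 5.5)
Write $w := w_{A,\mu}$ and $z := u - w\bar f$. First I check that $w\bar f \in H^{1}_{\mu}$. Since $\mu \ge I_{D}$, the function $w$ vanishes q.e. outside $D$, and $w \in L^{\infty}$ by Lemma \ref{thm: Linfty_Lt_estimate_of_torsion_func}. As $f \in L^{\infty}(D)$, the product $w\bar f$ lies in $L^{2}(\mu)$ with $\int |w\bar f|^{2}\,d\mu \le \n{f}_{\infty}^{2}\int w^{2}\,d\mu$. For the gradient I use $\nabla(w\bar f) = \bar f\nabla w + w\nabla f$ on $D$. The term $w\nabla f$ is in $L^{2}$ by Lemma \ref{thm: gradient of torsion negative sobolev estimate}-type H\"older bounds, since $w \in L^{\frac{2p}{2-p}}$ and $\nabla f \in L^{p'}$ with $\frac{1}{2} = \frac{2-p}{2p} + \frac{1}{p'}$. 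To justify that $w\bar f \in H^{1}(\R^{d})$ I approximate $w$ by $(w-\delta)_{+}$, which is supported in $D$ (in the quasi-open sense), or approximate $f$ by smooth functions on $D$. I would spend a line on this but do not expect it to be the core difficulty.

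Next I compute the weak equation for $z$. For $\varphi \in H^{1}_{\mu}$ bounded (then pass by density/truncation), the Leibniz rule gives
\[
\int A\nabla(w\bar f)\cdot\nabla\varphi + \int w\bar f\varphi\,d\mu = \int A\nabla w\cdot\nabla(\bar f\varphi) + \int w\bar f\varphi\,d\mu - \int \varphi A\nabla w\cdot\nabla f + \int w A\nabla f\cdot\nabla\varphi .
\]
The first two terms on the right equal $\int \bar f\varphi$ by the torsion equation with test function $\bar f\varphi \in H^{1}_{\mu}$, which is justified as above. Subtracting from the equation for $u$ yields
\[
\int A\nabla z\cdot\nabla\varphi + \int z\varphi\,d\mu = \int \varphi A\nabla w\cdot\nabla f - \int w A\nabla f\cdot\nabla\varphi .
\]

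Now I test with $\varphi = z$. The left-hand side is at least $\n{z}_{\dot H^{1}_{\mu}}^{2}$, since $\alpha = 1$. All integrands are supported in $\Omega_{\mu}$, because $z$ and $w$ vanish q.e. off $\Omega_{\mu}$. For the second term, H\"older gives $|\int wA\nabla f\cdot\nabla z| \le \beta\n{w}_{\frac{2p}{2-p}}\n{\nabla f}_{L^{p'}(\Omega_{\mu})}\n{\nabla z}_{2}$, using $|A\xi| \le \beta|\xi|$. For the first term, H\"older gives $|\int zA\nabla w\cdot\nabla f| \le \n{|A\nabla w|z}_{p}\n{\nabla f}_{L^{p'}(\Omega_{\mu})}$. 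By Lemma \ref{thm: gradient of torsion negative sobolev estimate}, $\n{|A\nabla w|z}_{p} \le \beta^{1/2}\n{w}_{\frac{2p}{2-p}}(4\beta\n{\nabla z}_{2}^{2} + 2\n{z}_{L^{2}(\mu)}^{2})^{1/2} \le 2\beta\n{w}_{\frac{2p}{2-p}}\n{z}_{\dot H^{1}_{\mu}}$, using $\beta \ge 1$. Summing the two bounds gives $\n{z}_{\dot H^{1}_{\mu}}^{2} \le 3\beta\n{w}_{\frac{2p}{2-p}}\n{\nabla f}_{L^{p'}(\Omega_{\mu})}\n{z}_{\dot H^{1}_{\mu}}$. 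Dividing by $\n{z}_{\dot H^{1}_{\mu}}$ gives \eqref{eq: energy estimate for u-wf}.

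The main obstacle is the rigorous justification of the products $w\bar f$ and $\bar f\varphi$ as elements of $H^{1}_{\mu}$ when $f$ is only given on $D$ and $\mu$ is a general capacitary measure. I would first handle bounded $\varphi$ and a Lipschitz truncation, then pass to the limit using the $L^{\infty}$ bound on $w$ and dominated convergence.
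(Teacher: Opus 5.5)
Your proposal is correct and follows the paper's proof essentially step for step: the same Leibniz identity for $z = u - w_{A,\mu}\bar f$ tested against bounded $\varphi \in H^{1}_{\mu}$, the same H\"older split, and the same use of Lemma \ref{thm: gradient of torsion negative sobolev estimate} to reach the constant $3\beta$. The paper's version is only slightly leaner in two places. First, no truncation or density step is needed to test with $\varphi = z$, since $z$ is already bounded: $|u| \le \n{f}_{L^{\infty}(D)} w_{A,\mu}$ by Lemma \ref{thm: comparison}. Second, $\bar f\varphi \in H^{1}_{0}(D)$ follows directly from $\varphi \in H^{1}_{0}(D)\cap L^{\infty}$ and $f \in H^{1}(D)\cap L^{\infty}(D)$, so no $(w-\delta)_{+}$ approximation is required, and the two $\mu$-integrals agree because quasi-continuous representatives are unique q.e.
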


\begin{proof}
    In the present proof, let us distinguish an element $v \in H^{1}(\R^{d})$ from its quasi-continuous representative, which we denote by $\tilde{v}$. Let $\varphi \in H^{1}_{\mu}$ be bounded $\mathcal{L}^{d}$-a.e. Since $f \in H^{1}(D) \cap L^{\infty}(D)$ and $\varphi \in H^{1}_{\mu} \sub H^{1}_{0}(D)$, $\bar{f}\varphi = \overline{f\varphi} \in H^{1}_{0}(D)$ and $\nabla (\bar{f} \varphi) = \bar{f} \nabla \varphi + \varphi \overline{\nabla f}$ in $L^{2}(\R^{d})$. Further, since $|\bar{f}\varphi| \le \n{f}_{L^{\infty}(D)}|\varphi|$ $\mathcal{L}^{d}$-a.e., $|\widetilde{\bar{f}\varphi}| \le \n{f}_{L^{\infty}(D)}|\tilde{\varphi}|$ q.e. Hence, $\bar{f}\varphi \in H^{1}_{\mu}$. Note also that $\widetilde{\bar{f}w_{A, \mu}}\tilde{\varphi} = \widetilde{w_{A, \mu}}\widetilde{\bar{f}\varphi}$ q.e., since both are quasi-continuous functions that coincide with $\bar{f}w_{A, \mu}\varphi$ $\mathcal{L}^{d}$-a.e. (\cite[Lemma 3.3.30]{henrot2018shape}). Now, we calculate 
    \begin{equation*}
        \begin{split}
            &\int_{\R^{d}} A\nabla(\bar{f}w_{A, \mu})\cdot \nabla \varphi + \int \widetilde{(\bar{f}w_{A, \mu})}\tilde{\varphi} \, d\mu\\ &= \int_{\R^{d}} A\nabla w_{A, \mu}\cdot \nabla(\bar{f} \varphi) + \int_{\R^{d}} w_{{A, \mu}}A{\overline{\nabla f}} \cdot \nabla \varphi - \int_{\R^{d}} \varphi A\nabla w_{A, \mu} \cdot {\overline{\nabla f}} + \int \widetilde{w_{A, \mu}}\widetilde{(\bar{f}\varphi)} \, d\mu\\
            &= \int_{\R^{d}} \bar{f}\varphi + \int_{D} w_{A, \mu}A{\nabla f} \cdot \nabla \varphi - \int_{D} \varphi A\nabla w_{A, \mu} \cdot {\nabla f},
        \end{split}
    \end{equation*}
    while
    \[\int_{\R^{d}} A\nabla u \cdot \nabla \varphi + \int \tilde{u}\tilde{\varphi} \, d\mu = \int_{D} f \varphi.\]
    Hence, 
    \begin{equation*}
        \begin{split}
            &\int_{\R^{d}} A\nabla(u - w_{A, \mu}\bar{f})\cdot \nabla \varphi + \int (\tilde{u} - \widetilde{w_{A, \mu}\bar{f}})\tilde{\varphi} \, d\mu\\
            &= -\int_{D} w_{A, \mu}A {\nabla f} \cdot \nabla \varphi + \int_{D} \varphi A \nabla w_{A, \mu} \cdot {\nabla f}\\
            &\le \beta\n{w_{A, \mu}}_{\frac{2p}{2 - p}}\n{\nabla f}_{L^{p'}(\Omega_{\mu})}\n{\nabla \varphi}_{2} + \n{\nabla f}_{L^{p'}(\Omega_{\mu})}\n{|A\nabla w_{A, \mu}|\varphi}_{p}\\
            &\underset{\eqref{eq: Lp H1 estimate of multiplication by energy density of torsion func}}{\le} \n{w_{A, \mu}}_{\frac{2p}{2 - p}}\n{\nabla f}_{L^{p'}(\Omega_{\mu})}\left( \beta\n{\nabla \varphi}_{2} +  \beta^{\frac{1}{2}}\sqrt{4\beta \n{\nabla \varphi}_{2}^{2} + 2\n{\varphi}_{L^{2}(\mu)}^{2}}\right)\\
            &\le 3\beta\n{w_{A, \mu}}_{\frac{2p}{2 - p}}\n{\nabla f}_{L^{p'}(\Omega_{\mu})}\n{\varphi}_{\dot{H}^{1}_{\mu}}.
        \end{split}
    \end{equation*}
    By Lemma \ref{thm: comparison}, we have the bound $\n{u}_{\infty} \le \n{f}_{L^{\infty}(D)}\n{w_{A, \mu}}_{\infty}$. Hence, we may substitute $\varphi = u - w_{A, \mu}\bar{f}$ in the previous display to obtain \eqref{eq: energy estimate for u-wf}.
\end{proof}

\begin{proof}[Proof of Theorem \ref{thm: quantitative error estimate}]
    The second inequality of \eqref{eq: quantitative error estimate} follows from Lemma \ref{thm: torsion and poincare const} and \eqref{eq: Ls_L1 estimate of torsion func}. Note that the assumed regularity of $f$ guarantees that the functions $f_{k} := (f \wedge k) \vee (-k)$ are such that $f_{k} \to f$ in $H^{-1}(D)$ strong as $k \to \infty$. Note also that $|\nabla f_{k}| \le |\nabla f|$ $\mathcal{L}^{d}$-a.e. on $D$. Thus, by virtue of Fatou's lemma, we may assume that $f \in L^{\infty}(D) \cap W^{1, p'}(D)$ to show the first inequality of \eqref{eq: quantitative error estimate}, in which case the desired estimate follows from Lemma \ref{thm: energy estimate for u-wf}.
\end{proof}

\subsection{Optimality of the quantitative error estimates}
\label{sect: optimality}
The optimality of the estimates \eqref{eq: quantitative error mue} and \eqref{eq: quantitative error estimate domains} is discussed in the next

\begin{example}[Perturbation by shrinking sets]
    \label{ex: optimality}
    Let $U \sub U(0,1) \sub D \sub \R^{d}$ be nonempty bounded open subsets containing the origin with $U$ connected. Let $(D^{\varepsilon})$ be a sequence of nonempty open subsets of $D \setminus \overline{U(0, 1)}$ with vanishing torsional rigidity. For each $\varepsilon$ and $0 < \rho < 1$, let $D^{\varepsilon, \rho} := D^{\varepsilon} \cup \rho U$.
    \begin{enumerate}
    \item Given $A \in M(1, \beta)$, 
    \begin{equation}
        \n{w_{A, D^{\varepsilon, \rho}}}_{s} \asymp_{d, U, \beta, s}\rho^{2 + \frac{d}{s}} + \n{w_{A, D^{\varepsilon}}}_{s}
    \end{equation}
    for every $1 \le s \le \infty$. 

    \item For $\delta > -\frac{d + 2}{2}$, define $f_{\delta} \in H^{-1}_{\mathrm{loc}}(\R^{d})$ by $f_{\delta}(x) := |x|^{\delta}$. Then we have
    \begin{equation}
        \label{eq: |u - wf| > r^2+ delta + d/q}
        \begin{split}
            \n{R_{A, D^{\varepsilon, \rho}}f_{\delta} - w_{A, D^{\varepsilon, \rho}}f_{\delta}}_{L^{q}(D)} &\ge \rho^{2 + \delta + \frac{d}{q}}\n{R_{A(\rho \cdot), U}f_{\delta} - w_{A(\rho \cdot), U}f_{\delta}}_{L^{q}(U)}\\
            &\gtrsim_{d, \beta, \delta, U}\rho^{2 + \delta + \frac{d}{q}}
        \end{split}
    \end{equation}
    for $q \in [1, \infty]$, $A \in M(1, \beta),$ and a nonzero $\delta > -\frac{d + 2}{2}$.
    \item Now let $(\rho_{\varepsilon}) \sub ]0, 1[$ be a sequence decreasing to $0$ such that there exists a constant $C_{1} > 0$ such that $C_{1}^{-1} \le \rho_{\varepsilon}^{-1}T(D^{\varepsilon})^{\frac{1}{2 + d}} \le C_{1}$ for every $\varepsilon$. Put $\tilde{D}^{\varepsilon} := D^{\varepsilon, \rho_{\varepsilon}}$. Then $T(\tilde{D}^{\varepsilon}) \asymp_{d, U, C_{1}} T(D^{\varepsilon})$ and
    \begin{equation}
        \n{R_{A, \tilde{D}^{\varepsilon}}f_{\delta} - w_{A, \tilde{D}^{\varepsilon}}f_{\delta}}_{L^{q}(D)} \gtrsim_{d, U, C_{1}, \beta, \delta}T(\tilde{D}^{\varepsilon})^{\frac{2 + \delta + \frac{d}{q}}{2 + d}}
    \end{equation}
    for $q \in [1, \infty]$, $A \in M(1, \beta)$, and a nonzero $\delta > -\frac{d + 2}{2}$.
    \item Let $p, q \in [1, 2]$. For $\varkappa \in \R$, define $\delta_{\varkappa}$ by $\varkappa = \frac{2 + \delta_{\varkappa} + \frac{d}{q}}{2 + d}$. Then $\varkappa > (\text{resp.}=) 1 + \delta_{d, p, q}$ if and only if $\delta_{\varkappa} - 1 >(\text{resp.}=) -\frac{d}{p'}$. Further, the torsional rigidity $T(\tilde{D}^{\varepsilon})$ vanishes as $\varepsilon \searrow 0$, and for every sequence $(A^{\varepsilon}) \sub M(\alpha, \beta)$, we have the following two-sided estimates.
    \begin{itemize}
        \item When $p = 1$, $f_{1} \in W^{1, \infty}(D)$ and 
        \begin{equation}
            \label{eq: sharpness f_1}
            T(\tilde{D}^{\varepsilon})^{\delta_{d, p, q}} \lesssim_{\substack{d, C_{1}, U,\\ \alpha, \beta}} \n{\frac{R_{A^{\varepsilon}, \tilde{D}^{\varepsilon}}f_{1}}{T(\tilde{D}^{\varepsilon})} - \frac{w_{A^{\varepsilon}, \tilde{D}^{\varepsilon}}}{T(\tilde{D}^{\varepsilon})}f_{1}}_{L^{q}(D)} \lesssim_{d, \alpha, \beta}\n{\nabla f_{1}}_{L^{\infty}(\tilde{D}^{\varepsilon})}T(\tilde{D}^{\varepsilon})^{\delta_{d, p, q}}.
        \end{equation}
        \item When $1 < p \le 2$, there is a $\varkappa_{1} = \varkappa_{1}(d, p, q) > 1 + \delta_{{d, p, q}}$ such that for every $1 + \delta_{d, p, q} < \varkappa < \varkappa_{1}$, $f_{\delta_{\varkappa}} \in W^{1, p'}(D)$ and 
        \begin{equation}
            \label{eq: almost sharpness f_delta}
            T(\tilde{D}^{\varepsilon})^{\varkappa - 1} \lesssim_{\substack{d, C_{1}, U,\\ \alpha, \beta, \varkappa, q}} \n{\frac{R_{A^{\varepsilon}, \tilde{D}^{\varepsilon}}f_{\delta_{\varkappa}}}{T(\tilde{D}^{\varepsilon})} - \frac{w_{A^{\varepsilon}, \tilde{D}^{\varepsilon}}}{T(\tilde{D}^{\varepsilon})}f_{\delta_{\varkappa}}}_{L^{q}(D)} \lesssim_{d, \alpha, \beta}\n{\nabla f_{\delta_{\varkappa}}}_{L^{p'}(\tilde{D}^{\varepsilon})}T(\tilde{D}^{\varepsilon})^{\delta_{d, p, q}}.
        \end{equation}
    \end{itemize}
    \end{enumerate}    
\end{example}

\begin{proof}
    To prove 1, we use \eqref{eq: scaling property of resolvent} and Lemma \ref{thm: torsion and poincare const} to calculate
    \[\n{w_{A, D^{\varepsilon, \rho}}}_{s} \asymp \n{w_{A, \rho U}}_{s} + \n{w_{A, D^{\varepsilon}}}_{s} = \rho^{\frac{d}{s} + 2}\n{w_{A(\rho \cdot), U}}_{s} + \n{w_{A, D^{\varepsilon}}}_{s} \asymp_{d, U, \beta, s} \rho^{\frac{d}{s} + 2} + \n{w_{A, D^{\varepsilon}}}_{s}.\]

    To prove 2, we bound
    \begin{equation*}
        \begin{split}
            \n{R_{A, D^{\varepsilon, \rho}}f_{\delta} - w_{A, D^{\varepsilon, \rho}}f_{\delta}}_{L^{q}(D)} &\ge \n{R_{A, \rho U}f_{\delta} - w_{A, \rho U}f_{\delta}}_{L^{q}(\rho U)}\\
            &= \rho^{\frac{d}{q}} \n{\left( R_{A, \rho U}f_{\delta} - w_{A, \rho U}f_{\delta} \right)(\rho \cdot)}_{L^{q}(U)}\\
            &\underset{\eqref{eq: scaling property of resolvent}}{=} \rho^{\frac{d}{q}}\n{\rho^{2}R_{A(\rho \cdot), U}(f_{\delta}(\rho \cdot)) - \rho^{2}w_{A(\rho \cdot), U}f_{\delta}(\rho \cdot)}_{L^{q}(U)}\\
            &= \rho^{2 + \delta + \frac{d}{q}}\n{R_{A(\rho \cdot), U}f_{\delta} - w_{A(\rho \cdot), U}f_{\delta}}_{L^{q}(U)}.
        \end{split}
    \end{equation*}
    
    The second inequality of \eqref{eq: |u - wf| > r^2+ delta + d/q} results from the following estimate:
    \begin{equation}
        \label{eq: inf B u - wf > 0}
        \inf_{B \in M(1, \beta)}\n{R_{B, U}f_{\delta} - w_{B, U}f_{\delta}}_{L^{1}(U)} > 0
    \end{equation}
    for every nonzero $\delta > -\frac{d + 2}{2}$. Since $U$ is bounded, the compactness of $M(1, \beta)$ with respect to $H$-convergence (see, e.g., \cite[Theorem 6.5]{tartar2009the}) guarantees the existence of a minimizer of the left-hand side of \eqref{eq: inf B u - wf > 0}, so that it suffices to show that
     $R_{B, U}f_{\delta} \neq w_{B, U}f_{\delta}$ for each $B \in M(1, \beta)$ and nonzero $\delta > -\frac{d + 2}{2}$. Fix $B \in M(1, \beta)$ and a nonzero $\delta > -\frac{d + 2}{2}$, and put $u := R_{B, U}f_{\delta}$. Toward a contradiction, suppose that $u = w_{B, U}f_{\delta}$. Then for every $1 \le s \le \infty$, $f_{\delta} \in L^{s}_{\mathrm{loc}}(U)$ whenever $u \in L^{s}_{\mathrm{loc}}(U)$, since $\inf_{\omega}w_{B, U} > 0$ for any $\omega \Subset U$. On the other hand, by \cite[Th\'{e}or\`{e}me 5.4]{Stampacchia1965le}, $u \in L^{s + \frac{2s^{2}}{d - 2s}}_{\mathrm{loc}}(U)$ whenever $\frac{2d}{d + 2} < s < \frac{d}{2}$ and $f_{\delta} \in L^{s}_{\mathrm{loc}}(U)$. From the above two considerations, we deduce that $f_{\delta} \in L^{s}_{\mathrm{loc}}(U)$ for some $s > \frac{d}{2}$, which in turn implies $f_{\delta} \in L^{\infty}_{\mathrm{loc}}(U)$ by virtue of \cite[Th\'{e}or\`{e}me 5.4]{Stampacchia1965le}. Hence, $\delta > 0$, and $\inf_{\omega}u = 0 = \inf_{U}u$ for any open $\omega \Subset U$ containing the origin. This implies, by virtue of the strong maximum principle (\cite[Theorem 8.19]{gilbarg1983elliptic}), that $u = 0$, and we come to a contradiction.

    Item 3 follows from  1 and 2. As for Item 4, choose $\varkappa_{0}$ so that $\delta_{\varkappa_{0}} = 0$. Set $\varkappa_{1} := \varkappa_{0}$ if $\varkappa_{0} > 1 + \delta_{d, p, q}$ and $\varkappa_{1} := \infty$ otherwise. The remaining parts are direct consequences of 3 and Theorem \ref{thm: quantitative error estimate}.
\end{proof}

\begin{remark}
    \label{rmk: exact sharpness lorentz}
    By using \eqref{eq: quantitative error estimate lorentz} instead of Theorem \ref{thm: quantitative error estimate} in the proof of Item 4 of Example \ref{ex: optimality}, we may prove the following two-sided estimate: when $p \in [1, 2] \setminus \{\frac{d}{d - 1}, 2\}$ and $1 \le q \le 2$, $f_{\delta_{1 + \delta_{d, p, q}}} = f_{1 - \frac{d}{p'}} \in L^{p'}(D)$, $\n{\left|\nabla f_{1 - \frac{d}{p'}}\right|\chi_{D}}_{{p', \infty}} < \infty$, and 
        \begin{equation}
            \label{eq: sharpness weak norm}
            T(\tilde{D}^{\varepsilon})^{\delta_{d, p, q}} \lesssim_{\substack{d, C_{1}, U,\\ \alpha, \beta}} \n{\frac{R_{A^{\varepsilon}, \tilde{D}^{\varepsilon}}f_{1 - \frac{d}{p'}}}{T(\tilde{D}^{\varepsilon})} - \frac{w_{A^{\varepsilon}, \tilde{D}^{\varepsilon}}}{T(\tilde{D}^{\varepsilon})}f_{1 - \frac{d}{p'}}}_{L^{q}(D)}
            \lesssim_{d, \alpha, \beta, p}\n{\left|\nabla f_{1 - \frac{d}{p'}}\right|\chi_{\tilde{D}^{\varepsilon}}}_{p', \infty}T(\tilde{D}^{\varepsilon})^{\delta_{d, p, q}}.
        \end{equation}
\end{remark}
\section{Problems for rougher source terms}
\label{sect: rougher source}
In this section, we fix $0 < \alpha \le \beta < \infty$, $(A^{\varepsilon}) \sub M(\alpha, \beta)$, a bounded open $D \sub \R^{d}$, and a sequence $(\mu^{\varepsilon}) \sub \mathcal{M}_{\mathrm{cap}}(D)$ such that the problems $P(A^{\varepsilon}, \mu^{\varepsilon})$ admit an effective measure $k$.

Since the effective measure $k$ can be any nonzero nonnegative finite Borel measure supported in $\overline{D}$ by Proposition \ref{thm: attainability}, the source term $f$ in Theorem \ref{thm: main result} has to be continuous on $\overline{D}$ to make the expression $kf$ well-defined without imposing further assumptions. In other words, we need regularity of the effective measure to obtain convergence of the form \eqref{eq: convergence to kf} for rougher $f$. The next proposition characterizes this in terms of uniform $L^{p}$--$L^{1}$ estimates for torsion functions or, equivalently, of Poincar\'{e} constants. We emphasize that the condition does not depend on the coefficients $A^{\varepsilon}$.
\begin{proposition}
    \label{thm: rougher source term}
    Let $1 < r \le 2$. The following conditions are equivalent.
    \begin{enumerate}
        \item $k \in L^{\frac{r}{2 - r}}(\R^{d})$ and for every $f \in L^{r'}(\R^{d})$, $\frac{R_{A^{\varepsilon}, \mu^{\varepsilon}}f}{T(\mu^{\varepsilon})} \xrightharpoonup{L^{r}(\R^{d})} kf$ weakly.
        \item There is a constant $B > 0$ such that 
        \begin{equation}
            \label{eq: torsion Lr/(2-r) L1 bound}
            \n{w_{A^{\varepsilon}, \mu^{\varepsilon}}}_{\frac{r}{2 - r}} \le B \n{w_{A^{\varepsilon}, \mu^{\varepsilon}}}_{1}
        \end{equation}
        for every $\varepsilon$.
        \item There is a constant $B' > 0$ such that 
        \begin{equation}
            \n{w_{\mu^{\varepsilon}}}_{\frac{r}{2 - r}} \le B' \n{w_{\mu^{\varepsilon}}}_{1}
        \end{equation}
        for every $\varepsilon$.
        \item There is a constant $B'' > 0$ such that 
        \begin{equation}
            \lambda_{r}(\mu^{\varepsilon}) \ge B'' \lambda_{1}(\mu^{\varepsilon})
        \end{equation}
        for every $\varepsilon$.
    \end{enumerate}
    If these conditions are satisfied, then
    \begin{equation}
        \label{eq: quantitative error estimate general}
        \n{\frac{R_{A^{\varepsilon}, \mu^{\varepsilon}}f}{T(\mu^{\varepsilon})} - \frac{w_{A^{\varepsilon}, \mu^{\varepsilon}}}{T(\mu^{\varepsilon})}f}_{q} \lesssim_{d, \alpha, \beta}\left( 1 + B^{\frac{3}{2}} \right)\n{\nabla f}_{L^{p'}(\Omega_{\mu^{\varepsilon}})}T(\mu^{\varepsilon})^{\delta_{d, p, q, r}}
    \end{equation}
    for every $p, q \in [1, 2]$ and $f \in H^{-1}(D)$ satisfying \eqref{eq: admissible integrability of source} and $\nabla f \in L^{p'}(D)$. Here, 
    \begin{equation}
        \delta_{d, p, q, r} := \frac{1}{2} - \frac{d}{2 + \frac{2 - r}{r}d}\left( \frac{1}{2}\left( \frac{2 - r}{r} - \frac{2 - q}{q} \right)_{+} + \left( \frac{2 - r}{r} - \frac{2 - p}{2p} \right)_{+} \right).
    \end{equation}
    In particular, $\delta_{d, p, q, r} = \frac{1}{2}$ when $r = 2$.
\end{proposition}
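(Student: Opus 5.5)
The plan is to prove the cycle $2 \Leftrightarrow 3 \Leftrightarrow 4$ first, then $2 \Rightarrow 1$ together with \eqref{eq: quantitative error estimate general}, and finally $1 \Rightarrow 2$.

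\emph{Equivalence of 2, 3 and 4.} These are essentially restatements of Lemma \ref{thm: torsion and poincare const}. That lemma gives $\n{w_{A^{\varepsilon},\mu^{\varepsilon}}}_{s} \asymp_{d,\alpha,\beta,s} \n{w_{\mu^{\varepsilon}}}_{s}$ for every $s$, in particular for $s=1$ and $s=\frac{r}{2-r}$; this yields $2\Leftrightarrow 3$. It also gives $\lambda_{\frac{2s}{1+s}}(\mu)^{-1} \asymp \n{w_\mu}_s$. With $s=\frac{r}{2-r}$ we have $\frac{2s}{1+s}=r$, and with $s=1$ we have $\frac{2s}{1+s}=1$. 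Hence condition 3 reads $\lambda_r(\mu^\varepsilon)^{-1}\lesssim \lambda_1(\mu^\varepsilon)^{-1}$, which is condition 4. For $r=2$ the exponent is $s=\infty$, and the lemma still applies with a constant depending on $d$ and $\beta/\alpha$.

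\emph{Proof of $2\Rightarrow 1$ and of \eqref{eq: quantitative error estimate general}.} Set $s=\frac{r}{2-r}$. Condition 2, combined with \eqref{eq: L1 norm of wA}, makes $T(\mu^\varepsilon)^{-1}w_{A^\varepsilon,\mu^\varepsilon}$ bounded in $L^{s}$. Since $1<s\le\infty$, the weak (or weak*) limit $k$ lies in $L^{s}$, which proves $k\in L^{\frac{r}{2-r}}$.

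For $f\in L^{r'}$ we use the a priori bound obtained exactly as in \eqref{eq: a priori estimate}. First, $\n{R h}_{r}\le\lambda_r^{-1/2}\n{Rh}_{\dot H^1_\mu}$ and $\n{Rh}_{\dot H^1_\mu}^2\lesssim \n{h}_{r'}\n{Rh}_r$, so $\n{R h}_{r}\lesssim \lambda_r(\mu)^{-1}\n{h}_{r'} \asymp \n{w_\mu}_{s}\n{h}_{r'}\lesssim B\, T(\mu)\n{h}_{r'}$. Thus $T(\mu^\varepsilon)^{-1}R_{A^\varepsilon,\mu^\varepsilon}$ is uniformly bounded from $L^{r'}$ to $L^{r}$. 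Likewise $f\mapsto T^{-1}w f$ is uniformly bounded from $L^{r'}$ to $L^{r}$, by Hölder's inequality and the $L^{s}$ bound, since $\frac1s+\frac1{r'}=\frac1r$. Theorem \ref{thm: main result} gives the convergence $T^{-1}Rf\rightharpoonup kf$ for $f\in C_c^\infty$, which is dense in $L^{r'}$ (for $r>1$). The uniform bounds then let us extend the weak $L^{r}$ convergence to all $f\in L^{r'}$ by a $3\varepsilon$ argument, testing against $g\in L^{r'}\cap C_b$ (dense in the dual $L^{r'}$ for $r>1$).

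For the rate \eqref{eq: quantitative error estimate general}, we rerun Theorem \ref{thm: quantitative error estimate}. The first bound there is $\lambda_q^{-1/2}\n{w}_{\frac{2p}{2-p}}\n{\nabla f}_{p'}$. Each factor is estimated by interpolating via Lemma \ref{thm: Linfty_Lt_estimate_of_torsion_func} with base exponent $t=s$ instead of $t=1$. When $\frac{2p}{2-p}\le s$ (respectively $\frac{q}{2-q}\le s$), we instead use Hölder's inequality on the bounded set $D$, or directly the inequality $\n{w}_{s}\le BT$; this is the source of the positive parts in $\delta_{d,p,q,r}$. Then $\n{w}_s\lesssim BT$ is inserted, which produces the powers of $B$ and the exponent $\delta_{d,p,q,r}$. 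I expect this bookkeeping, in particular the $B^{3/2}$ and the case split, to be routine but tedious. For $r=2$ the bound $\n{w}_\infty\lesssim BT$ makes every factor comparable to $T$, and we get $\lambda_q^{-1/2}\n{w}\lesssim T^{3/2}$, i.e.\ $\delta=\frac12$.

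\emph{Proof of $1\Rightarrow 2$, the main obstacle.} Weak convergence in $L^{r}$ for every $f\in L^{r'}$ implies, by the uniform boundedness principle, that the operators $S_\varepsilon:=T(\mu^\varepsilon)^{-1}R_{A^\varepsilon,\mu^\varepsilon}$ satisfy $\sup_\varepsilon\n{S_\varepsilon}_{L^{r'}\to L^{r}}<\infty$. The task is then to extract $\n{w}_s\lesssim T$ from this operator bound. I would use the energy identity with $A$ replaced by its symmetric part: $\n{R h}_{\dot H^1_\mu}^2\lesssim_\alpha\int hRh\le \n{S}T\n{h}_{r'}^2$. Taking the supremum over $h$, together with the duality characterization of $\lambda_r$ (namely $\sup_h \frac{(\int h\varphi)^2}{\n{h}_{r'}^2}=\n{\varphi}_r^2$ and the minimization of $\frac12\n{\varphi}^2_{\dot H^1}-\int h\varphi$), should yield $\lambda_r(\mu^\varepsilon)^{-1}\lesssim_{\alpha,\beta}\n{S_\varepsilon}T(\mu^\varepsilon)$. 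Since $\lambda_1(\mu)^{-1}\asymp T(\mu)$, this is condition 4. The delicate point is the nonsymmetric case. There I would pass through the comparison $\int hR_{A,\mu}h\ge c\,\sup_\varphi(2\int h\varphi-\beta\n{\varphi}^2_{\dot H^1_\mu})$, which follows from coercivity of $A$ together with the bound $|A\xi|^2\le\beta A\xi\cdot\xi$, and then optimize over $\varphi$ to recover $\lambda_r^{-1}\n{h}_{r'}^2$ up to constants.
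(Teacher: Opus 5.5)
Your proposal is correct and follows essentially the paper's route. You use Lemma~\ref{thm: torsion and poincare const} for $2\Leftrightarrow 3\Leftrightarrow 4$, and the comparison $\n{R_{A,\mu}}_{r'\to r}\asymp\lambda_r(\mu)^{-1}$ together with uniform boundedness and density for the equivalence with Condition~1. Your supremum-over-sources duality argument for $\lambda_r(\mu)^{-1}\lesssim_\beta\n{R_{A,\mu}}_{r'\to r}$ is just the dual form of the paper's explicit choice of source $u^{r-1}$ in Lemma~\ref{thm: comparison between op norm of resolvent and poincare const}, and it handles nonsymmetric $A$ the same way, via $|A\xi|^2\le\beta A\xi\cdot\xi$. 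For the rate, you insert Lemma~\ref{thm: Linfty_Lt_estimate_of_torsion_func} with base exponent $t=\frac{r}{2-r}$ into the first line of \eqref{eq: quantitative error estimate}, as the paper does.

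One adjustment: for exponents $\sigma\le\frac{r}{2-r}$, use $\n{w}_{\sigma}\le\n{w}_{1}^{1-\theta}\n{w}_{\frac{r}{2-r}}^{\theta}\le B^{\theta}\n{w}_{1}$ rather than H\"older on $D$. H\"older on $D$ would make the constant depend on $\mathcal{L}^{d}(D)$, which the stated bound does not allow.
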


\begin{proof}
    By Lemma \ref{thm: comparison between op norm of resolvent and poincare const} below, we see that Conditions 2, 3, and 4 are equivalent to the condition that $k \in L^{\frac{r}{2 - r}}(\R^{d})$ and $\sup_{\varepsilon}T(\mu^{\varepsilon})^{-1}\n{R_{A^{\varepsilon}, \mu^{\varepsilon}}}_{r' \to r} < \infty$ hold. The last condition is in turn equivalent to Condition 1 by the uniform boundedness principle, the convergence \eqref{eq: convergence to kf}, and the density of $C(\overline{D})$ in $L^{r'}(D)$.

    Next, we show \eqref{eq: quantitative error estimate general}. To this end, we may assume that $\alpha = 1$. Suppose first that $r < 2$. Note that
    \begin{equation}
        \label{eq: Ls le B times torsion power}
        \lambda_{\frac{2s}{1 + s}}(\mu^{\varepsilon})^{-1} \underset{\text{Lemma \ref{thm: torsion and poincare const}}}{\lesssim_{\beta}} \n{w_{A^{\varepsilon}, \mu^{\varepsilon}}}_{s} \underset{\eqref{eq: Ls_Lt estimate of torsion func}, \eqref{eq: torsion Lr/(2-r) L1 bound}}{\lesssim_{d}} (1 + B)\n{w_{A^{\varepsilon}, \mu^{\varepsilon}}}_{1}^{1 - \left(1 - \gamma_{\frac{r}{2 - r}}\right)\left( 1 - \frac{1}{s}\frac{r}{2 - r} \right)_{+}}
    \end{equation}
    for every $\varepsilon$ and $1 \le s \le \infty$. By the first line of \eqref{eq: quantitative error estimate}, \eqref{eq: Ls le B times torsion power} with $s = \frac{q}{2 - q}$ and with $s = \frac{2p}{2 - p}$, and \eqref{eq: L1 norm of wA}, we get
    \[\n{\frac{R_{A^{\varepsilon}, \mu^{\varepsilon}}f}{T(\mu^{\varepsilon})} - \frac{w_{A^{\varepsilon}, \mu^{\varepsilon}}}{T(\mu^{\varepsilon})}f}_{q} \lesssim_{d, \beta}(1 + B)^{\frac{3}{2}}\n{\nabla f}_{L^{p'}(\Omega_{\mu^{\varepsilon}})}T(\mu^{\varepsilon})^{\delta},\]
    where
    \[\delta = \frac{1}{2} - \left( 1 - \gamma_{\frac{r}{2 - r}} \right)\left(\frac{1}{2} \left( 1 - \frac{2 - q}{q}\frac{r}{2 - r} \right)_{+} + \left( 1 - \frac{2- p}{2p}\frac{r}{2 - r} \right)_{+} \right).\]
    Further calculation of the right-hand side of the last display shows that $\delta = \delta_{d, p, q, r}$. If $r = 2$, the same argument works since \eqref{eq: Ls le B times torsion power} still holds true if we set $(1 - \frac{1}{s}\frac{r}{2 - r})_{+} = 0$.
\end{proof}

\begin{lemma}
    \label{thm: comparison between op norm of resolvent and poincare const}
    For $A \in M(\alpha, \beta)$, $\mu \in \mathcal{M}_{\mathrm{cap}}^{b} \setminus \{I_{\varnothing}\}$, and $1 \le q \le 2$, 
    \begin{equation}
        \label{eq: comparison between op norm of resolvent and poincare const}
        1 \le \lambda^{(\beta)}_{q}(\mu)\n{R_{A, \mu}}_{q' \to q},\, \lambda^{(\alpha)}_{q}(\mu)\n{R_{A, \mu}}_{q' \to q} \le 1.
    \end{equation}
    Hence, 
    \begin{equation}
        \label{eq: comparison poincare const torsion resolvent}
        \n{w_{A, \mu}}_{\frac{q}{2 - q}} \asymp_{\alpha, \beta, q} \lambda_{q}^{(\alpha)}(\mu)^{-1} \asymp_{\alpha} \lambda_{q}^{(1)}(\mu)^{-1} \asymp_{\beta} \lambda_{q}^{(\beta)}(\mu)^{-1} \asymp_{\alpha, \beta} \n{R_{A, \mu}}_{q' \to q}.
    \end{equation}
\end{lemma}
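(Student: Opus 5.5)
The plan is to prove the two-sided bound \eqref{eq: comparison between op norm of resolvent and poincare const} by testing the equation against the solution itself (for the upper bound) and by a duality argument applied to an almost-minimizer of the Rayleigh quotient (for the lower bound). Then \eqref{eq: comparison poincare const torsion resolvent} follows by chaining with Lemma \ref{thm: torsion and poincare const}.

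\emph{Upper bound.} Let $f \in L^{q'}(\R^{d})$ and $u := R_{A, \mu}f$. Testing \eqref{eq: weak formulation of relaxed Dirichlet} with $\varphi = u$ and using coercivity gives
\[\alpha\n{\nabla u}_{2}^{2} + \n{u}_{L^{2}(\mu)}^{2} \le \int A\nabla u \cdot \nabla u + \int u^{2}\, d\mu = \int fu \le \n{f}_{q'}\n{u}_{q}.\]
By the definition of $\lambda_{q}^{(\alpha)}(\mu)$, the left-hand side is at least $\lambda_{q}^{(\alpha)}(\mu)\n{u}_{q}^{2}$. Dividing by $\n{u}_{q}$ yields $\lambda_{q}^{(\alpha)}(\mu)\n{R_{A,\mu}}_{q'\to q} \le 1$.

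\emph{Lower bound.} Fix a nonzero $v \in H^{1}_{\mu}$, which we may assume bounded by truncation. Put $g := |v|^{q-2}v\,\n{v}_{q}^{1-q}$, so that $\n{g}_{q'} = 1$ and $\int gv = \n{v}_{q}$ (for $q = 1$ take $g = \mathrm{sgn}\, v$). Let $u := R_{A,\mu}g$. Testing the equation for $u$ with $v$ gives
\[\n{v}_{q} = \int A\nabla u \cdot \nabla v + \int uv\, d\mu.\]
We bound the right-hand side by Cauchy--Schwarz, using $|A\eta|^{2} \le \beta A\eta\cdot\eta$ as in Lemma \ref{thm: energy bound by torsion and L1 norm of wA}. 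The form $\xi\cdot\eta$ is split so that the $v$-factor carries $\beta\n{\nabla v}_{2}^{2} + \n{v}_{L^{2}(\mu)}^{2}$ and the $u$-factor carries $\int A\nabla u\cdot\nabla u + \int u^{2}\,d\mu = \int gu \le \n{u}_{q}$:
\[\n{v}_{q} \le \Big(\beta^{-1}\!\int |A\nabla u|^{2} + \int u^{2}\,d\mu\Big)^{1/2}\Big(\beta\n{\nabla v}_{2}^{2} + \n{v}_{L^{2}(\mu)}^{2}\Big)^{1/2} \le \n{R_{A,\mu}}_{q'\to q}^{1/2}\Big(\beta\n{\nabla v}_{2}^{2} + \n{v}_{L^{2}(\mu)}^{2}\Big)^{1/2}.\]
Squaring and taking the infimum over $v$ gives $1 \le \lambda_{q}^{(\beta)}(\mu)\n{R_{A,\mu}}_{q'\to q}$. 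The condition $\mu \neq I_{\varnothing}$ guarantees $H^{1}_{\mu} \neq (0)$ by Lemma \ref{thm: torsion and poincare const} and \cite[Proposition 3.4]{dalmaso1994new}, so all the quantities involved are finite and positive.

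\emph{Chaining.} For \eqref{eq: comparison poincare const torsion resolvent}, note that $\lambda^{(\eta)}_{q}$ is monotone in $\eta$ and satisfies $\lambda^{(\eta)} \le \lambda^{(\eta')} \le (\eta'/\eta \vee 1)\lambda^{(\eta)}$ for $\eta \le \eta'$. This gives the middle equivalences with $\lambda_{q}^{(1)}$. The equivalence with $\n{w_{A,\mu}}_{q/(2-q)}$ is \eqref{eq: torsion poincare q < 2} for $q<2$ and \eqref{eq: torsion poincare q = 2} for $q = 2$; in the latter case the constant depends on $d$ as well.

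The main obstacle I expect is the Cauchy--Schwarz step with a nonsymmetric $A$: the weights must be arranged so that exactly $\beta$, and no larger constant, appears.
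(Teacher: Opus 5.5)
Your proof is correct and follows the paper's argument essentially verbatim. The upper bound comes from testing with the solution itself. The lower bound comes from solving with the dual source $|v|^{q-2}v$; the paper uses the unnormalized source $u^{q-1}$ for nonnegative bounded $u$. It then applies Cauchy--Schwarz with weights $\beta^{-1}|A\nabla u|^{2}$ and $\beta|\nabla v|^{2}$, together with $|A\eta|^{2}\le\beta A\eta\cdot\eta$. Your remark that for $q=2$ the comparison with $\n{w_{A,\mu}}_{\infty}$ goes through \eqref{eq: torsion poincare q = 2}, and hence carries a $d$-dependent constant, is accurate: it shows the subscripts in the first equivalence of \eqref{eq: comparison poincare const torsion resolvent} should include $d$ in that case.
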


\begin{proof}
    Let $u \in H^{1}_{\mu}$ be nonnegative bounded and consider the solution $v \in H^{1}_{\mu}$ to the problem
    \begin{equation}
        \label{eq: aux prob for estimating op norm of resolvent}
            -\mathrm{div}(A\nabla v) + \mu v = u^{q - 1} \text{ in $(H^{1}_{\mu})^{*}$, $v \in H^{1}_{\mu}$}.
    \end{equation}
    Testing \eqref{eq: aux prob for estimating op norm of resolvent} with $u$ and $v$, we obtain
    \[\n{u}_{q}^{q} = \int A\nabla v \cdot \nabla u + \int vu \, d\mu \le \left( \frac{1}{\beta}\int |A \nabla v|^{2} + \int v^{2} \, d\mu \right)^{\frac{1}{2}}\left( \beta\int |\nabla u|^{2} + \int u^{2} \, d\mu \right)^{\frac{1}{2}}\]
    and
    \[\begin{split}
        \frac{1}{\beta}\int |A \nabla v|^{2} + \int v^{2} \, d\mu &\le \int A\nabla v \cdot \nabla v + \int v^{2} \, d\mu = \int u^{q - 1}v\\
        &\le \n{u}_{q}^{q - 1}\n{v}_{q} \le \n{u}_{q}^{q - 1}\n{R_{A, \mu}}_{q' \to q}\n{u}_{q}^{q - 1},
    \end{split}\]
    respectively. Combining these two estimates, we get $\n{u}_{q} \le (\beta \n{\nabla u}_{2}^{2} + \n{u}_{L^{2}(\mu)}^{2})^{\frac{1}{2}}\n{R_{A, \mu}}_{q' \to q}^{\frac{1}{2}}$. This yields the first inequality of \eqref{eq: comparison between op norm of resolvent and poincare const}.

    On the other hand, for every $f \in L^{q'}(\R^{d})$ and $u := R_{A, \mu}f$,  
    \[\n{u}_{q}^{2} \le \lambda_{q}^{(\alpha)}(\mu)^{-1}\left( \alpha \int |\nabla u|^{2} + \int u^{2} \, d\mu \right) \le \lambda_{q}^{(\alpha)}(\mu)^{-1}\int fu,\]
    which implies $\n{R_{A, \mu}f}_{q} \le \lambda_{q}^{(\alpha)}(\mu)^{-1}\n{f}_{q'}$.

    The estimate \eqref{eq: comparison poincare const torsion resolvent} now follows from \eqref{eq: comparison between op norm of resolvent and poincare const} and Lemma \ref{thm: torsion and poincare const}.
\end{proof}

\cite[Lemma 6]{vandenBerg2022on}  by van den Berg, Bucur, and Kappeler provides a sufficient condition for the stability of the conditions in Proposition \ref{thm: rougher source term} under a perturbation by nonnegative potentials.
\begin{proposition}[{\cite[Lemma 6]{vandenBerg2022on}}]
    \label{thm: stability}
    Suppose that $\mu^{\varepsilon} = I_{D^{\varepsilon}}$, $D^{\varepsilon} \sub D$ nonempty bounded open. Let a Lebesgue measurable $V^{\varepsilon}:D \to [0, \infty[$ be such that 
    \begin{equation}
        \Lambda := \sup_{\varepsilon}\frac{\n{V^{\varepsilon}}_{L^{\infty}(D^{\varepsilon})}}{\lambda_{2}(D^{\varepsilon})} < \infty.
    \end{equation}
    Then 
    \begin{equation}
        w_{D^{\varepsilon}} \lesssim_{\Lambda} w_{D^{\varepsilon} + V^{\varepsilon}\mathcal{L}^{d}\resmes D^{\varepsilon}} \le w_{D^{\varepsilon}}
    \end{equation}
    in $D^{\varepsilon}$. As a consequence,
    \begin{equation}
        \n{w_{D^{\varepsilon} + V^{\varepsilon}\mathcal{L}^{d}\resmes D^{\varepsilon}}}_{s} \asymp_{\Lambda} \n{w_{D^{\varepsilon}}}_{s}
    \end{equation}
    for every $1 \le s \le \infty$. Hence, Condition 3 in Proposition \ref{thm: rougher source term} holds for $(D^{\varepsilon})_{\varepsilon}$ if and only if the corresponding condition for $(D^{\varepsilon} + V^{\varepsilon}\mathcal{L}^{d}\resmes D^{\varepsilon})_{\varepsilon}$ holds.
\end{proposition}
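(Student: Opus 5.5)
The plan is to compare $w := w_{D^{\varepsilon}}$ with $v := w_{D^{\varepsilon} + V^{\varepsilon}\mathcal{L}^{d}\resmes D^{\varepsilon}}$ directly. The right inequality $v \le w$ is immediate from Lemma \ref{thm: comparison}, because $I_{D^{\varepsilon}} \le I_{D^{\varepsilon}} + V^{\varepsilon}\mathcal{L}^{d}\resmes D^{\varepsilon}$. For the left inequality I would look for a subsolution of the perturbed problem built from $w$. The first candidate is $c\,w$ with a small constant $c$, but then $-\Delta(cw) + V^{\varepsilon} cw = c(1 + V^{\varepsilon} w)$. This is $\le 1$ as soon as $c \le (1 + \n{V^{\varepsilon}}_{\infty}\n{w}_{\infty})^{-1}$.

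By \eqref{eq: torsion poincare q = 2} of Lemma \ref{thm: torsion and poincare const}, applied with $A = \mathrm{Id}$, we have $\n{w}_{\infty} \le C(d)\lambda_{2}(D^{\varepsilon})^{-1}$. Hence $\n{V^{\varepsilon}}_{L^{\infty}(D^{\varepsilon})}\n{w}_{\infty} \le C(d)\Lambda$. Taking $c := (1 + C(d)\Lambda)^{-1}$, the function $cw \in H^{1}_{0}(D^{\varepsilon})$ satisfies, in the weak sense,
\[\int \nabla(cw)\cdot\nabla\varphi + \int V^{\varepsilon} cw\varphi = c\int(1 + V^{\varepsilon} w)\varphi \le \int \varphi\]
for nonnegative $\varphi \in H^{1}_{\mu}$, where $H^{1}_{\mu} = H^{1}_{0}(D^{\varepsilon})$ here since $V^{\varepsilon}$ is bounded on $D^{\varepsilon}$. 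Subtracting the equation for $v$ shows that $z := cw - v$ satisfies $\int \nabla z\cdot\nabla\varphi + \int V^{\varepsilon} z\varphi \le 0$ for all $\varphi \ge 0$. Testing with $\varphi = z_{+}$ gives $\int |\nabla z_{+}|^{2} + \int V^{\varepsilon} z_{+}^{2} \le 0$, so $z_{+} = 0$ and $cw \le v$. (Alternatively one could invoke a version of Lemma \ref{thm: comparison} for sub/supersolutions, but the direct test is cleaner.) The constant depends on $d$ as well as on $\Lambda$; the statement's $\lesssim_{\Lambda}$ presumably suppresses the dimension.

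The $L^{s}$ equivalence then follows at once from the pointwise two-sided bound, since both functions vanish outside $D^{\varepsilon}$. For the final claim, Condition 3 of Proposition \ref{thm: rougher source term} reads $\n{w_{\mu^{\varepsilon}}}_{\frac{r}{2-r}} \le B'\n{w_{\mu^{\varepsilon}}}_{1}$. Replacing both norms by those of the perturbed torsion function changes the constant only by a factor $C(d,\Lambda)$, which gives the equivalence in both directions.

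The main obstacle is justifying the $L^{\infty}$ bound on $w$ in terms of $\lambda_{2}(D^{\varepsilon})^{-1}$ uniformly in $\varepsilon$, and this is exactly what \eqref{eq: torsion poincare q = 2} supplies. Everything else is the routine weak maximum principle.
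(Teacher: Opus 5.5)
Your proof is correct. The upper bound is Lemma \ref{thm: comparison}, and the lower bound comes from your subsolution $(1 + C(d)\Lambda)^{-1} w_{D^{\varepsilon}}$ combined with the bound $\lambda_{2}\n{w}_{\infty} \lesssim_{d} 1$ from \eqref{eq: torsion poincare q = 2}; the paper gives no proof of its own and only cites Lemma 6 of van den Berg--Bucur--Kappeler, but your argument is a complete proof using the paper's own lemmas. Your remark that the constant also depends on $d$ is accurate, and that dependence cannot be removed.
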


\begin{remark}
    In the setting of Proposition \ref{thm: rougher source term}, consider the case $\mu^{\varepsilon} = I_{D^{\varepsilon}}$, $D^{\varepsilon} \sub D$ nonempty open. Then clearly the validity of Condition 3 for some $B' < \infty$ is equivalent to the following condition: $\inf_{\varepsilon}\mathcal{L}^{d}(D^{\varepsilon}) > 0$ and $\inf_{\varepsilon}F_{\frac{r}{2 - r}}(D^{\varepsilon}) > 0$, where for every $1 \le s \le \infty$, $F_{s}$ is a scale-invariant functional defined for each nonempty bounded open $\Omega \sub \R^{d}$ by 
    \begin{equation}
        F_{s}(\Omega) = \frac{\n{w_{\Omega}}_{1}}{\n{w_{\Omega}}_{s}\mathcal{L}^{d}(\Omega)^{\frac{1}{s'}}}.
    \end{equation}
    In the literature, $F_{\infty}(\Omega)$ is called the \emph{efficiency} or \emph{mean-to-max ratio} of the torsion function on $\Omega$ (see \cite{vandenBerg2022on,Briani2023mean} and the references therein).
\end{remark}

We now present two concrete examples satisfying the conditions in Proposition \ref{thm: rougher source term}.
\begin{example}
    Let $d \ge 3$ and $D^{\varepsilon}_{\eta}$ be as in Example \ref{ex: periodically perforated domains}. Then for $1 \le \eta < \frac{d}{d - 2}$, $D^{\varepsilon}_{\eta}$ satisfies the conditions in Proposition \ref{thm: rougher source term} for $r = 2$. Hence the error is $O(T(D^{\varepsilon}_{\eta})^{\delta_{d, p, q, 2}}) = O(\varepsilon^{1 - \frac{d - 2}{2}(\eta - 1)})$. Indeed, the convergence stated in Example \ref{ex: periodically perforated domains} for $1 \le \eta < \frac{d}{d - 2}$ is exactly Condition 1 with $r = 2$ and $k = \mathcal{L}^{d}(D)^{-1}\mathcal{L}^{d} \resmes D \in L^{\infty}(D)$.
\end{example}

\begin{example}
    Let $D_{h, \varkappa}$ be as in Example \ref{ex: disjoint balls}. Then the conditions in Proposition \ref{thm: rougher source term} for some (every) $1 < r \le 2$ are satisfied if and only if $\varkappa = 1$, while the effective measure $k = \lim_{h}\frac{w_{D_{h, \varkappa}}}{T(D_{h, \varkappa})} = \frac{1}{\mathcal{L}^{d}(D)}\mathcal{L}^{d}\resmes D$ does not depend on $\varkappa$. This follows from \eqref{eq: Ls norm of torsion of disjoint balls}.
\end{example}

The optimality of \eqref{eq: quantitative error estimate general} is discussed in the next

\begin{example}[Optimality of \eqref{eq: quantitative error estimate general}]
    \label{ex: optimality general}
    Let $U, D, D^{\varepsilon}, D^{\varepsilon, \rho}$ be as in Example \ref{ex: optimality}. Let $1 < r \le 2$ and suppose that $D^{\varepsilon}$ satisfies the conditions in Proposition \ref{thm: rougher source term} for $r$.
    \begin{enumerate}
        \item Let $A^{\varepsilon} \in M(\alpha, \beta)$ and $(\rho_{\varepsilon}) \sub ]0, 1[$ be a sequence decreasing to $0$ such that there exists a constant $C_{1} < \infty$ such that
        \begin{equation}
            C_{1}^{-1} \le T(D^{\varepsilon})^{-\frac{1}{2 + \frac{2 - r}{r}d}} \rho_{\varepsilon} \le C_{1}
        \end{equation}
        for every $\varepsilon$. Then
        \begin{equation}
            \n{w_{A^{\varepsilon}, \tilde{D}^{\varepsilon}}}_{s} \asymp_{d, U, \alpha, \beta, s} \rho_{\varepsilon}^{2 + \frac{d}{s}} + \n{w_{A^{\varepsilon}, D^{\varepsilon}}}_{s} 
            \begin{cases}
                \asymp_{\alpha, \beta, C_{1}, B'} T(D^{\varepsilon}) & \text{ if $1 \le s \le \frac{r}{2 - r}$}\\
                \ge \rho_{\varepsilon}^{2 + \frac{d}{s}} \gg T(\tilde{D}^{\varepsilon}) \text{ as $\varepsilon \searrow 0$} & \text{ if $\frac{r}{2 - r} < s \le \infty$}.
            \end{cases}
        \end{equation}
        Here, $B' := \sup_{\varepsilon}\frac{\n{w_{D^{\varepsilon}}}_{\frac{r}{2 - r}}}{\n{w_{D^{\varepsilon}}}_{1}}$, and we put $\tilde{D}^{\varepsilon} := D^{\varepsilon, \rho_{\varepsilon}}$.
        \item Let $p, q \in [1, 2]$ such that $q \ge r$ and $\frac{2p}{2 - p} \ge \frac{r}{2 - r}$. For $\varkappa \in \R$, define $\delta_{\varkappa}$ by $ \varkappa = \frac{2 + \delta_{\varkappa} + \frac{d}{q}}{2 + \frac{2 - r}{r}d}$. Then $\varkappa > (\text{resp.} =) 1 + \delta_{d, p, q, r}$ if and only if $\delta_{\varkappa} - 1 > (\text{ resp.} =) - \frac{d}{p'}$. Further, for an arbitrary sequence $(A^{\varepsilon}) \sub M(\alpha, \beta)$, the torsional rigidity $T(\tilde{D}^{\varepsilon})$ vanishes as $\varepsilon \searrow 0$, and we have the following two-sided estimates.
        \begin{itemize}
            \item When $p = 1$, $f_{1} \in W^{1, \infty}(D)$ and
            \begin{equation}
                T(\tilde{D}^{\varepsilon})^{\delta_{d, p, q, r}} \lesssim_{\substack{d, C_{1}, U,\\ \alpha, \beta}} \n{\frac{R_{A^{\varepsilon}, \tilde{D}^{\varepsilon}}f_{1}}{T(\tilde{D}^{\varepsilon})} - \frac{w_{A^{\varepsilon}, \tilde{D}^{\varepsilon}}}{T(\tilde{D}^{\varepsilon})}f_{1}}_{L^{q}(D)} \lesssim_{\substack{d, \alpha,\\ \beta, B'}}\n{\nabla f_{1}}_{L^{\infty}(\tilde{D}^{\varepsilon})}T(\tilde{D}^{\varepsilon})^{\delta_{d, p, q, r}}.
            \end{equation}
            \item When $p > 1$, there is a $\varkappa_{1} = \varkappa_{1}(d, p, q, r) > 1 + \delta_{d, p, q, r}$ such that for every $1 + \delta_{d, p, q, r} < \varkappa < \varkappa_{1}$, $f_{\delta_{\varkappa}} \in W^{1, p'}(D)$ and 
            \begin{equation}
                T(\tilde{D}^{\varepsilon})^{\varkappa - 1} \lesssim_{\substack{d, C_{1}, U,\\ \alpha, \beta, \varkappa, q, r}} \n{\frac{R_{A^{\varepsilon}, \tilde{D}^{\varepsilon}}f_{\delta_{\varkappa}}}{T(\tilde{D}^{\varepsilon})} - \frac{w_{A^{\varepsilon}, \tilde{D}^{\varepsilon}}}{T(\tilde{D}^{\varepsilon})}f_{\delta_{\varkappa}}}_{L^{q}(D)} \lesssim_{\substack{d, \alpha,\\ \beta, B'}}\n{\nabla f_{\delta_{\varkappa}}}_{L^{p'}(\tilde{D}^{\varepsilon})}T(\tilde{D}^{\varepsilon})^{\delta_{d, p, q, r}}.
            \end{equation}
        \end{itemize}
        Here, $f_{\delta}(x) := |x|^{\delta}$.
    \end{enumerate}
\end{example}

\begin{proof}
    Item 1 follows from Example \ref{ex: optimality} (1). Item 2 follows from Example \ref{ex: optimality} (2) and Proposition \ref{thm: rougher source term}.
\end{proof}

The conditions in Proposition \ref{thm: rougher source term} also enable us to describe the asymptotic behavior of the energy density for rougher source terms in stronger topologies than the sources and topologies we have considered in Proposition \ref{thm: convergence of energy}. As an example, we formulate

\begin{proposition}
    \label{thm: H-1 strong convergence of the scaled energy density}
    Let $A \in M(\alpha, \beta)$, $\mu \in \mathcal{M}_{\mathrm{cap}}^{b}$, $1 \le p \le \frac{d}{d - 1}$, and $f \in L^{p'}(\R^{d})$ with $u := R_{A, \mu}f$. Then
    \begin{equation}
        \label{eq: quantitative part for H-1 convergence of energy}
        \sup_{\substack{\varphi \in C_{c}^{\infty}(\R^{d}),\\ \n{\nabla \varphi}_{L^{2}(\Omega_{\mu})} \le 1}}\left| \int \varphi A\nabla u \cdot \nabla u + \int \varphi u^{2} \, d\mu - \int \varphi u f \right| \lesssim_{d, \alpha, \beta} \n{f}_{L^{p'}(\Omega_{\mu})}^{2}\n{w_{\mu}}_{\infty}^{\frac{d}{2}\left( \frac{1}{d} - \frac{1}{p'} \right) + \frac{1}{2}}\n{w_{\mu}}_{\frac{p}{2 - p}}^{\frac{1}{2}}.
    \end{equation}
    As a consequence, if $(\mu^{\varepsilon})$ satisfies the conditions in Proposition \ref{thm: rougher source term} for $r = 2$, then 
    \begin{equation}
        \label{eq: H-1 strong convergence}
        T(\mu^{\varepsilon})^{-1}\left( A^{\varepsilon} \nabla u^{\varepsilon} \cdot \nabla u^{\varepsilon} \mathcal{L}^{d} + |u^{\varepsilon}|^{2} \, \mu^{\varepsilon} \right) \xrightarrow{H^{-1}(\R^{d})} kf^{2}
    \end{equation}
    strongly for every $f \in L^{p'}(\R^{d})$ with $p' > d$ and $u^{\varepsilon} := R_{A^{\varepsilon}, \mu^{\varepsilon}}f$.
\end{proposition}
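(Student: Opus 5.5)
The plan is to repeat the Leibniz computation from the proof of Proposition \ref{thm: convergence of energy}, and then to estimate the two resulting factors separately: one in energy and one in $L^{\infty}$. For $\varphi \in C_{c}^{\infty}(\R^{d})$, testing the equation for $u = R_{A, \mu}f$ with $\varphi u$ gives, exactly as in \eqref{eq: convergence of energy Leibniz},
\[
\int \varphi A\nabla u \cdot \nabla u + \int \varphi u^{2} \, d\mu - \int \varphi u f = -\int u A\nabla u \cdot \nabla \varphi .
\]
The right-hand side is bounded by $\n{u}_{\infty}\n{A\nabla u}_{2}\n{\nabla\varphi}_{L^{2}(\Omega_{\mu})}$, so \eqref{eq: quantitative part for H-1 convergence of energy} reduces to two bounds.

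\emph{Energy factor.} As before, $\n{A\nabla u}_{2}^{2} \le \beta\int fu \le \beta \n{f}_{L^{p'}(\Omega_{\mu})}\n{u}_{p}$. By Lemma \ref{thm: comparison between op norm of resolvent and poincare const}, $\n{u}_{p} \lesssim_{\alpha,\beta} \n{w_{\mu}}_{\frac{p}{2-p}}\n{f}_{p'}$. Hence
\[
\n{A\nabla u}_{2} \lesssim \n{f}_{L^{p'}(\Omega_{\mu})}\n{w_{\mu}}_{\frac{p}{2-p}}^{\frac12}.
\]

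\emph{$L^{\infty}$ factor.} This is the main obstacle. The target is $\n{u}_{\infty} \lesssim_{d,\alpha,\beta} \n{f}_{p'}\n{w_{\mu}}_{\infty}^{1-\frac{d}{2p'}}$; the exponent is dictated by scaling, since $1-\frac{d}{2p'} = \frac{d}{2}(\frac1d-\frac1{p'})+\frac12$. By Lemma \ref{thm: comparison}, $|u| \le R_{A,\mu}|f|$, so I may take $f \ge 0$. I would first try a Green-function argument. Represent $R_{A,\mu}|f|(x) = \int G_{\mu}(x,y)|f(y)|\,dy$, with $0 \le G_{\mu} \le G_{A}$, the latter satisfying the Grüter--Widman bound $G_{A}(x,y) \lesssim |x-y|^{2-d}$ for $d \ge 3$. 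The kernel also obeys $\int G_{\mu}(x,y)\,dy = w_{A,\mu}(x) \le \n{w_{A,\mu}}_{\infty} =: m$. Hölder gives $u(x) \le \n{f}_{p'}\n{G_{\mu}(x,\cdot)}_{p}$. Splitting the integral at $|x-y| = R$ yields
\[
\int G_{\mu}^{p} \lesssim R^{d-(d-2)p} + R^{(2-d)(p-1)}m ,
\]
which is finite because $p \le \frac{d}{d-1} < \frac{d}{d-2}$. Choosing $R^{2} = m$ gives $\n{G_{\mu}(x,\cdot)}_{p} \lesssim m^{1-\frac{d}{2p'}}$, and Lemma \ref{thm: torsion and poincare const} lets me replace $m$ by $\n{w_{\mu}}_{\infty}$. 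For $d = 2$, and to avoid Green functions for relaxed problems altogether, the fallback is the local-boundedness route used in the proof of \eqref{eq: torsion poincare q = 2}. Since $-\mathrm{div}(A\nabla u) \le f$ in $H^{-1}(\R^{d})$ (same truncation argument), \cite[Theorem 8.17]{gilbarg1983elliptic} on balls of radius $r \sim m^{1/2}$ gives $\sup u \lesssim r^{-d/2}\n{u}_{L^{2}(B_{2r})} + r^{2-\frac{d}{p'}}\n{f}_{p'}$. The first term would then be absorbed by a Moser-type estimate, or by the appendix lemma on $L^{\infty}$ bounds for solutions.

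\emph{Consequence.} Under the conditions of Proposition \ref{thm: rougher source term} with $r = 2$, we have $\n{w_{\mu^{\varepsilon}}}_{\infty} \lesssim T(\mu^{\varepsilon})$, and also $\n{w_{\mu^{\varepsilon}}}_{\frac{p}{2-p}} \lesssim T(\mu^{\varepsilon})$ because $\frac{p}{2-p} \le \infty$ and $D$ is bounded. The right-hand side of \eqref{eq: quantitative part for H-1 convergence of energy} is therefore $O(T^{1+\frac{d}{2}(\frac1d-\frac1{p'})})$ with a positive exponent when $p' > d$. Dividing by $T$, the difference between the scaled energy measure and $T^{-1}u^{\varepsilon}f$ tends to $0$ in $H^{-1}$, since the supremum is over $\n{\nabla\varphi}_{2} \le 1$ and the measures are supported in $\overline{D}$.

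It remains to show $T^{-1}u^{\varepsilon}f \to kf^{2}$ strongly in $H^{-1}$. By Condition 1 of Proposition \ref{thm: rougher source term} with $r = 2$, $T^{-1}u^{\varepsilon} \rightharpoonup kf$ weakly in $L^{2}$, so $T^{-1}u^{\varepsilon}f \rightharpoonup kf^{2}$ weakly in $L^{s}(D)$ with $\frac1s = \frac12 + \frac1{p'}$. Since $p' > d$ gives $s > \frac{2d}{d+2}$, the embedding $L^{s}(D) \hookrightarrow H^{-1}$ is compact by the dual of Rellich--Kondrachov. This upgrades the weak convergence to strong $H^{-1}$ convergence and yields \eqref{eq: H-1 strong convergence}.
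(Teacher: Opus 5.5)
Your Leibniz identity, the energy factor $\n{A\nabla u}_{2}\lesssim \n{f}_{L^{p'}(\Omega_\mu)}\n{w_\mu}_{\frac{p}{2-p}}^{1/2}$, and the passage to \eqref{eq: H-1 strong convergence} all match the paper. That last step uses weak convergence in $L^{\frac{2p'}{2+p'}}$ plus the compact embedding into $H^{-1}$. The difference, and the problem, is the $L^\infty$ factor.

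For $d\ge 3$ your Green-function computation is a legitimate alternative route. You use $G_\mu\le G_{U(0,n)}\lesssim_{d,\alpha,\beta}|x-y|^{2-d}$ (comparison plus Gr\"uter--Widman) and $\int G_\mu(x,y)\,dy=w_{A,\mu}(x)$. Splitting at $R=m^{1/2}$ then gives $\n{u}_\infty\lesssim\n{f}_{p'}m^{1-\frac{d}{2p'}}$. You would still have to justify that $R_{A,\mu}$ is an integral operator, e.g.\ as a positive operator dominated by the kernel operator $R_{A,U(0,n)}$.

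However, the statement includes $d=2$, where there is no scale-invariant pointwise Green bound, and your fallback there does not close. After \cite[Theorem 8.17]{gilbarg1983elliptic} on $B_{2r}$ with $r^2\sim m$, the term $r^{-d/2}\n{u}_{L^2(B_{2r})}$ cannot be ``absorbed'' by a Moser-type estimate. Any such estimate still needs some integral bound for $u$ on the ball. Likewise, Lemma \ref{thm: Linfty Lr estimate for solutions} needs a bound on $\n{u}_{tr}$, and you never supply one.

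The missing ingredient is the global bound $\n{R_{A,\mu}}_{p'\to p'}\lesssim_{d,\alpha,\beta}\n{w_\mu}_\infty$. The paper gets it by Riesz--Thorin between two endpoint bounds, using $p'\ge d\ge 2$:
\begin{itemize}
\item $\n{R_{A,\mu}}_{\infty\to\infty}\le\n{w_{A,\mu}}_\infty$, from Lemma \ref{thm: comparison};
\item $\n{R_{A,\mu}}_{2\to 2}\asymp\lambda_2(\mu)^{-1}\asymp\n{w_{A,\mu}}_\infty$, from Lemmas \ref{thm: comparison between op norm of resolvent and poincare const} and \ref{thm: torsion and poincare const}.
\end{itemize}
With this bound, Lemma \ref{thm: Linfty Lr estimate for solutions} with $r=p$ and $t=\frac{1}{p-1}$ finishes the argument. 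Then $tr=p'$ and $\gamma_{t,r}=1-\frac{d}{2p'}$, so $\n{u}_\infty\lesssim\n{f}_{p'}\n{w_\mu}_\infty^{1-\frac{d}{2p'}}$ in every dimension, with no Green function needed. For $p=1$ the bound is immediate from comparison.

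Alternatively, your Theorem 8.17 route also closes once you have this bound. H\"older on $B_{2r}$ gives $r^{-d/2}\n{u}_{L^2(B_{2r})}\lesssim r^{-d/p'}\n{u}_{p'}\lesssim m^{1-\frac{d}{2p'}}\n{f}_{p'}$.
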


\begin{proof}
    Let $\varphi \in C_{c}^{\infty}(\R^{d})$ such that $\n{\nabla \varphi}_{L^{2}(\Omega_{\mu})} \le 1$. We bound
    \begin{equation}
        \begin{split}
            \left| \int \varphi A\nabla u \cdot \nabla u + \int \varphi u^{2} \, d\mu - \int \varphi u f \right| & \underset{\text{\eqref{eq: convergence of energy Leibniz}}}{\le} \n{u}_{\infty}\n{A\nabla u}_{2}\\
            &\lesssim_{\beta} \n{u}_{\infty}\n{f}_{L^{p'}(\Omega_{\mu})}^{\frac{1}{2}}\n{u}_{p}^{\frac{1}{2}}\\
            &\le \n{f}_{L^{p'}(\Omega_{\mu})}^{2}\n{R_{A, \mu}}_{p' \to \infty}\n{R_{A, \mu}}_{p' \to p}^{\frac{1}{2}}.
        \end{split}
    \end{equation}
    From Lemma \ref{thm: Linfty Lr estimate for solutions}, we get the bound $\n{R_{A, \mu}}_{p' \to \infty} \lesssim_{d, \alpha} \n{R_{A, \mu}}_{p' \to p'}^{\gamma_{\frac{1}{p - 1}, p}}$, where $\gamma_{\frac{1}{p - 1}, p}$ is defined by \eqref{eq: gamma t r}. Hence, by the Riesz--Thorin theorem, we may further bound the right-hand side of the last display by 
    \[C(d, \alpha, \beta)\n{f}_{L^{p'}(\Omega_{\mu})}^{2}\n{w_{\mu}}_{\infty}^{\gamma_{\frac{1}{p - 1}, p}}\n{R_{A, \mu}}_{p' \to p}^{\frac{1}{2}},\]
    since $\n{R_{A, \mu}}_{\infty \to \infty} \le \n{w_{A, \mu}}_{\infty}$ by Lemma \ref{thm: comparison} and $\n{R_{A, \mu}}_{2 \to 2} \asymp_{d, \alpha, \beta} \n{w_{A, \mu}}_{\infty} \asymp_{d, \alpha, \beta} \n{w_{\mu}}_{\infty}$ by Lemma \ref{thm: comparison between op norm of resolvent and poincare const}. We obtain the estimate \eqref{eq: quantitative part for H-1 convergence of energy} from the last two displays and Lemma \ref{thm: comparison between op norm of resolvent and poincare const}. 

    Suppose that $(\mu^{\varepsilon})$ satisfies the conditions in Proposition \ref{thm: rougher source term} for $r = 2$ and $f \in L^{p'}(\R^{d})$ with $p' > d$. Then $T(\mu^{\varepsilon})^{-1}u^{\varepsilon} \rightharpoonup kf$ in $L^{2}(\R^{d})$ weak, and hence $T(\mu^{\varepsilon})^{-1}u^{\varepsilon}f \rightharpoonup kf^{2}$ in $L^{\frac{2p'}{2 + p'}}(\R^{d})$ weak. Now the $H^{-1}(\R^{d})$ convergence \eqref{eq: H-1 strong convergence} follows from \eqref{eq: quantitative part for H-1 convergence of energy}.
\end{proof}
\appendix
\section{Auxiliary lemmas}
\begin{lemma}
    \label{thm: multi general}
    \begin{itemize}
        \item If $d \ge 3$, then for every $1 \le s \le r \le \frac{2d}{d - 2}$ and $u \in C^{\infty}_{c}(\R^{d})$,
        \begin{equation}
            \label{eq: multi d ge 3}
            \n{u}_{r} \le C(d)\n{\nabla u}_{2}^{\theta}\n{u}_{s}^{1 - \theta}.
        \end{equation}
        \item If $d = 2$, then for every $1 \le s \le r < \infty$ and $u \in C^{\infty}_{c}(\R^{d})$, 
        \begin{equation}
            \label{eq: multi nash}
            \n{u}_{r} \le C(d)^{r - s}\n{\nabla u}_{2}^{\theta}\n{u}_{s}^{1 - \theta}.
        \end{equation}
    \end{itemize}
    Here, $\theta \in [0, 1]$ is defined by
    \begin{equation}
        \frac{1}{r} = \frac{1}{s}(1 - \theta) + \frac{d - 2}{2d}\theta.
    \end{equation}
\end{lemma}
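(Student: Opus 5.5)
The statement is the Gagliardo--Nirenberg-type multiplicative inequality (Lemma \ref{thm: multi general}). I would obtain it from the endpoint Sobolev inequality by Hölder interpolation between Lebesgue spaces. The case $d = 2$ needs a separate route.

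\emph{Case $d \ge 3$.} The Gagliardo--Nirenberg--Sobolev inequality gives $\n{u}_{2^{*}} \le C(d)\n{\nabla u}_{2}$ for $u \in C_{c}^{\infty}(\R^{d})$, where $2^{*} = \frac{2d}{d-2}$. Given $s \le r \le 2^{*}$, let $\theta \in [0,1]$ be determined by $\frac{1}{r} = \frac{1-\theta}{s} + \frac{\theta}{2^{*}}$. This is exactly the relation in the statement. The log-convexity of $L^{p}$ norms (Hölder) then gives
\[
\n{u}_{r} \le \n{u}_{s}^{1-\theta}\n{u}_{2^{*}}^{\theta} \le C(d)^{\theta}\n{\nabla u}_{2}^{\theta}\n{u}_{s}^{1-\theta}.
\]
Since $C(d)^{\theta} \le \max(1, C(d))$, this proves \eqref{eq: multi d ge 3}. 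No obstacle arises here.

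\emph{Case $d = 2$.} Now $\frac{d-2}{2d} = 0$, so the relation reads $\frac{1}{r} = \frac{1-\theta}{s}$, i.e.\ $\theta = 1 - \frac{s}{r}$. The target is the Ladyzhenskaya/Nash-type bound $\n{u}_{r} \le C^{r-s}\n{\nabla u}_{2}^{1 - s/r}\n{u}_{s}^{s/r}$.

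The cleanest route is the following. First apply the $W^{1,1}$ Sobolev inequality in $\R^{2}$, $\n{v}_{2} \le C\n{\nabla v}_{1}$, to $v = |u|^{\gamma}$ with $\gamma \ge 1$. Using $|\nabla v| = \gamma|u|^{\gamma-1}|\nabla u|$ and Cauchy--Schwarz, this gives
\[
\n{u}_{2\gamma}^{\gamma} \le C\gamma \n{u}_{2(\gamma-1)}^{\gamma-1}\n{\nabla u}_{2}.
\]
The case $\gamma = \frac{r}{2}$ relates $\n{u}_{r}$ to $\n{u}_{r-2}$. Interpolating $\n{u}_{r-2}$ between $\n{u}_{s}$ and $\n{u}_{r}$ (when $s \le r-2$) and absorbing then closes the estimate. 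The other regime, $s \le r < s+2$, is handled by interpolation from a neighbouring admissible exponent.

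An alternative that avoids the case analysis is to choose $\gamma = \frac{r}{s}$ in a variant, namely to apply the inequality to $|u|^{\gamma}$ with the split $|u|^{\gamma-1} \in L^{2}$ chosen so that the exponent matches $s$. One would then iterate dyadically if needed.

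\emph{Main obstacle.} The main obstacle is tracking the constant in $d = 2$. The factor $\gamma \sim r$ appears at each step, and the claimed form $C(d)^{r-s}$ requires that the product of these factors is bounded by a geometric quantity in $r - s$. I would first try a direct one-step argument: apply $\n{v}_{2} \le C\n{\nabla v}_{1}$ to $v = |u|^{r/2}$ and bound $\n{u}_{r-2}$ by Hölder between $s$ and $r$, noting that $\gamma \le e^{\gamma}$. The bound $r \le e^{r-s}\cdot e^{s}$ fails to be uniform in $s$ in general, so I would instead aim for $r/s \le e^{(r-s)/s} \le e^{r-s}$, which holds for $s \ge 1$. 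This uses the inequality $x \le e^{x-1}$ applied to $x = r/s$. If the single step only covers $r \le 2s$, I would iterate over a chain $s = r_{0} < r_{1} < \cdots < r_{N} = r$ with $r_{j+1} \le 2r_{j}$ and multiply the constants.
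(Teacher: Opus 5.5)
Your proposal is correct. For $d \ge 3$ it is the paper's argument (Sobolev plus H\"{o}lder interpolation). For $d = 2$ you start from the same key inequality as the paper, $\n{u}_{2\gamma}^{\gamma} \le C\gamma \n{u}_{2(\gamma - 1)}^{\gamma - 1}\n{\nabla u}_{2}$, obtained from $\n{v}_{2} \le C\n{\nabla v}_{1}$ with $v = |u|^{\gamma}$. You close it differently, though.

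The paper first reduces, by interpolation, to the case $(r - s)/2 \in \mathbb{Z}_{>0}$. It then iterates the inequality $l = (r - s)/2$ times, stepping from $L^{r}$ down to exactly $L^{s}$ with no absorption. The constant is controlled by $p(p - 1)\cdots(p - l + 1) \le p^{l}$ and $p^{l/p} \le e^{l/e}$.

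You apply the inequality once with $\gamma = r/2$, interpolate $\n{u}_{r - 2}$ between $L^{s}$ and $L^{r}$, and absorb the $L^{r}$ factor. The absorption is legitimate since $\n{u}_{r} < \infty$ for $u \in C_{c}^{\infty}$. Carrying this out gives exactly $\n{u}_{r}^{r/(r - s)} \le \frac{Cr}{2}\n{u}_{s}^{s/(r - s)}\n{\nabla u}_{2}$, that is, $\n{u}_{r} \le (Cr/2)^{\theta}\n{\nabla u}_{2}^{\theta}\n{u}_{s}^{1 - \theta}$ with $\theta = (r - s)/r$. This holds for all real $r \ge s + 2$ at once. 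The range $s \le r < s + 2$ then follows by interpolating with $r' = s + 2$.

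Your version needs a single step and no integrality condition on the number of steps. The paper's version avoids absorption and makes the constant an explicit product.

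The constant issue you single out as the main obstacle is not real. The factor $\gamma = r/2$ enters raised to the power $\theta = (r - s)/r$. So your own remark $\gamma \le e^{\gamma}$ already gives $\gamma^{\theta} \le e^{\gamma\theta} = e^{(r - s)/2}$, and $C^{\theta} \le C^{r - s}$ for $C \ge 1$ since $r \ge 1$.

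In the range $s \le r < s + 2$, the interpolated constant is $(C(s + 2)/2)^{\theta}$. Here $\frac{s + 2}{2}\theta \le \frac{s + 2}{2s}(r - s) \le \frac{3}{2}(r - s)$, so again it is at most $K^{r - s}$.

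Hence the detour through $r/s \le e^{(r - s)/s}$, the dyadic chain, and the vague ``$\gamma = r/s$'' variant are all unnecessary. Note also that your single step covers $r \ge s + 2$, not $r \le 2s$.
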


\begin{proof}
    The inequality \eqref{eq: multi d ge 3} is an immediate consequence of the Sobolev inequality $\n{u}_{\frac{2d}{d - 2}} \le C(d)\n{\nabla u}_{2}$ and an interpolation.

    The inequality $\eqref{eq: multi nash}$ can be proven in the following way (cf. \cite[p. 129]{nirenberg1959on}, see also \cite[Section 2.3.12]{Mazya2011sobolevspaces}). By interpolation, it suffices to show the case where $m :=\frac{d - 1}{d} (r - s)$ is a positive integer. Given a nonnegative $u \in C^{\infty}_{c}(\R^{d})$ and $p > 1$, by plugging $u^{p}$ in the Sobolev inequality $\n{v}_{\frac{d}{d - 1}} \le C(d) \n{\nabla v}_{1}$, and using the H\"{o}lder inequality, we get 
    \[\n{u}_{\frac{d}{d - 1}p}^{p} \le C(d)p\n{\nabla u}_{d}\n{u}_{\frac{d}{d - 1}(p - 1)}^{p - 1}.\]
    Iterating the last estimate, we obtain 
    \[\n{u}_{\frac{d}{d - 1}p}^{p} \le C(d)^{l}p\cdots (p - l + 1)\n{\nabla u}_{d}^{l}\n{u}_{\frac{d}{d - 1}(p - l)}^{p - l}\]
    for every positive integer $l$ and every real $p > l$. Thus, 
    \[\n{u}_{\frac{d}{d - 1}p} \le C(d)^{\frac{l}{p}}p^{\frac{l}{p}}\n{\nabla u}_{d}^{\frac{l}{p}}\n{u}_{\frac{d}{d - 1}(p - l)}^{1 - \frac{l}{p}} \le C(d)^{l}\n{\nabla u}_{d}^{\frac{l}{p}}\n{u}_{\frac{d}{d - 1}(p - l)}^{1 - \frac{l}{p}}.\]
    The last display with $p = \frac{d - 1}{d}r$ and $l = m$ proves the desired multiplicative inequality.
\end{proof}

\begin{lemma}[cf. {\cite[Proposition 3.1]{brasco2017compact}}]
    \label{thm: Linfty Lr estimate for solutions}
    Let $1 \le \beta < \infty$, $1 \le r < \frac{d}{d - 2}$, and $1 \le t < \infty$ ($\frac{d}{d - 2} := \infty$ if $d = 2$). Then
    \begin{equation}
        \label{eq: Linfty Lr estimate for solutions}
        \n{R_{A, \mu}f}_{\infty} \le C(d, r)\n{f}_{r'}^{1 - \gamma_{t, r}}\n{R_{A, \mu}f}_{tr}^{\gamma_{t, r}}
    \end{equation}
    for every $f \in L^{r'}(\R^{d})$, $A \in M(1, \beta)$, and $\mu \in \mathcal{M}_{\mathrm{cap}}^{b}$. Here, 
    \begin{equation}
        \label{eq: gamma t r}
        \gamma_{t, r} := \frac{(d - (d - 2)r)t}{d + (d - (d - 2)r)t},
    \end{equation}
    and for each $d \ge 2$, $C(d, r)$ appearing in \eqref{eq: Linfty Lr estimate for solutions} is bounded if $r$ is bounded away from $\frac{d}{d - 2}$.
\end{lemma}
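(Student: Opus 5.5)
The plan is a Moser iteration (De Giorgi--Nash--Moser type) for nonnegative subsolutions, carried out in the relaxed framework, with the capacitary term simply dropped because it has a favourable sign. By linearity and Lemma \ref{thm: comparison} applied to $f_\pm$ (which gives $|R_{A,\mu}f| \le R_{A,\mu}|f|$), we may assume $f \ge 0$, so that $u := R_{A,\mu}f \ge 0$. We may also truncate, i.e.\ work with $u \wedge M$ or assume $f$ bounded and pass to the limit by monotone convergence, so that every power $u^{\varkappa}$, $\varkappa \ge 1$, lies in $H^1_\mu$. This membership holds because $u \in L^\infty$, $u^{\varkappa}$ is a Lipschitz function of $u$ vanishing at $0$, and $|u^{\varkappa}| \le \|u\|_\infty^{\varkappa-1}|u|$ pointwise q.e., so $u^{\varkappa} \in L^2(\mu)$.

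\emph{Step 1 (reverse Hölder step).} Test \eqref{eq: weak formulation of relaxed Dirichlet} with $\varphi = u^{\varkappa}$. As in the proof of Lemma \ref{thm: torsion and poincare const}, ellipticity with $\alpha = 1$ gives
\[\frac{4\varkappa}{(\varkappa+1)^2}\int |\nabla u^{\frac{\varkappa+1}{2}}|^2 + \int u^{\varkappa+1}\, d\mu = \int f u^{\varkappa} \le \|f\|_{r'}\|u\|_{\varkappa r}^{\varkappa}.\]
Drop the $\mu$-term. Then apply the Sobolev inequality when $d \ge 3$, or the multiplicative inequality of Lemma \ref{thm: multi general} when $d = 2$ (choosing an auxiliary exponent with $\frac{d-2}{2d}$ replaced by a small positive number), to $v = u^{(\varkappa+1)/2}$. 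This yields
\[\|u\|_{\frac{d}{d-2}(\varkappa+1)}^{\varkappa+1} \lesssim_d \varkappa\,\|f\|_{r'}\,\|u\|_{\varkappa r}^{\varkappa}.\]

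\emph{Step 2 (iteration).} Set $\varkappa_0 r = tr$ and define recursively $\varkappa_{j+1} r = \frac{d}{d-2}(\varkappa_j+1)$. Because $r < \frac{d}{d-2}$, the ratio $\frac{d}{(d-2)r} > 1$, so $\varkappa_j \to \infty$ geometrically. Iterating Step 1 and taking logarithms, the exponents of $\|f\|_{r'}$ and of $\|u\|_{tr}$ form convergent series, and the product of the constants $(C\varkappa_j)^{1/(\varkappa_j+1)}$ converges. The limiting exponents must be those forced by scaling: under $u \mapsto \lambda u$ with $f \mapsto \lambda f$ the estimate is homogeneous of degree one, and under $x \mapsto \rho x$ (using \eqref{eq: scaling property of resolvent}) the exponents must balance. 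Scaling alone determines the pair $(1-\gamma_{t,r},\gamma_{t,r})$ with $\gamma_{t,r}$ as in \eqref{eq: gamma t r}. As a check, for $r = 1$ and $f = 1$, $\gamma_{t,1} = \frac{2t}{d+2t} = \gamma_t$, consistent with Lemma \ref{thm: Linfty_Lt_estimate_of_torsion_func}. The product of the constants stays bounded as long as $\frac{d}{(d-2)r} - 1$ is bounded below, which gives the stated uniformity of $C(d,r)$ in $r$.

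The main obstacle is the bookkeeping in Step 2: the recursion is affine rather than multiplicative (the "$+1$" coming from the source term), so it must be shown that the limiting exponents are exactly $\gamma_{t,r}$ and $1-\gamma_{t,r}$ and that the constants remain controlled as $r$ approaches $\frac{d}{d-2}$. I would first rewrite the recursion in terms of $a_j := \varkappa_j + \frac{d}{d-(d-2)r}$. This turns it into the pure geometric recursion $a_{j+1} = \frac{d}{(d-2)r}\,a_j$, from which the exponents can be summed explicitly. The case $d = 2$ is handled the same way using the $C(d)^{r-s}$ form of Lemma \ref{thm: multi general}, with the growth of those constants absorbed because the associated weights are summable.
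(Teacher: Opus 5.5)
Your Moser iteration is a legitimate alternative to the paper's argument. As written, though, the reduction at the start fails, and Step 1 is misstated for $d=2$.

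\textbf{The reduction to $f \ge 0$ does not prove the statement.} Lemma \ref{thm: comparison} gives $|R_{A,\mu}f| \le R_{A,\mu}|f|$, so the nonnegative case only yields $\n{R_{A,\mu}f}_{\infty} \le C\n{f}_{r'}^{1-\gamma_{t,r}}\n{R_{A,\mu}|f|}_{tr}^{\gamma_{t,r}}$. The right-hand side of \eqref{eq: Linfty Lr estimate for solutions}, however, contains $\n{R_{A,\mu}f}_{tr}$. This can be much smaller than $\n{R_{A,\mu}|f|}_{tr}$ because of cancellations (take $f$ rapidly oscillating), and no comparison goes in the needed direction. Splitting $f = f_{+} - f_{-}$ has the same defect, and your option of approximating $f$ monotonically also relies on $f \ge 0$.

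The repair is never to use the sign of $f$. Test \eqref{eq: weak formulation of relaxed Dirichlet} with $(u_{+} \wedge M)^{\varkappa} \in H^{1}_{\mu}$:
\begin{itemize}
\item the gradient term lives on $\{0 < u < M\}$ and dominates $\frac{4\varkappa}{(\varkappa+1)^{2}}\int |\nabla (u_{+}\wedge M)^{\frac{\varkappa+1}{2}}|^{2}$;
\item the $\mu$-term $\int u\,(u_{+}\wedge M)^{\varkappa}\,d\mu$ is nonnegative;
\item $\int f\,(u_{+}\wedge M)^{\varkappa} \le \n{f}_{r'}\n{u_{+}\wedge M}_{\varkappa r}^{\varkappa}$ holds for signed $f$.
\end{itemize}
Iterate for $u_{+}\wedge M$, let $M \to \infty$, and handle $u_{-}$ through $-f$. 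This also removes any need for an a priori bound on $u$. The paper proceeds the same way, bounding $\n{u_{+}}_{\infty}$ directly with the test functions $(u-k)_{+}^{t}$.

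\textbf{The case $d = 2$.} Here no inequality of the form $\n{u}_{\sigma(\varkappa+1)}^{\varkappa+1} \lesssim \varkappa\n{f}_{r'}\n{u}_{\varkappa r}^{\varkappa}$ with finite $\sigma$ can hold. Under the dilation $u \mapsto \rho^{-2}u(\rho\,\cdot)$, $f \mapsto f(\rho\,\cdot)$ of \eqref{eq: scaling property of resolvent}, the left side gains a factor $\rho^{-2/\sigma}$ relative to the right. So ``replacing $\frac{d-2}{2d}$ by a small positive number'' is not available.

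You have to use Lemma \ref{thm: multi general} with the lower norm $\n{v}_{s}$, $s = \frac{2\varkappa r}{\varkappa+1}$, as the paper does. For a fixed $\sigma > r$ this gives $\n{u}_{\sigma(\varkappa+1)}^{\varkappa+1} \lesssim_{\sigma} (\varkappa\n{f}_{r'})^{\theta_{\varkappa}}\n{u}_{\varkappa r}^{\varkappa+1-\theta_{\varkappa}}$ with $\theta_{\varkappa} = 1 - \frac{\varkappa r}{(\varkappa+1)\sigma}$. With $\varkappa_{j+1} = \frac{\sigma}{r}(\varkappa_{j}+1)$, the weight on $\n{u}_{tr}$ telescopes to $\frac{t}{t+1} = \gamma_{t,r}$. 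So the iteration closes, but through a different recursion than for $d \ge 3$.

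With these two repairs the rest is sound. Your substitution $a_{j}$ correctly gives $\prod_{j}\frac{\varkappa_{j}}{\varkappa_{j}+1} = \frac{t}{t+c} = \gamma_{t,r}$ with $c = \frac{d}{d-(d-2)r}$. The resulting constant is uniform in $t \ge 1$ and bounded for $r$ away from $\frac{d}{d-2}$.

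The paper takes a different route: it fixes $t$, proves a single reverse H\"older inequality for $(u-k)_{+}$, and integrates the resulting differential inequality for $k \mapsto \n{(u-k)_{+}}_{tr}^{tr}$. That avoids bookkeeping over infinitely many exponents and treats all $d \ge 2$ uniformly.
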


\begin{proof}
    Put $u := R_{A, \mu}f$. It suffices to bound $\n{u_{+}}_{\infty}$ by the right-hand side of \eqref{eq: Linfty Lr estimate for solutions}. Clearly we may also assume $u_{+} \neq 0$. Fix $k \in [0, \infty[$ and put $v := (u - k)_{+}$. Since $R_{A, \mu}(f_{+})$ is a subsolution of $-\mathrm{div}(A\nabla \cdot) = f_{+}$ in $\R^{d}$ by \cite[Proposition 2.6]{dalmaso1994new} and since $r' > \frac{d}{2}$, $R_{A, \mu}(f_{+}) \in L^{\infty}(\R^{d})$. Hence, by Lemma \ref{thm: comparison}, $u_{+} \in L^{\infty}(\R^{d})$.  By using $v^{t}$ as a test function for the definition \eqref{eq: weak formulation of relaxed Dirichlet} for $u$, we obtain 
    \begin{equation}
        \int f v^{t} = \int tv^{t - 1} A \nabla u \cdot \nabla v + \int uv^{t} \, d\mu \ge \left( \frac{2}{t + 1} \right)^{2}t \int|\nabla (v^{\frac{t + 1}{2}})|^{2}.
    \end{equation}
     Thus, by the H\"{o}lder inequality,
    \begin{equation}
        \label{eq: v energy}
        \n{\nabla (v^{\frac{t + 1}{2}})}_{2} \le t^{\frac{1}{2}}\n{f}_{r'}^{\frac{1}{2}}\n{v}_{tr}^{\frac{t}{2}}.
    \end{equation}
    From Lemma \ref{thm: multi general} and \eqref{eq: v energy}, we get
    \begin{equation}
        \label{eq: reverse hoelder for v}
            \n{v}_{(t + 1)r} = \n{v^{\frac{t + 1}{2}}}_{2r}^{\frac{2}{t + 1}}
            \underset{\text{Lemma \ref{thm: multi general}}}{\le} \left( B \n{\nabla(v^{\frac{t + 1}{2}})}_{2}^{\theta}\n{v^{\frac{t + 1}{2}}}_{\frac{2tr}{t + 1}}^{1 - \theta}\right)^{\frac{2}{t + 1}}
            \underset{\eqref{eq: v energy}}{\le} B^{\frac{2}{t + 1}} t^{\frac{\theta}{t + 1}}\n{f}_{r'}^{\frac{\theta}{t + 1}}\n{v}_{tr}^{1 - \frac{\theta}{t + 1}},
    \end{equation}
    where $\frac{1}{2r} = \frac{t + 1}{2tr}(1 - \theta) + \frac{d - 2}{2d}\theta$, and $B = C(d)$ if $d \ge 3$ and $B = C(d)^{\frac{2r}{t + 1}}$ if $d = 2$. Now put $M(k) := \n{v}_{tr}^{tr}$. Then from \eqref{eq: reverse hoelder for v}, the H\"{o}lder inequality $\n{v}_{tr} \le \n{v}_{(t + 1)r}^{\tau}\left( \int_{u > k}v^{tr - 1} \right)^{\frac{1}{tr} - \frac{\tau}{(t + 1)r}}$ with $1 - \tau = (tr - 1)(\frac{1}{tr} - \frac{\tau}{(t + 1)r})$, and the relation $D_{r}M(k) := \lim_{h \searrow 0}h^{-1}(M(k + h) - M(k)) = -tr \int_{u > k}(u - k)_{+}^{tr - 1}$, we obtain
    \begin{equation}
        D_{r}M(k) \le - tr A_{1}^{t + 1} M(k)^{1 - \frac{1 - \theta}{tr}}.
    \end{equation}
    Here, we put $A_{1}^{-1} := B^{\frac{2}{t + 1}} t^{\frac{\theta}{t + 1}}\n{f}_{r'}^{\frac{\theta}{t + 1}} $. Since $M(0) > 0$ and $0 \le \theta < 1$, the differential equation 
    \[N'(k) = - tr A_{1}^{t + 1} N(k)^{1 - \frac{1 - \theta}{tr}}, \, N(0) = M(0)\]
    admits the unique solution 
    \[N(k) = \left( q\left( C - tr A_{1}^{t + 1}k \right) \right)^{\frac{1}{q}}, \, q = \frac{1 - \theta}{tr}, \, N(0) = M(0) = (qC)^{\frac{1}{q}}\]
    on $0 \le k \le (trA_{1}^{t + 1})^{-1}C =: k_{m}$. Hence, we see that $M(k) \le N(k)$ on $0 \le k \le k_{m}$ (\cite[Theorem 6.1 of Chapter I]{hale1969ordinary}). Thus, $M(k_{m}) \le N(k_{m}) = 0$, i.e., $\n{u_{+}}_{\infty} \le k_{m}$. Since $(1 - \theta)^{-1} = 1 + (1 - \frac{d - 2}{d}r)^{-1}t^{-1}$ and $t\theta = (t^{-1} + 1 - \frac{d - 2}{d}r)^{-1} \le (1 - \frac{d - 2}{d}r)^{-1}$, we bound
    \[\begin{split}
        \n{u_{+}}_{\infty} &\le k_{m} = A_{1}^{-(t + 1)}\frac{1}{tr}\frac{tr}{1 - \theta}M(0)^{\frac{1 - \theta}{tr}}\\
        &\le B^{2}e^{\frac{1}{e}\left( 1 - \frac{d - 2}{d}r \right)^{-1}}\left( 1 + \frac{1}{t}\left( 1 - \frac{d - 2}{d}r \right)^{-1} \right)\n{f}_{r'}^{\theta}\n{u_{+}}_{tr}^{1 - \theta}.
    \end{split}\]
    Finally, observe that $1 - \theta = \gamma_{t, r}$.
\end{proof}

\begin{lemma}
    \label{thm: approx by simple fcts}
    For a nonnegative finite Borel measure $m$ on $\R^{d}$, define
    \begin{equation}
        \label{def: Sm}
        \mathcal{S}_{m} := \left\{\sum_{i = 1}^{n}a_{i}\chi_{U_{i}}: n \ge 1,\, a_{i} \in \R, \text{$U_{i} \sub \R^{d}$ are open, } m\left( \bigcup_{i \neq j}(U_{i} \cap U_{j}) \cup \left( \R^{d} \setminus \bigcup_{i}U_{i} \right) \right) = 0\right\}.
    \end{equation}
    Then the closure of $\mathcal{S}_{m}$ in $L^{\infty}(m)$ is $\mathcal{C}_{m}$ (see \eqref{def: Cm}).
    If $-\infty <a < b < \infty$ and $u \in \mathcal{C}_{m}$ satisfy $a \le u \le b$ $m$-a.e., then we may choose an approximating sequence $(u_{l}) \sub \mathcal{S}_{m}$ so that $\n{u_{l} - u}_{L^{\infty}(m)} \to 0$ and $a \le u_{l} \le b$ $m$-a.e.
\end{lemma}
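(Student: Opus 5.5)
We prove the two inclusions $\overline{\mathcal{S}_{m}} \sub \mathcal{C}_{m}$ and $\mathcal{C}_{m} \sub \overline{\mathcal{S}_{m}}$ separately, the closure being taken in $L^{\infty}(m)$.

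\emph{Step 1: $\mathcal{S}_{m} \sub \mathcal{C}_{m}$.} Let $u = \sum_{i} a_{i}\chi_{U_{i}} \in \mathcal{S}_{m}$. Let $N$ be the $m$-null set $\bigcup_{i \neq j}(U_{i} \cap U_{j}) \cup (\R^{d} \setminus \bigcup_{i} U_{i})$. On $\R^{d} \setminus N$ the sets $U_{i} \setminus N$ are pairwise disjoint, relatively open, and cover $\R^{d} \setminus N$. So $u$ is locally constant there, hence continuous, and $u \in \mathcal{C}_{m}$.

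\emph{Step 2: $\mathcal{C}_{m}$ is closed in $L^{\infty}(m)$.} Suppose $u_{l} \in \mathcal{C}_{m}$ and $u_{l} \to u$ in $L^{\infty}(m)$. Take the union of the exceptional null sets $N_{l}$ together with the null sets on which $|u_{l} - u| > \n{u_{l} - u}_{L^{\infty}(m)}$. Off this countable union $N$, the $u_{l}$ are continuous and converge uniformly to $u$. Hence $u$ is continuous on $\R^{d} \setminus N$, and $u \in \mathcal{C}_{m}$. Together with Step 1 this gives $\overline{\mathcal{S}_{m}} \sub \mathcal{C}_{m}$.

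\emph{Step 3: $\mathcal{C}_{m} \sub \overline{\mathcal{S}_{m}}$.} This is the main step. Let $u \in \mathcal{C}_{m}$, continuous on $\R^{d} \setminus N$ with $m(N) = 0$, and assume after modification on a null set that $a \le u \le b$ on $\R^{d} \setminus N$.

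\begin{enumerate}
\item Fix $\eta > 0$ and choose levels $t_{0} < a \le t_{1} < \cdots < t_{n} \le b < t_{n+1}$ with consecutive gaps $< \eta$. Choose them so that $m(u^{-1}(\{t_{j}\}) \setminus N) = 0$ for $1 \le j \le n$. This is possible because $m$ is finite, so only countably many level sets of $u_{*}m$ carry mass.
\item By continuity, each $V_{j} := \{t_{j-1} < u < t_{j}\} \setminus N$ is relatively open in $\R^{d} \setminus N$. Write $V_{j} = U_{j} \cap (\R^{d} \setminus N)$ with $U_{j}$ open in $\R^{d}$.
\item The $V_{j}$ are disjoint, so the pairwise overlaps $U_{i} \cap U_{j}$ ($i \neq j$) lie in $N$. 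The complement $\R^{d} \setminus \bigcup_{j} U_{j}$ lies in $N$ together with the level sets of the $t_{j}$, which are null. So the structural condition of $\mathcal{S}_{m}$ holds.
\item Set $u_{\eta} := \sum_{j} c_{j}\chi_{U_{j}}$ with $c_{j} := (t_{j} \wedge b) \vee a$. Then $u_{\eta} \in \mathcal{S}_{m}$, $a \le u_{\eta} \le b$ $m$-a.e., and $|u_{\eta} - u| \le \eta$ $m$-a.e.
\end{enumerate}

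Letting $\eta \to 0$ gives the approximating sequence with the required bounds.

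The delicate point is the choice of levels with null level sets, which is what makes the cover-up-to-null-sets condition hold. The countability argument above handles it.
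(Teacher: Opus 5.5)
Your proof is correct and takes essentially the same route as the paper's. Both arguments partition the range of $u$ by levels whose level sets are $m$-null, take the relatively open preimages $V_{j}$ off $N$, and extend them to open sets $U_{j}$ with $U_{j}\setminus N=V_{j}$.

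The differences are cosmetic. The paper keeps the approximants inside the prescribed bounds by replacing $u$ with $\theta u$ for some $\theta<1$, so that the endpoint levels $\pm 1$ are automatically $m$-null. You instead put the outer levels outside $[a,b]$ and clamp the assigned constants into $[a,b]$. You also spell out the inclusion $\overline{\mathcal{S}_{m}}\subseteq\mathcal{C}_{m}$, which the paper dismisses as clear.
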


\begin{proof}
    Clearly $\mathcal{C}_{m}$ is closed in $L^{\infty}(m)$. Let $u \in \mathcal{C}_{m}$ and $N$ an $m$-null set such that the restriction $u \restriction \R^{d}\setminus N$ is continuous. We shall construct a desired approximating sequence $(u_{l}) \sub \mathcal{S}_{m}$. To this end, we may assume that $-1 \le u \le 1$ $m$-a.e. Let $0 < \theta < 1$ and $\varepsilon > 0$. Let $-1 = a_{0} < a_{1} < \cdots < a_{n} = 1$ such that $a_{i + 1} < a_{i} + \varepsilon$ for $0 \le i \le n - 1$ and $m(\theta u = a_{i}) = 0$ for $0 \le i \le n$. Put $A_{i} := \{a_{i} < \theta u < a_{i + 1}\} \setminus N$ and choose an open subset $U_{i}$ of $\R^{d}$ such that $U_{i} \setminus N = A_{i}$. Then $m(U_{i} \cap U_{j}) \le m(N) = 0$ for distinct $i, j$ and $m(\R^{d} \setminus \bigcup_{i = 0}^{n - 1}U_{i}) \le m(\bigcup_{i = 0}^{n}\{\theta u = a_{i}\} \cup N) = 0$. Hence, $v_{\theta, \varepsilon} := \sum_{i = 0}^{n - 1}a_{i}\chi_{U_{i}} \in \mathcal{S}_{m}$ and $-1 \le v_{\theta, \varepsilon} \le 1$ $m$-a.e. By construction, $\n{u - v_{\theta, \varepsilon}}_{L^{\infty}(m)} \le \n{u - \theta u}_{L^{\infty}(m)} + \n{\theta u - v_{\theta, \varepsilon}}_{L^{\infty}(m)} \le (1 - \theta)\n{u}_{L^{\infty}(m)} + \varepsilon$. Now set $u_{l} := v_{1 - 2^{-l}, 2^{-l}}$.
\end{proof}

\section*{Acknowledgements}
This work was partly presented at the workshop Communications on Probability, Geometry and Numerical analysis (Ishigaki, 2026), and this opportunity helped the author organize the manuscript. The author would like to thank the organizers Toshihiro Uemura, Toshihiro Shoda, and Dai Taguchi for the invitation to the stimulating workshop. The author's participation in the workshop was supported by JSPS KAKENHI Grant Number 25K07056. Part of this work was conducted while the author was a special research student at Kyoto University. The author is grateful to Professor Jun Masamune and the members of the Applied Analysis group for their warm hospitality.

\section*{Statements and Declarations}
\textbf{Competing Interests.} The author declares no competing interests.\\
\textbf{Funding.} Part of this work was supported by JSPS KAKENHI Grant Number 25K07056.\\
\textbf{Data Availability.} No datasets were generated or analyzed during the current study.\\
\textbf{AI disclosure statement.}
The author used Anthropic Claude Opus 5.5 for language editing and for checking the consistency of the exposition. All mathematical contents were verified by the author, who assumes full responsibility for the contents of the paper.
\bibliography{biblio}
\bibliographystyle{abbrv}
\end{document}